\DeclareMathAlphabet{\mathpzc}{OT1}{pzc}{m}{it}
\newtheorem{theorem}{Theorem}[section]
\newtheorem{lemma}[theorem]{Lemma}
\newtheorem{remark}[theorem]{Remark}
\newtheorem{proposition}[theorem]{Proposition}
\newcommand{\RR}{\mathbb{R}}
\newcommand{\CC}{\mathbb{C}}
\newcommand{\MM}{\mathcal{M}}
\newcommand{\II}{\mathcal{I}}
\newcommand{\SSS}{\mathcal{S}}
\newcommand{\X}{X^+}
\newcommand{\Y}{X^-}
\newcommand{\OO}{\mathcal{O}}
\newcommand{\VV}{\mathcal{V}}
\newcommand{\QQQ}{\mathcal{Q}}
\newcommand{\eps}{\varepsilon}
\newcommand{\de}{\delta}
\newcommand{\la}{\lambda}
\newcommand{\al}{\alpha}
\newcommand{\f}{\pi^e}
\newcommand{\Q}{g}
\newcommand{\be}{\begin{equation}}
\newcommand{\ee}{\end{equation}}
\newcommand{\rdelta}{\sqrt{\delta}}
\begin{document}

\title{Two regularizations of the grazing-sliding bifurcation giving non equivalent dynamics}
%\author{Carles Bonet Rev\'{e}s\thanks{\tt carles.bonet@upc.edu} \ and  Tere M. Seara\thanks{\tt tere.m-seara@upc.edu}}

\author[C. Bonet Rev\'{e}s]{Carles Bonet Rev\'{e}s}
\address[CBR]{Departament de Matem\`{a}tiques, Universitat Polit\`{e}cnica de Catalunya, Diagonal 647, 08028 Barcelona, Spain }
\email{carles.bonet@upc.edu}

\author[T. M- Seara]{Tere M- Seara}
\address[TS]{Departament de Matem\`{a}tiques, Universitat Polit\`{e}cnica de Catalunya, Diagonal 647, 08028 Barcelona, Spain }
\email{tere.m-seara@upc.edu }

\begin{abstract}
We present two ways of regularizing a one  parameter family of piece-wise smooth dynamical systems undergoing a codimension one grazing-sliding global bifurcation of periodic orbits. First we use the Sotomayor-Teixeira regularization and prove that the regularized family has a saddle-node bifurcation of periodic orbits. Then we perform a hysteretic regularization and show that the regularized family has chaotic dynamics.  Our result shows that, in spite that the two regularizations will give the same dynamics in the sliding modes, when a tangency appears the hysteretic process generates chaotic dynamics.

\end{abstract}

\maketitle

\bigskip

\begin{enumerate}
\item[] \textbf{Keywords:} Regularization of Filippov Systems; grazing-sliding bifurcation; saddle-node bifurcation; hysteresis; chaotic behavior
\end{enumerate}

\section{Introduction}

Discontinuous dynamical systems model many phenomena in control theory, in mechanical friction and impacts, in hysteresis in electrical circuits and plasticity, etc. In these systems the phase space is divided into several regions where the system takes different forms. Vector fields with jump discontinuities at the edges of these regions -the switching manifolds- are usually named Filippov Systems. See \cite{bc08} for a deep overview.

One major example of Filippov systems is the so called sliding mode control (SMC) (see \cite{u92}). Roughly speaking a SMC is an application of a discontinuous control signal u that forces the solutions to reach the switching surface in finite time, and ``slide'' on it with a prescribed convenient flow. Obviously this procedure cannot be continuous as the switching manifold won’t be, in general, an invariant manifold of any differentiable system. For instance, this designed control can produce ``chattering'' around the switching manifold.

Then two main questions arise: How to define a solution on the switching manifold and how to regularize the discontinuous system. That is, how to unfold the Filippov system  in a parametric family of smooth  vector fields, in such a manner that their (singular) limit be consistent with the prescribed switching dynamics.

It is well known and largely discussed that there is not a "canonical" way of defining the dynamics on the switching manifolds \cite{u92, j18book, Bonet2021AgeingOA}, but the most commonly used formalism to define a flow on the switch derived from the fields outside the edges is due to Filippov \cite{Filippov88} and its application to control by Utkin \cite{u92}.

They essentially approximate chattering to-and-fro across a discontinuity by a steady flow precisely along the discontinuity. Whereas Filippov sliding dynamics convention describes a linear combination of vector fields at the edges, Utkin equivalent control describes a function depending of the control. The two methods are derived from different regularizations of the piece-wise systems in a neighborhood of the sliding regions of the switching manifold. While Filippov procedure can be seen as a limiting process of oscillations created by hysteresis or delay \cite{uber1956, uber1956II} Utkin justifies his definition of equivalent control by filtering and averaging the oscillations around the sliding modes \cite{u92}. The two  approaches coincide in case of linear dependence on the control, but not otherwise. See \cite{Kristiansen2020, j18book, u92,BonetSFG2017,Bonet2021AgeingOA}.

One of the most used differentiable regularization of a piecewise smooth dynamical systems is the so called regularization of Sotomayor-Teixeira \cite{SotoTei}. The piecewise smooth system is approximated in a thin boundary layer around the switch by a one parameter family of differentiable flows. It is well known that near any compact sliding region of the switching manifold there exists a differentiable normally hyperbolic invariant manifold of the regularized family and the flow inside this manifold is close to the sliding Filippov flow in the switch \cite{tls09}.
But the Sotomayor-Teixeira is not the only possible regularization of a Filippov system. In fact,  the justification of Filippov convention also is based on hysteresis. In \cite{BonetSFG2017} it is proved that the regularization by hysteresis in sliding compact regions of the switch also gives the Filippov's solutions in the limit.

In conclusion, both regularizations, the Sotomayor-Teixeira and the hysteretic one, give the Filippov flow as a limit in  compact sliding regions of the switching manifold.  In this paper we will prove that this is not the case when the hyperbolicity is lost, as happens, for instance, at  grazing bifurcations.

In \cite{BonetS16}  the Sotomayor-Teixeira regularization of a general visible fold singularity (also called visible tangency point) of a planar Filippov system was studied.
Extending Geometric Fenichel Theory the deviation of the orbits of the regularized system from the orbits of the Filippov one were determined.
 This result was used to understand the global dynamics of a regularized family of Filippov vector field having some global bifurcations, like  the grazing-sliding of periodic orbits or the Sliding Homoclinic to a Saddle.
Both bifurcations involve a tangency between the periodic (or homoclinic) orbit of one of the adjacent vector fields with the discontinuity manifold. 
Therefore, although we are studying a global phenomenon, its behavior relies on the local behavior of the regularized Filippov System near a so-called visible tangency point.

In case of the grazing-sliding bifurcation, if the periodic orbit is repelling, it was shown that the regularized family also has a bifurcation of periodic orbits. 
As the parameter crosses the bifurcation value, the system passes from having two periodic orbits to none. We presented numerical and heuristic evidences of the bifurcation value and that it was of saddle node type. We also indicated how to prove it rigorously through the convexity of a certain Poincar\'{e} map, but we leaved the detailed proof to a future work (See Remark 2 in \cite{BonetS16}).

The goal of this paper is twofold: On the one hand we give a rigorous proof of the saddle-node character of the bifurcation appearing in the Sotomayor-Teixeira regularization of the grazing-sliding bifurcation.
On the other hand, and  completing the results in \cite{BonetSFG2017}, we explore which is the effect of a hysteretic regularization in the grazing-sliding bifurcation
and we show that it gives rise to very different behavior.

Despite that in the sliding regions, the regularization by hysteresis also tends to the Filippov flow, we will see that, near the fold point, this regularization produces chaotic behavior of spiral type. As a consequence, in the grazing-sliding bifurcation thus regularized, does not appear one attracting periodic orbit (and the corresponding unstable), but an annulus with chaotic behavior instead.
In particular, this set contains infinitely unstable periodic orbits and also dense orbits. This kind of "noisy" behavior is also present in many chaotic circuits, like Chua and Alpazur circuits \cite{SharkovskyCh1993, Brown1992, Takuji1999, Naka1996}. We believe that this work contributes to explain the cause of the appearance of vibrations in sliding mode control systems.\cite{Naka1996}

This paper follows the notation and results of \cite{BonetS16, BonetSFG2017}.
In Section \ref{sec:ST} we prove that the Sotomayor-Teixeira regularization of a family of Filippov systems undergoing a grazing-sliding bifurcation is a saddle-node bifurcation.
This is achieved by searching the bifurcation value near the intersection of the vault of the periodic orbit with the Fenichel solution of the regularized system.
Using normal forms developed in \cite{BonetS16} we can bound the parameters where the bifurcation would be.
For these parameters a Poincar\'{e} map can be defined, and finally proved a convexity property.
Moreover, we provide an asymptotic expression for the bifurcation value and the semi-stable perodic orbit at this value. As often occurs in fold singularities, at last, all relies on the study of a Riccati equation.

In Section \ref{sec:H}, we regularize the grazing-sliding bifurcation by hysteresis as is defined in \cite{BonetSFG2017}. Also a Poincar\'{e} map can be defined, but now this map has discontinuities. Actually the map looks like an overlapping Lorenz map on the interval, a class of maps which are widely studied its chaotic and stochastic features \cite{Keener1980, GlendinningJ2019}. Then the dynamics of this map is analyzed. For values of the parameter less than the bifurcation value, the size of the points that goes to zero as the number of the iterates of the map goes to infinite is the total size of the interval. That is, the attractor between the periodic unstable orbits, attracts all, except a measure zero set. But at the bifurcation value, it appears a chaotic map, exactly the Baker-like map of \cite{GardiniM2015, Nordmark1997}, which has infinite discontinuity branches. After this value, the discontinuities are already in finite number and are disappearing, but the chaotic character remains. Even with a single discontinuity, a case that we prove with the usual methods of finding a horseshoe pair of subintervals \cite{GlendinningJ2019}

Section \ref{sec:proofs} is devoted to the more technical proofs of the results needed in the two precedent parts.

In the course of writing the present paper, the  work \cite{Kristiansen2020} has appeared,
where the author considers a different regularization given by an analytic function and proves the existence of a saddle-node bifurcation. There is no way to use his results in the problem studied in this paper. The author excludes from his study the Sotomayor-Teixeira regularization using instead a regularization with flat behavior at infinity, which modifies the original vector fields throughout the whole domain, while the Sotomayor-Teixeira regularization only modifies them in a small environment of the switching manifold.

Nor can it be argued that one regularization is more natural than the other. While flat at infinity functions are widely used in numerical simulations, it is also a fact that in sliding mode control theory, the fields outside the switching surface remain unchanged, and the control variable is confined to a finite range. Anyway, one can combine both results to conclude that the saddle node character is maintained for a large family of regularizations, but not for the hysteretic one.

\section{The Sotomayor-Teixeira regularization of the
grazing-sliding bifurcation}\label{sec:ST}

To settle properly the problem we follow closely \cite{BonetS16} and its basic notation, that is:

We consider a  Filippov  system in $\RR^2$:
\begin{equation}\label{def:Filippov}
Z(x,y)=\left\{\begin{array}{l}
        \X(x,y),\, (x,y)\in\VV^+\\
        \Y(x,y),\, (x,y)\in\VV^-,
       \end{array}\right.
\end{equation}
where: $\VV^+=\{(x,y)\in \VV, \ y> 0\}$,
$\VV^-=\{(x,y)\in \VV, \ y< 0\}$, where $\VV$ is an open set containing  the origin,  with a  switching manifold  given by:
$$
\Sigma=\{(x,y)\in \VV,\ y= 0\}.
$$
We assume that the vector fields $\X$ and $\Y$ have an extension to a neighborhood of $\Sigma$, at least, $\CC^{2}$. We denote their flows by $\phi_{\X}$ and  $\phi_{\Y}$ respectively.

We assume that the vector field $\Y$ is transverse to $\Sigma$ and that $\X$ has a generic fold in $\Sigma$, that is:
\begin{equation}\label{generalform}
\begin{array}{rcl}
\X(0,0) &=& (\X_1(0,0),0), \quad \X_1(0,0) \ne 0 ,\quad \frac{\partial \X_2}{\partial x}(0,0)\ne 0\\
\Y(0,0)&=&(\Y_1(0,0),\Y_2(0,0)), \quad  \Y_2(0,0)\ne 0 .
\end{array}
\end{equation}
Without loss of generality we can assume that the fold point is at $(0,0)$.

We will consider the case where:
\begin{equation}\label{cond:visiblefold}
\Y_2(0,0)>0, \mbox{and} \ \X_2(x,0) <0 \ \mbox{for}\ x<0, \   \X_2(x,0) >0\ \mbox{for}\ x>0.
\end{equation}
These conditions ensure that $(0,0)$ is a generic visible fold-regular point.
As $\X _1(0,0)\ne 0$, we will deal with the case
\begin{equation}\label{cond:visiblefold1}
\X_1(0,0)>0,
\end{equation}
which implies that $\X$ goes ``to the right".\\

Moreover, by Prop. $14$ in \cite{BonetS16}, we know that, after a smooth change of variables, we can
assume that $Z =(\X,\Y)$ has the form:
\begin{equation}\label{def:Xg}
\X(x,y)=\left(\begin{array}{l}
        1 + f_1(x,y)\\
        2x + by +f_2(x,y)
       \end{array}\right)
\end{equation}
where $f_i(x,y)=O_i(x,y)$ and $f_2(x,0)=0$, and
\begin{equation}\label{def:Yg}
\Y(x,y)=\left(\begin{array}{l}
        0\\
        1
       \end{array}\right).
\end{equation}

As in this paper we will work with the Sotomayor-Teixeira regularization $Z_\eps$ of the vector field $Z$ in \eqref{def:Filippov}, let us recall here its definition:
\begin{equation}\label{regularizedvf}
Z_\eps (x,y)= \frac{\X(x,y)+\Y(x,y)}{2}+\varphi(\frac{y}{\eps}) \frac{\X(x,y)-\Y(x,y)}{2},
\end{equation}
where  $\varphi$ is any increasing smooth ${\mathcal{C}}^{p-1}$ function with:
$$
\varphi(v) =-1, \ \mbox{for} \ v\le -1, \quad \varphi(v) =1, \ \mbox{for} \ v\ge 1.
$$
During this paper we will consider the case $p=2$ and therefore we consider  ${\mathcal{C}}^{1}$ regularizing functions.
The general case can be done analogously.

We introduce
\begin{equation}\label{eq:nueps}
\mathcal{V}_\eps=\{(x,y)\in \mathcal{V},|y|\le \eps\},
\end{equation}
the regularizing strip.
Is clear that outside $\mathcal{V}_\eps$, $Z_\eps=Z$.

\subsection{Previous results}

\begin{figure}
\begin{center}
\includegraphics[width=7.5cm,height=4cm]{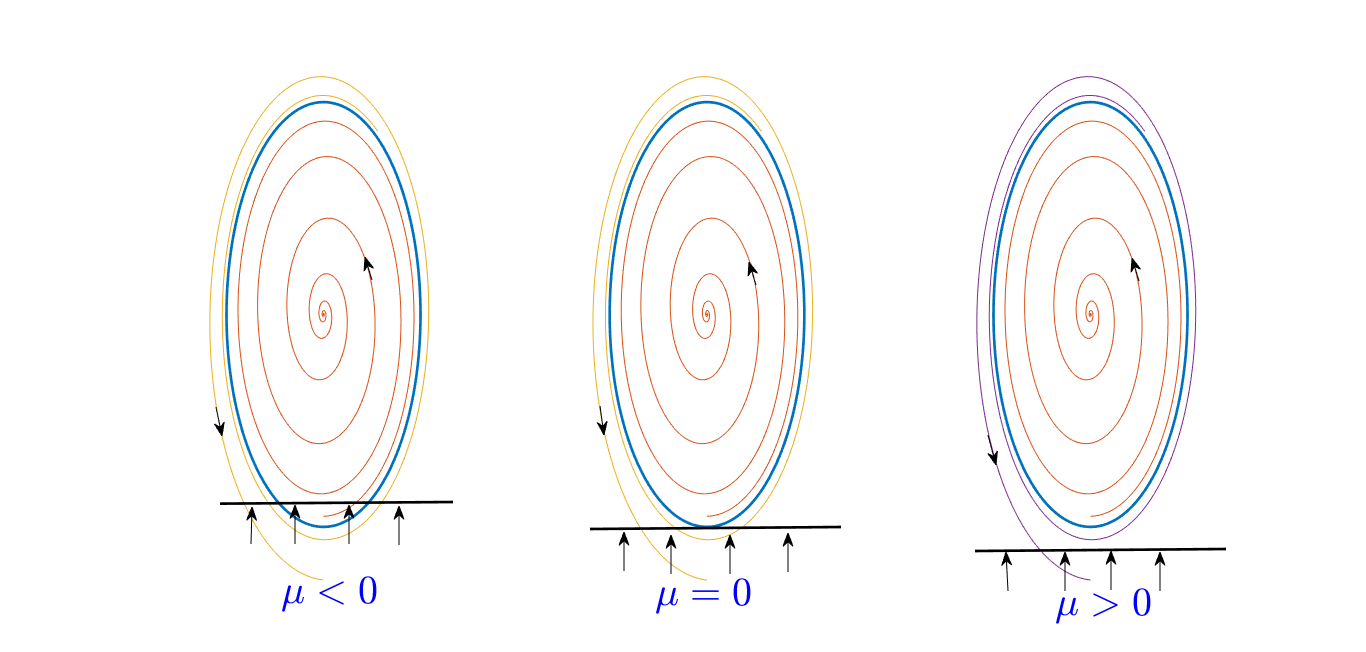}
\includegraphics[width=7.5cm,height=4cm]{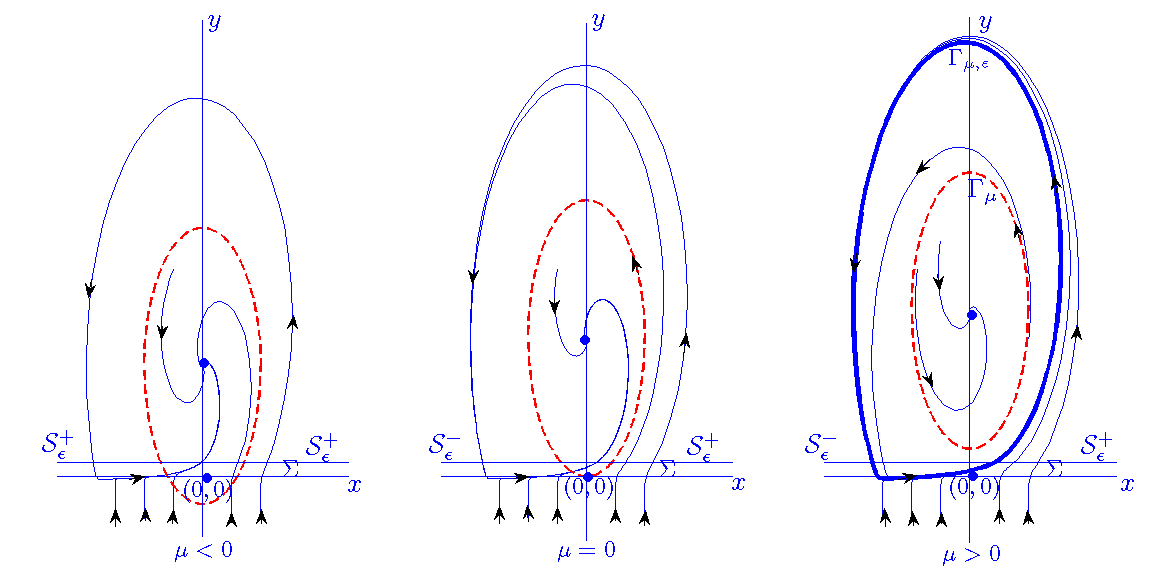}
\caption{On the left, the relative position of the repelling periodic orbit $\Gamma_\mu$ of  $ X^+_\mu$ for different values of $\mu$.
On the right the bifurcation of periodic orbits of the regularized vector field $Z_{\mu,\eps}$}\label{fig:bifurcacioantiga}
\end{center}
\end{figure}

The purpose of this section is to study how the Sotomayor-Teixeira  regularizations affects  a family of Filippov vector fields having a grazing-sliding bifurcation of periodic orbits.
That is, we consider   a family $Z_\mu$ of Filippov  planar systems undergoing a grazing-sliding bifurcation of a hyperbolic attracting or repelling periodic orbit $\Gamma_\mu \subset \VV$ of the vector field $\X_\mu$ at $\mu=0$.
Therefore the case $\mu=0$ corresponds to the case that $X_0$ has a periodic orbit $\Gamma_0$ tangent to $\Sigma$.

Next theorem, which is Theorem 2.4 in \cite{BonetS16},  gives some preliminary results of how these bifurcations behave in  the corresponding regularized family $Z_{\mu,\eps}$ (see Figure \ref{fig:bifurcacioantiga}).

\begin{theorem}[\cite{BonetS16}]\label{thm:sella-node}
Let  $Z_\mu$, $\mu \in \RR$ be a family of non-smooth planar systems that undergoes a grazing-sliding bifurcation
of a hyperbolic periodic orbit $\Gamma_\mu$ of the vector field $\X_\mu$ at $\mu=0$. We assume that, for $\mu>0$
the periodic orbit  $\Gamma_\mu$  is entirely contained in $\VV^+$, it becomes tangent to $\Sigma$ for $\mu=0$
and intersects both regions $\VV^\pm$ for $\mu<0$.

Consider the regularized family $Z_{\mu,\eps}$.
\begin{itemize}
\item
If $\Gamma_\mu$ is attracting,
the regularized system has a  periodic orbit $\Gamma_{\mu,\eps}$ for any
$\eps$, $\mu$ small enough.
No bifurcation occurs in the regularized system.
\item
If $\Gamma_\mu$ is repelling,
for any $\mu>0$ and $0<\eps<\eps _0(\mu)$, the regularized system has a  periodic orbit $\Gamma_{\mu,\eps}$  which co-exists with the periodic orbit $\Gamma_\mu$ contained in
$\VV^+\cap \{ (x,y), \ y>\eps\}$.
Moreover, there exists a constant $\Delta<0$ such that this result is also true for $\mu = \tilde \mu \eps$, if $\tilde \mu >-\Delta>0$.
For $\mu \le  0$ small enough, the system has no periodic orbits near $\Gamma_0$ if $\eps$ is small enough.
Therefore the family $Z_{\mu,\eps}$ undergoes a
bifurcation of periodic orbits near  $\mu=0$.
\end{itemize}
\end{theorem}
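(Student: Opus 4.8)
The plan is to reduce the theorem to a one-dimensional first-return map of the regularized flow and to exploit the strong contraction produced by the passage through the regularizing strip $\VV_\eps$. Fix a small $y_0>0$ independent of $(\mu,\eps)$, set $S=\{y=y_0\}\cap\VV$ parametrized by $x$, and let $x^*(\mu)$ be the point where $\Gamma_\mu$ meets $S$ when $\Gamma_\mu$ lies above the strip. Since $Z_{\mu,\eps}=Z_\mu=\X_\mu$ outside $\VV_\eps$, an orbit issued from a point of $S$ near $x^*(0)$ is of one of two types: (a) it returns to $S$ staying in $\{y>\eps\}$ --- this occurs on the side of $x^*(\mu)$ interior to $\Gamma_\mu$, where the return map is the hyperbolic return map of $\X_\mu$ along $\Gamma_\mu$; or (b) it reaches $\VV_\eps$ over the sliding region $\{x<0\}$ of $\Sigma$, where, by the Fenichel picture of the Sotomayor-Teixeira regularization \cite{SotoTei,tls09}, it is captured by the attracting normally hyperbolic slow manifold $\ye$, slides along $\ye$ towards the visible fold at the origin, leaves $\VV_\eps$ near $(0,0)$ moving to the right (recall $\X_1(0,0)>0$), and then returns to $S$ following $\X_\mu$. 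Write $P_{\mu,\eps}$ for the resulting return map and $P^{\mathrm b}_{\mu,\eps}$ for its restriction to the points of type (b).

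The first step is to show that $P^{\mathrm b}_{\mu,\eps}$ is a uniform contraction with tiny image. An orbit of type (b) is attracted to $\ye$ at rate $\sim e^{-ct/\eps}$ and slides a time $O(1)$ before reaching the fold, so on leaving $\VV_\eps$ its distance to the exit orbit of $\ye$ is exponentially small in $\eps$; the passage through the visible fold inside $\VV_\eps$ is controlled by the local blow-up analysis of \cite{BonetS16} (the one reducing to a Riccati equation), which shows that all these orbits leave the strip in an $x$-interval about $(0,0)$ of size $o_\eps(1)$. Composing with the diffeomorphism $\phi_{\X_\mu}$ that carries the exit point back to $S$, we get that $P^{\mathrm b}_{\mu,\eps}$ has Lipschitz constant $o_\eps(1)$ and image an interval of size $o_\eps(1)$ about a point $c(\mu,\eps)\in S$. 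The second step is to locate $c(\mu,\eps)$: to leading order it is the intersection with $S$ of the orbit of $\X_\mu$ issued from the origin. For $\mu=0$ that orbit is $\Gamma_0$, so $c(0,\eps)=x^*(0)+O(\eps)$, and the correction, coming from the displacement of $\ye$ from $\Sigma$ and from the fold passage, places $c(0,\eps)$ on the side of $x^*(0)$ interior to $\Gamma_0$. As $\mu$ varies, the origin has height $0$ while the lowest point of $\Gamma_\mu$ has height $\sim\mu$: for $\mu>0$ the origin is exterior to $\Gamma_\mu$, and one checks that $c(\mu,\eps)$ crosses to the exterior side of $x^*(\mu)$ precisely once $\mu$ exceeds a threshold of order $\eps$, whereas for $\mu\le0$ the origin is interior to $\Gamma_\mu$ and the orbit through it winds inward, away from $\Sigma$.

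The statement then follows. In the repelling case, $\Gamma_\mu$ is itself a periodic orbit of $Z_{\mu,\eps}$ as soon as it does not meet $\VV_\eps$, i.e. once its minimal height $\sim\mu$ exceeds $\eps$, which we write as $\eps<\eps_0(\mu)$ and, in the scaling $\mu=\tilde\mu\eps$, as $\tilde\mu>-\Delta$ for a suitable constant $\Delta<0$; enlarging $-\Delta$ if necessary, in the same range $c(\mu,\eps)$ lies on the type (b) side of $x^*(\mu)$, so $P^{\mathrm b}_{\mu,\eps}$ maps its domain into itself and, being a contraction, has a unique fixed point --- the periodic orbit $\Gamma_{\mu,\eps}\subset\VV^+\cap\{y>0\}$ coexisting with $\Gamma_\mu$. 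For $\mu\le0$ small, $\Gamma_\mu$ is no longer a periodic orbit of $Z_{\mu,\eps}$ (it crosses $\Sigma$), so a periodic orbit near $\Gamma_0$ would have to be of type (b), hence a fixed point of the contraction $P^{\mathrm b}_{\mu,\eps}$ and $o_\eps(1)$-close to $c(\mu,\eps)$; but the orbit through $c(\mu,\eps)$ winds into the region enclosed by $\Gamma_\mu$, never re-meets $\VV_\eps$, and does not close up. Hence $Z_{\mu,\eps}$ has two periodic orbits near $\Gamma_0$ for $\mu>0$ (or $\mu=\tilde\mu\eps$, $\tilde\mu>-\Delta$) and none for $\mu\le0$, which is the claimed bifurcation. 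In the attracting case there is no repelling fixed point to begin with: alternative (a), when present, contributes a contraction along the attracting $\Gamma_\mu$, and it glues with the strong contraction of type (b) into a single uniform contraction of $P_{\mu,\eps}$ on a whole neighbourhood of $x^*(0)$ in $S$ for every small $(\mu,\eps)$; hence a unique periodic orbit $\Gamma_{\mu,\eps}$, continuous in the parameters, and no bifurcation.

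The main obstacle is the second step: locating the exit/return point $c(\mu,\eps)$ with enough precision to decide its side relative to $x^*(\mu)$, uniformly down to the scaling $\mu\sim\eps$. This forces one to match the quantitative fold asymptotics of \cite{BonetS16} --- the behaviour of orbits inside $\VV_\eps$ near the tangency, where a Riccati equation governs the leading correction and, in particular, the correction has the right order in $\eps$ --- with the global return along $\X_\mu$ and with the $O(\eps)$ displacement of $\ye$ from $\Sigma$; it is this matching that identifies $\Delta$ and $\eps_0(\mu)$. The finer statement that the bifurcation is of saddle-node type --- the two orbits colliding into one semistable orbit at the bifurcation value --- is not needed here and is obtained later in this section from a convexity property of $P_{\mu,\eps}$.
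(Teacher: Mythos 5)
Your proposal reconstructs the argument along the same lines the paper attributes to \cite{BonetS16}: decompose the first-return map into an interior passage through the regularizing strip---controlled by Fenichel attraction to the slow manifold and the blow-up/Riccati fold asymptotics, which collapse the image into a tiny interval around a point $c(\mu,\eps)$ near the Fenichel exit $\eps^{2/3}\eta_0(0)$---composed with the exterior return by $\X_\mu$, and then decide existence of fixed points by comparing the side of $c(\mu,\eps)$ relative to $\Gamma_\mu$. This is essentially the same approach the paper outlines (the map $\mathcal{Q}_\eps$ of \eqref{eq:Qepsilon}, the Fenichel manifold, the Riccati equation \eqref{Ricatti}, and the comparison with the $\Gamma_\mu$ crossing $\pm\sqrt{\eps-\mu}$), with the only cosmetic differences being your choice of a single section $\{y=y_0\}$ rather than $\SSS_\eps^\pm$ and a slight abuse of the symbol $\ye$ (which the paper reserves for the section $\mathcal{S}_\eps^+$, not the slow manifold).
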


\begin{remark}\label{rem:Delta}
The constant $\Delta$ which appears in Theorem \ref{thm:sella-node} has an explicit formula in
\cite{BonetS16} that we don't reproduce here because it does  not play any role in the sequel.
What is  important is that:
\begin{itemize}
\item
$\Delta >0$ if the periodic orbit $\Gamma_0$ is attracting and $\Delta<0$ when it is repelling.
\item
The value of $\mu=|\Delta| \eps+\OO(\eps^2)$ corresponds to the case that the periodic orbit $\Gamma_\mu$ of $\X_\mu$ is tangent to the line $y=\eps$ and, therefore, is still a periodic orbit of the regularized family $Z_{\mu,\eps}$.
\end{itemize}
\end{remark}

The proof of Theorem \ref{thm:sella-node}  needs to match the behavior of $Z_{\mu, \eps}$ inside $\mathcal{V}_\eps$ with the one of $Z_\mu$ outside.
To this end, one considers several maps which give the dynamics near the periodic orbit $\Gamma_\mu$.

The main difficulty is the study of the map
\[
\mathcal{Q}_{\eps} : \SSS_\eps ^- \to \SSS _{\eps}^+
\]
where, given any $y_0$, the sections are defined as:				
\begin{equation}\label{eq:S0}							
\SSS ^-_{y_0} =\{ (x, y_0)  \in \VV, \ x\le 0\}, \quad \SSS ^+ _{y_0}=\{ (x, y_0) \in \VV, \ x\ge 0\}, \quad
\SSS_{y_0}=\SSS ^- _{y_0}\cup \SSS ^+ _{y_0},
\end{equation}
and the map $\mathcal{Q}_{\eps}$ is given by the orbits of the regularized vector field  between the sections $ \SSS_\eps ^- $ and $ \SSS _{\eps}^+$.

The study of this map  is performed in\cite{BonetS16} by using Fenichel theory and rigorous asymptotic methods. One  obtains that there exists  a solution, known as the Fenichel manifold, which attracts all the orbits in a neighborhood of $ \SSS_\eps ^- $.
More concretely, the Fenichel manifold intersects $\SSS^+_\eps$ in a point
\[
F=(\eps ^{ \frac{2}{3} } \eta_0 (0)+ \OO(\eps ),\eps)
\]
and one can prove that the map $\mathcal{Q}_{\eps}$ behaves as:
\begin{equation}\label{eq:Qepsilon}
\mathcal{Q}_{\eps} (x) =  \eps ^{ \frac{2}{3} } \eta _0(0)+ \OO(\eps ), \quad \forall x\in[-L,-\eps^{\lambda}), \ \end{equation}
where $0<\lambda<\frac23$ and $L>0$ is a constant independent of $\eps$.

Actually, $\eta_0(u)$ is the solution of the Ricatti equation associated to the following system:
\begin{equation}\label{Ricatti}
\begin{array}{rcl}
\dot \eta &=&1\\
\dot u &=&2\eta-\frac{\varphi''(1)}{4}u^2.
\end{array}
\end{equation}
satisfying
\[
\eta (u)-\frac{\varphi''(1)}{8}u^2 =\OO(\frac{1}{u}), \ u\to -\infty
\]

For the purposes of this work we also need the next proposition, which is Proposition 2 in \cite{BonetS16}.
It states that the flow of a Sotomayor-Teixeira regularized system $Z_{\eps}$ in $\mathcal{V}_\eps$ (see \eqref{eq:nueps}) of a Filippov system $Z=(X^+,X^-)$ is strictly bounded by the flow of $X^+$ in the regularization strip near a visible fold. More concretely. Let $ P_\eps^+$ and $\mathcal{Q}_{\eps}$ denote the Poincar\'{e} maps associated, respectively, with the flows of $X^+$ and $ Z_\eps$ on $ \SSS_\eps $ . Let be $(x_\eps, \eps)$ the point where these vector fields have a tangency on $ \SSS_\eps $. Let $[\bar{x},x_\eps]\times \{\eps\}\subset \SSS_\eps^-$, for fixed $\bar{x}$ but close to $x_\eps$ in order to guarantee the above maps are defined. Then we have:

Let $[\bar{x},x_\eps]\times \{\eps\}\subset \SSS_\eps^-$, for fixed $\bar{x}$ but close to $x_\eps$ in order to guarantee the above maps are defined. Then we have:
\begin{proposition}[\cite{BonetS16}]\label{prop:comparacio}
If $\eps>0$ is small enough then for any  $x\in [\bar{x}, x_\eps]$ one has that
$$
\mathcal{Q}_{\eps} (x)< P_{\eps}^+(x).
$$
\end{proposition}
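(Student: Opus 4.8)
The plan is to compare the two flows inside the regularizing strip $\mathcal{V}_\eps$ by a differential-inequality (``sub/supersolution'') argument applied to the orbit slopes. Both $X^+$ and $Z_\eps$ enter $\SSS_\eps^-$ from the region $y>\eps$ and, starting from a common initial point $(x,\eps)$ with $x\in[\bar x,x_\eps]$, descend to $y=0$ and come back up to $y=\eps$; the value of the corresponding Poincaré map is precisely the $x$-coordinate of the return point on $\SSS_\eps^+$. So it suffices to show that, once both orbits issue from the same point, the $Z_\eps$-orbit stays strictly to the \emph{left} of the $X^+$-orbit for all later times inside $\mathcal{V}_\eps$, up to and including the return to $y=\eps$. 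Since $X^+_1(0,0)>0$ (see \eqref{cond:visiblefold1}), near the fold both vector fields have positive first component, so $x$ can be used to compare orbits, but it is cleaner to parametrise each orbit by $y$ on the way down and then on the way up; writing $x=x(y)$ along an orbit, one has, from \eqref{def:Xg} and \eqref{regularizedvf}, that $dx/dy$ for $X^+$ equals $(1+f_1)/(2x+by+f_2)$ while for $Z_\eps$ it equals the analogous ratio with numerator and denominator built from $\tfrac12(X^++X^-)+\varphi(y/\eps)\tfrac12(X^+-X^-)$.

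The first key step is to record that, inside $\mathcal{V}_\eps$ and below the tangency height, the denominator (the vertical component of each field) keeps a definite sign so that the reparametrisation by $y$ is legitimate on each monotone branch, and that on $\Sigma$ itself the two fields agree in the relevant sense (this is exactly the content behind the Filippov limit recalled in the introduction). The second and central step is to establish the strict inequality between the two slope fields: using $\varphi(v)<1$ for $|v|<1$, $\varphi$ increasing, $X^-=(0,1)$ from \eqref{def:Yg}, and the normal form \eqref{def:Xg} with $f_2(x,0)=0$, one computes that $Z_\eps$ is a strict convex combination that pulls the vector field ``below'' $X^+$ (less positive vertical component, or equivalently a larger value of $dx/dy$ on the descending branch and a smaller one on the ascending branch, in the sense that makes the $Z_\eps$-orbit drift left). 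The third step is a standard comparison/Gronwall argument: two solutions of $x'=g(x,y)$ (resp. $\le$, $\ge$ a comparison equation) that start equal and whose right-hand sides are strictly ordered separate strictly; one runs this on the descending branch from $y=\eps$ to $y=0$ and again on the ascending branch from $y=0$ back to $y=\eps$, checking that the gained strict lead is preserved (not cancelled) through the turning region near $\Sigma$. Concatenating, $\mathcal{Q}_\eps(x)<P_\eps^+(x)$.

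The main obstacle I expect is the passage through the fold/turning region near $y=0$: there the vertical component of $X^+$ vanishes (by definition of the tangency, $X^+_2$ changes sign) so the reparametrisation by $y$ degenerates, the slope $dx/dy$ blows up, and one must argue that the strict ordering accumulated on the way down survives the turn and is handed off correctly to the ascending branch. The clean way around this is to avoid $y$ as a parameter near the fold and instead compare the two orbits there directly in the $(x,y)$-plane — e.g. via the time parametrisation, using $X^+_1>0$, $Z_{\eps,1}>0$ to conclude that on each section $\SSS_c$ with $0\le c\le\eps$ the $Z_\eps$-orbit hits strictly to the left — or, following \cite{BonetS16}, to work with the blown-up Riccati variable $\eta_0$ in which the fold is desingularised and the comparison becomes a monotonicity statement for solutions of \eqref{Ricatti}. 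Either way, once the degeneracy at $y=0$ is handled, the rest is the routine monotone-dependence argument sketched above.
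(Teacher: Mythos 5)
Your key computation in step two has the wrong sign, and the two--branch slope argument built on it does not close. Inside the strip $Z_\eps$ is a convex combination of $X^+$ and $X^-=(0,1)$ with strictly positive weight on $X^-$; since $X^-_2=1$ dominates $X^+_2\approx 0$ near the fold, one gets $Z_{\eps,2}-X^+_2=\tfrac{(1-\varphi(y/\eps))(1-X^+_2)}{2}>0$ for $|y|<\eps$, i.e.\ the vertical component of $Z_\eps$ is strictly \emph{larger} than that of $X^+$, not ``less positive''. Consequently $dx/dy$ for $Z_\eps$ is \emph{smaller} (more negative) on the descending branch, so at each height $y<\eps$ the $Z_\eps$-orbit lies to the \emph{right} of the $X^+$-orbit on the way down. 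The ``strict lead'' you acquire on the descent therefore points the wrong way and has to be reversed --- not merely preserved --- through the turn, so the Gronwall scheme cannot be completed as stated; and the claim that ``on each section $\SSS_c$ the $Z_\eps$-orbit hits strictly to the left'' is false at the descending crossing.

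The fix is the trapping argument you mention as a back-up, carried out with the correct orientation and with no reparametrization at all. Let $\gamma^+$ be the $X^+$-orbit through $(x,\eps)$ and $R$ the bounded region enclosed by $\gamma^+$ and the segment of $\SSS_\eps$ joining $(x,\eps)$ to $(P^+_\eps(x),\eps)$. Writing $Z_\eps-X^+=\tfrac{1-\varphi(y/\eps)}{2}(X^--X^+)$, its inner product with the inward normal $(-X^+_2,X^+_1)$ to $\gamma^+$ is $\tfrac{1-\varphi}{2}\left(X^-_2X^+_1-X^-_1X^+_2\right)=\tfrac{1-\varphi}{2}X^+_1>0$ strictly for $0<y<\eps$, using $X^-=(0,1)$ and $X^+_1>0$. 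This quantity never degenerates because $X^+_1>0$ along all of $\gamma^+$, so the fold causes no difficulty and no blow-up is needed. Hence $Z_\eps$ points strictly into $R$ along the interior of $\gamma^+$, the $Z_\eps$-orbit from $(x,\eps)$ stays in $R$, and it must exit through $\SSS_\eps^+$ strictly to the left of $(P^+_\eps(x),\eps)$ (equality would force, in backward time, the $Z_\eps$-orbit to leave $R$ across $\gamma^+$, which the strict inward-pointing forbids). This gives $\mathcal{Q}_\eps(x)<P^+_\eps(x)$. In short: the regularization rotates $X^+$ \emph{towards} $(0,1)$, i.e.\ upwards, and that is exactly what traps the regularized orbit above $\gamma^+$ and makes it return earlier; the intuition ``pulls the vector field below $X^+$'' is backwards.
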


\subsection{The saddle node bifurcation}\label{sec:sella-node}
Observe that Theorem \ref{thm:sella-node}
establishes, in the unstable case, and therefore when $\Delta<0$,  the existence of a bifurcation of periodic orbits for:
\begin{equation}\label{intbifurc}
0<\mu\le -\Delta \eps
\end{equation}
and the value $\mu=|\Delta| \eps+\OO(\eps^2)$ corresponds to the value where the periodic orbit $\Gamma_\mu$ of the upper vector field $\X_\mu$ is tangent to the line $\SSS_\eps$ (see Remark \ref{rem:Delta}) and therefore $\Gamma_\mu$ is still a periodic orbit of the vector field $Z_{\mu,\eps}$.

The purpose of this section is to prove next Theorem \ref{thm:sella-node-si} which  completes the results in Theorem \ref{thm:sella-node}  and states that there is only a bifurcation in this interval and this bifurcation is a  saddle-node bifurcation of periodic orbits.

\begin{theorem}\label{thm:sella-node-si}
With the same hypothesis of Theorem \ref{thm:sella-node}, if $\Gamma_0$ is repelling, the regularized vector field $Z_{\mu,\eps}$ has only a bifurcation and it is a saddle node bifurcation of periodic orbits at:
\begin{equation}\label{eq:mustar}
\mu^*=-\Delta \eps+\OO (\eps^{\frac{4}{3}})
\end{equation}
\end{theorem}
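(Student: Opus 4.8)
The plan is to set up a Poincaré return map on a cross-section transverse to the grazing orbit $\Gamma_0$, parametrize it by a real coordinate, and show that after the appropriate scaling it becomes a one-parameter family $\Pi_{\mu,\eps}$ of maps of an interval into itself that is strictly convex (or strictly concave) uniformly in $\eps$. A strictly convex self-map of an interval can have at most two fixed points, and the transition from two to zero fixed points as a parameter varies, when it happens through a single tangency of the graph with the diagonal, is precisely a saddle-node bifurcation. So the whole proof reduces to (i) constructing $\Pi_{\mu,\eps}$, (ii) proving a convexity estimate, and (iii) locating the critical parameter $\mu^*$ and controlling its asymptotics.

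First I would assemble the return map from the pieces already available in \cite{BonetS16}: on $\SSS_\eps$ we have the map $\mathcal{Q}_\eps$ coming from the regularized flow inside $\VV_\eps$, whose behavior on $[-L,-\eps^\lambda)$ is the nearly-constant expression \eqref{eq:Qepsilon} collapsing everything onto the Fenichel point $F=(\eps^{2/3}\eta_0(0)+\OO(\eps),\eps)$; and on $[\bar x,x_\eps]$ the comparison $\mathcal{Q}_\eps(x)<P_\eps^+(x)$ of Proposition \ref{prop:comparacio}. Outside $\VV_\eps$ the dynamics is that of $\X_\mu$ near the repelling $\Gamma_\mu$, which contributes an expanding, orientation-preserving transition map $\RRR_\mu$ from $\SSS^+_\eps$ back to $\SSS^-_\eps$ with multiplier strictly larger than $1$ and depending smoothly on $\mu$ and $\eps$; its relative position with respect to $\Sigma$ is governed by $\mu$, with the tangency to $y=\eps$ occurring at $\mu=|\Delta|\eps+\OO(\eps^2)$ as in Remark \ref{rem:Delta}. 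The return map is $\Pi_{\mu,\eps}=\RRR_\mu\circ\mathcal{Q}_\eps$ restricted to the piece of $\SSS^-_\eps$ where both are defined. Fixed points of $\Pi_{\mu,\eps}$ are exactly the periodic orbits of $Z_{\mu,\eps}$ near $\Gamma_0$, and Theorem \ref{thm:sella-node} already tells us there are two for $0<\mu<\mu^*$ and none for $\mu\le 0$; what remains is to rule out any other bifurcation in $(0,-\Delta\eps]$ and to identify the one that occurs as saddle-node.

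The convexity estimate is where the real work lies, and it is the step I expect to be the main obstacle. The strategy, following the hint in Remark 2 of \cite{BonetS16}, is to show $\Pi_{\mu,\eps}''>0$ (or $<0$) on its whole domain. The outer map $\RRR_\mu$ is a smooth diffeomorphism whose second derivative is $\OO(1)$ and whose first derivative is bounded away from $0$ and from $1$; the sign of $\Pi''$ is therefore dictated, after a chain-rule computation, by the convexity of the inner map $\mathcal{Q}_\eps$. On the ``flat'' part $[-L,-\eps^\lambda)$ equation \eqref{eq:Qepsilon} makes $\mathcal{Q}_\eps$ nearly constant so convexity there is essentially trivial, but near the tangency point $x_\eps$, where the interesting dynamics happens, one must analyze $\mathcal{Q}_\eps$ through the rescaled Riccati system \eqref{Ricatti}. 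Concretely I would blow up the fold at scale $\eps^{1/3}$, show that in the blown-up variables the transition across the layer is described by the flow of \eqref{Ricatti} with the solution $\eta_0$, and prove that the induced map on the relevant section inherits a definite sign of the second derivative from monotonicity and convexity properties of the Riccati trajectory $\eta_0(u)$ (the solution asymptotic to $\frac{\varphi''(1)}{8}u^2$). This is the analogue of the classical fact that fold/Riccati transition maps are convex, and it is delicate because one needs the estimate to be uniform as $\eps\to 0$ while the domain shrinks and the map degenerates.

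Finally, granted uniform strict convexity, the conclusion is soft: for each small $\eps$ the graph of $\Pi_{\mu,\eps}$ is a convex curve meeting the diagonal in two points for $\mu$ slightly below $-\Delta\eps$ and in none for $\mu\le 0$, so by continuity and convexity there is a unique $\mu^*=\mu^*(\eps)$ at which the graph is tangent to the diagonal, with $\Pi_{\mu^*,\eps}'=1$ and $\Pi_{\mu^*,\eps}''\ne 0$ at the contact point and $\partial_\mu\Pi\ne 0$ — exactly the non-degeneracy conditions for a saddle-node. For the asymptotics \eqref{eq:mustar} I would expand $\Pi_{\mu,\eps}$ around the Fenichel point $F$ and the tangency value $\mu=-\Delta\eps$: the leading correction comes from how far the Fenichel landing point $\eps^{2/3}\eta_0(0)$ and the expansion multiplier of $\RRR_\mu$ push the would-be fixed point off the diagonal, and matching these at order $\eps^{2/3}\cdot\eps^{2/3}=\eps^{4/3}$ against the $\OO(\eps)$ shift of $\Gamma_\mu$ gives the correction term $\OO(\eps^{4/3})$; the detailed bookkeeping of these orders is routine once the convexity and the Riccati asymptotics are in hand, and I would defer it (as the authors do) to the technical Section \ref{sec:proofs}.
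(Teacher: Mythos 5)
Your proposal follows the same high-level strategy as the paper --- decompose the return map into an inner layer map $\mathcal{Q}_{\mu,\eps}$ and an outer transition map $\f_{\mu,\eps}$, prove that the composition has a definite-sign second derivative on the interval where the fixed points must lie, then conclude at most two fixed points and hence a single saddle-node --- but there is a substantive error at the key step, and it is exactly the point the paper's Section~\ref{sec:proofs} spends most of its effort on.

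You assert that the outer map $\RRR_\mu$ (the paper's $\f_{\mu,\eps}$) is a smooth diffeomorphism with second derivative $\OO(1)$ and first derivative bounded away from $0$ and $1$, so that the sign of $\Pi''$ is dictated purely by the convexity of the inner map. This is false in the regime where the bifurcation takes place. The relevant fixed points sit at $x = \OO(\eps^{2/3})$ with $\de = \eps-\mu = \OO(\eps^{4/3})$, and in that range Theorem~\ref{thm:extensiopie} gives $(\f_{\mu,\eps})''(x) = \frac{-\de\,\pi'(0)(1-\pi'(0))}{\left(\de-\pi'(0)(\de-x^2)\right)^{3/2}}+\OO(1)$, which is of size $\OO(\eps^{-2/3})$, not $\OO(1)$; near the tangency endpoint $\tilde x_\mu$ it degenerates further with a $(x-\tilde x_\mu)^{-3/2}$ singularity, and $\f_{\mu,\eps}'$ is unbounded there as well. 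The outer map is a fold transition map, not a uniformly bounded diffeomorphism, and establishing its convexity is itself a nontrivial piece of the argument (Propositions~\ref{prop:exterior}, \ref{prop:exteriorderivada}, \ref{prop:extensiopie}). Consequently the chain rule
\[
(\f_{\mu,\eps}\circ\QQQ_{\mu,\eps})'' \;=\; \f_{\mu,\eps}''\,(\QQQ_{\mu,\eps}')^2 \;+\; \f_{\mu,\eps}'\,\QQQ_{\mu,\eps}''
\]
requires \emph{both} terms to have the favorable sign: the paper proves $\f_{\mu,\eps}''>0$, $\f_{\mu,\eps}'<0$, and, via the Riccati analysis, $\QQQ_{\mu,\eps}''<0$, $\QQQ_{\mu,\eps}'<0$. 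The first term is positive because $\f''>0$, and the second because $\f'$ and $\QQQ''$ are both negative; if the outer map were not proved convex (or if the sign of $\f'$ were not tracked), one could not conclude. Your plan needs to be amended to (a) recognize that $\f_{\mu,\eps}$ is a singular fold map, (b) prove its convexity with uniform estimates in the $\OO(\eps^{2/3})$-scaled region, including at the square-root singularity near $\tilde x_\mu$, and (c) combine this with the concavity (not convexity) of the inner Riccati map and the fact that $\f_{\mu,\eps}$ is decreasing. Without these the argument that "the sign of $\Pi''$ is dictated by the inner map alone" does not hold, and the saddle-node conclusion is not reached.

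A secondary, smaller gap: you do not explain how to confine the fixed points to an interval $\mathcal{J}=[-M\eps^{2/3},-\overline M\eps^{2/3}]$ on which the asymptotic formulas for both maps are valid. The paper does this by first pinning $\mu$ in a window $(\mu_1,\mu_2)$ via Propositions~\ref{prop:bifsegura1} and~\ref{prop:bifsegura} (using the trapping/comparison argument of Propositions~\ref{prop:comparacio}--\ref{prop:trapping} to establish two orbits for $\mu\ge\mu_2$ and none for $\mu\le\mu_1$), and only then applying the convexity estimate on $\mathcal{J}$. Your proposal gestures at these comparison facts but does not use them to localize the domain, which is necessary because the convexity estimates do not hold globally.
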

In the rest of this section we give the proof of Theorem \ref{thm:sella-node-si}.
In fact, we will provide a more detailed result in Theorem \ref{thm:main}, where we provide an asymptotic formula for the bifurcation value $\mu^*$, see \eqref{eq:muestrella},\eqref{eq:delta0}.
To proof Theorem \ref{thm:sella-node-si} we will construct the return map in a slightly different way as in \cite{BonetS16}. It will be crucial to improve the knowledge of the behavior of the  map $\mathcal{Q}_\eps $ given in \eqref{eq:Qepsilon}, which is  mainly determined by the Fenichel manifold that,  by Proposition 8 in \cite{BonetS16},  exponentially attracts the points of the segment
 $[-L, -\eps ^\la]\times \{\eps\}\subset \SSS_\eps^-,\quad 0<\lambda<\frac{2}{3} $.
In order to avoid technicalities that can hide the essential facts, during this proof and without loss of generality we assume the following hypothesis:
\begin{itemize}
\item
The vector field  $\X_0$ is defined in $\RR^2$ and has a unique repelling periodic orbit $\Gamma_0$ entirely contained in $\VV^+$ except the point $(0,0)$ which is a (visible) fold of $\X_0$ and it is of the form \eqref{def:Xg}. We also assume
that there is a unique  attracting focus inside $\Gamma_0$.
\item
As a consequence of the fact that $\Gamma_0$ is repelling, the Poincar\'{e} map
\begin{equation}\label{eq:poincaremap}
\pi: \{(0,y)\} \to \{(0,y)\}
\end{equation}
is defined locally in the $y$ axis in a neighborhood of $y=0$ and  fulfills $\pi(0)=0$ and $\pi '(0)>1$.
\item
The family $\X_\mu(x,y)$ is given by:
\begin{equation}\label{hypofamily}
\X_\mu(x,y)=\X_0(x,y-\mu).
\end{equation}
This assumption gives that $\X_\mu$  consists on slipping on the $y$ axis the  vector field $\X_0$.
\item
We denote by $\Gamma_\mu$ the periodic orbit of $\X_\mu$ which, by construction, is tangent to $\SSS_\mu$ at the point $(0,\mu)$.
Consequently, for $\mu=\eps$ the periodic orbit $\Gamma_\eps$ is tangent to $\SSS_\eps$ which implies two important facts (see Remark \ref{rem:Delta}): on the one hand the parameter $\Delta$ in Theorem \ref{thm:sella-node} is $\Delta=-1$ and, in the other hand, when $\mu=\eps$, $\Gamma_\eps$ is still a periodic orbit of the regularized system $Z_{\mu,\eps}$ .
\item
We also denote by $(x_\mu,\epsilon)$ the point whose orbit through $\X_\mu$ is tangent to $\SSS_{\eps} $, and by $(\tilde{x}_\mu,\epsilon)$ the first cut of the negative orbit of $(x_\mu,\eps)$ with $\SSS_{\eps} $.(see Figure \ref{fig:xmuxtildemu}).
%}
\item
We take as $\Y_\mu=(0,1)$
\end{itemize}
The hypothesis of the vectors fields  $\X_\mu$ and $\Y_\mu $ jointly with Proposition \ref{prop:comparacio} give that, for $0\le\mu \le\eps$ small enough, all the solutions of the regularized system  $Z_{\mu,\eps}$ departing from any point in the vault of $\Gamma_\mu$ contained $\VV^+\cap \{ (x,y), \ y\ge\eps\}$ are trapped by the focus of $\X_\mu$. In fact it is enough to prove the next proposition
\begin{proposition}\label{prop:trapping}
Let $0\le\mu \le\eps$ small enough and let $ (x_\mu^\pm,\eps)=\Gamma_\mu \cap \SSS_{\eps} ^\pm$. Then the solution of the regularized system $Z_{\mu,\eps}$ departing from $ (x,\eps)$, where $x\in [x_\mu^-,x_\mu^+]$, is trapped by the focus of $\X_\mu$.
\end{proposition}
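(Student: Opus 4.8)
The plan is to show that every orbit of $Z_{\mu,\eps}$ starting on the arc $[x_\mu^-,x_\mu^+]\times\{\eps\}$ eventually enters an arbitrarily small neighborhood of the attracting focus of $\X_\mu$ and stays there. The key observation is that the periodic orbit $\Gamma_\mu$ of $\X_\mu$, for $0\le\mu\le\eps$, has its ``vault'' (the part with $y\ge\eps$) lying entirely in $\VV^+\cap\{y\ge\eps\}$, a region where $Z_{\mu,\eps}=\X_\mu$. Since $\Gamma_\mu$ is repelling for $\X_\mu$ and there is a unique attracting focus inside it, the interior of $\Gamma_\mu$ is the basin of that focus under the flow of $\X_\mu$. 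So the real content is: orbits of $Z_{\mu,\eps}$ starting at $(x,\eps)$, $x\in[x_\mu^-,x_\mu^+]$, once they re-enter the region $y\ge\eps$, do so at a point strictly inside $\Gamma_\mu$; from there the $\X_\mu$-flow carries them to the focus.

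First I would reduce to controlling the passage through the strip $\VV_\eps$. Take a point $(x,\eps)$ with $x\in[x_\mu^-,x_\mu^+]$. For $x=x_\mu^+$ the orbit is $\Gamma_\mu$ itself (still periodic for $Z_{\mu,\eps}$ since $\mu\le\eps$), so assume $x<x_\mu^+$. The orbit of $Z_{\mu,\eps}$ from $(x,\eps)$ enters $\VV_\eps$. Here I invoke Proposition~\ref{prop:comparacio}: with $[\bar x,x_\eps]$ the appropriate sub-arc of $\SSS_\eps^-$, the return map $\mathcal{Q}_\eps$ of $Z_{\mu,\eps}$ through the strip satisfies $\mathcal{Q}_\eps(x)<P_\eps^+(x)$, where $P_\eps^+$ is the transition map of $\X_\mu$ alone. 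Since the $\X_\mu$-orbit through $(x_\mu^+,\eps)$ returns to $\SSS_\eps^+$ exactly at $x_\mu^+$ (it is $\Gamma_\mu$), and $P_\eps^+$ is increasing, we get that the $Z_{\mu,\eps}$-orbit leaves the strip at a point $(x',\eps)\in\SSS_\eps^+$ with $x'<P_\eps^+(x)\le x_\mu^+$. Thus the orbit re-enters $\{y\ge\eps\}$ strictly to the left of the tangency point of $\Gamma_\mu$, i.e. strictly inside $\Gamma_\mu$. (One must also check the orbit actually exits upward through $\SSS_\eps^+$ rather than getting stuck: this follows because on $\SSS_\eps^+$ the field $\X_\mu$ points into $y>\eps$, by the fold condition \eqref{cond:visiblefold}, \eqref{cond:visiblefold1}, and $Z_{\mu,\eps}=\X_\mu$ there.)

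Next, once inside $\Gamma_\mu$ and in the region $y\ge\eps$ where $Z_{\mu,\eps}=\X_\mu$: the orbit either converges to the focus without touching $y=\eps$ again, or it crosses $y=\eps$ once more. Each time it re-enters the strip, Proposition~\ref{prop:comparacio} together with monotonicity pushes the re-entry point farther inside $\Gamma_\mu$, so the distance to $\Gamma_\mu$ is uniformly non-increasing across strip passages, and strictly decreasing. To get genuine convergence to the focus (and not, say, to a limit cycle of $Z_{\mu,\eps}$ or to $\Gamma_\mu$ from inside), I would use that $\Gamma_\mu$ is repelling for $\X_\mu$ with a definite rate, so the Poincaré-map contraction obtained from composing the strip map with the $\X_\mu$-return is strictly less than $1$ near $\Gamma_\mu$; combined with the absence of any other invariant curve between $\Gamma_\mu$ and the focus (the region is a basin for $\X_\mu$, and $Z_{\mu,\eps}$ differs from $\X_\mu$ only in the thin strip $\VV_\eps$, where the comparison still forces the orbit inward), a Poincaré–Bendixson–type argument on the compact annular region confirms the $\omega$-limit set is the focus.

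The main obstacle is the last step: ruling out a periodic orbit of $Z_{\mu,\eps}$ (or other recurrent behavior) trapped between $\Gamma_\mu$ and the focus. The comparison inequality $\mathcal{Q}_\eps<P_\eps^+$ gives the right monotone ``inward'' tendency at each strip crossing, but one needs to combine it quantitatively with the expansion rate of $\Gamma_\mu$ — controlled via $\pi'(0)>1$ in \eqref{eq:poincaremap} — to close the estimate uniformly for $0\le\mu\le\eps$ as $\eps\to0$; the delicate point is that the strip passage for orbits entering $\VV_\eps$ very close to the tangency point $x_\mu$ is governed by the Riccati asymptotics \eqref{Ricatti}, \eqref{eq:Qepsilon}, so one must ensure these passages, though they produce an $O(\eps^{2/3})$ displacement, never overcome the inward push. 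I expect this to be handled by choosing the comparison arc $[\bar x,x_\eps]$ small but $\eps$-independent and noting that orbits entering outside it are governed purely by \eqref{eq:Qepsilon}, which lands them at $F=(\eps^{2/3}\eta_0(0)+O(\eps),\eps)$, a point that is also strictly inside $\Gamma_\mu$ for $\mu\le\eps$ small.
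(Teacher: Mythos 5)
Your overall strategy is the same as the paper's: combine Proposition \ref{prop:comparacio} with the repelling nature of $\Gamma_\mu$ for $\X_\mu$ so that each passage through the regularizing strip pushes the orbit strictly inward, then argue that the spiral must terminate at the focus. You correctly identify the one genuine issue — ruling out a periodic orbit of $Z_{\mu,\eps}$ stuck between $\Gamma_\mu$ and the focus — but you over-estimate how hard it is: you propose to close it by \emph{quantitatively} comparing the $\OO(\eps^{2/3})$ Riccati displacement with the expansion rate $\pi'(0)>1$, and you leave that as an expectation rather than a proof.

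That quantitative work is not needed, and the paper dispatches the step qualitatively by re-applying Proposition \ref{prop:comparacio} once more. The argument is: introduce the trapping subinterval $[x_\mu,\tilde x_\mu]\subset\SSS_\eps^+$ — the $\X_\mu$-orbit through the tangency $(x_\mu,\eps)$ stays in $\{y\ge\eps\}$, lies strictly inside the repelling $\Gamma_\mu$, and therefore is attracted to the focus, so every point of $[x_\mu,\tilde x_\mu]\times\{\eps\}$ is already trapped. If the spiral of returns of a point $(x,\eps)$, $x\in[x_\mu^-,x_\mu^+]$, under $Z_{\mu,\eps}$ never enters this subinterval, continuity and monotonicity of the return map force the existence of a fixed point $x^*\in(\tilde x_\mu,x_\mu^+)$, i.e.\ a periodic orbit of $Z_{\mu,\eps}$. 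But then
\[
x^*=\mathcal{Q}_{\eps}\bigl(\f_{\mu,\eps}(x^*)\bigr)<P_\eps^+\bigl(\f_{\mu,\eps}(x^*)\bigr)<x^*,
\]
the strict inequality on the left coming from Proposition \ref{prop:comparacio} and the one on the right from the fact that the $\X_\mu$-return map is strictly contracting inside the repelling cycle $\Gamma_\mu$. This contradiction is all that is needed; no rate estimates and no analysis of the Riccati asymptotics near the tangency enter the argument. So your proposal is on the right track and shares the paper's mechanism, but as written it leaves the central step open and hints at a more laborious route than the one the strict comparison already provides.
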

\begin{figure}
\begin{center}
\includegraphics[width=9cm,height=4cm]{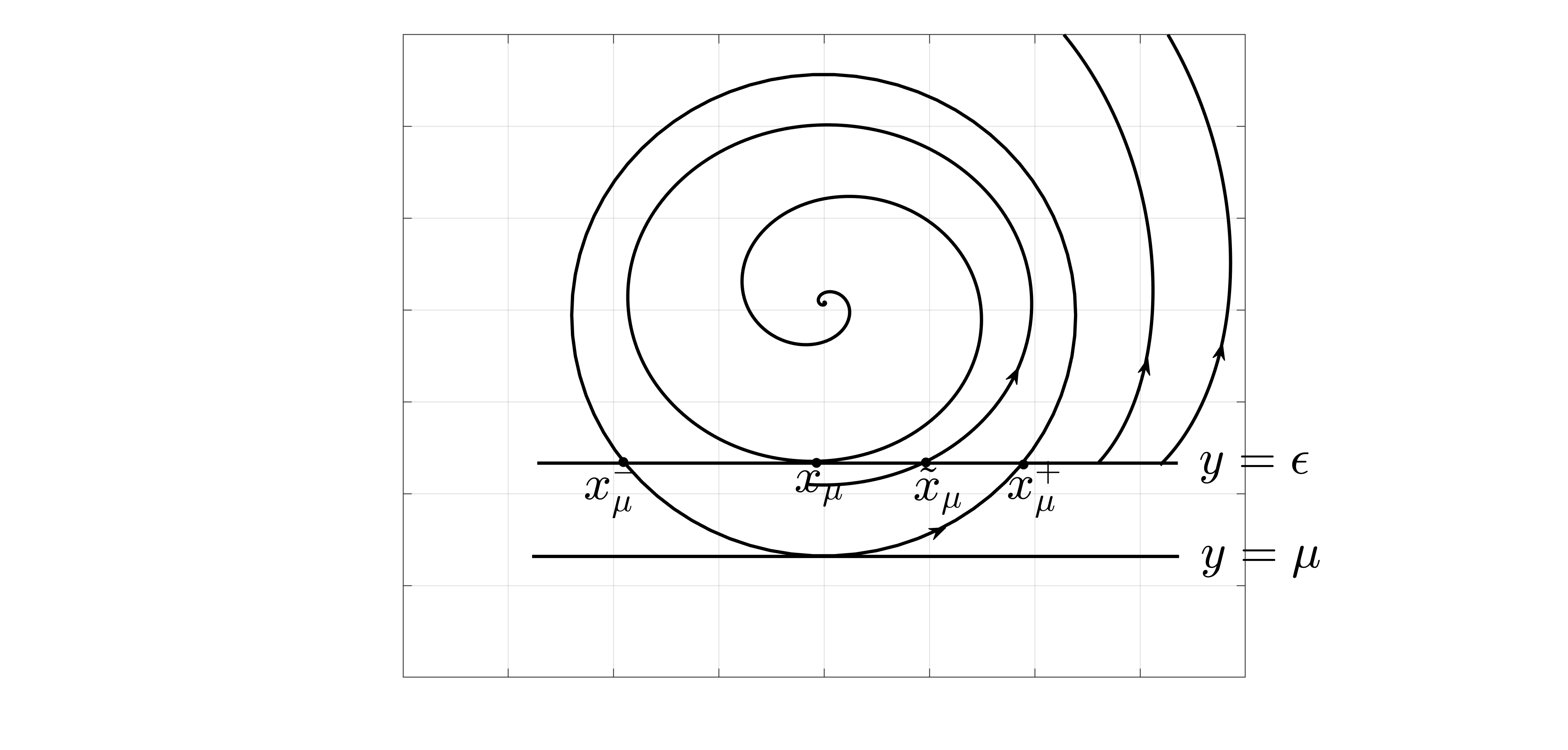}
\caption{The significative points of the intersection of the flow of $X_\mu^+$ with $\SSS_{\eps} $.}\label{fig:xmuxtildemu}
\end{center}
\end{figure}

\begin{proof}
By Proposition \ref{prop:comparacio}, the flow of $X_\mu^+$ strictly  minorizes  the flow of $Z_{\mu,\eps}$ inside $\mathcal{V}_\eps$.
The properties of $X_0$ necessarily imply that the point
$(x_\mu,\eps)\in
\SSS_{\eps} $ will be trapped by the focus.
So a trapping subinterval $[x_\mu, \tilde x_\mu]$ is determined.
Take any point $(x,\eps)$ with $x \in  [x_\mu^-,x_\mu^+]$
outside this interval, for instance, $x\ge \tilde x_\mu$.
Proposition \ref{prop:comparacio} also implies that the orbit of $(x,\eps)$ hits $\SSS_{\eps}^+ $ in a point $(x_1,\eps)$ with $x_\mu<x_1<\tilde x_\mu$.
In this way, the orbit of $(x,\eps)$ follows an spiraling process in concordance with the hypothesis for $X_0$. If any iterates never enters the trapping subinterval, then a periodic orbit of the regularized system  will be determined. But, applying again Proposition \ref{prop:comparacio}, such orbit can not exist.
\end{proof}

\begin{remark}\label{rem:xtitllamu}
The distance between the points $(x_\mu,\epsilon) $ and $ (\tilde x_\mu,\epsilon) $ will give a geometrical view of the bifurcation.(see Figure \ref{fig:mecanismebif1})
\end{remark}

Taking into account the above hypothesis and Proposition \ref{prop:trapping}
in the next proposition we extend  the results in Theorem \ref{thm:sella-node} applied to the regularized family $Z_{\mu,\eps}$ to obtain  that:

\begin{proposition}\label{prop:flowtangency}
In the above hypothesis, we have:
\begin{enumerate}
\item
For $ \mu=\eps $ the regularized  vector field $Z_{\eps,\eps}$, besides  the unstable periodic orbit  $\Gamma_\eps$ which is tangent to $y=\mu=\eps$,  has, at least, another periodic orbit which is attracting.
\item
For $\mu=0$ the regularized field $Z_{0,\eps}$ has no periodic orbits.
\end{enumerate}
\end{proposition}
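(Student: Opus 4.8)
The plan rests on the ``collapsing'' effect of the regularized flow near the fold recorded in \eqref{eq:Qepsilon}: every orbit of $Z_{\mu,\eps}$ that enters $\mathcal{V}_\eps$ close to the tangency is pushed onto a small neighbourhood of the point $F=(\eps^{\frac{2}{3}}\eta_0(0)+\OO(\eps),\eps)\in\SSS_\eps^+$ where the Fenichel manifold leaves the strip, and thereafter evolves under $\X_\mu$ alone. Since $\eps^{\frac{2}{3}}\ll\sqrt\eps$, the point $F$ sits at distance $\OO(\eps^{\frac{2}{3}})$ to the right of the fold, and the position of $F$ relative to $\Gamma_\mu$ decides everything. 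Writing $(x_0^\pm,\eps)=\Gamma_0\cap\SSS_\eps^\pm$, with $x_0^-<0<x_0^+$ and $|x_0^\pm|=\OO(\sqrt\eps)$, I note that for $\mu=0$ the orbit $\Gamma_0$ is tangent to $\Sigma$ at the origin, so $F$ lies strictly \emph{inside} $\Gamma_0$ (because $0<\eps^{\frac{2}{3}}\eta_0(0)<x_0^+$ for $\eps$ small), whereas for $\mu=\eps$ the orbit $\Gamma_\eps$ is tangent to $\SSS_\eps$ at the fold itself, so $F$ lies just \emph{outside} $\Gamma_\eps$. This change of side is exactly what forces the two opposite conclusions.

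\emph{Part (2).} I would first observe that a periodic orbit of $Z_{0,\eps}$ contained in $\{y\ge\eps\}$ would be a periodic orbit of $\X_0$, hence equal to $\Gamma_0$, which is impossible since $\Gamma_0$ meets $\Sigma$; and one contained in $\{y\le-\eps\}$ would be a periodic orbit of $\Y_0=(0,1)$, which does not exist. So any periodic orbit $\gamma$ of $Z_{0,\eps}$ meets $\mathcal{V}_\eps$, and, since $\mathcal{V}_\eps$ is crossed transversally away from the fold, $\gamma$ must traverse the part of $\mathcal{V}_\eps$ near the fold. By \eqref{eq:Qepsilon}, completed by the behaviour of $\mathcal{Q}_\eps$ for entry points in the window $(-\eps^\la,0)$, the orbit then leaves $\mathcal{V}_\eps$ across $\SSS_\eps^+$ at a point $(x,\eps)$ with $0<x<x_0^+$, hence lying in $[x_0^-,x_0^+]\times\{\eps\}$; Proposition \ref{prop:trapping} with $\mu=0$ then says that the orbit issued from such a point is trapped by the focus of $\X_0$, so it cannot be periodic. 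Hence $Z_{0,\eps}$ has no periodic orbit.

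\emph{Part (1).} Here $\Gamma_\eps$ is already known to be a periodic orbit of $Z_{\eps,\eps}$; it is repelling for $\eps$ small because $\Gamma_0$ is (equivalently $\pi'(0)>1$), and I let $\Lambda>1$ be its characteristic multiplier. For the second periodic orbit I would track the orbit issued from $F$. Since $F$ lies just outside $\Gamma_\eps$, running forward with $\X_\eps$ it makes essentially one turn around $\Gamma_\eps$, peeling away from it, dips below $y=\eps$ by $\OO(\eps^{\frac{4}{3}})$, and re-enters $\mathcal{V}_\eps$ across $\SSS_\eps^-$ at a point $(b_1,\eps)$ with $b_1=-\OO(\eps^{\frac{2}{3}})$; the strip then collapses it back to a neighbourhood of $F$. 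Proposition \ref{prop:comparacio} (the flow of $Z_{\eps,\eps}$ in $\mathcal{V}_\eps$ strictly undershoots that of $\X_\eps$) confines this excursion to a thin annular neighbourhood of $\Gamma_\eps$ which contains no equilibrium, so Poincar\'e--Bendixson produces a periodic orbit $\Gamma^*$ there; $\Gamma^*$ encircles $\Gamma_\eps$ and is distinct from it because $\Gamma^*$ meets $\mathcal{V}_\eps$ while $\Gamma_\eps$ does not. Equivalently, the first-return map $R$ to a short interval $I\subset\SSS_\eps^+$ centred at $F$ is well defined, continuous and satisfies $R(I)\subset I$ (up to small errors it is the constant $F$), so it has a fixed point. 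Finally, I would check that $R'$ is the product of the derivative of one turn with $\X_\eps$ — which is bounded uniformly in $\eps$, being a Poincar\'e map of the smooth flow of $\X_\eps$ along a curve uniformly close to $\Gamma_\eps$ — and the derivative of the sweep $\mathcal{Q}_\eps$ across $\mathcal{V}_\eps$, which the refined analysis of $\mathcal{Q}_\eps$ (see below) shows to be small; hence $|R'|<1$ for $\eps$ small, so $\Gamma^*$ is attracting (and the only periodic orbit in $I$), which proves (1).

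The step I expect to be the main obstacle is the passage through the part of $\mathcal{V}_\eps$ not covered by \eqref{eq:Qepsilon}, namely entry points in $(-\eps^\la,0)$ with $0<\la<\frac{2}{3}$: one must control $\mathcal{Q}_\eps$ and its derivative there in order to (i) locate $F$ and the return point precisely enough to get $R(I)\subset I$ in (1) and to guarantee the exit point lies in $[x_0^-,x_0^+]$ in (2), and (ii) show that the contraction produced by the strip still beats the expansion factor $\Lambda$ of $\Gamma_\eps$. This is precisely the ``improved knowledge of $\mathcal{Q}_\eps$'' announced before, and it is at this point that the Riccati equation \eqref{Ricatti} enters decisively.
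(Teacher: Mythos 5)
Your overall strategy matches the paper's: use the collapsing property \eqref{eq:Qepsilon} of $\mathcal{Q}_\eps$ to send orbits near the Fenichel exit point $F$, then compare the position of $F$ with the vault of $\Gamma_\mu$ and invoke Proposition~\ref{prop:trapping} (for $\mu=0$) or a spiraling/Poincar\'e--Bendixson argument (for $\mu=\eps$). So the approach is genuinely the same in spirit.

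The one place where you diverge, and where you correctly flag a potential obstacle, is the window $(-\eps^\lambda,0)$ not covered by \eqref{eq:Qepsilon}. For Part~(2) you propose to control $\mathcal{Q}_\eps$ there to show the exit point still lands inside the vault of $\Gamma_0$, and you anticipate that this will require the more refined Riccati-based analysis. The paper sidesteps this entirely by a simpler observation: since $x_0^\pm=\OO(\sqrt\eps)$ and one is free to pick $\lambda\in(\tfrac12,\tfrac23)$, for $\eps$ small enough any entry point $(x,\eps)$ with $x\in(-\eps^\lambda,0)$ already satisfies $|x|<\sqrt\eps$, so it lies in $[x_0^-,x_0^+]\times\{\eps\}$ and is trapped \emph{directly} by Proposition~\ref{prop:trapping}, with no need to follow it through the strip. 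You should adopt this shortcut; it makes the purported ``main obstacle'' in Part~(2) disappear, and it is the kind of scaling comparison ($\eps^{2/3}\ll\eps^\lambda\ll\sqrt\eps$) that the paper exploits throughout. For Part~(1), your construction of a forward-invariant annulus around $\Gamma_\eps$ and your estimate of the re-entry abscissa $b_1=-\OO(\eps^{2/3})$ (which indeed falls in the uncovered window) is consistent with the paper's argument; the paper simply invokes ``instability of $\Gamma_\eps$ and two-dimensional topological reasons'' without estimating $R'$, so your attempt to show $|R'|<1$ is a strengthening that would require the derivative bounds developed later (Theorem~\ref{prop:asymptoticsQ}) rather than being needed at this stage. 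For the statement as given, the mere existence of a trapping annulus with the repellor $\Gamma_\eps$ on its inner boundary and no equilibria suffices, and your Poincar\'e--Bendixson step already delivers this.
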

\begin{proof}
\begin{itemize}
\item
For $\mu=\eps$ the regularized flow inside the regularization strip $\mathcal{V}\eps $ defined in \eqref{eq:nueps} only can exit through $\SSS_\eps^+$.
Moreover, by \eqref{eq:Qepsilon}, the regularized flow sends the whole interval
$[-L, -\eps^{\lambda}]\times \{\eps\}\subset \SSS_\eps^-,\quad \forall\quad 0<\lambda<\frac{2}{3} \ $
to
$ \  \OO(\eps^{\frac{2}{3}})\times\{\eps\} \subset \SSS_\eps^+$.
But $\Gamma_\eps$ is tangent to $y=\eps$ at $ x=0$,
then the flow of $X_\mu(x,y)$ returns it to $x<0$, and so on.
Then a spiraling process take place around the periodic orbit, $\Gamma_\eps $, and because of its instability and two-dimensional topological reasons, at least one attracting periodic orbit must exist.
\item
For $\mu=0$ one can see that, if $\eps$ is small enough,
$[-L, 0]\times \{\eps\}\subset \SSS_\eps^-$ is trapped by
the focus.

The reason is again that the regularized flow sends the whole interval
$[-L, -\eps^{\lambda}]\times \{\eps\}\subset \SSS_\eps^-
$
to
$ \OO(\eps^{\frac{2}{3}})\times\{\eps\}\subset \SSS_\eps^+ $.
But $\Gamma_0\cap\SSS^+_\eps=(x_0^+,\eps)$ with $ x_0^+=\OO(\sqrt{\eps}) $, then the regularized flow enters inside of its vault and is trapped by the attracting focus by Proposition \ref{prop:trapping}.
For the  points $(x,\eps)$ with  $x \in (-\eps^\lambda,0)$, we can take $\frac{1}{2}<\lambda<\frac{2}{3}$, and diminish $\eps$ if needed to achieve that they are already in the vault of $\Gamma_0$ and are also trapped by Proposition \ref{prop:trapping}.
\end{itemize}
\end{proof}

\begin{remark}\label{rem:bifurcacio}

From Proposition \ref{prop:flowtangency} we have two consequences:
\begin{itemize}
\item
A bifurcation will take place for $0<\mu<\eps$, say, at $\mu=\mu^*$. Therefore, from now on, we assume that $\mu$ is inside this range although we will refine it later in Theorem \ref{thm:main}.
\item
We expect the bifurcation occurs when the Fenichel solution(s) and the upper segment of the periodic orbit  $\Gamma_\mu$ "collide" in  $\SSS^+_\eps$
at some order.
Define the following parameter, that will play a role in the rest of this section:
\begin{equation}\label{eq:defdelta}
\delta=\eps-\mu.
\end{equation}
\end{itemize}
Observe that, in the range of $\mu$ considered:
\[
\de=\de(\eps)>0, \ \mbox{and}\ \lim_{\eps\to 0} \de(\eps)=0
\]
\end{remark}

\begin{remark}\label{rem:xdelta+-}
Note that, for $\mu $ small enough, the tangency of $\Gamma_{\mu}$ at $ \SSS_\mu$ is a fold at $(0,\mu)$.
Moreover, under our normalizations, we have that $\X_\mu(x,y)=\X_0(x,y-\mu)$, therefore, the intersection of the periodic orbit $\Gamma_\mu$ of  $\X_\mu$ with  $\SSS_\eps ^\pm$ has the same $x-$coordinate that the intersection of the periodic orbit $\Gamma_0$ of  $\X_0$ with  $\SSS_\de ^\pm$, with $\de$ in \eqref{eq:defdelta}.
Consequently:
\begin{equation}\label{eq:xdelta+-}
\Gamma_{\mu}\cap \SSS_\eps ^\pm=(x_\mu^\pm, \eps), \ \ x_\mu^\pm=\pm \rdelta+\OO(\delta), \quad \de=\eps-\mu .
\end{equation}
\end{remark}

In view of the previous considerations, heuristically,  at the bifurcation value $\mu=\mu^*$,  the point $(x_\mu^+,\eps)$ given in \eqref{eq:xdelta+-} has to match with that of Fenichel whose $x$-coordinate  is  $\OO(\eps^\frac{2}{3})$ (see \eqref{eq:Qepsilon}).
Then $\delta^*:=\eps -\mu^*=\OO(\eps^\frac{4}{3})$
and the bifurcation must be searched at $\mu^*=\eps-K\eps^\frac{4}{3}$.
Later, in Theorem \ref{thm:main} we will provide a rigorous computation of the asymptotic value of $K=\delta_0^*+\OO(\eps^{1/3})$ (see \eqref{eq:delta0}), where the value $\delta_0^*$ will be related to the Ricatti equation \eqref{Ricatti}.

We  consider the  map \eqref{eq:Qepsilon}, that in our case will also depend on $\mu$, in its whole  domain:
\begin{equation}%\label{exteriornou}
\mathcal{Q}_{\mu,\eps}:
[-L,x_\mu]\times \{\eps\}  \subset  \SSS^-_\eps \to \SSS_\eps^+
\end{equation}
where $x_\mu$ is the $x$-coordinate of the tangency point of $\X_\mu=(X_{\mu,1}^+,X_{\mu,2}^+)$ with $\SSS_\eps$
\begin{equation}\label{eq:xepsilon}
\X_{\mu,2}(x_\mu,\eps)=0, \ x_\mu=\OO(\delta)=\OO(\eps-\mu)
\end{equation}
and  the returning exterior map derived by the flow of $X_\mu^+$:
\begin{equation}\label{exteriornou}
\f_{\mu,\eps}:  \MM \times \{\eps\} \subset  \SSS^+_\eps \to \SSS_\eps^-
\end{equation}
where $\MM$ is a suitable domain that will be defined later.
Our objective is to select a range of $\mu $ values
for which a Poincar\'{e} map
$\f_{\mu,\eps}\circ\mathcal{Q}_{\mu,\eps}$ can be defined on an interval
$\mathcal{J}\subset [-L, x_\mu]$, which contains the intersection of the possible periodic orbits of $Z_{\mu,\eps}$ with $\SSS_\eps^-$
and see that the map $\f_{\mu,\eps}\circ\mathcal{Q}_{\mu,\eps}$ is convex in this interval. More concretely we will prove the following theorem, which immediately  implies Theorem  \ref{thm:sella-node-si}.

\begin{figure}
\begin{center}
\includegraphics[width=10cm,height=4cm]{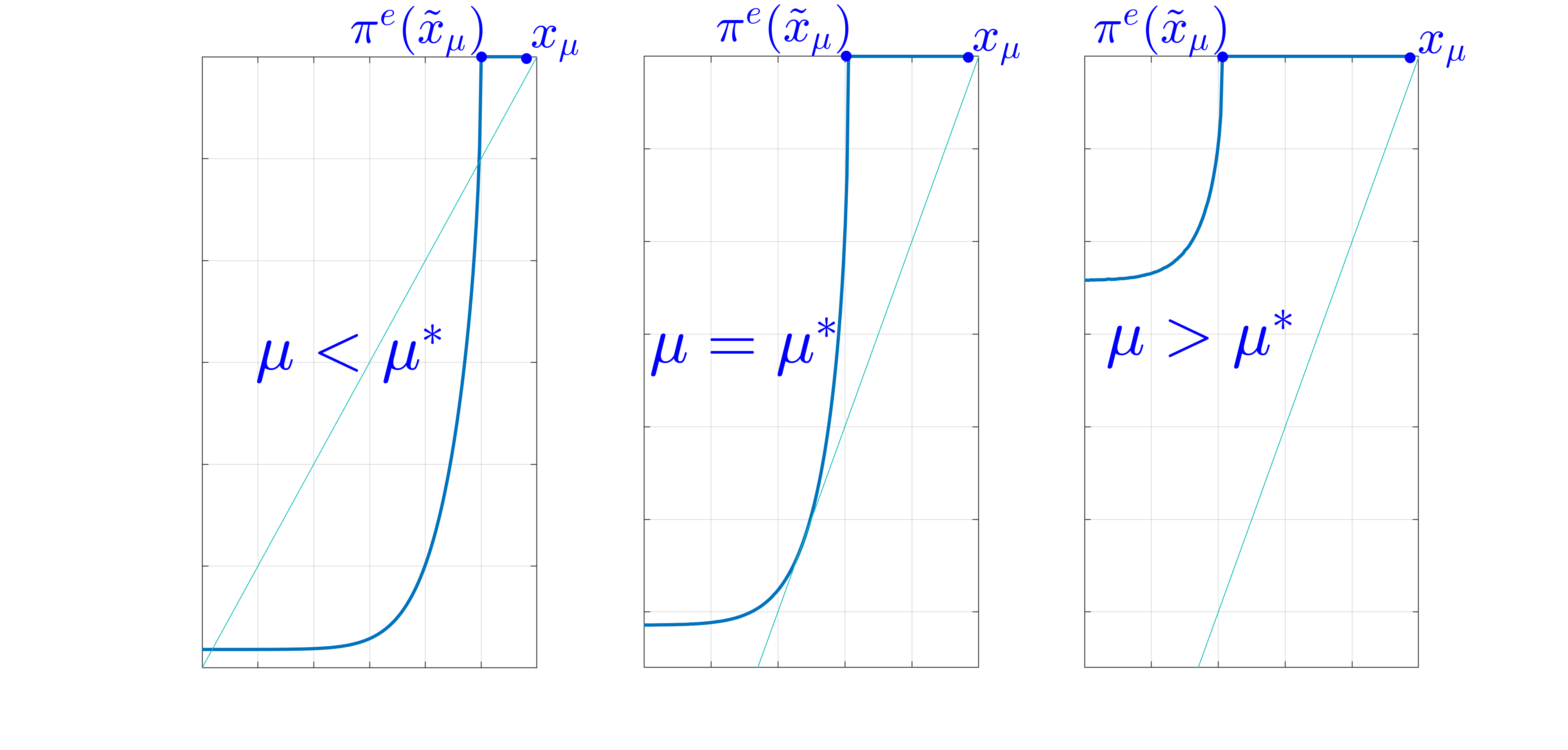}
\caption{In this picture the graphic of the Poincar\'{e} map, $ \f_{\mu,\eps} \circ \QQQ_{\mu,\eps} $, is depicted for values of $\mu <,=,> \mu^{*}$, the bifurcation value. One can see the dependence of the bifurcation on the distance between $\tilde{x}_\mu $ ( in fact $ \f_{\mu,\eps}(\tilde{x}_\mu )$ ) and $x_\mu$. (see definitions in \ref{hypofamily} and Remark \ref{rem:xtitllamu}) }\label{fig:mecanismebif1}
\end{center}

\end{figure}

\begin{theorem}\label{thm:main}
There exist two constants $K_1>0$, $K_2>0$, such that, for $\eps$ small enough, if we consider the values:
\begin{equation}\label{rangmuthm}
\begin{array}{rcl}
\mu_1:&=&\eps-\eps^{\frac{4}{3}}\eta_0^2(0)-K_1\eps^{\frac{5}{3}}\\
\mu_2:&=&\eps- K_2^2\eps^{\frac{4}{3}}
\end{array}
\end{equation}
the map $\f_{\mu,\eps}\circ\mathcal{Q}_{\mu,\eps}$ is smooth and satisfies:
\begin{itemize}
\item
If $\mu \le \mu_1$ has no fixed points.
\item
If $\mu \ge \mu_2$ has two fixed points.
\item
For $\mu \in (\mu_1,\mu_2)$
there exists an interval $\mathcal {J} =[-M \eps ^\frac23,-\overline M \eps ^\frac23]$, where $M>0$ and
$\overline M>0$ are constants independent of $\eps$ and $\mu$,
such that:
\begin{itemize}
\item
The fixed points of $\f_{\mu,\eps}\circ\mathcal{Q}_{\mu,\eps}$, if exist, belong to $\mathring{\mathcal J}$
\item
$\f_{\mu,\eps}\circ\mathcal{Q}_{\mu,\eps}$ is convex in $\mathcal{J}$.
\end{itemize}
\end{itemize}
Consequently the map $\f_{\mu,\eps}\circ\mathcal{Q}_{\mu,\eps}$ has only a bifurcation in $(\mu_1,\mu_2)$ and  is a saddle-node.

Moreover, if $\tilde{\mathcal{Q}_0}$ denotes the map
\[
\begin{split}
\tilde{\mathcal{Q}_0} : \SSS_0 ^- &\to \SSS _0^+
\\
(\eta,0) &\mapsto (\tilde{\mathcal{Q}_0}(\eta),0)
\end{split}
\]
derived from system \eqref{Ricatti}, and we denote by  $\eta_0^*<0$  the unique solution
of the equation:
\[
\pi  '(0)\tilde{\mathcal{Q}_0}(\eta)\tilde{\mathcal{Q}_0}'(\eta)=\eta
\]
where $\pi $ is the Poincar\'{e} map \eqref{eq:poincaremap}
and
$\delta_0^*$ is the value:
\begin{equation}\label{eq:delta0}
\delta_0^*=\frac{\pi '(0)\tilde{\mathcal{Q}^2_0}(\eta_0^*)-(\eta_0^*)^{2}}{\pi'(0)-1}
\end{equation}
then
\begin{itemize}
\item
the bifurcation takes place at the parameter value
\begin{equation}\label{eq:muestrella}
 \mu^*=\eps-\delta_0^*\eps^{\frac{4}{3}}+\OO(\eps^{\frac{5}{3}})
 \end{equation}
\item
the unique fixed point of the map $\f_{\mu^*,\eps}\circ\mathcal{Q}_{\mu^*,\eps}$ is at
$x^*=\eta_0^*\eps^{\frac{2}{3}}+\OO(\eps)$
\end{itemize}
%}
\end{theorem}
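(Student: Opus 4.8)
The plan is to reduce the return map $\f_{\mu,\eps}\circ\mathcal{Q}_{\mu,\eps}$ on $\SSS_\eps^-$ to an explicit one-parameter model after a blow-up, and to read off the saddle node from the convexity of that model. Write $x=\eps^{2/3}\eta$ and $\de=\eps-\mu=\eps^{4/3}\de_0$ (cf.\ Remark \ref{rem:bifurcacio}). The first step is to sharpen \eqref{eq:Qepsilon}: pushing the Fenichel analysis of \cite{BonetS16} one order past $C^0$, on the window $\eta\in[-M,-\overline M]$ one should have, in the $C^2$ norm,
\[
\mathcal{Q}_{\mu,\eps}(\eps^{2/3}\eta)=\eps^{2/3}\bigl(\tilde{\mathcal{Q}_0}(\eta)+\OO(\eps^{1/3})\bigr),
\]
where $\tilde{\mathcal{Q}_0}$ is the transition map across the blown-up strip of the Riccati system \eqref{Ricatti} and $\tilde{\mathcal{Q}_0}(\eta)\to\eta_0(0)$ as $\eta\to-\infty$, consistently with \eqref{eq:Qepsilon}. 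The second step expands the exterior map: using the normal form \eqref{def:Xg} to build a first integral of $\X_\mu$ near the fold of the form $C=(y-\mu)-x^2+\cdots$, together with the fact that $\Gamma_\mu$ is repelling with Floquet multiplier $\pi'(0)+\OO(\eps)$ ($\pi$ as in \eqref{eq:poincaremap}), one finds that the orbit of $(x_1,\eps)\in\SSS_\eps^+$ returns to $\SSS_\eps^-$ exactly when $\pi'(0)x_1^2-(\pi'(0)-1)\de>0$ --- otherwise it spirals into the focus, cf.\ Proposition \ref{prop:trapping} --- and then, in $C^2$,
\[
\f_{\mu,\eps}(\eps^{2/3}\eta_1)=-\eps^{2/3}\Bigl(\sqrt{\pi'(0)\,\eta_1^2-(\pi'(0)-1)\de_0}+\OO(\eps^{1/3})\Bigr).
\]
This pins down the domain $\MM$ in \eqref{exteriornou} and shows that, in the $\eta$ variable, $\f_{\mu,\eps}\circ\mathcal{Q}_{\mu,\eps}$ is $\OO(\eps^{1/3})$-close in $C^2$ to the model $P_0(\eta)=-\sqrt{\pi'(0)\,\tilde{\mathcal{Q}_0}(\eta)^2-(\pi'(0)-1)\de_0}$.

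Next I would pin down $\JJ$ and the endpoints $\mu_1,\mu_2$. A fixed point forces $\pi'(0)\tilde{\mathcal{Q}_0}(\eta)^2-\eta^2=(\pi'(0)-1)\de_0+\OO(\eps^{1/3})$; since $\tilde{\mathcal{Q}_0}$ is bounded on compacts and tends to $\eta_0(0)$ at $-\infty$, every solution lies in a fixed compact which, after rescaling, is $\JJ=[-M\eps^{2/3},-\overline M\eps^{2/3}]$; this gives the first bullet of the "$\mu\in(\mu_1,\mu_2)$" case. For $\mu\le\mu_1$, i.e.\ $\de_0\ge\eta_0^2(0)+K_1\eps^{1/3}$, the bound on $\tilde{\mathcal{Q}_0}$ coming from the Riccati dynamics keeps $\pi'(0)\tilde{\mathcal{Q}_0}(\eta)^2-\eta^2$ strictly below $(\pi'(0)-1)\de_0$ on $\JJ$, so there are no fixed points --- this is the quantitative form of Proposition \ref{prop:flowtangency}(2), the whole window being swallowed by the focus. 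For $\mu\ge\mu_2$, i.e.\ $\de_0\le K_2^2$, I would upgrade the argument behind Theorem \ref{thm:sella-node} and Proposition \ref{prop:flowtangency}(1): the unstable orbit near $\Gamma_\mu$ and the attracting one produced by the spiralling mechanism both persist, giving two fixed points.

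The heart of the matter is the convexity of $\f_{\mu,\eps}\circ\mathcal{Q}_{\mu,\eps}$ on $\JJ$ for $\mu\in(\mu_1,\mu_2)$. Writing $c=\pi'(0)-1>0$ and differentiating $P_0$ twice, $P_0''$ has the sign of $c\,\de_0\,\tilde{\mathcal{Q}_0}'(\eta)^2-\bigl(\pi'(0)\tilde{\mathcal{Q}_0}(\eta)^2-c\,\de_0\bigr)\tilde{\mathcal{Q}_0}(\eta)\tilde{\mathcal{Q}_0}''(\eta)$, so one only needs the signs of $\tilde{\mathcal{Q}_0}$ and $\tilde{\mathcal{Q}_0}''$ on $\JJ$. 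This is where the Riccati structure of \eqref{Ricatti} is essential: its solutions are M\"{o}bius-related, which forces $\tilde{\mathcal{Q}_0}''$ to have constant sign on $\JJ$; combining this with $\tilde{\mathcal{Q}_0}>0$ and with $\pi'(0)\tilde{\mathcal{Q}_0}(\eta)^2-c\,\de_0>0$ on $\JJ$ (precisely the region where $P_0$, hence $\f_{\mu,\eps}\circ\mathcal{Q}_{\mu,\eps}$, is defined) one obtains $P_0''\ge c_0>0$ uniformly in $\mu\in(\mu_1,\mu_2)$, and the $C^2$ smallness from the first step then gives $(\f_{\mu,\eps}\circ\mathcal{Q}_{\mu,\eps})''>0$ on $\JJ$ for $\eps$ small. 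Strict convexity of $\f_{\mu,\eps}\circ\mathcal{Q}_{\mu,\eps}-\Id$ caps the number of fixed points at two; since $\partial_\mu(\f_{\mu,\eps}\circ\mathcal{Q}_{\mu,\eps})<0$ (from $\partial_{\de_0}P_0>0$ and $\de_0$ decreasing in $\mu$), as $\mu$ sweeps $(\mu_1,\mu_2)$ the count jumps from $0$ to $2$ at a single value $\mu^*$ where the graph is tangent to the diagonal; $P_0''\neq0$ and $\partial_\mu P_0\neq0$ there make this a genuine saddle node, which is Theorem \ref{thm:sella-node-si}.

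Finally, the asymptotics follow by letting $\eps\to0$ in the tangency equations $P_0(\eta^*)=\eta^*$, $P_0'(\eta^*)=1$. The first gives $(\eta^*)^2=\pi'(0)\tilde{\mathcal{Q}_0}(\eta^*)^2-c\,\de_0$; substituting it into $P_0'(\eta^*)=1$ collapses that equation to $\pi'(0)\tilde{\mathcal{Q}_0}(\eta^*)\tilde{\mathcal{Q}_0}'(\eta^*)=\eta^*$, whose unique negative root is $\eta_0^*$, and feeding $\eta_0^*$ back into the first relation gives $\de_0=\de_0^*$ as in \eqref{eq:delta0}. Undoing the scalings and tracking the $\OO(\eps^{1/3})$ errors yields $\mu^*=\eps-\de_0^*\eps^{4/3}+\OO(\eps^{5/3})$ and $x^*=\eta_0^*\eps^{2/3}+\OO(\eps)$, i.e.\ \eqref{eq:muestrella}. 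I expect the convexity step to be the main obstacle: it needs the matched-asymptotic expansions of $\mathcal{Q}_{\mu,\eps}$ and $\f_{\mu,\eps}$ upgraded from $C^0$ to $C^2$ uniformly on the shrinking window $\JJ$, plus the extraction of the constant sign of $\tilde{\mathcal{Q}_0}''$ from \eqref{Ricatti} and the verification that it combines with the $-\sqrt{\,\cdot\,}$ shape of the exterior return in the right direction, so that the sole degeneracy is the tangency at $\mu^*$; the rest is bookkeeping around Propositions \ref{prop:comparacio}--\ref{prop:flowtangency}.
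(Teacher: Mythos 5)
Your overall architecture (blow-up to $\eta=\eps^{-2/3}x$, $\tilde\delta=\eps^{-4/3}\delta$, reduction to a limiting map, tangency conditions pinning down $\mu^*$ and $x^*$) is close to the paper's, and your derivation of the last two items from $P_0(\eta^*)=\eta^*$, $P_0'(\eta^*)=1$ agrees with what the paper does in Section \ref{sec:last2}. The surrounding bookkeeping — the rescaled formula for $\f_{\mu,\eps}$, the identification of the window $\mathcal{J}$, the use of Propositions \ref{prop:comparacio}--\ref{prop:flowtangency} and Propositions \ref{prop:bifsegura1}--\ref{prop:bifsegura} to handle $\mu\le\mu_1$ and $\mu\ge\mu_2$ — is also in the right spirit. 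The paper proves convexity slightly differently, by first showing $(\f_{\mu,\eps})''>0$, $\f_{\mu,\eps}'<0$ (Theorem \ref{thm:extensiopie}) and $\QQQ_{\mu,\eps}''<0$, $\QQQ_{\mu,\eps}'<0$ (Theorem \ref{prop:asymptoticsQ}), then applying the chain rule to the composition; your computation of the sign of $P_0''$ in terms of $\tilde{\mathcal{Q}_0}$, $\tilde{\mathcal{Q}_0}'$, $\tilde{\mathcal{Q}_0}''$ is algebraically correct and is equivalent once you know $\tilde{\mathcal{Q}_0}>0$ and $\tilde{\mathcal{Q}_0}''<0$.

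The genuine gap is precisely the step you flagged: you assert that ``the Riccati structure of \eqref{Ricatti} is essential: its solutions are M\"obius-related, which forces $\tilde{\mathcal{Q}_0}''$ to have constant sign.'' That inference does not hold as stated. The M\"obius structure of the Riccati flow gives you that, for \emph{fixed} entrance and exit values of the fast variable, the dependence of the exit $y$ on the entrance $y$ is fractional linear. But $\tilde{\mathcal{Q}_0}$ is not of this form: you fix the entrance and exit $y$-values (both $0$) and vary the entrance $x$-value $\eta$, so the return ``time'' $\eta\mapsto t(\eta)$ is itself an unknown function and the exit coordinate $\eta+t(\eta)$ is not a M\"obius transform of anything. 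Concluding the constant sign of $\tilde{\mathcal{Q}_0}''$ on the whole negative axis is in fact the hard core of the theorem. In the paper it occupies the bulk of the proof of Theorem \ref{prop:asymptoticsQ}: one reduces to the scaled Riccati system \eqref{RRicatti}, expresses $t''(x_0)$ via the second-order variational equations \eqref{variacionalsRRicatti} as $\QQQ_0''(x_0)=-\big[(t')^2+2t'+v(t(x_0),x_0)\big]/(t(x_0)+x_0)$, and shows the key quantity $f(x_0)=u^2-2xu+x^2v$ (with $u,v$ the first and second variationals) is strictly positive for all $x_0<0$ via the chain of Lemmas \ref{lem:retornlocal}, \ref{vigual1}, \ref{lem:intu}, \ref{lem:derivadav} and Proposition \ref{prop:signef}. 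A local Taylor expansion near $x_0=0$ gives the sign locally (Lemma \ref{lem:retornlocal}), but extending it to a uniform window $[-M,-\overline M]$ requires the identity $f'(x_0)=x^2\,\partial_{x_0}v(t(x_0),x_0)$ and the monotonicity argument that rules out a first zero of $f$. None of that follows from ``M\"obius-related,'' so as written your proposal proves the theorem only modulo the concavity of $\tilde{\mathcal{Q}_0}$, which is exactly the statement that needed a new argument.

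A secondary, smaller point: for $\mu\le\mu_1$ the paper does not argue by bounding $\pi'(0)\tilde{\mathcal{Q}_0}(\eta)^2-\eta^2$ below $(\pi'(0)-1)\tilde\delta$; it uses the geometric trapping argument of Proposition \ref{prop:trapping} once the Fenichel point $F_{\mu_1}$ sits inside the vault of $\Gamma_{\mu_1}$. Your analytic version would work once $\tilde{\mathcal{Q}_0}$ is pinned down (it tends to $\eta_0(0)$ at $-\infty$, and the orbit is below it elsewhere), but making it rigorous again leans on the same quantitative control of $\tilde{\mathcal{Q}_0}$ that you have not established.
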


The rest of the section is devoted to prove the three first items of theorem \ref{thm:main}. 
The proof of the last two is deferred to Section \ref{sec:last2}.

\subsection{The exterior map $\f_{\mu,\eps}$ }

In this section we study the properties of the map $\f_{\mu,\eps}$ in \eqref{exteriornou} derived from the flow of $X_\mu^+$. We recall that we will perform this study for the range of $\mu \in (0,\eps)$ where the bifurcation $\mu=\mu^*$ takes place.

Recall that
$ (\tilde{x}_\mu, \eps)$ is the last cut of the solution through the tangency point $(x_\mu,\eps)$ by $\X_\mu$ (in backward time) with $\SSS_\eps^+$ (see \ref{hypofamily}).
%(see Figure \ref{fig:tangenciapexterior}).

Then, $\f_{\mu,\eps}$ is defined in $\MM\times \{\eps\}$, where
\begin{equation}\label{eq:dominipie}
\MM=[\tilde{x}_\mu,M],
\end{equation}
for some $M>0$ independent of  $\eps$, and $\f_{\mu,\eps}(\tilde x_\mu)=x_\mu$.
In fact, for a fully understanding of the bifurcation mechanism between $ \mu=0 $ and $\mu=\eps$, we will extend the map $\f_{\mu,\eps}$ to the interval $[x_\mu,\tilde x_\mu]$ by (see Figure \ref{fig:extensiope})
\[
\f_{\mu,\eps}(x)=x_\mu, \ \forall x \in [x_\mu,\tilde x_\mu]
\]

\begin{figure}
\begin{center}
\includegraphics[width=13cm,height=7cm]{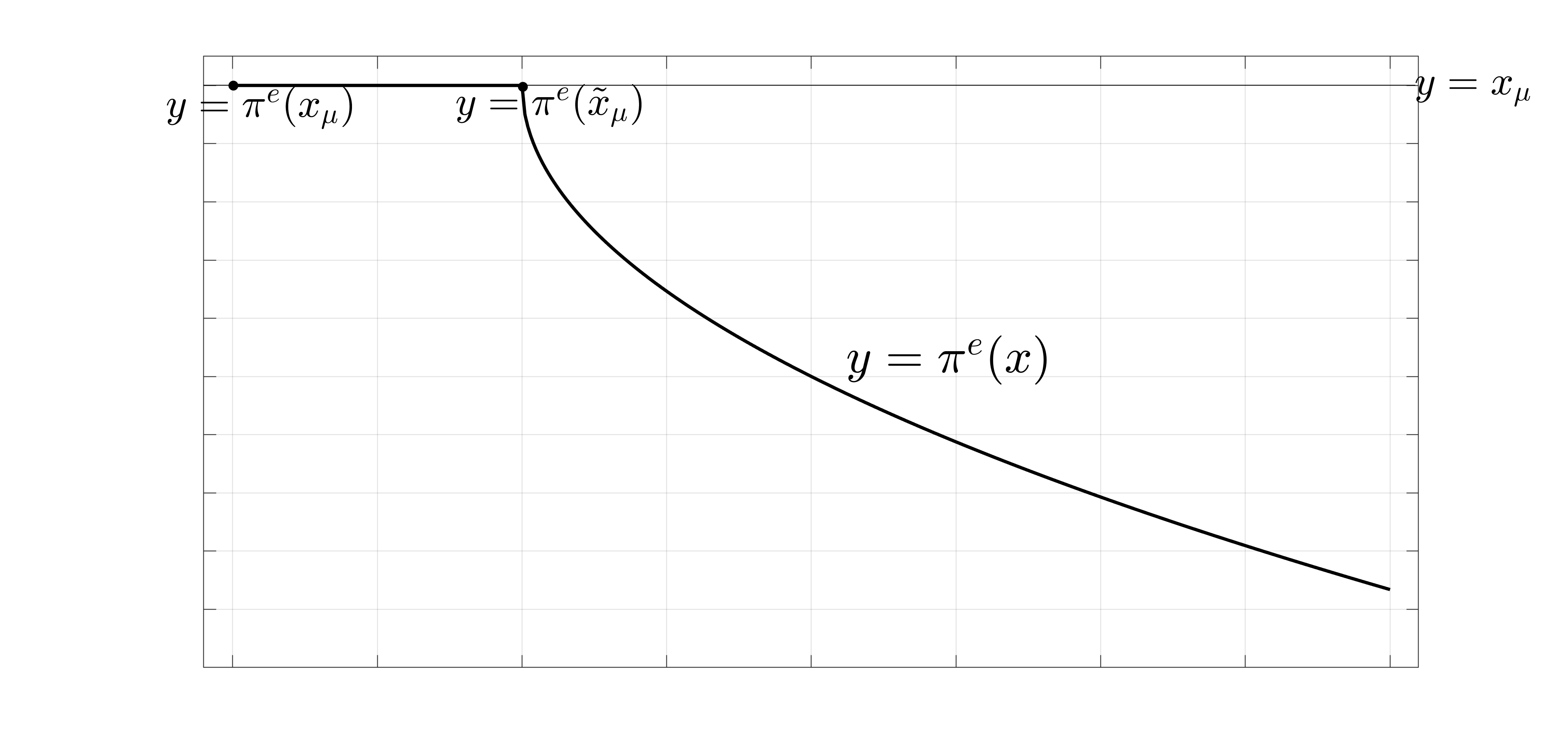}
\caption{The extended map $\f$.}\label{fig:extensiope}
\end{center}
\end{figure}

Next theorem gives the convexity properties of the map $\f_{\mu,\eps}$:
\begin{theorem}\label{thm:extensiopie}
The map
\[
\f_{\mu,\eps} :\MM=[ x_\mu, M]\times \{\eps \}\subset \SSS_\eps^+ \to \SSS^-_\eps
\]
satisfies:
\begin{itemize}
\item
$\f_{\mu,\eps}(x)=x_\mu, \ \forall x \in [x_\mu,\tilde x_\mu]$
\item
The points $x_\mu$ and $\tilde x_\mu$ are given by:
\begin{equation}\label{eq:punts}
x_\mu= -\frac{b}{2}\de+\OO(\de^2)\quad \tilde x_\mu =\sqrt{\de}\sqrt{1-\frac{1}{\pi'(0)}}+\OO(\de), \quad \de=\eps-\mu
\end{equation}
\item
Fix any constants $C>1$, $0<\bar{y}_0<\frac{1}{\pi'(0)}<1$ and
$\sqrt{1-\frac{1}{\pi'(0)}} < \sqrt{1-\bar y_0} <\sigma <1$
 where $\pi$ is the Poincar\'{e} map defined in \eqref{eq:poincaremap}.
Then we have if $\de=\eps-\mu>0$ is small enough:
\begin{enumerate}
\item
$\tilde x_\mu <\sqrt{\delta}\sqrt{1-\bar{y}_0}<\sigma \sqrt{\de} <\sqrt{\de C}$
\item
For $x \in [\sqrt{\delta}\sqrt{1-\bar{y}_0},\sqrt{\delta C}]$:
\begin{equation}\label{eq:formulapiebonathm}
\begin{array}{rcl}
\f_{\mu,\eps}(x)  &=&
-\sqrt{\delta- \pi'(0)(\delta-x^2)}+\OO(\delta), \\
(\f_{\mu,\eps})'(x)  &=&
-\frac{\pi'(0)x}{\sqrt{\delta-\pi'(0)(\delta-x^2)}}+\OO(\sqrt{\delta}) \\
(\f_{\mu,\eps})''(x) &=&
\frac{-\de \pi'(0)(1-\pi'(0))}{\sqrt{(\delta- \pi'(0)(\de-x^2))^3}  }+\OO(1) >0
\end{array}
\end{equation}
\item
For $x\in [\tilde{x}_\mu ,\sigma \sqrt{\de}]$:
\begin{equation}
\begin{split}
\f_{\mu,\eps}(x)-x_\mu&=\OO(\sqrt{x-\tilde{x}_\mu})<0\\
(\f_{\mu,\eps})''(x)&=\OO((x-\tilde{x}_\mu))^{-\frac32}>0
\end{split}
\end{equation}
\end{enumerate}
\end{itemize}
Consequently:
\begin{equation}
(\f_{\mu,\eps})''(x)>0, \forall x\in[\tilde x_\mu,\sqrt{\delta C}]
\end{equation}
\end{theorem}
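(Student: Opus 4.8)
The plan is to split the map $\f_{\mu,\eps}$ into three pieces according to the $x$-range and analyze each separately, since the qualitative behaviour changes from a constant part, to a square-root singular part near $\tilde x_\mu$, to a smooth part with controlled convexity. First I would establish the two asymptotic formulas in \eqref{eq:punts}. The point $x_\mu$ is the tangency of $\X_\mu$ with $\SSS_\eps$, so $\X_{\mu,2}(x_\mu,\eps)=0$; using $\X_\mu(x,y)=\X_0(x,y-\mu)$ and the normal form \eqref{def:Xg} with $\X_{0,2}(x,y)=2x+by+f_2(x,y)$, $f_2(x,0)=0$, one gets $2x_\mu+b(\eps-\mu)+f_2(x_\mu,\eps-\mu)=0$, hence $x_\mu=-\frac b2\de+\OO(\de^2)$ by the implicit function theorem. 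For $\tilde x_\mu$, the key observation is Remark \ref{rem:xdelta+-}: the backward orbit of $(x_\mu,\eps)$ under $\X_\mu$ coincides (in $x$-coordinate) with the backward orbit of the tangency of $\X_0$ with $\SSS_\de$, so $\tilde x_\mu$ is determined by the Poincaré map $\pi$ of \eqref{eq:poincaremap} about the focus: tracking the orbit from $\SSS_\de$ around to $\SSS_\de$ again and using $\pi(0)=0$, $\pi'(0)>1$, a linearized computation near the fold (where $y-\de \sim x^2$ on the orbit, by the generic fold normal form) yields $\tilde x_\mu=\sqrt\de\sqrt{1-1/\pi'(0)}+\OO(\de)$.

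Next I would derive the formula \eqref{eq:formulapiebonathm} on the smooth range $x\in[\sqrt\de\sqrt{1-\bar y_0},\sqrt{\de C}]$. Here the orbit of $(x,\eps)$ stays bounded away from the fold, travels once around the focus, and returns to $\SSS_\eps^-$. Parametrizing by the ``height above $\eps$'' and using that $\X_\mu$ near $\Sigma$ behaves like the fold normal form, the entry height is $\sim x^2$ and the return is governed by $\pi$: after one loop the height becomes $\pi'(0)\cdot(x^2)$ to leading order, minus the $\de$-shift coming from the fact that we cut at $y=\eps=\de+\mu$ rather than at the tangency level. This gives $\f_{\mu,\eps}(x)^2=\de-\pi'(0)(\de-x^2)+\OO(\de^{3/2})$, hence the stated square-root expression (the minus sign because we land on $\SSS_\eps^-$). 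Differentiating twice is then routine; the crucial sign in $(\f_{\mu,\eps})''$ comes from the factor $-\de\,\pi'(0)(1-\pi'(0))>0$ since $\pi'(0)>1$, and one checks the denominator $\de-\pi'(0)(\de-x^2)=\pi'(0)x^2-(\pi'(0)-1)\de$ is positive and bounded below on the given range by the choice $\bar y_0<1/\pi'(0)$.

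For the intermediate range $x\in[\tilde x_\mu,\sigma\sqrt\de]$ I would work in rescaled variables $x=\sqrt\de\,\xi$, reducing to a $\de$-independent model where $\f$ has a square-root branch point at $\xi=\sqrt{1-1/\pi'(0)}$: precisely $\f_{\mu,\eps}(x)-x_\mu$ behaves like $-c\sqrt{x-\tilde x_\mu}(1+o(1))$ with $c>0$, because passing the orbit through the neighbourhood of the fold where it is tangent to $\SSS_\de$ produces the classical square-root loss of regularity (the fold map $x\mapsto$ exit is a square root of the distance to the tangent orbit). Differentiating twice gives the $\OO((x-\tilde x_\mu)^{-3/2})>0$ behaviour. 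Finally, to conclude $(\f_{\mu,\eps})''(x)>0$ on all of $[\tilde x_\mu,\sqrt{\de C}]$ I would note the three intervals $[x_\mu,\tilde x_\mu]$ (where $\f''=0$, trivially not strictly positive — so one should read the conclusion as $\ge 0$ there, or restrict; I would follow the paper's convention and state it for $x>\tilde x_\mu$), $[\tilde x_\mu,\sigma\sqrt\de]$ and $[\sqrt\de\sqrt{1-\bar y_0},\sqrt{\de C}]$, which overlap by the chain $\tilde x_\mu<\sqrt\de\sqrt{1-\bar y_0}<\sigma\sqrt\de<\sqrt{\de C}$ established in item (1), so the two convexity estimates together cover the whole range.

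The main obstacle I expect is the intermediate range: obtaining the sharp square-root expansion of $\f_{\mu,\eps}$ near $\tilde x_\mu$ with enough uniformity in $\de$ to control the second derivative requires a careful normal-form analysis of the flow of $\X_\mu$ in a full neighbourhood of the fold point $(x_\mu,\eps)$ — essentially a Dulac-type estimate for a $\CC^2$ vector field near a generic fold — and then matching it to the global return described by $\pi$. The constant-part and smooth-part analyses are comparatively routine applications of the implicit function theorem and regular dependence on initial conditions.
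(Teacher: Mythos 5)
Your strategy is essentially the one the paper takes: reduce via the normal form $\X_\mu(x,y)=\X_0(x,y-\mu)$ to a map for $\X_0$ between sections $\SSS_\de$, decompose that return as (fold exit) $\circ\;\pi\;\circ$ (fold entry), use the rescaling $\bar x=x/\sqrt\de$, $\bar y=y/\de$ to obtain a $\de$-independent fold model with $\OO(\sqrt\de)$ corrections for the part bounded away from the tangent orbit (the paper's Propositions \ref{prop:exterior} and \ref{prop:exteriorderivada}), and isolate a square-root singularity of the exit map for $x\to\tilde x_\mu^{+}$ (the paper's Lemma \ref{lem:partfinalPe} and Proposition \ref{prop:extensiopie}). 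The one place where your plan as stated is not enough is exactly the one you flagged. The rescaling $x=\sqrt\de\,\xi$ does \emph{not} give a uniform $\de$-independent model up to $\tilde x_\mu$: the rescaled exit map is $-\sqrt{1-\bar y}+\OO(\sqrt\de)$, and this is only a genuine first-order approximation for $\bar y\le\bar y_0<1$; for $\bar y\to1^{-}$ (i.e.\ $x\to\tilde x_\mu^{+}$, the regime of your item (3)) the error term swamps the leading square root, which is precisely why the theorem is split into items (2) and (3). The paper resolves this not by a further rescaling but by refactoring the exit map as $\Q^{-1}=D^{-1}\circ C$, where $C$ is a diffeomorphism between the transversal sections $\{x=0\}$ and $\{x=x_\de\}$ and $D^{-1}$ is the map from $\{x=x_\de\}$ to $\SSS_\de^{-}$ passing through the fold, and by proving the uniform $\mathcal{C}^2$ estimate $D^{-1}(y)=-\sqrt{-\tfrac{2}{a}y}+\OO(\de\sqrt{-y},\,y)$ by a direct Taylor expansion of the flow in $x$ (Lemma \ref{lem:partfinalPe}). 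That decomposition plus the $\mathcal{C}^2$ control on $D^{-1}$ is the concrete form of the ``careful normal-form, uniform-in-$\de$'' step you identified; once you have it, the chain rule through $\pi\circ\bar\Q^{-1}$ (which is smooth and nonsingular near $\tilde x_\mu$) gives exactly the $\OO((x-\tilde x_\mu)^{-3/2})$ convexity, and the overlap $\tilde x_\mu<\sqrt\de\sqrt{1-\bar y_0}<\sigma\sqrt\de$ glues the two regimes as you describe.
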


\subsection{The inner map $\mathcal{Q}_{\mu,\eps} $ }
In this section we will study the map $\mathcal{Q}_{\mu,\eps} $ from $\SSS_{\eps}^-$ to $\SSS_{\eps}^+$, given by the orbits of the  regularized field $Z_{\mu,\eps}$, in the strip $\mathcal{V}_\eps$ for $0<\mu<\eps$.
We already know that its domain is defined on the left of the $x$-coordinate of the  tangency point $(x_\mu,\eps)$, but we need asymptotic formulas for it.

An important observation is that the interval
$[\QQQ_{\mu,\eps} ^{(-1)}(\tilde x_\mu), x_\mu]$ is mapped by $\QQQ_{\mu,\eps} $ to $[x_\mu, \tilde x_\mu]$ and we already know that this interval has no image through $\f_{\mu,\eps}$ (even if we have defined $\f_{\mu,\eps}$ as a constant function for convenience).
In particular, the fixed point of $\f_{\mu,\eps}\circ \QQQ_{\mu,\eps} $ will not belong to this interval.
Therefore we only need to study the map $\QQQ_{\mu,\eps} $ outside this interval, that is, away from the tangency $x_\mu$.

As a first step, next lemma shows that the Fenichel solution of the vector field  $Z_{\mu,\eps}$ intersects $\SSS_{\eps}^+$ in a point $(F_\mu ,\eps)$ whose first order is independent of $\mu$ if
$0<\mu<\eps$.

\begin{lemma}\label{lem:fenichelindependent}
Take $0<\mu<\eps$ and denote by $(F_\mu,\eps) $, the cut of the Fenichel solution of the vector field  $Z_{\mu,\eps}$ with $\SSS_{\eps}^+$.
Then we have:
\begin{equation}\label{tallfenichel}
F_\mu  =\eps^{\frac{2}{3}}\eta_0(0)+ \OO(\eps)
\end{equation}
where $\eta_0 (0)$ is given in \eqref{eq:Qepsilon}.
\end{lemma}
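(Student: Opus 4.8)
\emph{Proof proposal.} The plan is to revisit the blow-up analysis of \cite{BonetS16} that yields \eqref{eq:Qepsilon} — i.e. that identifies the Fenichel manifold near the visible fold with the distinguished solution $\eta_0$ of the Riccati system \eqref{Ricatti} — and to run it for the whole family $Z_{\mu,\eps}$, keeping track of the dependence on $\mu$ in the range $0<\mu<\eps$. Existence and uniqueness of the Fenichel manifold, and the fact that it exponentially attracts the segment $[-L,-\eps^\la]\times\{\eps\}$, are already available from Proposition 8 of \cite{BonetS16} applied to $Z_{\mu,\eps}$, so the only point to be established is that its intersection with $\SSS^+_\eps$ has the stated first order $\eps^{2/3}\eta_0(0)$, with $\eta_0(0)$ \emph{independent of} $\mu$.

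The structural observation on which everything rests is that, under the normalization \eqref{hypofamily},
\[
\X_\mu(x,y)=\X_0(x,y-\mu)=\bigl(1+f_1(x,y-\mu),\;2x+b(y-\mu)+f_2(x,y-\mu)\bigr),
\]
with $f_i=O_i$ and $f_2(x,0)=0$. Hence on the part of $\mathcal{V}_\eps$ where the fold analysis takes place — $x$ near $0$ and $y$ near $\eps$, where $|y-\mu|\le 2\eps$ since $0<\mu<\eps$ — one has, uniformly in $\mu$,
\[
\X_{\mu,2}(x,y)=2x+\OO(\eps),\qquad \partial_x\X_{\mu,2}(x,y)=2+\OO(|x|+\eps),\qquad \X_{\mu,1}(x,y)=1+\OO(|x|+\eps),
\]
so $Z_{\mu,\eps}$ agrees with the $\mu=0$ model of \cite{BonetS16} up to $\OO(\eps)$ on that block. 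I would first note that, away from the fold, the critical manifold of $Z_{\mu,\eps}$ in the variable $v=y/\eps$ is the graph $\varphi(v)=\frac{1+\X_{\mu,2}(x,\eps v)}{1-\X_{\mu,2}(x,\eps v)}=\frac{1+2x}{1-2x}+\OO(\eps)$ for $x\in[-L,-c]$, which is $\mu$-independent at leading order and rises to $v\to1$ (that is, $y\to\eps$) as $x\to0^-$. Then I would perform exactly the directional blow-up of \cite{BonetS16} centred at the fold of $\X_\mu$, under which $x\sim\eps^{2/3}$; substituting the expansions above, the leading-order reduced equation is the Riccati system \eqref{Ricatti}, whose coefficients — the fold curvature $2$, coming from $\partial_x\X_{\mu,2}$, and the number $\varphi''(1)$, coming from the value of $\varphi$ at the exit level $v=1$ — do not involve $\mu$. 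Consequently the distinguished solution $\eta_0$, and in particular $\eta_0(0)$, is $\mu$-independent, and matching it with the slow manifold and undoing the blow-up gives $F_\mu=\eps^{2/3}\eta_0(0)+\OO(\eps)$, the entire effect of $\mu$ being pushed into the $\OO(\eps)$ remainder (where it enters through, for instance, the term $b(\eps-\mu)$).

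The main obstacle is making this last assertion quantitative and uniform: one must show that the $\OO(\eps)$ discrepancy between $Z_{\mu,\eps}$ and its $\mu=0$ model — in particular the $\OO(\eps)$ shift $b(\eps-\mu)$ of the fold height — feeds into the blown-up equations as an $\OO(\eps^{1/3})$ perturbation of the rescaled coordinate $\eta$, hence as an $\OO(\eps^{2/3}\cdot\eps^{1/3})=\OO(\eps)$ perturbation of $F_\mu$, and that the matching between the outer (slow-manifold) and inner (blow-up) regions, together with the Gronwall-type estimates for the perturbed Riccati flow, are all uniform for $0<\mu<\eps$. The hypothesis $f_2(x,0)=0$ from the normal form \eqref{def:Xg} is precisely what keeps the $\mu$-dependence subleading and makes this uniformity possible.
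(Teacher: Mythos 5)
Your proposal is correct and follows essentially the same route as the paper: pass to the stretched variable $v=y/\eps$, observe that for $0<\mu<\eps$ the $\mu$-dependence of $Z_{\mu,\eps}$ enters only at $\OO(\eps)$ (the paper does this by setting $\alpha=\mu/\eps\in(0,1)$ and expanding system \eqref{eq:fastsystem} in $\eps$), so the critical/Fenichel manifold and the blown-up Riccati equation \eqref{Ricatti} near the fold are $\mu$-independent at leading order, whence $F_\mu=\eps^{2/3}\eta_0(0)+\OO(\eps)$. The only cosmetic difference is that you write the slow manifold as $\varphi(v)=\frac{1+2x}{1-2x}+\OO(\eps)$ while the paper writes the equivalent $x=n_0(v)+\OO(\eps)$ with $n_0(v)=\frac12\frac{\varphi(v)-1}{\varphi(v)+1}$.
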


\begin{proof}
Calling $\alpha= \frac{\mu }{ \eps}$, we know that $0<\alpha < 1$.
Taking account of definition (\ref{hypofamily}), and the normal form \eqref{def:Xg}, the Sotomayor-Teixeira regularization $Z_{\mu,\eps}$ in the variables
$(x,v=\frac{y}{\eps})$, will be
\begin{equation}\label{eq:fastsystem}
\begin{array}{rcl}
\dot x &=&
\frac{1+\varphi (v)}{2}(1+f_1(x,\eps v-\mu))
=\frac{1+\varphi (v)}{2}(1+f_1(x,\eps (v-\alpha)))\\
\eps \dot v &=&
 \frac{1+2x }{2} +\frac{1}{2}\varphi (v)(2x-1 ) +
\frac{1+\varphi(v)}{2}(b(\eps v-\mu)+  f_2(x,\eps v-\mu))\\
&=&
 \frac{1+2x }{2} +\frac{1}{2}\varphi (v)(2x-1 ) +
\frac{1+\varphi(v)}{2}(b\eps( v-\alpha))+  f_2(x,\eps (v-\alpha)))
\
\end{array}
\end{equation}
Expanding in $\eps$ we obtain:
\begin{equation}\label{eq:fastgg}
\begin{array}{rcl}
\dot x &=&
\frac{1+\varphi (v)}{2}(1+f_1(x,0))+\eps \frac{1+\varphi (v)}{2}(v-\alpha)\frac{\partial f_1(x,0)}{\partial y}+...\\
\eps \dot v &=&
 \frac{1+2x }{2} +\frac{1}{2}\varphi (v)(2x-1 ) +
\eps\frac{1+\varphi(v)}{2}(v-\alpha)(b+\frac{\partial f_2(x,0)}{\partial y})+...
\end{array}
\end{equation}
where the dots $...$ indicate terms of superior order than $\eps$.

As $0<\alpha<1$ the fields $ X_\mu^+$ are identical at order zero in $\eps$, therefore
their Fenichel manifolds
have the same expression till order $\eps$ :
$$
x = n(v;\eps)= n_0(v)+\OO(\eps)
$$
with
 $$
 n_0(v) = \frac{1}{2}\frac{\varphi(v)-1}{\varphi(v)+1}
 $$
Moreover, if we denote by $F_\mu$, the cut of the Fenichel solution with $\SSS_\eps^+$, then (see \cite{BonetS16})
\begin{equation}\label{tallfenichel}
F_\mu =\eps^{\frac{2}{3}}\eta_0(0)+ \OO(\eps)
\end{equation}
with $\eta_0(0)$
the same for all of $0<\alpha<1$ and $\eta_0(u)$ is the solution of \eqref{Ricatti}.
%given ib \eqref{eq:Qepsilon}.
\end{proof}

As a consequence of the results of the previous lemma we can ensure that the map $\QQQ_{\mu, \eps}$ satisfies \eqref{eq:Qepsilon} if $0<\mu<\eps$.
Consequently, we know the behavior of $\QQQ_{\mu,\eps}$ for points $x\le -\eps^\lambda$, $\lambda <\frac23$.
Next step is to understand its behavior near the tangency $x_\mu$, more concretely in intervals of the form $[-M\eps^\frac23, -\overline  M\eps^\frac23]$.
This is done in next theorem.

\begin{theorem}\label{prop:asymptoticsQ}
Take any constants  $0<\overline M<M$.
Then, there exists $\eps_0$ small enough
such  for  $0<\eps<\eps_0$, $\mu \in (0,\eps)$ we have:
\begin{itemize}
\item
For all $x \in [-M\eps^\frac23, -\overline M\eps^\frac23]$, the map $\QQQ_{\mu,\eps}$
satisfies:
\begin{equation}
\QQQ_{\mu,\eps}'<0, \quad \QQQ''_{\mu,\eps}(x)<0
\end{equation}
\item
Take $0<\overline C<C$ small enough, then we have for $x\in [-C\eps^\frac23, -\overline C\eps^\frac23]$ we have:
\begin{equation}\label{derivadesQQvell}
\mathcal{Q}_{\mu,\eps}(x)=-x(1+\OO(\frac{x}{\eps^{\frac23}}))+\OO(\eps),
\end{equation}
\end{itemize}

\end{theorem}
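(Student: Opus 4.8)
The plan is to resolve the singularity of $\QQQ_{\mu,\eps}$ at the tangency by the blow-up adapted to the visible fold of $\X_\mu$ at $(0,\eps)$: after a suitable rescaling $\QQQ_{\mu,\eps}$ becomes a small perturbation of the transition map $\tilde{\mathcal{Q}_0}$ of the Riccati system \eqref{Ricatti}, and all the asserted estimates reduce to properties of $\tilde{\mathcal{Q}_0}$ on a fixed compact interval. Concretely, first I would introduce the variables $x=\eps^{\frac23}\eta$, $y=\eps+\eps^{\frac43}u$ (so $v=y/\eps=1+\eps^{\frac13}u$) and the time rescaling $t=\eps^{\frac23}\tau$. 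Using $\X_\mu(x,y)=\X_0(x,y-\mu)$ with $\X_0$ in the normal form \eqref{def:Xg}, $\Y_\mu=(0,1)$, and the expansion of $\varphi$ at $v=1$ (where $\varphi=1$, $\varphi'=0$, and $\varphi''(1)<0$ since $\varphi$ is increasing and constant beyond $v=1$), a direct computation turns $Z_{\mu,\eps}$ into
\[
\frac{d\eta}{d\tau}=1+\OO(\eps^{\frac23}),\qquad
\frac{du}{d\tau}=2\eta-\frac{\varphi''(1)}{4}u^2+\OO(\eps^{\frac13}),
\]
the remainders being $\mathcal{C}^2$-small on compact $(\eta,u)$-sets, \emph{uniformly} in $\mu\in(0,\eps)$: $\mu$ enters only through $\delta=\eps-\mu\in(0,\eps)$ and $\mu/\eps\in(0,1)$, and the one term that could spoil uniformity, $b\,\delta\,\eps^{-\frac23}$, is still $\OO(\eps^{\frac13})$ because $\delta<\eps$; to leading order $\mu$ disappears, as already observed in Lemma \ref{lem:fenichelindependent}. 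In these variables $\SSS_\eps^{-}$ is $\{u=0,\ \eta<0\}$, $\SSS_\eps^{+}$ is $\{u=0,\ \eta>0\}$, the tangency $x_\mu=\OO(\delta)$ sits at $\eta=0$ to leading order, and $[-M\eps^{\frac23},-\overline M\eps^{\frac23}]$ becomes $\{u=0,\ \eta\in[-M,-\overline M]\}$; the leading system is exactly \eqref{Ricatti}.

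Next I would study the unperturbed transition map. For \eqref{Ricatti}, the orbit through $(\eta_0,0)$ with $\eta_0<0$ descends into $\{u<0\}$, makes a finite excursion (finite since $\dot u\ge 2\eta\to+\infty$ once $\eta>0$), and returns to $\{u=0\}$ at $(\tilde{\mathcal{Q}_0}(\eta_0),0)$ with $\tilde{\mathcal{Q}_0}(\eta_0)>0$; this is the map $\tilde{\mathcal{Q}_0}$ of Theorem \ref{thm:main}. Writing $U(\eta;\eta_0)$ for the orbit as a graph over $\eta$ and $c=-\tfrac{\varphi''(1)}{4}>0$, the first variation $V=\partial_{\eta_0}U$ solves $\partial_\eta V=2cUV$, $V|_{\eta=\eta_0}=-2\eta_0$ (from $U(\eta_0;\eta_0)\equiv 0$), hence $V(\eta;\eta_0)=-2\eta_0\exp(\int_{\eta_0}^\eta 2cU)>0$ (using $U\le 0$ on the excursion); differentiating the exit relation $U(\tilde{\mathcal{Q}_0}(\eta_0);\eta_0)=0$ gives $\tilde{\mathcal{Q}_0}'(\eta_0)=\eta_0\exp(\int_{\eta_0}^{\tilde{\mathcal{Q}_0}(\eta_0)}2cU)/\tilde{\mathcal{Q}_0}(\eta_0)<0$. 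Concavity $\tilde{\mathcal{Q}_0}''<0$ is the real point: I would obtain it from the second variation of \eqref{Ricatti} along these orbits, or via the classical substitution $u=-\tfrac1c\,w'/w$, which turns the Riccati equation $du/d\eta=2\eta+cu^2$ into the Airy equation $w''+2c\eta w=0$, so that $\tilde{\mathcal{Q}_0}$ sends one critical point of $w$ to the next one and its monotonicity and convexity follow from a Sturm-type comparison (equivalently, from the classical geometry of consecutive extrema of Airy functions). Finally, for $|\eta_0|$ small, $U$ stays $\OO(\eta_0^2)$ along the excursion, so the quadratic term is a higher-order perturbation of $\partial_\eta U=2\eta$, whose orbit from $(\eta_0,0)$ is $U=\eta^2-\eta_0^2$ and returns at $\eta=-\eta_0$; inserting this back yields $\tilde{\mathcal{Q}_0}(\eta)=-\eta(1+\OO(\eta))$ as $\eta\to0^-$.

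Finally, the orbits above stay in a fixed compact region of the blown-up phase space (depending only on $M$) on which the remainders of the first step are $\mathcal{C}^2$-small, and the sections $\{u=0\}$ are crossed transversally on the relevant $\eta$-ranges; smooth dependence of solutions and of first-return maps on parameters then gives, uniformly in $\mu\in(0,\eps)$,
\[
\QQQ_{\mu,\eps}(x)=\eps^{\frac23}\tilde{\mathcal{Q}_0}(x\eps^{-\frac23})+\OO(\eps),\quad
\QQQ_{\mu,\eps}'(x)=\tilde{\mathcal{Q}_0}'(x\eps^{-\frac23})+\OO(\eps^{\frac13}),\quad
\QQQ_{\mu,\eps}''(x)=\eps^{-\frac23}\bigl(\tilde{\mathcal{Q}_0}''(x\eps^{-\frac23})+\OO(\eps^{\frac13})\bigr).
\]
On $[-M\eps^{\frac23},-\overline M\eps^{\frac23}]$ both $\tilde{\mathcal{Q}_0}'$ and $\tilde{\mathcal{Q}_0}''$ are negative and bounded away from $0$, so $\QQQ_{\mu,\eps}'<0$ and $\QQQ_{\mu,\eps}''<0$ for $\eps$ small (the $\eps^{-\frac23}$ amplification in the last identity is harmless, the leading term beating the $\OO(\eps^{-\frac13})$ error). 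On $[-C\eps^{\frac23},-\overline C\eps^{\frac23}]$ with $C$ small, substituting the near-$0$ asymptotics with $\eta=x\eps^{-\frac23}$ gives $\QQQ_{\mu,\eps}(x)=-x(1+\OO(x/\eps^{\frac23}))+\OO(\eps)$, which is \eqref{derivadesQQvell}. The main obstacle is the concavity $\tilde{\mathcal{Q}_0}''<0$ of the Riccati transition map, together with checking that the blow-up remainders are genuinely $\mathcal{C}^2$-small of a positive power of $\eps$ so that the $\eps^{-\frac23}$ factor does not destroy the sign of $\QQQ_{\mu,\eps}''$; the rest is bookkeeping with the normal form \eqref{def:Xg}.
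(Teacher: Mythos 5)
Your outline matches the paper's up to the crucial step and then leaves a genuine gap exactly where the work is. The blow-up $x=\eps^{2/3}\eta$, $v=1+\eps^{1/3}u$ bringing $Z_{\mu,\eps}$ to an $\OO(\eps^{1/3})$ perturbation of \eqref{Ricatti} uniformly in $\mu\in(0,\eps)$, the reduction of the statement to properties of the Riccati transition map $\tilde{\mathcal{Q}_0}$ on a compact $\eta$-interval, the first-variation computation giving $\tilde{\mathcal{Q}_0}'<0$, the near-zero expansion $\tilde{\mathcal{Q}_0}(\eta)=-\eta(1+\OO(\eta))$, and the bookkeeping by which the $\eps^{-2/3}$ factor does not spoil the sign of $\mathcal{Q}_{\mu,\eps}''$ — all of this is essentially identical to the paper's argument (cf.\ \eqref{eq:sistemablowup}, \eqref{Qescalada}, \eqref{derivadesQ}, and Lemma \ref{lem:retornlocal}).

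The gap is the concavity $\tilde{\mathcal{Q}_0}''<0$, which you flag as ``the real point'' but do not prove. You offer two routes. The first, ``second variation of \eqref{Ricatti}'', is indeed what the paper does, but it is far from routine: the paper needs to set up the first and second variational system \eqref{variacionalsRRicatti}, express $t''(x_0)$ via Lemma \ref{lem:variacionals}, reduce the sign question to the positivity of the combination $f(x_0)=u^2-2xu+x^2v$, and then establish that positivity by a continuation argument using Lemmas \ref{vigual1}, \ref{lem:intu}, \ref{lem:derivadav} and Proposition \ref{prop:signef}, seeded by the local Taylor expansion $t''(x_0)=-\tfrac{16}{5}x_0^2+\OO(x_0^3)$ of Lemma \ref{lem:retornlocal}. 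None of this is hinted at in your proposal. The second route, the Airy substitution $u=-\tfrac{1}{c}w'/w$ plus ``a Sturm-type comparison'' or ``the classical geometry of consecutive extrema of Airy functions'', does not obviously work: Sturm theory controls the ordering and interlacing of zeros/extrema of solutions of $w''+2c\eta w=0$, but the concavity of $\tilde{\mathcal{Q}_0}$ is a \emph{second-order} statement about the map sending one critical point of $w$ to the next as a function of the initial critical point, which is not a classical consequence of oscillation theory. Absent an explicit computation along the lines of the paper's Section \ref{sec:proofs}, the sign $\tilde{\mathcal{Q}_0}''<0$ is unproved, and everything downstream (the concavity of $\mathcal{Q}_{\mu,\eps}$ and hence the whole convexity argument in Theorem \ref{thm:main}) rests on it.

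A secondary, minor point: you correctly observe that the uniformity in $\mu$ comes from $b\,\delta\,\eps^{-2/3}=\OO(\eps^{1/3})$ since $\delta<\eps$, but you should also note that this uniformity must hold in the $\mathcal C^2$ topology on the fixed compact $(\eta,u)$-region visited by the return orbit (which itself needs to be shown to exist and be compact, uniformly in $\mu$); the paper handles this via the normal form \eqref{def:Xg} and the regular-perturbation argument of Proposition \ref{prop:exterior} adapted to the blown-up system. That part is bookkeeping, as you say, but worth being explicit about.
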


\subsection{Proof of Theorem \ref{thm:main}}
Now we refine the range of $\mu$ where the bifurcation will take place.
In Remark  \ref{rem:xdelta+-} we have seen that the intersection of the periodic orbit $\Gamma_\mu$ with  $\SSS_{\eps}^+$ is the point $(x_\mu^+,\eps)$ with $x_\mu^+=\rdelta+\OO(\delta)$, and $\de=\eps-\mu$ (see \eqref{eq:xdelta+-}).
Moreover in Remark \ref{rem:bifurcacio}
we have  seen that the bifurcation will take place when $0<\mu<\eps$ and  we expect it to happen when the Fenichel solution(s) \eqref{tallfenichel} and the upper segment of $\Gamma_\mu$ "collide" in $\SSS_{\eps}^+$ at some order.
So, heuristically, we expect:
\begin{equation}
\begin{array}{l}
x_\mu^+ \simeq F_\mu
\\
\sqrt{\eps-\mu}+\OO(\eps-\mu)=\eps^{\frac{2}{3}}\eta_0(0)+\OO(\eps)\\
\mu=\eps-\eps^{\frac{4}{3}}\eta_0^2(0)+\OO(\eps^{\frac{5}{3}})
\end{array}
\end{equation}
This suggests to take the range of $\mu$ and $\delta$ as
\begin{equation}\label{rangmu}
\begin{array}{l}
\mu_1:=\eps-\eps^{\frac{4}{3}}\eta_0^2(0)-K_1\eps^{\frac{5}{3}}<\mu<\mu_2:=\eps-K_2^2\eps^{\frac{4}{3}}\\
\mbox{equivalently:}\\
\delta_2\equiv\eps-\mu_2=K_2^2\eps^{\frac{4}{3}}<\delta<\delta_1\equiv\eps-\mu_1=\eps^{\frac{4}{3}}\eta_0^2(0)+K_1\eps^{\frac{5}{3}}
\end{array}
\end{equation}
And the constants $K_1$, $K_2$ will be chosen later on.
Let's compute the intersections of $\Gamma_\mu$ with $\SSS^\pm_\eps$ for the values of $\mu_1$ and $\mu_2$:
\begin{equation}\label{rangxdelta}
\begin{array}{l}\\
x_{\mu_{1}}^+=\sqrt{\eps-\mu_1}+\OO(\eps-\mu_1)=\sqrt{\eps^{\frac{4}{3}}\eta_0^2(0)+K_1\eps^{\frac{5}{3}}}
+\OO(\eps^{\frac{4}{3}})
\\
=\eps^{\frac{2}{3}}\eta_0(0)\sqrt{1+\frac{K_1\eps^{\frac{1}{3}}}{(\eta_0(0))^2}}+\OO(\eps^{\frac{4}{3}})=\eps^{\frac{2}{3}}\eta_0(0)+\frac{K_1\eps}{2\eta_0(0)}
+\OO(\eps^{\frac{4}{3}});\\
x_{\mu_{2}}^+=K_2\eps^{\frac{2}{3}}+\OO(\eps^{\frac{4}{3}})
\end{array}
\end{equation}
Then if we take $K_1$ large enough, we will have
 \[
 F_{\mu_1}<x_{\mu_1}^+
 \]
and therefore the Fenichel manifold will be inside the vault of $\Gamma_{\mu_1}$, that is, $ F_{\mu_1}\in [x_{\mu_1}^-, x_{\mu_1}^+]$.
But then, by Proposition \ref{prop:trapping}, the  orbit through $F_{\mu_1}$ is trapped by the attracting focus.
Consequently, reasoning analogously as in Proposition \ref{prop:flowtangency}, there is not a periodic orbit.
The same phenomenon happens for $\mu \le \mu_1$ and we summarize these results in next proposition:
\begin{proposition}\label{prop:bifsegura1}
If $K_1>0$ big enough, then  the regularized field $Z_{\mu,\eps}$
has no  periodic orbits for $\mu \le \mu_1=\eps-\eps^{\frac{4}{3}}\eta_0^2(0)-K_1\eps^{\frac{5}{3}}$.
\end{proposition}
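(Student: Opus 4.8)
The plan is to reduce the claim to the trapping statement of Proposition \ref{prop:trapping} by showing that, for $K_1$ large and $\mu\le\mu_1$, the Fenichel solution of $Z_{\mu,\eps}$ enters $\SSS_\eps^+$ strictly inside the vault of $\Gamma_\mu$, so that every orbit crossing the regularizing strip from the left is funneled toward the attracting focus, leaving no room for a periodic orbit. First I would record the two asymptotic expansions already assembled in the excerpt: the intersection of the upper branch of $\Gamma_\mu$ with $\SSS_\eps^+$ is $x_\mu^+=\sqrt{\eps-\mu}+\OO(\eps-\mu)$ by \eqref{eq:xdelta+-}, while the cut of the Fenichel solution is $F_\mu=\eps^{2/3}\eta_0(0)+\OO(\eps)$ by Lemma \ref{lem:fenichelindependent} (equation \eqref{tallfenichel}), uniformly for $0<\mu<\eps$. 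Setting $\mu=\mu_1=\eps-\eps^{4/3}\eta_0^2(0)-K_1\eps^{5/3}$, the computation in \eqref{rangxdelta} gives
\[
x_{\mu_1}^+=\eps^{2/3}\eta_0(0)+\frac{K_1\eps}{2\eta_0(0)}+\OO(\eps^{4/3}),
\]
so that $x_{\mu_1}^+-F_{\mu_1}=\dfrac{K_1\eps}{2\eta_0(0)}+\OO(\eps)>0$ once $K_1$ is chosen larger than the (fixed, $\eps$- and $\mu$-independent) constant hidden in the $\OO(\eps)$ remainder; hence $F_{\mu_1}\in(x_{\mu_1}^-,x_{\mu_1}^+)$ for all sufficiently small $\eps$.

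Next I would propagate this inequality to all $\mu\le\mu_1$. Decreasing $\mu$ has two monotone effects: by \eqref{eq:xdelta+-}, $\de=\eps-\mu$ grows, so the vault half-width $x_\mu^+=\sqrt\de+\OO(\de)$ increases, widening the target interval $[x_\mu^-,x_\mu^+]$; meanwhile the first-order term of $F_\mu$ does not depend on $\mu$ (Lemma \ref{lem:fenichelindependent}), so $F_\mu$ stays at $\eps^{2/3}\eta_0(0)+\OO(\eps)$. Therefore, if the inclusion holds at $\mu=\mu_1$ it holds, with even more slack, for every $\mu\le\mu_1$ (with $\mu$ kept small enough that $\Gamma_\mu$ still grazes $\Sigma$ and the normal-form reductions apply); one only needs the uniformity of the error bounds over this range, which is exactly what Lemma \ref{lem:fenichelindependent} and Theorem \ref{thm:sella-node} provide.

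Finally I would invoke Proposition \ref{prop:trapping}: since $F_\mu\in[x_\mu^-,x_\mu^+]$, the orbit of $Z_{\mu,\eps}$ through the Fenichel point $(F_\mu,\eps)$ is trapped by the attracting focus of $\X_\mu$. Because the Fenichel manifold exponentially attracts every point of $[-L,-\eps^\lambda]\times\{\eps\}$ by \eqref{eq:Qepsilon} (and Proposition 8 of \cite{BonetS16}), and because $\QQQ_{\mu,\eps}$ maps this whole segment into an $\OO(\eps^{2/3})$ neighborhood of $F_\mu$, all these orbits are likewise trapped; the finitely many remaining points of $\SSS_\eps^-$ near $x_\mu$ lie in the vault of $\Gamma_\mu$ already and are trapped directly by Proposition \ref{prop:trapping}, reasoning verbatim as in the $\mu=0$ case of Proposition \ref{prop:flowtangency}. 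A periodic orbit of $Z_{\mu,\eps}$ near $\Gamma_0$ would have to cross $\SSS_\eps^-$ and hence be one of these trapped orbits — a contradiction. The only delicate point is making the constant $K_1$ genuinely independent of $\eps$ and $\mu$: one must verify that the $\OO(\eps)$ in $F_\mu$ and the $\OO(\de)=\OO(\eps^{4/3})$ in $x_\mu^+$ are bounded uniformly in $\mu\in(0,\mu_1]$, which follows from the uniform validity of the expansions in Lemma \ref{lem:fenichelindependent} and Remark \ref{rem:xdelta+-}. That uniformity is the main thing to be careful about; everything else is a direct appeal to the already-established trapping mechanism.
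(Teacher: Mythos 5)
Your argument follows the paper's own proof exactly: compare $F_{\mu_1}$ with $x_{\mu_1}^+$ via \eqref{rangxdelta} and \eqref{tallfenichel}, choose $K_1$ large to make $F_{\mu_1}<x_{\mu_1}^+$ (and hence $F_\mu < x_\mu^+$ for all $\mu\le\mu_1$), then conclude by Proposition \ref{prop:trapping} and the trapping argument of Proposition \ref{prop:flowtangency}. You have merely filled in the detail the paper leaves implicit (the explicit inequality $x_{\mu_1}^+-F_{\mu_1}=\frac{K_1\eps}{2\eta_0(0)}+\OO(\eps)$, the monotonicity that propagates the inclusion to $\mu<\mu_1$, and the uniformity of the error constants), so this is the same route, written out more carefully.
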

On the other hand, if $K_2>0$ is small enough
\[
 F_{\mu_2} >x_{\mu_2}^+
 \]
Next Proposition  \ref{prop:bifsegura}  ensures that, if $K_2>0$ is small enough, the regularized field $Z_{\mu,\eps}$ has two periodic orbits if $\mu \ge \mu_2$.
\begin{proposition}\label{prop:bifsegura}
If $K_2>0$ small enough, then  the regularized field $Z_{\mu,\eps}$
has two periodic orbits   for $\mu \ge \mu_2=\eps-K_2^2\eps^{\frac{4}{3}}$.
\end{proposition}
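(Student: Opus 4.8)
The plan is to work with the first return map $\PP_{\mu,\eps}:=\f_{\mu,\eps}\circ\mathcal Q_{\mu,\eps}$ on $\SSS_\eps^-$, whose fixed points are exactly the periodic orbits of $Z_{\mu,\eps}$ near $\Gamma_0$; so the task is to exhibit two of them. For $\mu\ge\eps$ this is already contained in Theorem \ref{thm:sella-node} (here $\Delta=-1$) together with Proposition \ref{prop:flowtangency}(1), so I would restrict to $\mu\in[\mu_2,\eps)$ and carry out the argument at $\mu=\mu_2$ in detail, the range $\mu\in(\mu_2,\eps)$ being obtained from it by monotonicity (see below). Write $\de=\eps-\mu$, so $\de\le K_2^2\eps^{4/3}$, put $d=\de\eps^{-4/3}$, rescale $x=\eta\eps^{2/3}$, fix $M$ large and $c\in(\sqrt{1-1/\pi'(0)},1)$, and set $\mathcal J=[-M\eps^{2/3},-cK_2\eps^{2/3}]\subset\SSS_\eps^-$.

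First I would establish that $\PP_{\mu,\eps}$ is smooth and convex on $\mathcal J$. By Theorem \ref{prop:asymptoticsQ} (together with Lemma \ref{lem:fenichelindependent}, \eqref{eq:Qepsilon} and the attraction of the Fenichel manifold) $\mathcal Q_{\mu,\eps}$ is decreasing on $\mathcal J$ with image inside $[\tilde x_\mu,\sqrt{\de C}]$ — the lower end thanks to $c>\sqrt{1-1/\pi'(0)}$, the upper end forcing the constant $C$ of Theorem \ref{thm:extensiopie} to be chosen large (of order $\eta_0(0)^2/K_2^2$) — and $\mathcal Q_{\mu,\eps}'<0,\ \mathcal Q_{\mu,\eps}''<0$ there; by Theorem \ref{thm:extensiopie}, $(\f_{\mu,\eps})'<0$ and $(\f_{\mu,\eps})''>0$ on $[\tilde x_\mu,\sqrt{\de C}]$. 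Hence $(\PP_{\mu,\eps})''=(\f_{\mu,\eps})''(\mathcal Q_{\mu,\eps})\bigl(\mathcal Q_{\mu,\eps}'\bigr)^{2}+(\f_{\mu,\eps})'(\mathcal Q_{\mu,\eps})\,\mathcal Q_{\mu,\eps}''>0$ on $\mathcal J$. Substituting the expansions of $\mathcal Q_{\mu,\eps}$ into the formula \eqref{eq:formulapiebonathm} for $\f_{\mu,\eps}$ gives moreover, uniformly on $\mathcal J$,
\[
\PP_{\mu,\eps}(\eta\eps^{2/3})=\eps^{2/3}\,\wt{\PP}_d(\eta)+\OO(\eps),\qquad
\wt{\PP}_d(\eta):=-\sqrt{\pi'(0)\,\tilde{\mathcal Q_0}(\eta)^2-d(\pi'(0)-1)},
\]
where $\tilde{\mathcal Q_0}$ is the Riccati map of \eqref{Ricatti}: decreasing, concave, $\tilde{\mathcal Q_0}(\eta)\to\eta_0(0)$ as $\eta\to-\infty$ and $\tilde{\mathcal Q_0}(\eta)=-\eta(1+\OO(\eta))$ as $\eta\to0^-$.

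Next I would use convexity: it suffices to find one point of $\mathring{\mathcal J}$ where $\wt{\PP}_{K_2^2}<\mathrm{id}$ while $\wt{\PP}_{K_2^2}>\mathrm{id}$ at the two endpoints of $\mathcal J$; then $\wt{\PP}_{K_2^2}-\mathrm{id}$ has exactly two simple zeros in $\mathring{\mathcal J}$, and by the uniform $\OO(\eps)$ estimate so has $\PP_{\mu_2,\eps}$ for $\eps$ small, i.e.\ $Z_{\mu_2,\eps}$ has two (hyperbolic) periodic orbits. At $\eta=-M$: $\wt{\PP}_{K_2^2}(-M)>-\sqrt{\pi'(0)}\,\eta_0(0)>-M$ since $\tilde{\mathcal Q_0}<\eta_0(0)$ and $M$ is large. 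At $\eta=-cK_2$: since $\tilde{\mathcal Q_0}(-cK_2)=cK_2(1+o(1))$ and $c<1$, a direct computation gives $\wt{\PP}_{K_2^2}(-cK_2)>-cK_2$. For the interior point take $\eta=-\sqrt{K_2}\in\mathring{\mathcal J}$, still in the regime $\tilde{\mathcal Q_0}(\eta)=-\eta(1+o(1))$ and with $\eta^2=K_2\gg K_2^2$; then $\wt{\PP}_{K_2^2}(-\sqrt{K_2})=-\sqrt{\pi'(0)}\,\sqrt{K_2}\,(1+o(1))<-\sqrt{K_2}$ precisely because $\pi'(0)>1$. (Equivalently, once $\eta_0^*,\delta_0^*$ of Theorem \ref{thm:main} are available: $K_2^2<\delta_0^*$ together with $\partial_d\wt{\PP}_d>0$ yields $\wt{\PP}_{K_2^2}(\eta_0^*)<\wt{\PP}_{\delta_0^*}(\eta_0^*)=\eta_0^*$.) Finally, for $\mu\in(\mu_2,\eps)$ one has $\partial_\mu\PP_{\mu,\eps}<0$ (because $\partial_d\wt{\PP}_d>0$): the convex graph of $\PP_{\mu,\eps}$ only moves further below the diagonal as $\mu$ grows, so the two transversal fixed points persist (using $\mathcal J_\mu=[-M\eps^{2/3},-c\sqrt{\de}\,]$; for $\mu$ very close to $\eps$ one may instead invoke the persistence of the hyperbolic fixed points of $\PP_{\eps,\eps}$ from Proposition \ref{prop:flowtangency}(1)), which proves the Proposition for all $\mu\ge\mu_2$.

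The step I expect to be hardest is the uniform rescaled estimate $\PP_{\mu,\eps}=\eps^{2/3}\wt{\PP}_d+\OO(\eps)$ on $\mathcal J$. It requires splicing together the Fenichel-saturated part of $\mathcal Q_{\mu,\eps}$ (where $\mathcal Q_{\mu,\eps}\approx\eps^{2/3}\eta_0(0)$ for $\eta$ very negative, cf.\ Lemma \ref{lem:fenichelindependent} and \eqref{eq:Qepsilon}) with the near-tangency part $\mathcal Q_{\mu,\eps}(x)\approx-x$ of \eqref{derivadesQQvell} — both summarized by $\tilde{\mathcal Q_0}$ — and, above all, keeping $\mathcal Q_{\mu,\eps}(\mathcal J)$ inside the $\OO(\sqrt{\de})$-window $[\sqrt{\de}\sqrt{1-\bar y_0},\sqrt{\de C}]$ on which the sharp formula \eqref{eq:formulapiebonathm} for $\f_{\mu,\eps}$ holds; as $\mathcal Q_{\mu,\eps}$ attains values of order $\eta_0(0)\eps^{2/3}\gg\sqrt{\de}$, this is what dictates the large choice of $C$ in Theorem \ref{thm:extensiopie}. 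The remaining work is essentially bookkeeping — choosing $c,M,C,\bar y_0$ and the constants of Theorem \ref{prop:asymptoticsQ} so that all the inclusions and the two endpoint inequalities hold simultaneously, taking $\eps$ small in terms of the fixed $K_2$ so that $\OO(\eps)$ is negligible against $\eps^{2/3}\sqrt{K_2}$ at the interior point, and handling uniformly the degenerate end $\mu\to\eps$ where $\de\to0$.
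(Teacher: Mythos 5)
Your proposal is correct, but it takes a genuinely different route from the paper. The paper's own proof is much lighter: it works with the conjugate map $\mathcal{Q}_{\mu,\eps}\circ\f_{\mu,\eps}$ on $\SSS_\eps^+$, evaluates it at the \emph{single} test point $2K_2\eps^{\frac23}$ to show $\mathcal{Q}_{\mu,\eps}(\f_{\mu,\eps}(2K_2\eps^{\frac23}))>2K_2\eps^{\frac23}$, and then (a) obtains a first, attracting, periodic orbit by a topological spiraling argument — the orbit of this point spirals outward but is trapped below the Fenichel solution exiting at $F_{\mu_2}$ — and (b) obtains a second by the intermediate value theorem applied to $f(x)=\mathcal{Q}_{\mu,\eps}(\f_{\mu,\eps}(x))-x$, using Proposition \ref{prop:comparacio} to get $f(x_{\mu_2}^+)<0$. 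No convexity is invoked at this stage. You instead prove convexity of $\f_{\mu,\eps}\circ\mathcal{Q}_{\mu,\eps}$ on a whole interval $\mathcal{J}$, derive a rescaled normal form $\tilde{\PP}_d$, and conclude via a three-point ``endpoints above, interior point below'' argument; this essentially re-derives, at $\mu=\mu_2$, the convexity estimates that the paper only sets up for $\mu\in(\mu_1,\mu_2)$ in the proof of Theorem \ref{thm:main}. What your approach buys is a sharper conclusion (\emph{exactly} two fixed points, both transversal), a clean unification with Theorem \ref{thm:main}, and explicit asymptotics of their locations; what the paper's buys is brevity and independence from the convexity machinery, which in the paper's logical architecture is only needed inside $(\mu_1,\mu_2)$. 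One caveat in your extension to $\mu\in(\mu_2,\eps)$: the monotonicity claim $\partial_\mu\PP_{\mu,\eps}<0$ is not directly justified, because the $\OO(\eps)$ remainder in $\PP_{\mu,\eps}=\eps^{\frac23}\tilde{\PP}_d+\OO(\eps)$ is uniform but its $\mu$-derivative is not controlled by the rescaled model; it is safer (and works with the same test points, replacing the right endpoint of $\mathcal{J}$ by $-c\sqrt{\de}$) to re-run the three-point argument for each $\mu\in[\mu_2,\eps)$, which is in spirit what the paper means by ``the proof for $\mu\ge\mu_2$ is analogous.''
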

\begin{proof}
Let's take $\mu = \mu_2$ and therefore $\delta=\de_2=K_2^2\eps^{\frac43}$.
Consider the point  $(2K_2\eps^{\frac{2}{3}},\eps)$.
Assuming $K_2$ is small enough we can ensure that
$2K_2\eps^{\frac{2}{3}}\le F_{\mu_2}$.
We will see that:
\begin{equation}\label{eq:dsigualtatde2}
\mathcal{Q}_{\mu,\eps}(\f_{\mu,\eps}(2K_2\eps^{\frac{2}{3}}))>2K_2\eps^{\frac{2}{3}}.
\end{equation}
In fact, taking the constant $C$ in Theorem \ref{thm:extensiopie} satisfying $C>4$, we have that $2K_2\eps^{\frac{2}{3}} \in[\sqrt{\delta_2}\sqrt{1-\bar{y}_0},\sqrt{\delta_2 C}]$.
Then   we can use formula \eqref{eq:formulapiebonathm}, obtaining:
\begin{equation}
\begin{array}{l}
\f_{\mu,\eps}(2K_2\eps^{\frac23})=
-\sqrt{ \delta_2- \pi'(0)(\delta_2-4 K_2^2\eps^{\frac43 })}+\OO(\delta_2)
=
-K_2\eps^{\frac{2}{3}}\sqrt{1+3\pi'(0)}+\OO(\eps^{\frac{4}{3}})
\end{array}
\end{equation}\\
But using that $x_{\mu_2}= \OO(\de_2)=\OO(\eps^\frac43)$ (see \eqref{eq:xepsilon}) we can ensure that
\[
\f_{\mu,\eps}(2K_2\eps^{\frac23})=
-K_2\eps^{\frac{2}{3}}\sqrt{1+3\pi'(0)}+\OO(\eps^{\frac{4}{3}}) <x_{\mu_2}
\]
Now, if $K_2$ is small enough,  we can use formula \eqref{derivadesQQvell}  for $\QQQ_{\mu,\eps}$ obtaining:
\[
\mathcal{Q}_{\mu,\eps}(\f_{\mu,\eps}(2K_2\eps^{\frac{2}{3}}))
=K_2\eps^{\frac{2}{3}}\sqrt{1+3\pi'(0)}\left(1+\OO\left(K_2\sqrt{1+3\pi'(0)}\right)^{4}\right)+\OO(\eps) >2K_2\eps^{\frac{2}{3}}
\]
where we have used that $\pi' (0)>1$ and that $K_2$ is small enough.

Once we have proved inequality \eqref{eq:dsigualtatde2} we have that the solution issuing from $(2K_2\eps^{\frac{2}{3}},\eps) $ spirals outside and is bounded by the Fenichel solution
(which leaves $\SSS^+_\eps $ at $(F_{\mu_2}, \eps)$ with
$F_{\mu_2}=\eta_0 (0) \eps ^\frac23 + \OO(\eps)$ ).
Then, between the two solutions must be a periodic orbit, which intersects $\SSS_\eps ^+$ at a point $x^*_{\mu_2} \in (2K_2\eps^{\frac{2}{3}}, F_\mu)$. Moreover it is a stable periodic orbit.

To see that there is another periodic orbit we proceed as follows.
Consider the map $f(x)=\mathcal{Q}_{\mu,\eps}(\f(x))-x$.
We have:
\begin{itemize}
 \item
 $f(2K_2\eps^{\frac{2}{3}})>0$.
 \item
 $f(x_{\mu_2}^+)<0$.
 The reason is that, using Proposition \ref{prop:comparacio}, the  orbit through $(x_{\mu_2}^+,\eps)$ will intersect $\SSS_\eps ^+$ in a point inside the vault of $\Gamma_{\mu_2}$.
\end{itemize}
Therefore, they will be $x_{\mu_2}^{**} \in (x_{\mu_2}^+, 2K_2\eps^{\frac{2}{3}})$ such that $f(x_{\mu_2}^{**})=0$, giving rise to another periodic orbit.
The proof for $\mu \ge \mu_2$ is analogous.

\end{proof}

\begin{remark}
 We stress that if there exists any periodic orbit for a given value of $\mu$ necessarily it hits $\SSS^+_\eps$ in a point which is on the right of $x_\mu^+$ and on the left of $F_\mu$.
\end{remark}
Propositions \ref{prop:bifsegura1} and \ref{prop:bifsegura} ensure that  the bifurcation will take place for
\begin{equation}\label{eq:bifurcationdomain}
\mu \in (\mu_1,\mu_2)=(
\eps-\eps^{\frac{4}{3}}\eta_0^2(0)-K_1\eps^{\frac{5}{3}},\eps-K_2^2\eps^{\frac{4}{3}}),
\end{equation}
if we take the constants $K_1$ and $K_2$ with the required conditions.
Therefore, from now on, we will restrict our study to this rank of $\mu$.

Observe that, by Theorem \ref{thm:extensiopie}, we already know the map $\f_{\mu,\eps}$ and have asymptotic formulas for it for $x\ge \tilde x_\mu$. Analogously, Theorem \ref{prop:asymptoticsQ} gives the needed properties of the map $\QQQ_{\mu,\eps}$.

Now we have all the ingredients to prove Theorem \ref{thm:main}.
Consider the interval
\[
\mathcal{I}=[\sigma\rdelta,\sqrt{2}\eta_0(0)\eps^{\frac{2}{3}}],
\]
where $\sigma>0$ is the constant given in Theorem \ref{thm:extensiopie}.
It is clear that in the considered range of $\mu\in [\mu_1,\mu_2]$, we have, by \eqref{rangmu}, if $\eps$ is small enough:
\[
F_\mu  \, , x_\mu^+ \in \mathcal{I}
\subset
[\tilde{x}_\mu,\sqrt{2}\eta_0(0)\eps^{\frac{2}{3}}] ,
 \ \mu \in [\mu_1,\mu_2], \ \de=\eps-\mu
\]
where $x_\mu^+$ is given in \eqref{eq:xdelta+-}, $F_\mu$ in \eqref{tallfenichel}.
Consequently, if there is a fix point of the map
$\mathcal{Q}_{\mu,\eps}\circ \f_{\mu,\eps}$ it must be in $\mathcal{I}$ and the corresponding fix point of $\f_{\mu,\eps}\circ \mathcal{Q}_{\mu,\eps}$ must be in $\f(\mathcal{I})$.

We apply Theorem \ref{thm:extensiopie} to this interval and we have:
\[
\f_{\mu,\eps} (\mathcal{I})= [\f_{\mu,\eps}(\sqrt{2}\eta_0(0)\eps^{\frac{2}{3}}), \f_{\mu,\eps}(\sigma\rdelta)]
\]
On the other hand, using the formula for $\f_{\mu,\eps}$ given in \eqref{eq:formulapiebonathm}, it is straightforward to see that:
\begin{equation}
\begin{split}
&\f_{\mu,\eps}(\sqrt{2}\eta_0(0)\eps^{\frac{2}{3}})\ge -\eta_0(0)\eps^{\frac{2}{3}}\sqrt{2\pi'(0)}>-M \eps^{\frac{2}{3}}\\
&\f_{\mu,\eps}(\sigma \sqrt{\de})<-K_2 \eps^{\frac{2}{3}}\sqrt{1-\pi'(0)(1-\sigma^2)}<- \overline M \eps^{\frac{2}{3}},
\end{split}
\end{equation}
for some constants $M$ and $\overline M$. Consequently
\[
\f_{\mu,\eps}(\mathcal{I})
\subset [-M\eps^{\frac{2}{3}},-\overline M \eps^{\frac{2}{3}}]:=\mathcal{J},
\]
and we can apply the results of Theorem \ref{prop:asymptoticsQ}.
In conclusion we have that
for $x\in \mathcal {I}$, we have that $\bar x=\f_{\mu,\eps}(x)\in \mathcal{J}$ and
$$
(\f_{\mu,\eps}\circ  \QQQ_{\mu,\eps}  )'' (\bar x)=
({\f}_{\mu,\eps})'' (\QQQ_{\mu,\eps}(\bar x)) ((\QQQ_{\mu,\eps})'(\bar x))^2+(\f_{\mu,\eps})'( \QQQ_{\mu,\eps}(\bar x))(\QQQ _{\mu,\eps})''(\bar x) >0
$$
where we have used the convexity of $\f_{\mu,\eps}$, the concavity of  $ \QQQ_{\mu,\eps}$ and the fact that $\f_{\mu,\eps}$ is decreasing in $\mathcal{J}$.

This concludes the proof of the first three items of Theorem \ref{thm:main}.
\subsection{An example}
As an example, let's take the family of vector fields $Z_\mu =(X^+_\mu, X^-_\mu)$ where $X^+$ is given by
\begin{equation}\label{exemple}
\left.
\begin{array}{rcl}
\dot x &=f(x,y,\mu)=& -y +\mu+1+\kappa x(r-1)\\
\dot y &=g(x,y,\mu)=& x + \kappa(y-\mu-1)(r-1)
\end{array}
\right\} \quad r=\sqrt {x^2+(y-\mu-1)^2},
\end{equation}
and  $ X^-=(0,1) $.

\begin{figure}
\begin{center}
\includegraphics[width=8cm,height=3cm]{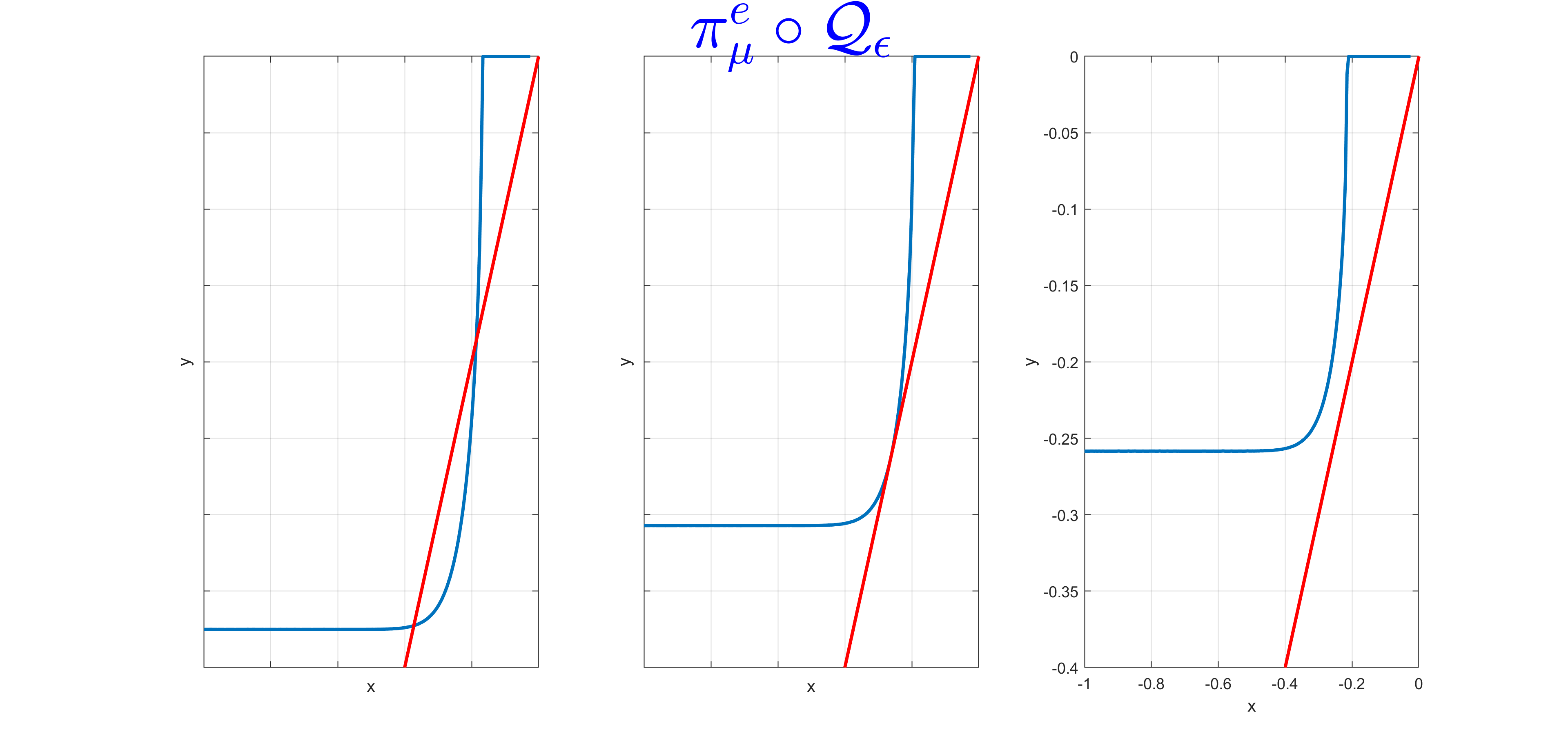}
\caption{The convex Poincar\'{e} map for the regularized system $Z_{\mu, \eps}$ of example \eqref{exemple}.}\label{fig:poincaremap}
\end{center}
\end{figure}
We see in Figure \ref{fig:poincaremap} the Poincar\'{e} map $ \f_{\mu,\eps}\circ\mathcal{Q}_\varepsilon $ defined in $[-1,0]$
and for $\kappa=1$, $\varepsilon=.05$ and $ \mu_{1,2,3}=\varepsilon-(.5,.5623,.6)\varepsilon^{\frac{4}{3}} $ has two, one and zero fixed points.\\

\section{Hysteresis} \label{sec:H}
In this section we will study the effects of a different regularization of the Filippov system  \eqref{def:Filippov}.
This is the so-called hysteresis, which can be seen  as another way of regularizing discontinuous systems.

Let us first recall that, for a given Filippov system as in \eqref{def:Filippov}, one can define the Filippov vector field in the sliding region,  a subset of the  switching manifold $\Sigma^s\subset \Sigma$ where both vector fields point towards $\Sigma$. In our case, where $\Sigma$ is given by $y=0$:
\begin{equation}\label{Filippov}
\dot x=Z_F(x)= \frac{\Y_2\X_1-\Y_1\X_2}{\Y_2-\Y_1}(x,0)
\end{equation}
and it is well known \cite{TeixeiraBS2006,TeixeiraS2012} that, in a neighborhood $W$ of any sliding region $\Sigma^s \subset W$, the orbits of the Sotomayor-Teixeira regularization $Z_{\eps}(x,y)$ tend to the orbits of the Filippov vector field  \eqref{Filippov}.

Let us now recall how hysteresis is applied to a system like \eqref{def:Filippov} if we are in a sliding region.
The main idea is that in a 'negative' boundary layer we define an overlap in the non smooth system:
\begin{eqnarray}\label{hyst}
Z_h (x,y)= \left\{\begin{array}{lll}\X (x,y) &\rm if&y>-\eps
\\
\Y(x,y) &\rm if&y<+\eps
\end{array}\right.
\end{eqnarray}
and a trajectory of $\X$
switches to a trajectory of
$\Y$ when it reaches $y=-\eps$, and  a trajectory of $\Y$ switches to a trajectory of $\X$ when it reaches $y=\eps$ and so on.

We can illustrate this regularization method with the next simple example.
Consider the planar piece-wise smooth system
\begin{equation}\label{eq:exutkin}
\dot x=0.3+u^3\qquad\dot y=-0.5-u\qquad u={\rm sign}(y)\;.
\end{equation}
If we perform the hysteretic regularization we obtain the trajectories shown in Figure~\ref{fig:exutkin2}.
\begin{figure}
\begin{center}
\includegraphics[height=3cm, width=7cm]{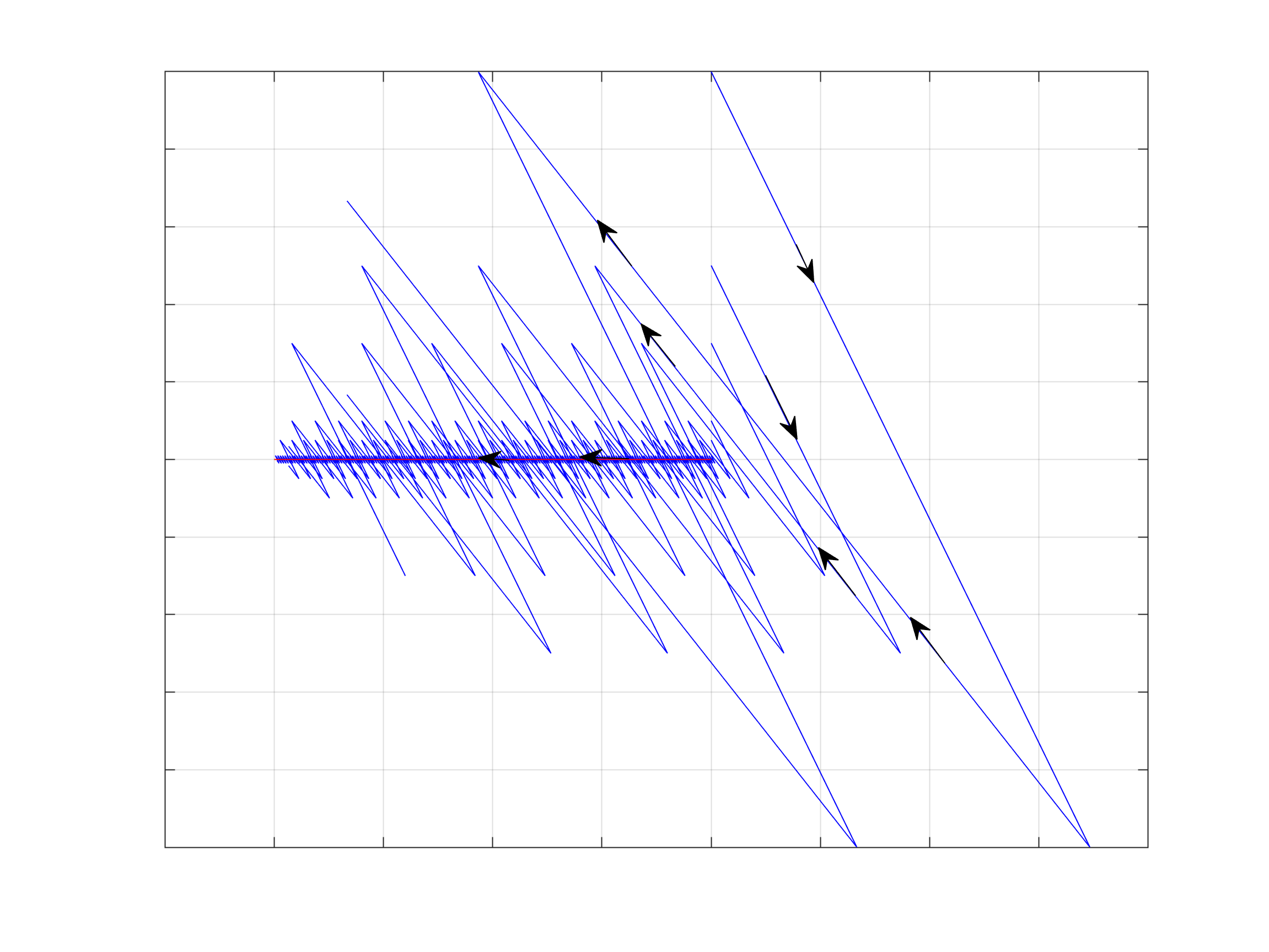}
\caption{The hysteretic behavior for the example \eqref{eq:exutkin} for diminishing values of $\alpha$. The line in red is the solution $x_F(t)$ of the Filippov system~\eqref{Filippov} with $x_F(0)=5$ and $0\le t \le 10$}\label{fig:exutkin2}
\end{center}
\end{figure}
Next Theorem, which is Theorem 1 in \cite{BonetSFG2017}, proves that, in sliding regions, the orbits generated through the hysteretic regularization tend to the orbits of the system generated by the Filippov vector field \eqref{Filippov} in $\Sigma^s$ as the parameter  $\eps\to 0$:
\begin{theorem}[\cite{BonetSFG2017}]
Fix $T >0$ and consider a solution $x_F(t)$ of the Filippov System \eqref{Filippov},
and assume that $|x_F(t)| <M$ for $0\le t\le T$ .
Then, there exists $\eps _0>0$ and  a constant $L>0$ such that,
for $0<\eps\le \eps_0$, if we
consider the hysteretic solution $(x_h(t), y_h(t))$ of \eqref{hyst} with initial condition
$(x_h(0), y_h(0))=(x_0, -\eps)=(x_F(0), -\eps)$, we have
\begin{equation}\label{uta}
|x_h(t)-x_F(t)| \le L \eps \quad 0\le t\le T
\end{equation}
\end{theorem}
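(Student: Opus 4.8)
The plan is to write the hysteretic orbit $(x_h(t),y_h(t))$ of \eqref{hyst} with $(x_h(0),y_h(0))=(x_F(0),-\eps)$ as a concatenation of short arcs that cross the strip $\{|y|\le\eps\}$, and to show that its $x$--component is, up to an error $\OO(\eps)$, the Euler polygon of the scalar equation $\dot x=Z_F(x)$ from \eqref{Filippov}. Since the sliding region $\Sigma^s$ is open and $|x_F(t)|<M$, one can fix a compact tubular neighbourhood $\KK$ of $\{(x_F(t),0):0\le t\le T\}$ with $\KK\subset\Sigma^s$; on $\KK$ one has $-\X_2\ge c>0$, $\Y_2\ge c>0$, together with uniform $\CC^{2}$ bounds on $\X$ and $\Y$. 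The estimates below are carried out as long as the orbit stays in $\KK$, and a standard bootstrap closes this: if $t^*$ denotes the first exit time from $\KK$, the bound $|x_h(t)-x_F(t)|\le L\eps$ proved on $[0,t^*]$ forces $t^*=T$ for $\eps$ small.

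First I would introduce the successive switching times $0=\tau_0<\tau_1<\tau_2<\dots$, so that on $[\tau_{2k},\tau_{2k+1}]$ the orbit follows $\Y$ from $y=-\eps$ to $y=\eps$ (possible since $\Y_2\ge c>0$) and on $[\tau_{2k+1},\tau_{2k+2}]$ it follows $\X$ from $y=\eps$ to $y=-\eps$ (since $-\X_2\ge c>0$), and set $x_k=x_h(\tau_k)$. Using $dx/dy=\Y_1/\Y_2$ (resp. $\X_1/\X_2$) and the lower bounds on $|\Y_2|,|\X_2|$, each such arc lasts a time $\OO(\eps)$ and changes $x$ by $\OO(\eps)$; a $\CC^{2}$ Taylor expansion of the across--strip transition maps then gives, for one full $\Y$--then--$\X$ passage starting at $x=x_{2k}$,
\[
\tau_{2k+2}-\tau_{2k}=2\eps\Big(\tfrac1{\Y_2}-\tfrac1{\X_2}\Big)(x_{2k},0)+\OO(\eps^{2}),\qquad
x_{2k+2}-x_{2k}=2\eps\Big(\tfrac{\Y_1}{\Y_2}-\tfrac{\X_1}{\X_2}\Big)(x_{2k},0)+\OO(\eps^{2}).
\]
Dividing, the mean velocity of $x$ over one passage equals $Z_F(x_{2k})+\OO(\eps)$, i.e. $x_{2k+2}-x_{2k}=Z_F(x_{2k})\,(\tau_{2k+2}-\tau_{2k})+\OO(\eps^{2})$: this is exactly one Euler step for $\dot x=Z_F(x)$, of step size $\OO(\eps)$ and one--step error $\OO(\eps^{2})$.

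Since there are $\OO(T/\eps)$ such passages in $[0,T]$ and $Z_F$ is Lipschitz on $\KK$, a discrete Gronwall inequality upgrades the local $\OO(\eps^{2})$ error into a global bound $|x_{2k}-x_F(\tau_{2k})|\le L'\eps$ at the switching times; between consecutive switching times $x_h$ and $x_F$ each vary only by $\OO(\eps)$, so the estimate improves to $|x_h(t)-x_F(t)|\le L\eps$ for all $0\le t\le T$, which also completes the bootstrap. The delicate point is precisely this accumulation step: because the number of crossings diverges like $\eps^{-1}$, the per--crossing discrepancy must be shown to be genuinely $\OO(\eps^{2})$ — not merely $o(\eps)$ — which is where the $\CC^{2}$ regularity of $\X,\Y$ and a careful expansion of the across--strip transition maps (in particular the dependence of the crossing time and of the $x$--increment on the base point $x_{2k}$) are essential; the other ingredient needing care is the elementary geometric bookkeeping, using $-\X_2,\Y_2\ge c$, that the orbit indeed keeps zig--zagging across the whole strip and never leaves $\{|y|\le\eps\}$.
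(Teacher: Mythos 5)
Your argument is correct, and it is the standard (and I believe the intended) proof: treat the hysteretic zig--zag as an explicit Euler polygon for $\dot x = Z_F(x)$, show each $\Y$--then--$\X$ passage contributes a one--step error that is genuinely $\OO(\eps^{2})$ by a $\CC^{2}$ expansion of the across--strip transition maps, and close by discrete Gronwall over the $\OO(T/\eps)$ crossings, with a bootstrap to keep the orbit in a compact subset of the sliding region where $-\X_{2},\Y_{2}\ge c>0$ and the derivatives are uniformly bounded. Note that the present paper only cites this theorem from \cite{BonetSFG2017} and does not reproduce its proof, so a line-by-line comparison is not possible here; your per-crossing expansions are correct, and the ratio you form correctly recovers $\bigl(\X_{1}\Y_{2}-\Y_{1}\X_{2}\bigr)/\bigl(\Y_{2}-\X_{2}\bigr)$, which is the Filippov field --- equation \eqref{Filippov} in the text has a typographical slip ($\Y_{2}-\Y_{1}$ in the denominator where $\Y_{2}-\X_{2}$ is meant), and your computation matches the intended formula, not the typo.
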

Therefore, we have that, in sliding regions,  both the  Sotomayor-Teixeira  and the hysteretic regularizations  tend to the orbits of Filippov system in $\Sigma^s$.
But the way that both regularizations approximate is different. Hysteresis approximates in a chattering manner, the Sotomayor-Teixeira approximates  in a smooth manner.
So they can produce quite different behaviors when the ``hyperbolicity'' which exists in the sliding region is lost, as happens, for instance, in a visible fold point.

In Theorem \ref{thm:sella-node-si} of  Section \ref{sec:sella-node} we have completed the work in \cite{BonetS16}, and we have  seen the effects of the Sotomayor-Teixeira regularization $Z_{\mu,\eps}$ of the family of Filippov vector fields \eqref{hypofamily} having a grazing-sliding bifurcation of (repelling) periodic orbits. We have seen that the regularized vector field $Z_{\mu,\eps}$ undergoes a saddle-node bifurcation.

In this section we will consider the same family $Z_{\mu}$ in  \eqref{hypofamily} and its regularization $Z_{\mu,h}$ by hysteresis and we will see
that a cascade of bifurcations leading to chaos appears in a interval of the parameter $\mu$ and later, for $\mu>0$ and $\eps$ small enough.

We consider the Filippov system $Z_{\mu}$ in \eqref{hypofamily} of the previous section, which has a grazing-sliding bifurcation, and we perform the hysteretic process.
Moreover, we can suppose that, locally, for $|y|$ small:
\begin{equation}\label{eq:X0nou}
X_0(0,y)=(1+O(y),0).
\end{equation}
In fact, using the implicit function theorem to the second equation of \eqref{def:Xg}, one obtains $x=x(y)$ satisfying:
\[
2x(y) + by +f_2(x(y),y)=0; x(0)=0
\]
and after the change
$ \bar{x}=x-x(y)$ we have a system of the form \eqref{eq:X0nou}.
Terefore, near $(0,0)$ the orbits through $(0,y_0)$ are tangent to $y=y_0$, that is, the points $(0,y_0)$ are folds.
This is not strictly necessary, but simplifies the exposition.

Recall that the vector field $\X_\mu$ has a periodic orbit $\Gamma_\mu$ which is tangent to the line $y=\mu$.
Therefore $\Gamma_\mu$ is entirely contained in  the region $\{y>\eps\}$ if $\mu >\eps$, is tangent to $\SSS_\eps$ for $\mu=\eps$ and intersects the hysteretic region $\{|y|\le \eps\}$ for $ <\mu<\eps$.

Now, instead of $\eps$, we call $\alpha>0$ the regularization parameter. So the boundary layer where will take place the hysteretic process is the strip $|y|\le\alpha$.
As we did in Section \ref{sec:sella-node} we begin by studying the three cases: $\mu<0$, $\mu=0$ and $\mu>0$.
In the hysteretic regularization, the bifurcation
(a sort of) will take place when $\mu=\OO(\alpha)$. In fact,
in the scope of our hypothesis,
it will occur
exactly
at $\mu=\alpha$, and we will see that for $\mu\ge\alpha$, it appears chaotic
behavior.

\subsection{The Poincar\'{e} map}
To understand the dynamics of the hysteretic vector field $Z_{\mu,h}$ we will consider a Poincar\'{e} map defined in the section $\SSS_\al^-$ (see \eqref{eq:S0}) in the following way:
\[
P_\mu:[A,0]\times \{y=\al\}\subset \SSS_\al^-\rightarrow [A,0]\times \{y=\al\}
\]
defined for $A<0$ small enough, but fixed.
The definition of $P$ is as follows:
\begin{itemize}
 \item
If $x\in[A,0]$ $P_\mu(x)$ will be obtained by the hysteretic process applied to the point $(x,\alpha)$.
That is, we consider  the positive orbit beginning at $(x,\alpha)$ of the field $\X_{\mu}$ till it intersects  $y=-\alpha$ at a point $(\bar x,-\al)$, then we consider the orbit of the lower field $\Y=(0,1)$ beginning at $(\bar x,-\al)$ till it arrives to $y=\alpha$, and we define $P_\mu(x)$ as the $x$ coordinate of this last point. Observe that the form of $\Y$ implies that $P_\mu(x)=\bar x$.
\item
If the orbit through $(x,\al)$ does not intersect $y=-\al$ for positive times, we define $P_\mu(x)=0$.
\end{itemize}
We call $(E_\mu,\al)$ to the point whose positive orbit through $\X_{\mu}$ is tangent to  $y=-\alpha$ (in fact at $(0,-\al)$), and therefore:
\begin{equation}\label{eq:discontinuitatE}
 P_\mu (E_\mu)=0,\quad \lim_{x\to E_\mu^-} P_\mu (x)=0 \quad  \forall \mu
\end{equation}
Another important point will be  the point:
\begin{equation}\label{eq:pointD}
(D_\mu,\al)=\Gamma_\mu\cap \SSS_\al^-
\end{equation}
In the next sections we will see that the value of  the right limit $\lim_{x\to E_\mu^+} P_\mu (x)$ will depend on the relative position between $E_\mu$ and $D_\mu$ and therefore of $\mu$ and $\al$.
\\
This will be related with the fact that,  sometimes, some turns around the focus
will be needed to reach $y=-\al$.
As our vector filed has a tangency with $y=-\al$ at the point $(0,-\al)$, this happens when $x>E_\mu:=P_\mu^{-1}(0)$ and we will see that this can be the cause of chaotic behavior.

\subsection{ The case $\mu\le -\al$ }

 Let's begin studying the orbits of the Poincar\'{e} map $P_\mu$ when $\mu<0$.
 In this case, for $|\al|$ small enough, the periodic orbit $\Gamma_\mu$ intersects the hysteretic region $|y|\le \al$ which implies that $D_\mu <E_\mu$ (see \eqref{eq:pointD}).
 In fact, one can take any  $A<D_\mu<E_\mu$.
The Figure \ref{fig:poin_mu_negativa} is a model for this case.
\begin{itemize}
\item
If $x<E_\mu$ then it is clear that $x<P_\mu(x)<0$.
\item
If $E_\mu<x<0$, the positive orbit through $(x,\al)$ does not intersect $y=-\al$ anymore, because it is in the vault of $\Gamma_\mu$ which is repelling.
Therefore, following our convention, we define:
 \[
 P_\mu(x)=0 \quad \mbox{ for } E_\mu \le x\le 0.
 \]
\end{itemize}
 \begin{figure}
\begin{center}
\includegraphics[width=15cm,height=6cm]{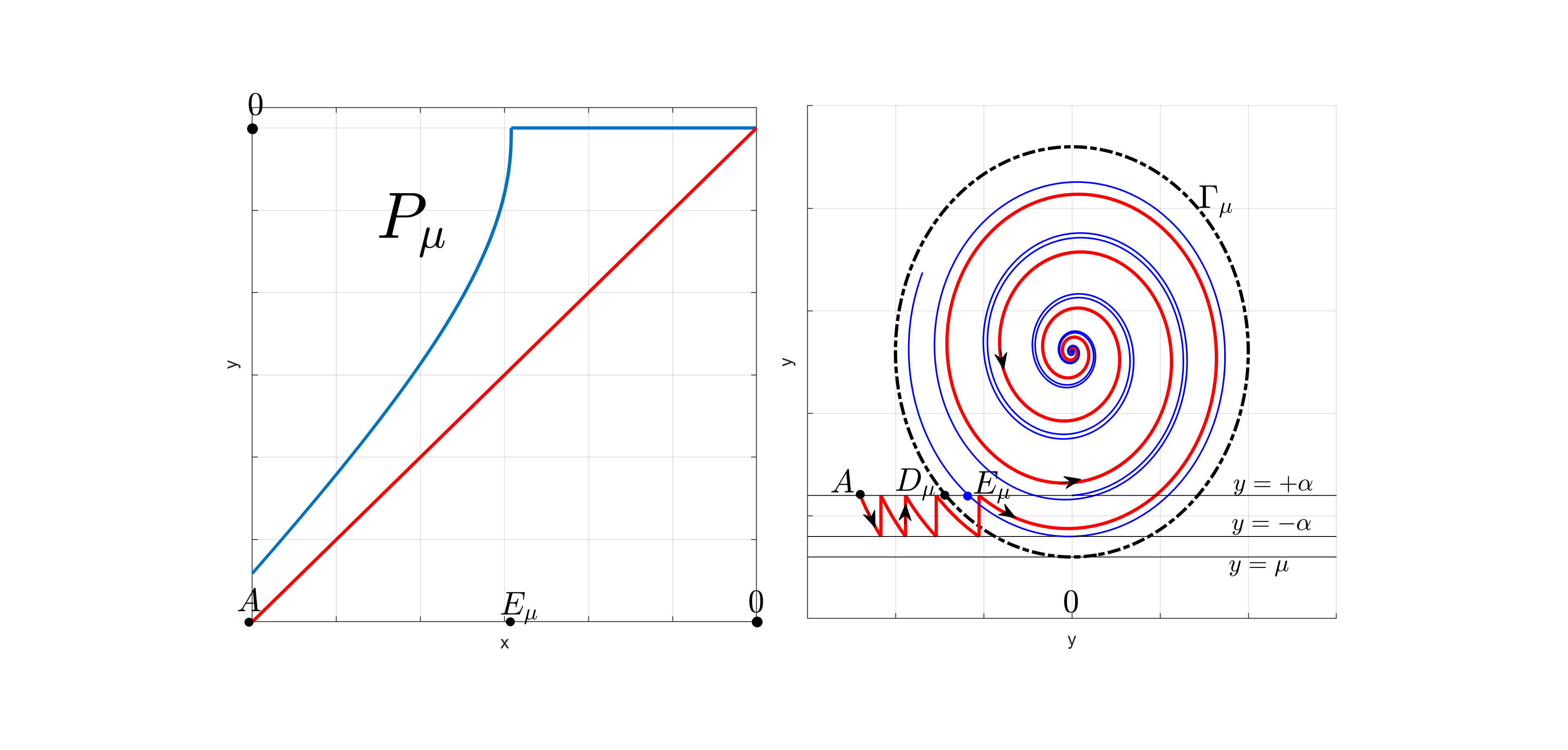}
\caption{The Poincar\'{e} map $P_\mu$ and some iterates of the orbit of the hysteretic regularization of the system \ref{exemple} scaled by $y/\alpha$ and with $\kappa=0.2$, beginning at $(-1.2,\alpha)$ for $\alpha=0.1$ and $\mu=-0.2$}\label{fig:poin_mu_negativa}

\end{center}
\end{figure}
Therefore, for $\mu<0$ the dynamics of the Poincar\'{e} map is simple:
 \[
\forall x\in[A,0], \quad \exists n>0 \mid P_\mu^{n}(x)=0,
 \]
and therefore $(0,0)$ is a global attractor and there is no periodic orbit for $Z_{\mu,h}$.

Observe that the same behavior occurs for $\mu<-\alpha<0$ because, in these cases, $D_\mu<E_\mu$. In fact, also for $\mu = -\al$. In this last case $D_{-\al}=E_{-\al}$.

 \subsection{ The case   $-\al <\mu<\al$}
In this section we will see that the Poincar\'{e} map $P_\mu$ satisfies, on the one hand, that is has a countable set of fixed points $\gamma_n$ giving rise to countable many periodic orbits, but in the other hand that $
\lim_{k\to\infty}P_\mu^k (x)=0 $ almost everywhere.

The positive orbit of  $X_\mu^+$ through  $(0,-\al)$  will intersect  $\SSS_\al^-$. From now and on we call $(A'_\mu,\al)$ its first intersection and $A_\mu=P_\mu(A'_\mu)$ its image through the hysteretic process.
We will first study in detail the case $\mu=0$:

 \subsubsection{ The case $\mu=0$ }
 \begin{figure}
\begin{center}
\includegraphics[width=9cm,height=5cm]{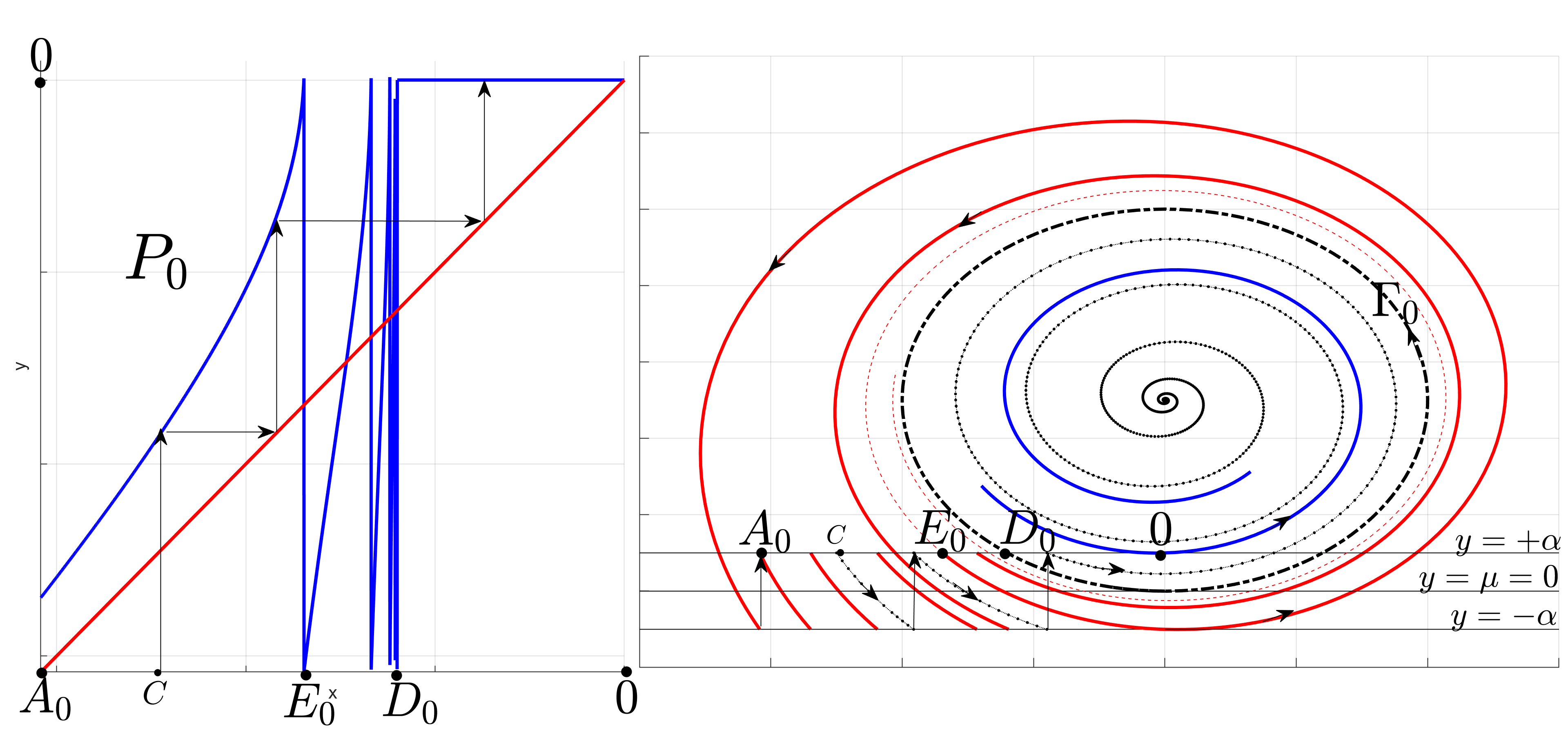}
\caption{The Poincar\'{e} map  $ P_0 $ and some iterates of the hysteretic process for $\mu=0$ for the hysteretic regularization of the system \eqref{exemple} scaled by $y/\alpha$ and with $\kappa=0.2$, $\alpha=0.2$ and $\mu=0$. The orbit tangent to $(0,-\alpha)$ separates the behavior and its infinite intersections on the interval $[E_0,D_0]$ produce on $D_0$ an accumulation of the discontinuities of the Poincar\'{e} map.}
\label{fig:poin mu zero}
\end{center}
\end{figure}
The model for this case is Figure \ref{fig:poin mu zero}.
As in this case the periodic orbit $\Gamma_0$ is tangent to $y=0$, its orbit never intersects $y=-\al$ and therefore $P_0(D_0)=0$.
Moreover, as $\Gamma_0$ is repelling the positive orbit of any point $(x,\al)$ with $x>D_0$ does not intersect $y=-\al$, therefore:
 \[
  P_0(x)=0,\quad D_0\le x\le 0.
 \]
In this case we have that $E_0<D_0$, therefore the positive orbit of  $\X_0$ through  $(E_0,\al)$, after its tangency at $y=-\al$, will intersect  $\SSS_\al^-$ at $(A'_0,\al)$ and $A_0=P_0(A'_0)$ its image through the hysteretic process.
Clearly $A'_0<A_0<E_0<D_0$ and we can consider the Poincar\'{e} map in $[A_0,0]$, then:
\begin{equation}\label{eq:discontinuitatE}
 P_0(E_0)=0,\quad \lim_{x\to E_0^-} P(x)=0 \quad  \lim_{x\to E_0^+}P(x)= A_0.
\end{equation}
Therefore the Poincar\'{e} map $P_0$ has  a discontinuity at $x=E_0$.

As $E_0<D_0$, the backward orbit of $(E_0,\al)$ through $\X_0$ spirals and accumulates to $\Gamma_0$.
 Let's call $(E_n, \al)$, $n \ge 1$,    the  infinite cuts of this negative orbit with $\SSS^-_\al$.
 Then we have, for $n\ge 1$:
 \[
E_n \in [E_0,D_0], \quad E_n<E_{n+1},\quad  \lim_{n\to\infty}E_n=D_0, \quad P_0(E_n)=0.
\]
Moreover, as in \eqref{eq:discontinuitatE}:
\begin{equation}\label{eq:discEn}
 \lim_{x\to E_n^-}P_0(x)=0 \quad  \lim_{x\to E_n^+}P_0(x)= A_0
\end{equation}
Summarizing, $P_0$ has  an accumulation of the discontinuities at $x=E_n$  which accumulate to $D_0$, is increasing   in $[E_n,E_{n+1}]$ and covers the interval $[A_0,0]$:
\[
P_0([E_n,E_{n+1}])=[A_0,0].
\]
Moreover, if we consider the Poincar\'{e} map $\pi_\mu$ associated to the periodic orbit $\Gamma_\mu$ through the flow of $\X_\mu$ on the section $\Sigma_\al^-$ (recall that $\Gamma_\mu$ intersects transversely this section):
\begin{equation}\label{eq:linealinterval}
\begin{split}
\pi_0:  [E_n,E_{n+1}] &\to [E_{n-1},E_n]\ n \ge 1 \\
\pi_0:  [E_0,E_{1}] &\to [A'_0,E_0]\\
\end{split}
\end{equation}
and this has several consequences:
\begin{itemize}
\item
As $\Gamma_0$ is repelling, we can assume that in the interval $[A_0, D_0]$ there exists constants $1<\xi < \eta$ such that $\xi <\pi_0 '<\eta$, therefore we have that the intervals shrink by a factor:
\begin{equation}\label{eq:factor}
\rho_1 |E_n-E_{n-1}|< |E_{n+1}-E_n|\le \rho_2 |E_n-E_{n-1}|, \ \rho_2=\xi^{-1} <1, \ \rho_1=\eta^{-1}<1
\end{equation}
\item
The definition of the map $P_0$ implies that, for $x \in [E_n,E_{n+1}]$,
\[
{P_0}_{|[E_n,E_{n+1}]}(x)={P_0}_{|[E_0, E_1]}\circ \pi_0^{(n)}(x) , \ n \ge 1
\]
\item
Observe that for $x\in [E_0,E_{1}] $,  $\pi_0 (x)\in [A'_0,E_0]$ and the definition of $P_0$ is again
$P_0(x)=P_0\circ \pi_0 (x)$.
Heuristically, if $x\in [E_n,E_{n+1}]$ it will take $n$ turns around the focus till $P_0(x)$ will be settled.
\item
For points in $[A,E_0]$ the map is given by the hysteretic map: $P_0(x)=P^h(x)$.
\item
As a consequence, $P_0$ has, near $x=D_0$, an accumulation of infinitely many fixed points:
\[
\gamma_n \in [E_n,E_{n+1}]
\]
which correspond to periodic orbits $\Gamma_n$ with increasing  periods which  give several turns before closing.
\end{itemize}
But, in despite of this apparently intricate behavior,  we can ensure that the measure of points $x\in[A,0]$ such that $\exists k>0 \mid P_0^{k}(x)=0$ is the total measure of interval $[A,0]$.
That is,
\[
\lim_{k\to\infty}P_0^k (x)=0 \quad \mbox{almost everywhere.}
\]
This is suggested in Figure \ref{fig:poin mu zero 4}.

\begin{figure}
\begin{center}
\includegraphics[width=10cm,height=5cm]{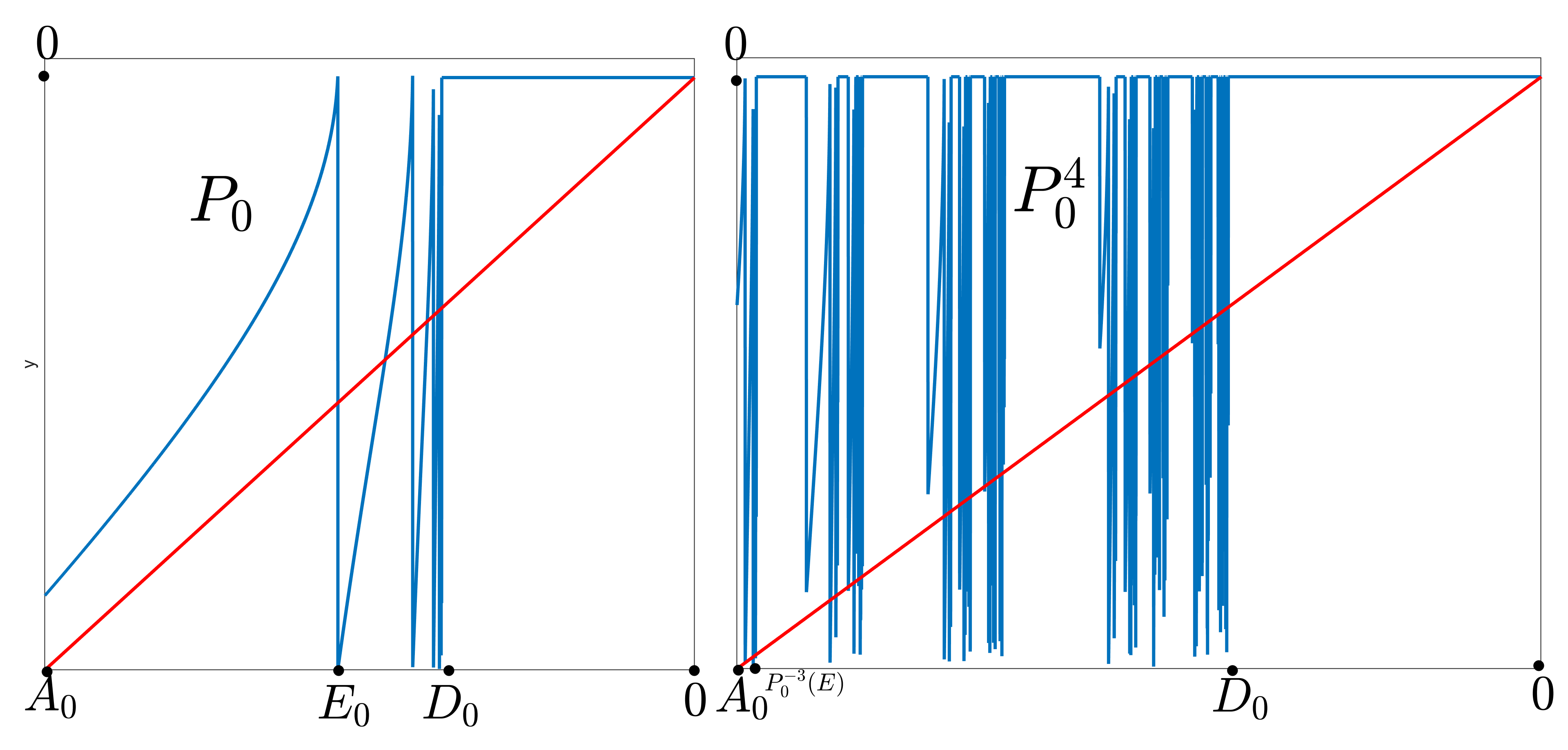}
\caption{The forth iterate, $P_0^4$, of the Poincar\'{e} map for the example of Figure \ref{fig:poin mu zero}. One can see the accumulation of discontinuities of $ P_0 $ on point $ D_0 $, and how this behavior is repeating in a growing number of subintervals. Also the flat parts of $P_0^n$ in all these intervals is growing to the full interval $[A_0,0]$ }
\label{fig:poin mu zero 4}
\end{center}
\end{figure}

To see this fact, let's first assume that $P_0(A_0)=A_0$, and consider an idealized linear model:
shown in the Figure \ref{fig:model ideal}.

Let
$T:[0,1]\rightarrow[0,1]$, and there exist a sequence $0=E_{-1}<E_0<\dots E_n <\sigma<1$ such that
\begin{itemize}
\item
$T(0)=0$, $T(x)=1$, for $x \ge \sigma$.
\item
$T$ is linear and increasing in the intervals $ I_n:=(E_n,E_{n+1})$ and
$T(E_n)=1$, $\lim _{x\to E_n^+}T(x)=0$, $\lim _{x\to E_{n+1}^-}T(x)=1$
\end{itemize}
Then the size of the pre-images of $1$ of the iterates of $T$ tends to $1$:
\[
\lim_{k\to \infty} \mu (T^{-k}(1))=1
\]
To see this we proceed as  follows:
\begin{itemize}
\item
Call $M_k=\{ x\in [0,1], \ T^k(x)=1\}$.
\item
Clearly $M_1=\cup _{n\ge -1}{E_n}\cup [\sigma, 1]$ therefore $\mu(M_1)=\sigma$, and the measure of points that $T$ does not send to $1$ is
$1-\sigma $
\item
To find $\mu(T^{-2}(1))$, only one has to take account that  $T ([E_n,E_{n+1}])=[0,1]$ and that
\[
\mu \{x\in [E_n,E_{n+1}], \ T(x)\ge \sigma\})=(E_{n+1}-E_n)\sigma
\]
and that
\[
\sum _{n\ge -1}(E_{n+1}-E_n)\sigma=(1-\sigma)\sigma
\]
Therefore:
\[
\mu(T^{-2}(1))= \sigma+(1-\sigma)\sigma .
\]
Moreover, the measure of points that $T^2$ does not send to $1$ is
$1-\left(\sigma+(1-\sigma)\sigma \right)$
\item
Proceeding by induction and taking into account that the map $T^k$ has the same structure than $T$ we have
\[
\begin{split}
\mu(T^{-k}(1))&=\sigma+(1-\sigma)\sigma +(1-\sigma-(1-\sigma)\sigma)\sigma+...=\sigma(1+(1-\sigma)+(1-\sigma)^2+..+(1-\sigma)^k)\\
&= \sigma \frac{1- (1-\sigma)^{k+1}}{1-(1-\sigma)}=1- (1-\sigma)^{k+1}
\end{split}
\]
Then we have
\[
\lim\limits_{n\to\infty}\mu(T^{-k}(1))=1
\]
\end{itemize}
\begin{figure}
\begin{center}
\includegraphics[width=7cm,height=2.3cm]{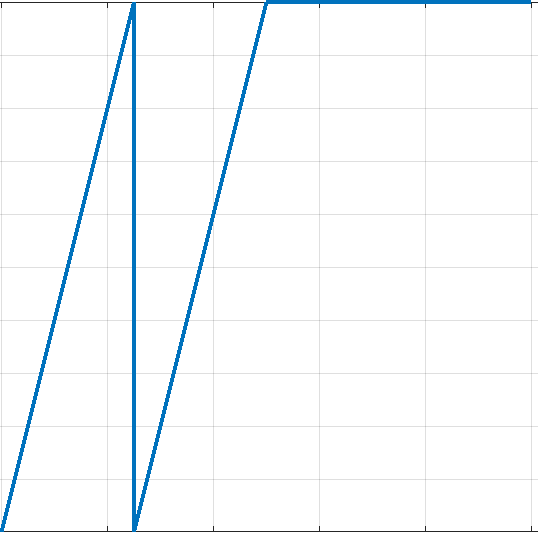}
\caption{The linear idealized model for $\mu=0$}
\label{fig:model ideal}
\end{center}
\end{figure}
In the case that we have a map  $\bar T$ satisfying the same properties than $T$ except in the first interval $[0,E_0]$ where it satisfies $\bar T(0)=\gamma>0$, $\lim _{x\to E_{1}^+}T(x)=1$, the main observation is to compare this map with the previous one and observe that:
\[
\mu (\{x \in [0,1], \ \bar T(x)\ge \sigma\}) \ge\mu (\{x \in [0,1], \  T(x)\ge \sigma\}), \ T(x)=\bar T(x), \ x\in [0,1]\setminus[0,E_0]
\]
and that both $T$ and $\bar T$ are increasing functions.
This gives that
\[
\lim\limits_{n\to\infty}\mu(\bar T^{-k}(1))\ge \lim\limits_{n\to\infty}\mu(T^{-k}(1))=1.
\]

\subsubsection{ The general case  $-\al <\mu<\al$ }

The dynamics when $-\al <\mu<\al$ is analog to  the one for $\mu=0$ because $E_\mu <D_\mu <0$ and the model  is again Figure \ref{fig:poin mu zero}.
The periodic orbit $\Gamma_\mu$ is tangent to $y=\mu$, which is in the regularity zone.
Its orbit never intersects $y=-\al$ and therefore:
 \[
  P_\mu(x)=0,\quad D_\mu\le x\le 0.
 \]
We call  $(A'_\mu,\al)$ the first intersection of this positive orbit  of $(E_\mu,\al)$ through $\X_\mu$ with   $\SSS_\al^+$ and $A_\mu=P_\mu(A'_\mu)$ its image through the hysteretic process.
We consider the Poincar\'{e} map in $[A_\mu,0]$, then as in \eqref{eq:discontinuitatE}:
\begin{equation}\label{eq:discontinuitatEmu}
P_\mu(E_\mu)=0,\quad \lim_{x\to E_\mu^-} P(x)=0 \quad  \lim_{x\to E_\mu^+}P(x)= A_\mu.
\end{equation}
Therefore the Poincar\'{e} map $P_\mu$ has  a discontinuity at $x=E_\mu<D_\mu$.

If we take the backward orbit of $(E_\mu,\al)$ through $\X_0$ spirals and accumulates to $\Gamma_\mu$.
 Lets call $(E_n, \al)$   the  infinite cuts of this negative orbit with $\SSS^-_\al$
 Then we have that, as in \eqref{eq:discEn}:
 \[
E_n \in [E_\mu,D_\mu], \quad \lim_{n\to\infty}E_n=D_\mu, \quad P_\mu(E_n)=0,\quad P_\mu(x)=0, \forall x\in[D_\mu,0].
\]
Moreover we have the same property \eqref{eq:discEn} and therefore $P_\mu$ has the same properties of $P_0$.

\subsection{ The route to chaos. The case $\mu=\alpha$ }

 \begin{figure}
\begin{center}
\includegraphics[width=10cm,height=5cm]{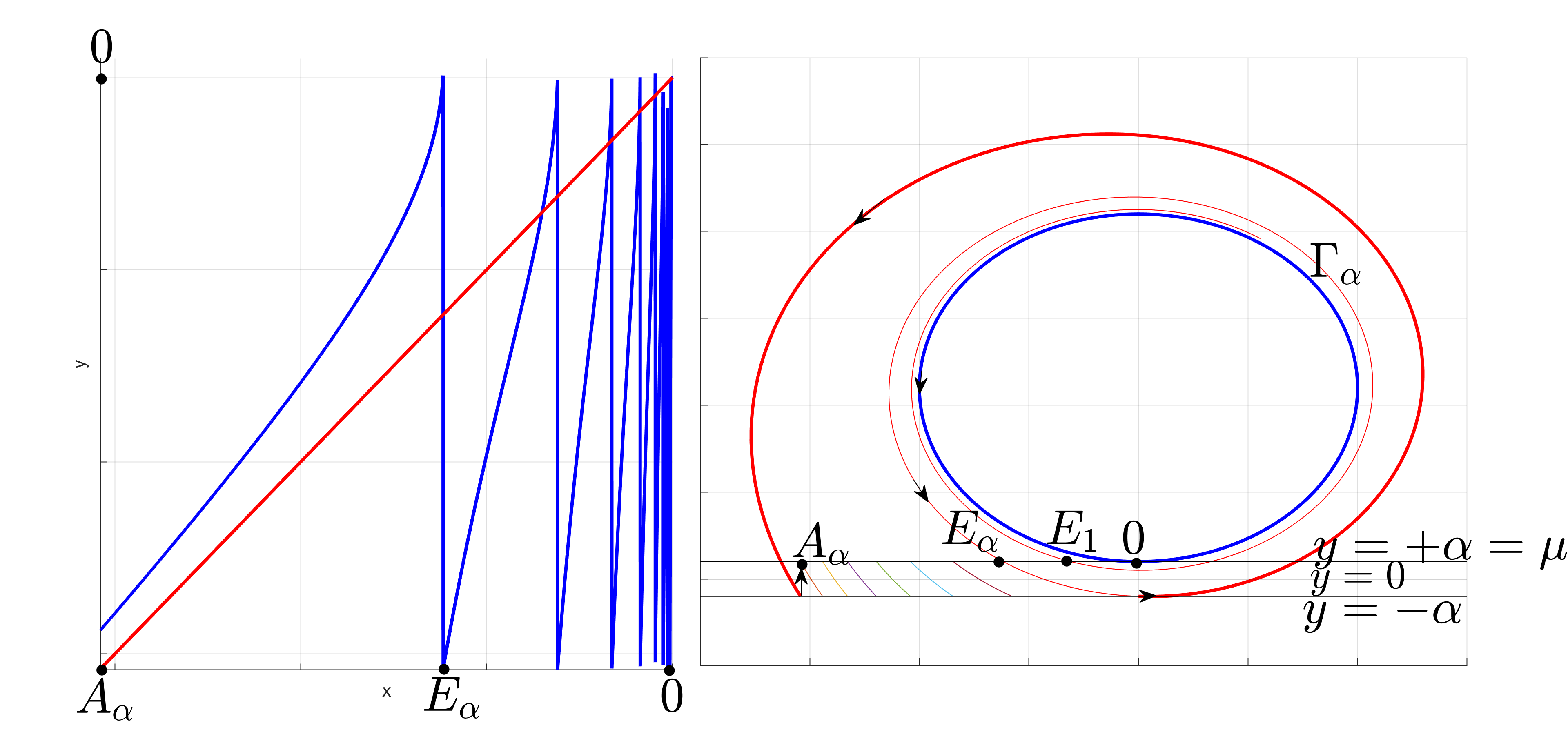}
\caption{The Poincar\'{e} map and some iterates of the hysteretic process for $\mu=\alpha$. The accumulation of discontinuities is on $0$. This will produce chaos as Baker-like map with infinitely many discontinuity points.The picture is made with the hysteretic regularization of the system \ref{exemple} scaled by $y/\alpha$ and with $\kappa=0.2$, $\alpha=0.1$ and $\mu=0.1$ }
\label{fig:poin mu alpha}
\end{center}
\end{figure}
In this case, the periodic orbit $\Gamma_\al$ is tangent to $y=\al$ at   the point  $(0,\al)$ and therefore $D_\al=0$. Then the discontinuities $E_n$,  $n \ge 1$  accumulate at $0$, and there is properly chaos. The model is in Figure \ref{fig:poin mu alpha}.
More concretely, as in  case $-\al<\mu<\al$, we also have $(E_n, \al)$, $n \ge 1$, the  infinite cuts of the negative orbit of $(E_\al,\al)$ with $\SSS^-_\al$.
 Then we have, for $n\ge 1$, and calling $E_0=E_\al$:
 \[
E_n \in [E_0,0], \quad E_n<E_{n+1},\quad  \lim_{n\to\infty}E_n=0, \quad P_\alpha(E_n)=0.
\]
and as in \eqref{eq:discontinuitatE}:
\begin{equation}\label{eq:discEn}
 \lim_{x\to E_n^-}P_\alpha(x)=0 \quad  \lim_{x\to E_n^+}P_\alpha(x)= A
\end{equation}
That is, $P_\alpha$ has   discontinuities at $x=E_n$  which accumulate to $x=0$, is increasing in $[E_n,E_{n+1}]$ and covers the interval $[A,0]$:
\[
P_\alpha([E_n,E_{n+1}])=[A,0].
\]
Now
the periodic orbit $\Gamma_\alpha$ does not intersect transversely the section $\Sigma_\al^-$, but it s tangent to it.  Then, the Poincar\'{e} map $\pi_\al$ associated to
the periodic orbit $\Gamma_\alpha$ it is not a properly Poincar\'{e} map in a neighborhood of $D_0=0$. For this reason we will call this map
$\tilde \pi$ and we observe that it satisfies:
\begin{equation}\label{eq:linealinterval}
\begin{split}
\tilde \pi:  [E_n,E_{n+1}] &\to [E_{n-1},E_n]\ n \ge 1 \\
\tilde \pi:  [E_0,E_{1}] &\to [A',E_0]\\
\end{split}
\end{equation}
Next proposition, whose proof is given in Section \ref{sec:provapropos}, shows that  $\tilde \pi'(0)>1$.
\begin{proposition}\label{prop:tildepi}
$\tilde \pi'(0)=\sqrt{\pi'(0)}$, where $\pi$ is Poincar\'{e} map  defined in \eqref{eq:poincaremap}.
\end{proposition}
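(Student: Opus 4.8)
The plan is to relate the non-standard "Poincaré map" $\tilde\pi$ at the tangency to the genuine Poincaré map $\pi$ of the periodic orbit $\Gamma_0$ of $X_0^+$ on the transversal section $\{x=0\}$ defined in \eqref{eq:poincaremap}, for which we already know $\pi(0)=0$ and $\pi'(0)>1$. The geometric picture is that $\Gamma_\alpha$ is tangent to $y=\alpha$ at the point $(0,\alpha)$, which (after the normalization \eqref{eq:X0nou}) is a fold point of $X_\alpha^+$. The map $\tilde\pi$ takes a point $(E_n,\alpha)\in\SSS_\al^-$, follows the backward orbit of $X_\alpha^+$, and records the next intersection $(E_{n-1},\alpha)$ with $\SSS_\al^-$ — i.e. it is the return map of $X_\alpha^+$ to the half-line $\{y=\alpha,\ x\le 0\}$, taken near its tangency with $\Gamma_\alpha$ at $x=0$.

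First I would use the fold normal form. Near $(0,\alpha)$, by \eqref{eq:X0nou} together with the visible-fold condition \eqref{cond:visiblefold}, the field $X_\alpha^+$ has a quadratic tangency with the line $y=\alpha$, so locally $y-\alpha \approx c\,x^2$ along orbits close to the tangent orbit, with $c>0$ a constant depending on $\partial X_2^+/\partial x$. This gives a "corner" change of coordinates: a point at signed horizontal distance $x<0$ from the tangency on $\SSS_\al^-$ corresponds, under the flow of $X_\alpha^+$, to a point on the auxiliary transversal $\{x=0\}$ at height $y-\alpha \approx c\,x^2$ above the tangency, and conversely. Explicitly, let $\Psi$ denote the (local, orientation-preserving) diffeomorphism from a one-sided neighborhood of $0$ in $\SSS_\al^-$ to a one-sided neighborhood of $0$ in $\{x=0\}$, sending $x\mapsto \Psi(x)$ with $\Psi(x)=c\,x^2+o(x^2)$; this is well-defined because $\Gamma_\alpha$ is repelling so the return is defined on both sides. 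By definition of $\pi$ on $\{x=0\}$ and of $\tilde\pi$ on $\SSS_\al^-$, the two maps are conjugate: $\pi\circ\Psi = \Psi\circ\tilde\pi$ near $0$, since following the flow of $X_\alpha^+$ once around $\Gamma_\alpha$ is the same closed orbit whether we read it off on $\SSS_\al^-$ or on $\{x=0\}$.

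Second I would extract the derivative. From $\pi(\Psi(x)) = \Psi(\tilde\pi(x))$, with $\Psi(x)=cx^2+o(x^2)$, $\pi(y)=\pi'(0)y+O(y^2)$, $\tilde\pi(x)=\tilde\pi'(0)x+o(x)$, one computes the leading order on each side: the left side is $\pi'(0)\,c\,x^2+o(x^2)$, the right side is $c\,(\tilde\pi'(0))^2 x^2+o(x^2)$. Equating leading coefficients gives $(\tilde\pi'(0))^2=\pi'(0)$, and since $\tilde\pi$ is increasing (it maps $[E_n,E_{n+1}]$ orientation-preservingly onto $[E_{n-1},E_n]$, as recorded in \eqref{eq:linealinterval}) we take the positive root, $\tilde\pi'(0)=\sqrt{\pi'(0)}$. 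In particular $\tilde\pi'(0)>1$ because $\pi'(0)>1$.

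The main obstacle — and the only place that needs care — is justifying that $\Psi$ is genuinely $C^1$ up to the tangency with $\Psi(x)=cx^2+o(x^2)$ and $\Psi'(0)=0$ in the appropriate one-sided sense, and hence that the conjugacy $\pi\circ\Psi=\Psi\circ\tilde\pi$ can be differentiated in the limit $x\to 0^-$ to yield the clean relation between derivatives. This is a standard computation with the flow near a quadratic fold (one integrates $X_\alpha^+$ from $\SSS_\al^-$ to $\{x=0\}$ using \eqref{generalform}, \eqref{cond:visiblefold}, \eqref{eq:X0nou}), but the square-root-type behavior of the inverse map $\Psi^{-1}$ at $0$ means one must be slightly careful about which side one differentiates on and about the $o(\cdot)$ remainders; writing everything in the variable $\sqrt{y-\alpha}$ on the $\{x=0\}$ side makes the singularity disappear and the argument routine.
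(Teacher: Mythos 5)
Your proposal is correct and follows essentially the same route as the paper: write $\tilde\pi$ as a conjugate $D^{-1}\circ\pi_\alpha\circ D$ of the genuine Poincar\'e map $\pi_\alpha$ (with $\pi_\alpha'(\alpha)=\pi'(0)$) by the quadratic fold map, then read off the derivative from the leading order; your $\Psi$ is exactly the map $D$ of Lemma \ref{lem:partfinalPe} after a shift by $\alpha$. The ``main obstacle'' you flag --- $C^1$/$C^2$ regularity of the fold map up to the tangency and control of the remainders --- is precisely what Lemma \ref{lem:partfinalPe} supplies, so the step you single out as needing care is already done elsewhere in the paper.
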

As a consequence of the previous proposition, we can as well determine constants like in \eqref{eq:factor} (in fact its square roots) as in the case $\mu=0$:
\begin{figure}
\begin{center}
\includegraphics[width=9cm,height=5cm]{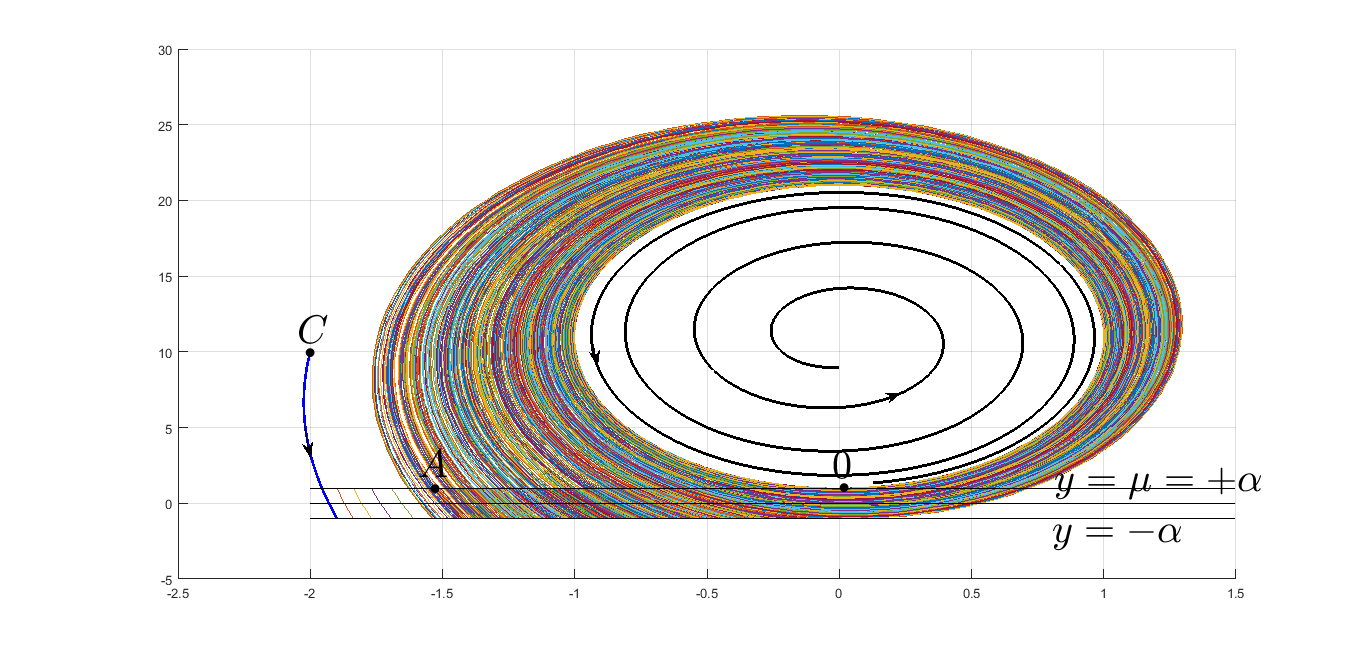}
\caption{The evidence of the chaotic behavior for $\mu=\alpha$. The orbit beginning at C fills densely a chaotic region. For better understanding the vertical orbits of the lower field are dismissed. The picture is made with the hysteretic regularization of the system \ref{exemple} scaled by $y/\alpha$ and with $\kappa=0.2$, $\alpha=0.1$ and $\mu=0.1$ }
\label{fig:attractor mu alpha}
\end{center}
\end{figure}
\begin{itemize}
\item
In the interval $[A, D_0]$ there exists constants $1<\tilde \xi < \tilde \eta$ such that $\tilde \xi <\tilde \pi '<\tilde \eta$, therefore we have that the intervals shrink by a factor:
\[
\tilde \rho_1 |E_n-E_{n-1}|< |E_{n+1}-E_n|\le \tilde \rho_2 |E_n-E_{n-1}|, \ \tilde \rho_2=\tilde \xi^{-1} <1, \ \tilde \rho_1=\tilde \eta^{-1}<1
\]
\item
The definition of the map $P_\alpha $ implies that, for $x \in [E_n,E_{n+1}]$,
\[
{P_\alpha}_{|[E_n,E_{n+1}]}(x)={P_\alpha}_{|[E_0, E_1]}\circ \tilde \pi^{(n)}(x) , \ n \ge 1
\]
\item
Observe that for $x\in [E_0,E_{1}] $,  $\tilde \pi(x)\in [A',E_0]$ and the definition of $P_\alpha$ is again
$P_\alpha(x)=P_\alpha\circ \tilde \pi(x)$.
Heuristically, if $x\in [E_n,E_{n+1}]$ it will take $n$ turns around the focus till $P_\alpha(x)$ will be settled.
\item
As a consequence, $P_\alpha$ has at $x=0$, an accumulation of infinitely many fixed points:
\[
\gamma_n
\in [E_n,E_{n+1}]
\]
which correspond to periodic orbits $\Gamma_n$ with increasing  periods.
\end{itemize}
The model is in Figure \ref{fig:poin mu alpha}. Is easy to see that ${P'_\alpha}_{|[E_n,E_{n+1}]}(x)>1$ (the singularity on the right extreme is $O(\sqrt{ E_{n+1}-x})$).
This kind of maps are  studied in \cite{GardiniM2015} and are called Baker-like maps. Among other problems where they appear, these maps rely on the study of the grazing bifurcations of impacting mechanical oscillators. In \cite{Nordmark1997} a one-dimensional limit mapping can be obtained through renormalization as we let the bifurcation  parameter go to zero. The mapping obtained is piece-wise continuous with an infinite number of branches, that is, a Baker-like map. These maps present robust chaotic attractors with the three conditions of Devaney: Transitivity, Density and Sensitivity. See Figure \ref{fig:attractor mu alpha}.

We can therefore conclude that, for $\mu=\al$, the hysteretic regularization $Z_{\mu,h}$ exhibits chaotic behavior.

\subsection{ The persistence of chaos. The case $\mu>\alpha\rightarrow 0$ }.

When $\mu \ge \al$
there will successive bifurcations
as the number of points  points $(E_n,\al)$, which correspond to the cuts of the negative orbit passing trough
$(E_\mu,\al)$ with $\SSS^{-}_\al$, changes.
More concretely, when $\mu=\alpha$ there where infinite numerable  $E_n$, $n\ge 1$, but as
$\mu$ increases only a finite number of cuts $(E_n,\al)$, $1\le n\le N=N(\mu)$, persist until just one.
In Figure \ref{fig:elstalls}
% \ref{fig:untall}, \ref{fig:dostalls} and \ref{fig:trestalls}
we see the cases of one, two and three cuts, with their relative Poincar\'{e} maps.
One can guess that the maps are still chaotic.

\begin{figure}
\begin{center}
\includegraphics[width=14cm,height=8cm]{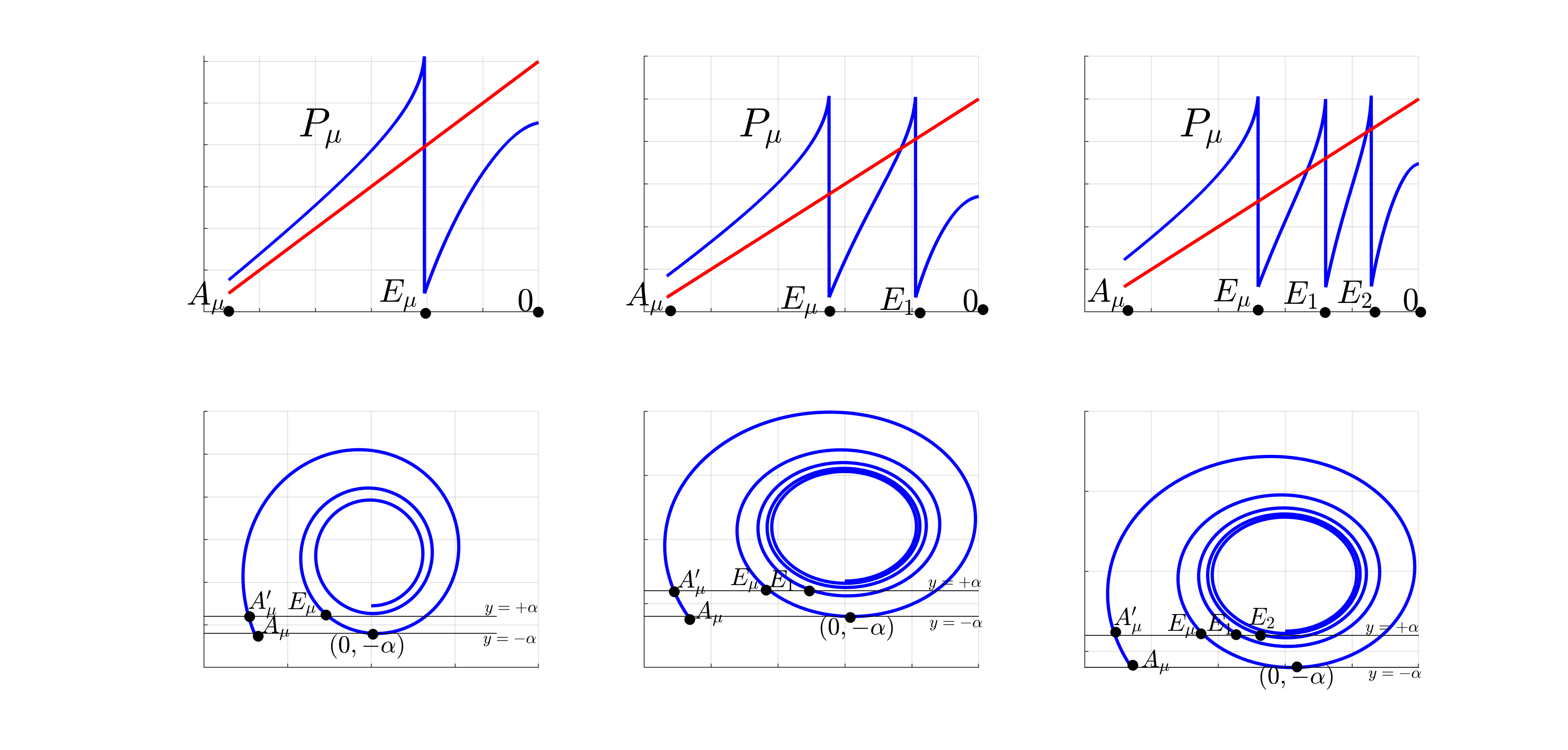}
\caption{The orbits passing trough $(0,-\alpha)$ for negative times with one, two and three cuts on $y=+\alpha$, and their relative Poincar\'{e} maps with one, two and three singularities. The picture is made with the hysteretic regularization of the system \ref{exemple} scaled by $y/\alpha$ and with $\kappa=0.047$, $\alpha=0.25$ and $\mu=1$ }
\label{fig:elstalls}
\end{center}
\end{figure}

In the next section we will find the relation between $\mu$ and $\alpha$ small enough, $\mu=\mu_1(\al)$  such that there is only one cut, and we will prove that in this case the hysteretic system still presents chaos.
We will see it finding $k\ge 1$ such that $P_{\mu_1}^k$ has two sub-intervals forming a horseshoe graph (see \cite{GlendinningJ2019})  and therefore the map $P_{\mu_1}$ is conjugated to a shift of two symbols.
In Figure \ref{fig:horseshoedetall} these intervals are shown.
The case $\mu=\mu_N(\al)$ where one has $N$ cuts is analogous but one should have $N+1$ intervals and therefore a shift of $N+1$ symbols.

This section is devoted to prove the existence of chaos in the case that the orbit in backward time through the point $(E_\mu,\al)$ does not cut the section $y=\al$ anymore.
Equivalently we can consider the orbit through $(0,-\al)$ (recall that the forward orbit of $(E_\mu,\al)$ is tangent to $y=-\al$ at this point) and compute $\pi _\mu^{-1}((0,-\al))$ where $\pi_\mu$ is the Poincar\'{e} map associated to the periodic orbit $\Gamma_\mu$ defined on the section $x=0$ in a neighborhood $\II$ of the point $y=\mu$:
\begin{equation}\label{eq:poincarex0}
\begin{split}
\pi_\mu :\{0\}\times \II  & \to \{x=0\} \\
(0,y) & \mapsto (0, \pi_\mu (y))  , \ \pi_\mu(\mu)=\mu
\end{split}
\end{equation}
Then we have
\[
\begin{split}
\pi_\mu^{-1}(-\al)&=\pi^{-1}_\mu(\mu)+(\pi_\mu^{-1})'(\mu)(-\al-\mu)+\OO((-\al-\mu)^2)\\
&=
\mu +(\pi_\mu^{-1})'(\mu)(-\al-\mu)+\OO((-\al-\mu)^2)
\end{split}
\]
Then $\pi^{-1}(-\al)>\al$ if
\[
\mu > \sigma\al
\]
where $\sigma$ is any value such that $\sigma >\frac{1+(\pi_\mu^{-1})'(\mu)}{1-(\pi_\mu^{-1})'(\mu)}$.
\subsection{The case  $\mu = \mu_1(\al)$; Finding chaos for a single singularity}
\begin{figure}
\begin{center}
\includegraphics[width=9cm,height=5cm]{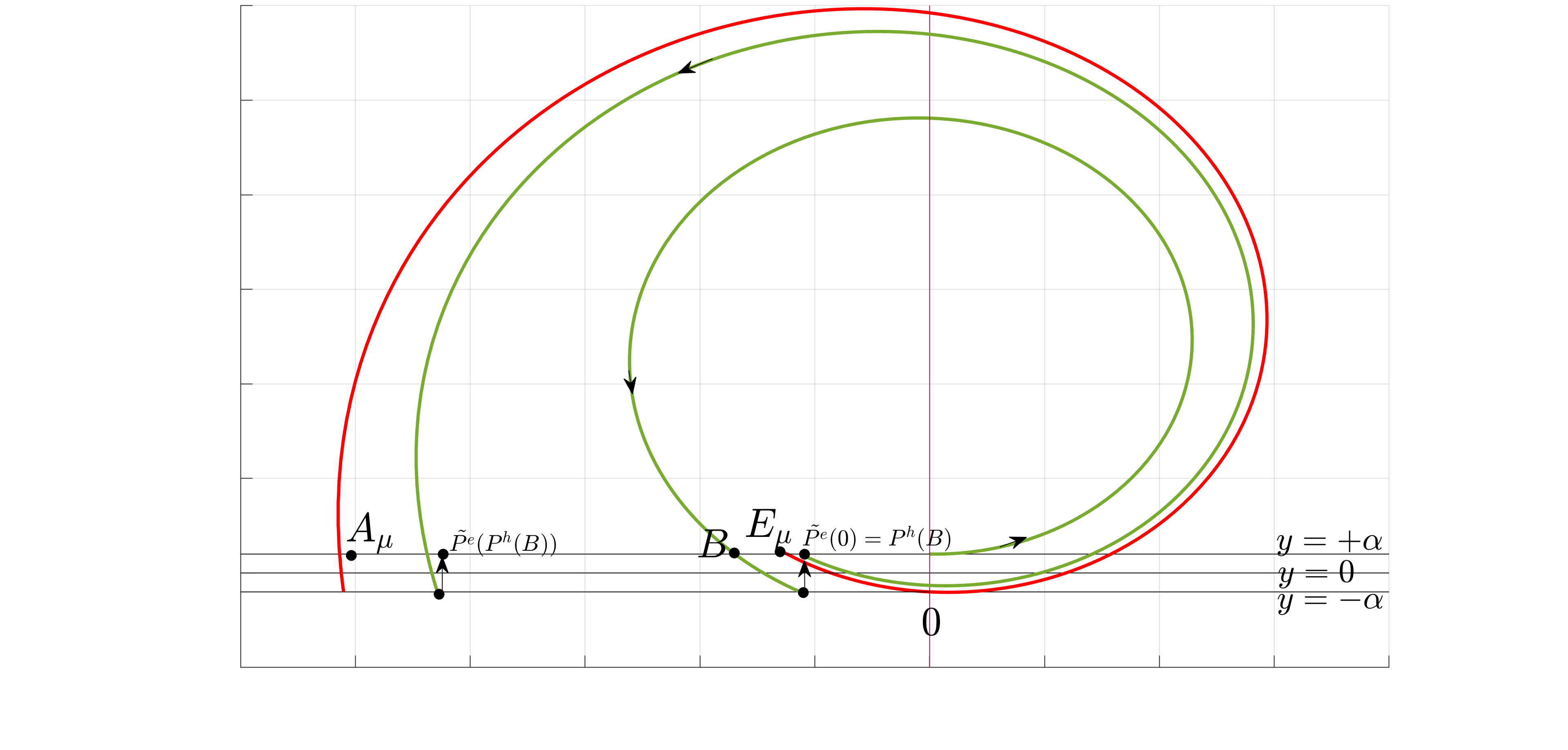}
\caption{The definition of the Poincar\'{e} map $P_\mu$, depending if $x\in [A_\mu,E_\mu]$ or $x\in [E_\mu,0]$. The point $B$ is the intersection with $\SSS_\al^-$ of the forward orbit of system $X_\mu^+$ from $(0,+\alpha)$ }
\label{fig:p_h}
\end{center}
\end{figure}
Recall that  the value $(\pi_\mu^{-1})'(\mu)$ is independent of $\mu$ and therefore  $(\pi_\mu^{-1})'(\mu)=(\pi^{-1})'(0)=\frac1{\pi'(0)}<1$, where $\pi$ is the Poincar\'{e} map given in \eqref{eq:poincaremap}.
Therefore, from now on in this section, we consider $\mu$ and $\al$ related by:
\begin{equation}\label{eq:mu1}
\mu =\mu_1(\al)=\sigma_1 \al, %\quad
\end{equation}
where
\begin{equation}\label{eq:sigma1}
\sigma_1 >%2
\frac{1+(\pi ^{-1})'(0)}{1-(\pi ^{-1})'(0)}=%2
\frac{\pi'(0)+1}{\pi'(0)-1}>2.%4.
\end{equation}
In the sequel, we will change the condition on $\sigma_1$ several times, but a finite number of them.
In this case the Poincar\'{e} map $P_\mu$:
\[
P_\mu: [A_\mu, 0]\times \{y=\al\}\subset \SSS_\al^- \to
\SSS_\al^-
\]
has only one singularity at $E_\mu$
%, like Figure \ref{fig:horseshoedetall},
and our goal is to define and obtain asymptotic formulas for it.
Recall that the point
$(A_\mu',\al)$ corresponds to the first cut of the positive orbit of the point $(E_\mu, \al)$ with $\SSS^-_\al$.
Equivalently, it is the first cut of the positive orbit of the point  $(0,-\al)$ with $\SSS^-_\al$.
The point $(A_\mu,\al)$ is the image of $(A'_\mu,\al)$ through the hysteretic map $P^h$.
This will be important to study the map $P_\mu$ that, as we did  in the previous sections, will be constructed as a combination of two maps: the exterior  return map $\tilde \pi$, and the "interior" hysteretic map $P^h$.
More concretely, consider the maps:
\begin{equation}\label{eq:pitilde}
\begin{split}
{\tilde \pi} :[E_\mu,0]\times \{y=\al\}  & \to [A'_\mu,0]\times\{y=\alpha\}\\
(x,\al) & \mapsto ({\tilde \pi} (x),\al)
\end{split}
\end{equation}
%}
defined following the flow of $\X_\mu$ until its first cut with $\SSS_\al^-$, and
\begin{equation}\label{eq:phisteretic}
\begin{split}
P^h:[A_\mu',E_\mu]\times\{y=\alpha\}  & \to [A_\mu,0]\times\{y=\alpha\}\\
(x,\al) & \mapsto (P^h(x),\al)
\end{split}
\end{equation}
defined by the hysteretic process determined by the fields $\X_\mu$ and $\Y_\mu=(0,1)$.
Next proposition, whose proof is deferred to section \ref{sec:provapropos}, gives the main properties and asymptotic formulas for the map $P_\mu$:

\begin{proposition}\label{prop:histereticmap}
Take $\mu=\mu_1(\al)$ as  \eqref{eq:mu1} with $\sigma_1$ satisfying \eqref{eq:sigma1}.
Then, the Poincar\'{e} map
\[
 P_\mu : [A_\mu,0]\times\{y=\alpha\} \to [A_\mu,0]\times\{y=\alpha\}
\]
is given by:
\begin{itemize}
 \item
For $x \in [A_\mu, E_\mu]$ we have:
\[
P_\mu (x)=P^h(x)=-\sqrt{-2\al +x^2}+\OO(\al),
\]
\item
For  $x\in (E_\mu,0]$ we have:
\[
\begin{split}
P_\mu (x) =P^h({\tilde \pi}(x)) &=-\sqrt{-2\al +(\al-\mu)(1-(\pi')(0))+(\pi')(0)x^2(1+\OO(\sqrt{\al})}+\OO(\al)\\
&=-\sqrt{-2\al+\al(\sigma_1-1)( (\pi')(0)-1)+(\pi')(0)x^2}+\OO(\al)
\end{split}
 \]
 \end{itemize}
Moreover,
\begin{itemize}
\item
$P_\mu(E_\mu)=0 $
\item
$\lim\limits_{x \to E_\mu^{-}}(P_\mu(x))=0 $,
$\lim\limits_{x \to E_\mu^{+}}(P_\mu(x))=A_\mu $
\item
$P_\mu(0)=
-\sqrt{-2\al+\al(\sigma_1-1)( (\pi')(0)-1)}+\OO(\al)$.
\item
$
P_\mu (A_\mu) = -\sqrt{-2\al +\alpha(\sigma_1+1)((\pi)'(0)+1)}+\OO(\al)$
\item
$P_\mu'(0)=0$ and $P_\mu'(x)>0$ for any $x\in [A_\mu, 0)$
\end{itemize}
Moreover, if we assume the extra condition:
\begin{equation}\label{eq:sigma2}
\sigma_1 >2
\frac{1+(\pi^{-1})'(0)}{1-(\pi^{-1})'(0)}=2
\frac{\pi'(0)+1}{\pi'(0)-1}>4.
\end{equation}
\begin{figure}
\begin{center}
\includegraphics[width=9cm,height=4cm]{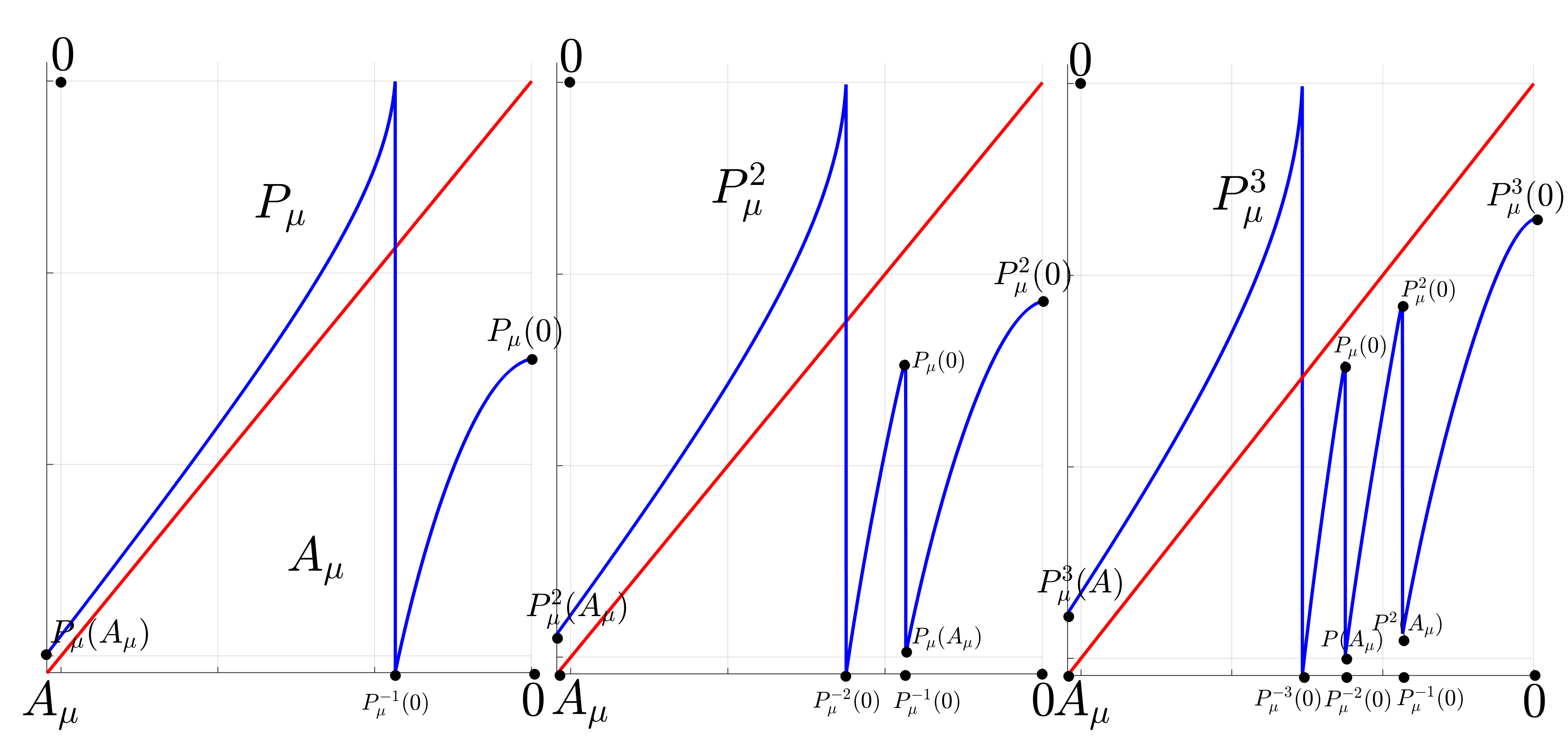}
\caption{The shape of the Poincar\'{e} map for $P_\mu$, $P_\mu^2$ and $P_\mu^3$. Note that $E_\mu=P_\mu^{-1}(0)=P_\mu^{-1}(A)$, expressing the jump discontinuity. The picture is made with the hysteretic regularization of the system \ref{exemple} scaled by $y/\alpha$ and with $\kappa=0.2$, $\alpha=0.05$ and $\mu=0.2$}
\label{fig:tresiteracions}
\end{center}
\end{figure}
then we have the relative position:
\begin{equation}\label{eq:relativeposition}
A_\mu<P_\mu(A_\mu)<P_\mu(0)<E_\mu
\end{equation}
\end{proposition}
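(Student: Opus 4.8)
\emph{Plan.} The idea is to realize $P_\mu$ as the composition of two elementary pieces: the purely \emph{interior} (hysteretic) map $P^h$ of \eqref{eq:phisteretic}, valid on $[A_\mu,E_\mu]$, and the \emph{one--turn} exterior return map $\tilde\pi$ of \eqref{eq:pitilde}, so that $P_\mu=P^h\circ\tilde\pi$ on $(E_\mu,0]$. Once both admit an asymptotic expansion in $\al$, every remaining assertion — the value $P_\mu(E_\mu)=0$, the two one--sided limits, the values at $0$ and $A_\mu$, monotonicity, and the chain \eqref{eq:relativeposition} — follows by substitution and elementary estimates. The backbone of all the local computations is the normalization \eqref{eq:X0nou}: it says that near the origin the fold locus of $\X_\mu(x,y)=\X_0(x,y-\mu)$ is exactly the line $\{x=0\}$. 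Integrating $dy/dx=\X_{\mu,2}/\X_{\mu,1}$ with the normal form \eqref{def:Xg} (and $f_2(x,0)=0$), every orbit of $\X_\mu$ meeting the strip $|y|\le\al$ near the origin obeys $y=y_0+x^2+\OO(\al^{3/2})$, with $y_0$ its height on $\{x=0\}$, the remainder being uniform on the window $|x|=\OO(\sqrt\al)$, $|y-\mu|=\OO(\al)$ (recall $\mu=\sigma_1\al$).

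First I would treat $P^h$. Since $\Y_\mu=(0,1)$ is vertical, $P^h(x)$ is just the $x$--coordinate of the point where the forward $\X_\mu$--orbit of $(x,\al)$ meets $y=-\al$; by the parabola formula this point is $(-\sqrt{x^2-2\al}+\OO(\al),\,-\al)$, which is defined precisely for $x^2\ge 2\al$, i.e.\ on $[A'_\mu,E_\mu]$, and restricting to $[A_\mu,E_\mu]$ gives the first branch formula. The same formula forces $E_\mu=-\sqrt{2\al}+\OO(\al)$, since the tangency to $y=-\al$ occurs at the parabola vertex $(0,-\al)$; the hysteretic switch then takes place at $(0,-\al)$, so $P_\mu(E_\mu)=0$ and, by continuity of $P^h$, $\lim_{x\to E_\mu^-}P_\mu(x)=0$.

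Next, the exterior map $\tilde\pi$ on $(E_\mu,0]$. There the forward orbit of $(x,\al)$ does not reach $y=-\al$: it descends to its minimum on $\{x=0\}$ at height $m(x)=\al-x^2+\OO(\al^{3/2})$, winds once around $\Gamma_\mu$ — one full turn on the section $\{x=0\}$ being exactly the map $\pi_\mu$ of \eqref{eq:poincarex0} with $\pi_\mu(\mu)=\mu$ — returns to $\{x=0\}$ at height $\pi_\mu(m(x))$, and descends again to $\SSS_\al^-$, crossing $y=\al$ at $\tilde\pi(x)=-\sqrt{\al-\pi_\mu(m(x))}+\OO(\al)$. Now $\X_\mu(x,y)=\X_0(x,y-\mu)$ conjugates the flow of $\X_\mu$ to that of $\X_0$ by a translation by $\mu$ in $y$, so $\pi_\mu(y)=\mu+\pi(y-\mu)$ and in particular $\pi_\mu'(\mu)=\pi'(0)$ (with $\pi$ the map \eqref{eq:poincaremap}); a first--order Taylor expansion of $\pi_\mu$ about $\mu$ — legitimate because every argument involved is within $\OO(\al)$ of $\mu$ — then yields $\tilde\pi(x)^2=(\al-\mu)(1-\pi'(0))+\pi'(0)x^2+\OO(\al^{3/2})$. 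The hypothesis \eqref{eq:sigma1} on $\sigma_1$ — equivalently $\pi_\mu^{-1}(-\al)>\al$, established just before the statement — is precisely what guarantees that $\tilde\pi$ carries $(E_\mu,0]$ into $[A'_\mu,E_\mu)$, the domain of $P^h$, so the single--turn description suffices; this is the ``one singularity'' regime.

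Composing gives $P_\mu=P^h\circ\tilde\pi$ on $(E_\mu,0]$; substituting the two expansions and absorbing the inner $\OO(\al^{3/2})$ into an additive $\OO(\al)$ past the square root produces the displayed formula, and $x=0$ yields $P_\mu(0)$. The right limit at $E_\mu$ is $P^h(\tilde\pi(E_\mu))=P^h(A'_\mu)=A_\mu$ by the very definitions of $A'_\mu$ and $A_\mu$; the value $P_\mu(A_\mu)=P^h(A_\mu)$ (one checks $A_\mu<E_\mu$ from the formulas) is obtained by computing $(A'_\mu)^2=\al-\pi_\mu(-\al)$ from the same $\pi_\mu$--expansion, now at $y=-\al$, together with $A_\mu^2=(A'_\mu)^2-2\al$. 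Monotonicity is immediate on each branch: $(P^h)'(z)=-z/\sqrt{z^2-2\al}>0$ for $z<0$, and on $(E_\mu,0]$ $P_\mu$ composes smooth maps increasing on the negative half--line; moreover $P_\mu'(0)=0$ holds exactly, because $(0,\al)$ is itself the fold--crossing point of its own orbit, so the transit time to $\{x=0\}$ vanishes at $x=0$ and differentiating the defining relation for $m(x)$ there forces $m'(0)=0$, hence $\tilde\pi'(0)=0$ and $P_\mu'(0)=(P^h)'(\tilde\pi(0))\,\tilde\pi'(0)=0$. Finally, under \eqref{eq:sigma2} the four points $A_\mu,\,P_\mu(A_\mu),\,P_\mu(0),\,E_\mu$ are, to leading order, of the form $-\sqrt{\al\,c}$ with explicit constants $c=c(\sigma_1,\pi'(0))$, and \eqref{eq:relativeposition} reduces to a short chain of inequalities among these constants, the only binding one being $(\sigma_1-1)(\pi'(0)-1)>4$, which \eqref{eq:sigma2} implies. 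The hard part throughout is the uniform control of the remainders near the discontinuity $E_\mu$, where $x^2-2\al\to 0$ so that the multiplicative error in $\sqrt{x^2-2\al}$ is not small and one must verify that the additive $\OO(\al)$ form survives, together with the bookkeeping needed to justify the single--turn $\pi_\mu$--decomposition of $\tilde\pi$ and the validity of the Taylor expansions; these estimates are the content of Section \ref{sec:provapropos}.
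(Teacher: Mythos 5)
Your proposal is correct and follows essentially the same route as the paper. You decompose $P_\mu=P^h$ on $[A_\mu,E_\mu]$ and $P_\mu=P^h\circ\tilde\pi$ on $(E_\mu,0]$, obtain the local quadratic (parabola) description of the orbits near the two fold points and a first-order Taylor expansion of the exterior return map $\pi_\mu$ about $y=\mu$, and then read off the point values, the one-sided limits, monotonicity, and the inequality chain by substitution; this is precisely the content of Lemmas~\ref{lem:histereticmap} and \ref{lem:exterior}, which package your ``parabola formula'' via the maps $\Q$ and $D$ of Proposition~\ref{prop:exterior} and Lemma~\ref{lem:partfinalPe}. Your identification of the binding inequality $(\sigma_1-1)(\pi'(0)-1)>4$ is an equivalent rewriting of the paper's $\sigma_1(\pi'(0)-1)>\pi'(0)+3$, and your argument for $P_\mu'(0)=0$ reduces to the correct fact $D'(0)=0$ at the fold, so the slightly informal phrasing about vanishing transit time is harmless.
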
\label{prop:model}
To better understanding the behavior of the Poincar\'{e} map, in Figure \ref{fig:tresiteracions} are depicted the graphics of the maps $P_\mu$,$P_\mu^2$ and $P_\mu^3$.

Next proposition will show that a suitable iterate of the map $P_\mu$ has symbolic dynamics. The main idea is to prove that we can find  two intervals which "cover" each other. Then the results of \cite{GlendinningJ2019} give the existence of a horseshoe, and therefore symbolic dynamics and chaos.

The first observation is that  for $x \in [A_\mu,E_\mu]$ we have:
\begin{equation}
\begin{split}
P_\mu^{-n} (x)&=-\sqrt{x^2+2n\al}\\
\end{split}
\end{equation}
and,  whilst the forward orbit of $x \in [A_\mu,E_\mu] $ stays in this interval we also have:
\begin{equation}
\begin{split}
P_\mu^n (x)&=-\sqrt{x^2-2n\al}\\
\end{split}
\end{equation}
Observe that $P_\mu^{-1}(0)=E_\mu$, therefore
$P_\mu^{-n} (0)=P_\mu^{-n+1} (E_\mu)=\sqrt{2n\al}+\OO(\al)$.
\\
On the other hand, we observe that
\[
P_\mu([ P_\mu^{-1}( 0), 0])=P_\mu([E_\mu, 0])=[A_\mu, P_\mu(0)].
\]
Now, we consider  the subintervals
 \begin{equation}\label{subintervals}
I_n=[P_\mu^{-n}(0),P_\mu^{-(n-1)}(0)]=[-\sqrt{2n\alpha}+\OO(\al),-\sqrt{2(n-1)\alpha}+\OO(\al)]
\end{equation}
and we have:
\begin{proposition}\label{parellahorseshoe}
Take $\mu=\mu_1(\al)$ as  \eqref{eq:mu1} with $\sigma_1$ satisfying \eqref{eq:sigma2}.
Then, there exists natural numbers $n\ge 1$ satisfying the following condition:
 \begin{equation}\label{eq:condiciointervals}
\frac12 (\sigma_1-1)( (\pi')(0)-1) < n-1 < \frac12 (\sigma_1+1)( (\pi')(0)+1)
\end{equation}
Choose one of these numbers $n$.
Then, the  map $P_\mu^n$ fulfills the graph
\begin{equation}
I_n\rightarrow I_{n-1}\rightarrow I_n
\end{equation}
that is
\begin{equation}
I_{n-1} \subset P_\mu^n (I_n);  \ I_n \subset P_\mu^n (I_{n-1});
\end{equation}
and therefore $P_\mu^n$  has a horseshoe. Consequently there is a subset in $[A_\mu,0]$ where $P_\mu$ is conjugated to a shift of two symbols.
\end{proposition}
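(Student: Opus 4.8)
The plan is to convert the geometric ``horseshoe graph'' $I_n\to I_{n-1}\to I_n$ into the two explicit interval inclusions $I_{n-1}\subset P_\mu^{\,n}(I_n)$ and $I_n\subset P_\mu^{\,n}(I_{n-1})$ by tracking the endpoints of $I_n$ and $I_{n-1}$ along the orbit of $P_\mu$, using only the asymptotics of Proposition~\ref{prop:histereticmap}. The computational engine is the observation made just before the statement: on $[A_\mu,E_\mu]$ the map is $P_\mu(x)=P^h(x)=-\sqrt{x^{2}-2\al}+\OO(\al)$, i.e.\ in the coordinate $x^{2}$ this branch is the translation $x^{2}\mapsto x^{2}-2\al$. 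Hence, as long as a forward orbit stays in $[A_\mu,E_\mu]$, one has $P_\mu^{\,k}(x)=-\sqrt{x^{2}-2k\al}+\OO(\al)$ and, dually, $P_\mu^{-k}(0)=-\sqrt{2k\al}+\OO(\al)$ --- which is precisely the definition \eqref{subintervals} of $I_n$. On $(E_\mu,0]$ the second formula of Proposition~\ref{prop:histereticmap} is again of the form $-\sqrt{c\,\al+\pi'(0)x^{2}}+\OO(\al)$, so all the compositions of the two branches that occur remain elementary square--root expressions.

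First I would check that for $n$ in the range \eqref{eq:condiciointervals} the whole chain of images $I_n,\ P_\mu(I_n)=I_{n-1},\ \dots,\ P_\mu^{\,n-1}(I_n)=I_1=[E_\mu,0]$ is realized by $n-1$ successive applications of the branch $P^h$; this requires $I_2,\dots,I_n\subset[A_\mu,E_\mu]$, i.e.\ the left endpoint $-\sqrt{2n\al}+\OO(\al)$ of $I_n$ to lie to the right of $A_\mu$, and this is exactly where the upper bound $n-1<\tfrac12(\sigma_1+1)(\pi'(0)+1)$ of \eqref{eq:condiciointervals} enters (via Proposition~\ref{prop:histereticmap} it is the condition $2n\al\le A_\mu^{2}$ up to lower order). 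Since $P^h$ is increasing, $P_\mu^{\,n-1}$ is an orientation--preserving homeomorphism $I_n\to I_1$ and $P_\mu^{\,n-2}$ one from $I_{n-1}$ onto $I_1$, so it suffices to control the images of $I_1$.

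Next I would compute those images using the jump at $E_\mu$. From $P_\mu(E_\mu)=0$, $\lim_{x\to E_\mu^{+}}P_\mu(x)=A_\mu$ and $P_\mu$ increasing on $(E_\mu,0]$ up to $P_\mu(0)$, one gets $P_\mu((E_\mu,0])=(A_\mu,P_\mu(0)]$, hence $P_\mu^{\,n}(I_n)=P_\mu(I_1)\supset(A_\mu,P_\mu(0)]$; and since $P_\mu(0)<E_\mu$ (from the relative position \eqref{eq:relativeposition}) the interval $[A_\mu,P_\mu(0)]$ lies in the $P^h$--region, so applying $P^h$ once more $P_\mu^{\,n}(I_{n-1})=P_\mu^{2}(I_1)\supset[P_\mu(A_\mu),P_\mu^{2}(0)]$. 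Now comparing the endpoints of these target intervals --- which Proposition~\ref{prop:histereticmap} gives in closed form, all of the shape $-\sqrt{(\cdot)\al}+\OO(\al)$ --- with the endpoints $-\sqrt{2n\al}$, $-\sqrt{2(n-1)\al}$, $-\sqrt{2(n-2)\al}$ (up to $\OO(\al)$) of $I_n$, $I_{n-1}$, the two inclusions $I_{n-1}\subset[A_\mu,P_\mu(0)]$ and $I_n\subset[P_\mu(A_\mu),P_\mu^{2}(0)]$ reduce to exactly the two inequalities of \eqref{eq:condiciointervals}: the lower bound $\tfrac12(\sigma_1-1)(\pi'(0)-1)<n-1$ forces $|P_\mu(0)|^{2}\le 2(n-1)\al$ (i.e.\ $P_\mu(0)$ to the left of $I_{n-1}$), while the upper bound again controls $|A_\mu|^{2}\ge 2n\al$. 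This gives the covering relations. Finally the admissible range \eqref{eq:condiciointervals} is a nonempty interval of length $\sigma_1+\pi'(0)>1$, so it contains an integer $n$, which one checks is compatible with Step~1; and with the branches of $P_\mu^{\,n}$ on $I_n$ and $I_{n-1}$ monotone and continuous away from $E_\mu$, the horseshoe criterion of \cite{GlendinningJ2019} yields a $P_\mu^{\,n}$--invariant Cantor set in $I_{n-1}\cup I_n$ on which $P_\mu^{\,n}$ (hence $P_\mu$) is conjugate to the full shift on two symbols.

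The main obstacle, and the part that needs real care, is the bookkeeping around the single discontinuity $E_\mu$: one must verify that the first $n-1$ (resp.\ $n-2$) iterates of $I_n$ (resp.\ $I_{n-1}$) genuinely remain inside the $P^h$--region $[A_\mu,E_\mu]$, so that the elementary square--root composition is valid throughout, and then treat separately the one step that crosses into the right branch together with the jump $E_\mu\mapsto\{0\}$, $E_\mu^{+}\mapsto A_\mu$. In addition, since every quantity being compared is of size $\sqrt{\al}$ while the formulas of Proposition~\ref{prop:histereticmap} carry $\OO(\al)$ errors, I would have to make sure the resulting inclusions are strict for all small $\al$, uniformly over the finitely many admissible integers $n$ --- this is the only delicate estimate, everything else being a direct manipulation of the square--root formulas.
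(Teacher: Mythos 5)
Your proposal is correct and follows essentially the same route as the paper: you reduce $P_\mu^{\,n}(I_n)=[A_\mu,P_\mu(0)]$ and $P_\mu^{\,n}(I_{n-1})=[P_\mu(A_\mu),P_\mu^{2}(0)]$ via the square-root formulas of Proposition~\ref{prop:histereticmap}, compare endpoints to obtain the condition on $n$, and then check the admissible range has length $\sigma_1+\pi'(0)>1$. The one point you add that the paper leaves implicit --- verifying that $I_2,\dots,I_n$ stay in the $P^h$-region so that the elementary composition formula is valid for the first $n-1$ iterates --- is a genuine refinement, though be careful with endpoint orientation: the lower bound of \eqref{eq:condiciointervals} forces $P_\mu(0)$ to lie to the \emph{right} of the right endpoint $P_\mu^{-(n-2)}(0)$ of $I_{n-1}$ (not "to the left of $I_{n-1}$" as you wrote), and the sharper form of the constraints involves $2(n-2)\al$ and $2(n+1)\al$ rather than $2(n-1)\al$ and $2n\al$.
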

\begin{figure}
\begin{center}
\includegraphics[width=11cm,height=5cm]{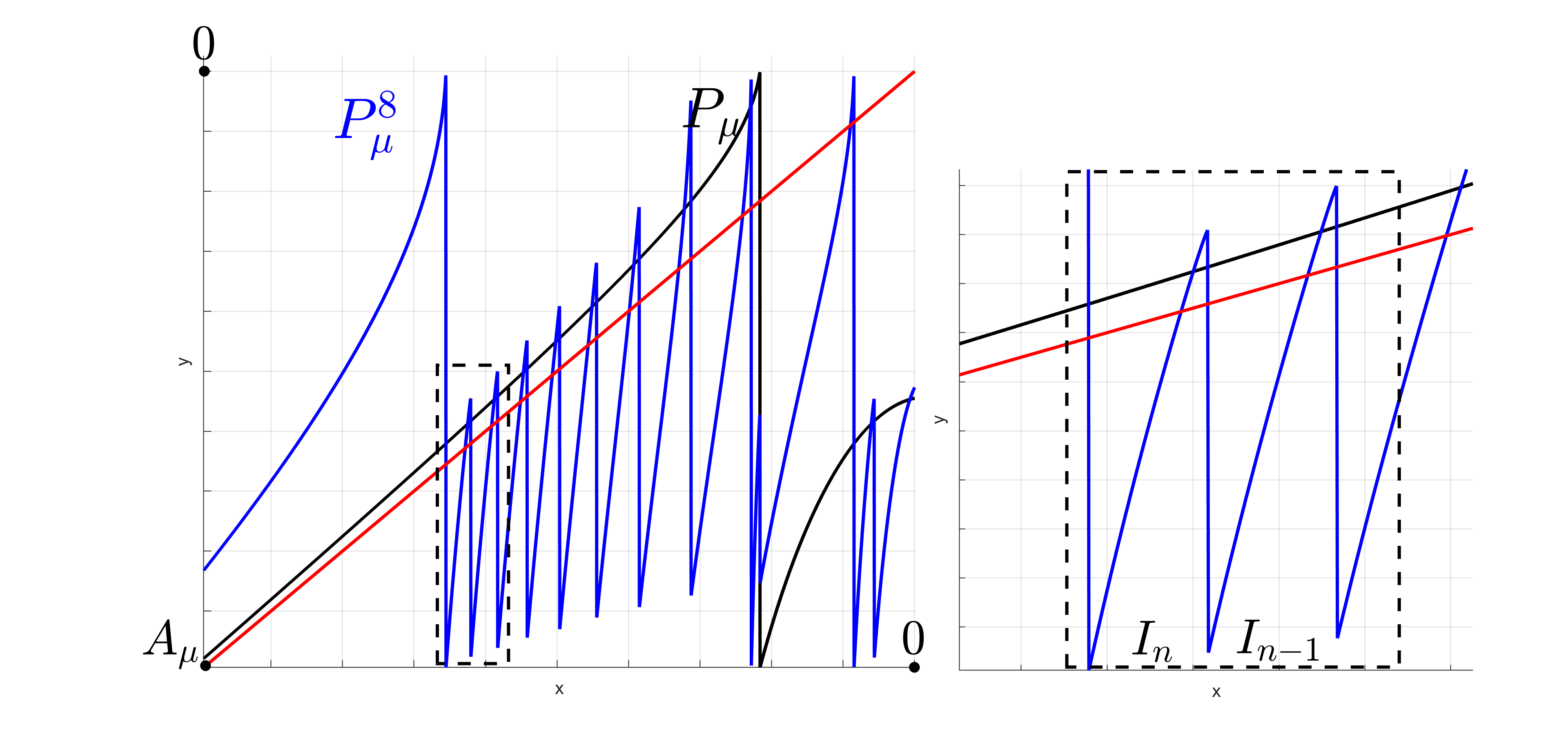}
\caption{The Poincar\'{e} map for $P_\mu$ and $P_\mu^n$ overlapped, and the two intervals forming a horseshoe pair. In this example $n=8$ with the parameters of Figure \ref{fig:tresiteracions}}
\label{fig:horseshoedetall}
\end{center}
\end{figure}

\begin{proof}
Observe that:
\[
I_{n-1}=[P_\mu^{-(n-1)}(0),P_\mu^{-(n-2)}(0)]=[-\sqrt{2(n-1)\alpha}+\OO(\al),-\sqrt{2(n-2)\alpha}+\OO(\al)]
\]
and:
$P_\mu^n (I_n)=[A_\mu,P_\mu(0)]$ and
$P_\mu^n (I_{n-1})=[P_\mu(A_\mu),P_\mu^2(0)]$, therefore we must prove (see Figure \ref{fig:horseshoedetall})
  \begin{itemize}
\item
$A_\mu<P_\mu^{-(n-1)}(0)$, equivalently $ -\sqrt{\al(\sigma_1+1)((\pi_0)'(0)+1)}<-\sqrt{2(n-1)\alpha}$
\item
$P_\mu(0)>P_\mu^{-(n-2)}(0)$, equivalently
$-\sqrt{-2\al+\al(\sigma_1-1)( (\pi'_0)(0)-1)}
>-\sqrt{2(n-2)\alpha}$
\item
$P_\mu(A_\mu)<P_\mu^{-(n)}(0)$, equivalently
$-\sqrt{-2\al +\alpha(\sigma_1+1)((\pi_0)'(0)+1)} <-\sqrt{2n\alpha}$
\item
$P_\mu^2(0)>P_\mu^{-(n-1)}(0)$, equivalently
$-\sqrt{-4\al+\al(\sigma_1-1)( (\pi'_0)(0)-1)}
>-\sqrt{2(n-1)\alpha}$
\end{itemize}
All these conditions are satisfied if we can ensure that there exists $n\ge 1$ such that:
\begin{equation}\label{eq:condiciointervals}
\frac12 (\sigma_1-1)( (\pi'_0)(0)-1) < n-1 < \frac12 (\sigma_1+1)( (\pi'_0)(0)+1)
\end{equation}
Observe that
\[
\frac12 (\sigma_1+1)( (\pi'_0)(0)+1)-\frac12 (\sigma_1-1)( (\pi'_0)(0)-1) =(\sigma_1+ (\pi'_0)(0))>2
\]
therefore we can always find  a number $n$ such that \eqref{eq:condiciointervals} is satisfied.
Then, $P^n$ contains a horseshoe, and chaotic dynamics is assured.
This concludes the proof.
\end{proof}

\section{Proofs}\label{sec:proofs}

\subsection{The exterior map $\f_{\mu,\eps}$, proof of Theorem  \ref{thm:extensiopie}}

This section is devoted to prove Theorem \ref{eq:dominipie}.

\begin{remark}\label{rem:Pdeltadependence}
Even if the map $\f_{\mu,\eps}$ depends on parameters,
in the sequel, we will keep in mind this dependence, but, as not being a matter of confusion we will write $\f$.
This remark also applies to the related objects defined.
\end{remark}

\begin{figure}
\begin{center}
\includegraphics[width=7cm,height=4.5cm]{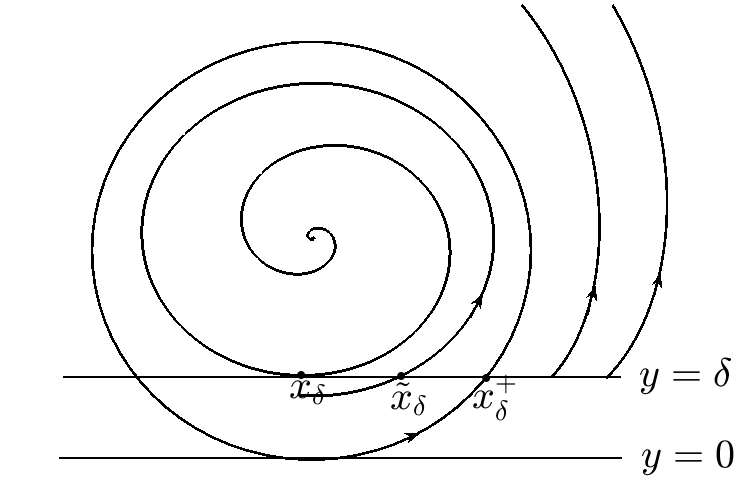}
\caption{$\f$, the periodic orbit of $X_0$ and its tangent orbit to $y=\delta$ and $ \tilde{x}_\delta$.}\label{fig:tangenciapexterior}
\end{center}

\end{figure}

Under our normalizations, we have that $\X_\mu(x,y)=\X_0(x,y-\mu)$, to study the exterior map $\tilde \pi$ for $X_\mu$ defined in \eqref{exteriornou} and \eqref{eq:dominipie}
is equivalent to  study the  map
\begin{equation}\label{exteriorbar}
\overline \f: \overline  \MM\times \{\de\}\subset  \SSS_{\de}^+ \to \SSS^{-}_\de , \quad \de=\eps-\mu,
\end{equation}
for the vector field $\X_0$.
Moreover, the formulas  for $\f(x)$ and $\overline \f(x)$ will be the same and the  domains $\overline  \MM=\MM$, for this reason we drop the bars and we use $\f$.

If we call $x_\delta$ the value such that the solution of $\X_0$ with initial condition $(x_\delta,\delta)$ is tangent to $\mathcal{S}_\delta$ and  define $ \tilde{x}_\delta$ the last cut of this solution (in backward time) with $\SSS_\delta^+$ before the tangency (see Figure \ref{fig:tangenciapexterior}), then, our normalizations imply that $x_\mu=x_\delta$ and $\tilde x_\mu=\tilde x_\delta$ (see \ref{hypofamily}), but during the proof we will use the $\delta$-notation to be consistent.

Then, $\f$ is defined in $\MM\times \{\de\}$, where
\begin{equation}\label{eq:dominipie}
\MM=[\tilde{x}_\delta,M]
\end{equation}
for some $M>0$, independent of  $\delta$ and $\f(\tilde x_\delta)=x_\delta$.
In fact, for our purposes, it will be enough to obtain information for $\f$ in a smaller domain of the form $[\tilde{x}_\de, \sqrt{\de C}]\subset \MM$ where $C>1$ is a constant independent of $\de$.

The proof will  be the consequence of propositions \ref{prop:exteriortangent} and  \ref{prop:extensiopie}.
First, in Proposition \ref{prop:exteriortangent}, we will study $\f$ for points slightly away from the point $\tilde x_\de$.
The local study near $\tilde x_\de$ is done in Proposition \ref{prop:extensiopie}.

The idea is to write $\f$ as:
\begin{equation}\label{descomposicioexterior}
\f (x) =(\Q)^{-1} \circ \pi \circ (\bar{\Q})^{-1}(x),
\end{equation}
where
$$
\Q: \SSS_\de^- \to \{(0,y), \ |y|\le y_0\} , \quad
\bar \Q: \{(0,y), \ |y|\le y_0\}  \to \SSS_\de^+
$$
are the maps derived by the orbits of $\X_0$ followed in positive time,
$\pi(y)$, is the Poincar\'{e} return map \eqref{eq:poincaremap} on $ \{(0,y), \ |y|\le y_0\}$ defined around  the periodic orbit $\Gamma_0$ of
$ \X_0$.
Next proposition gives the asymptotics for the maps $\Q$, $\bar{\Q}$ and their inverses in suitable domains:

\begin{proposition}\label{prop:exterior}
Consider any constant $C>1$.
Let $\X_0$ satisfying \eqref{def:Xg} and take any  $0<y_0<\delta$.
Then, if $\delta>0$ is small enough, the flow of $\X_0$ defines  diffeomorphisms:
\begin{itemize}

\item

\begin{eqnarray*}
\Q: [-\sqrt{\de C}-\sqrt{\de-y_0}]\times \{\delta\} \subset \SSS^-_\de  & \to & \{0\} \times
[\de (1-C) ,y_0]\\
(x,\delta) &\mapsto & (0, g(x))
\end{eqnarray*}
with:
\begin{equation}
\begin{array}{rcl}
\Q (x) &=& \delta[1-(\frac{x}{\sqrt{\delta}})^2+\OO(\sqrt{\delta})]=
\delta-x^2+\OO(\sqrt{\delta^{3}}) =\OO(\delta),
\\
\Q' (x) &=& \sqrt{\delta}[-2\frac{x}{\sqrt{\delta}}+\OO(\sqrt{\delta})]=
-2x+\OO(\delta)=\OO(\sqrt{\delta}),
\\
\Q '' (x) &=& -2+\OO(\sqrt{\delta})=\OO(1),
\end{array}
\end {equation}

and its inverse\\
\begin{eqnarray*}
{\Q}^{-1}: \{0\}  \times  [\de (1-C) ,y_0]  &\to &  [-\sqrt{\de C}-\sqrt{\de-y_0}]\times \{\delta\} \subset \mathcal{S}_\delta^-\\
(0, y) & \mapsto & (\Q^{-1}(y),\de)
\end{eqnarray*}
with

\begin{equation}
\begin{array}{rcl}
{\Q}^{-1}(y) &=& -\rdelta[\sqrt{1-\frac{y}{\delta}}+\OO(\sqrt{\delta})] =
-\sqrt{\delta-y}+\OO(\delta)= \OO(\sqrt{\delta}),
\\
{\Q}'^{-1}(y) &=&\frac{1}{\sqrt{\delta}}[\frac{1}{2\sqrt{{1-(\frac{y}{\delta})}}}+\OO(\sqrt{\delta})]=
[\frac{1}{2\sqrt{\delta-y}}+\OO(1)]= \OO(\frac{1}{\sqrt{\delta}}) , \\
{\Q}''^{-1}(y) &=& \frac{1}{(\sqrt{\delta})^3}[\frac{1}{4 \sqrt{(1-\frac{y}{\delta})^3}}+\OO(\sqrt{\delta})]=
\frac{1}{4 \sqrt{(\delta-y)^3}}+\OO(\frac{1}{\delta})
= \OO(\frac{1}{(\sqrt{\delta})^3}),
\end{array}
\end{equation}
\item
\begin{eqnarray*}
\bar{\Q} : \{0\}\times[\de(1-C) ,y_0]  & \to   [\sqrt{\de-y_0}, \sqrt{\de C}]\times \{\delta\} \subset & \mathcal{S}_\delta^+\\
(0, y) & \mapsto & (\bar{\Q}(y),\de)
\end{eqnarray*}
with

\begin{equation}
\begin{array}{rcl}
\bar{\Q}(y) &=& \rdelta[\sqrt{1-\frac{y}{\delta}}+\OO(\sqrt{\delta})] =
\sqrt{\delta-y}+\OO(\delta)= \OO(\sqrt{\delta}), %\quad\forall y\in[-y_0,y_0]
\\
\bar{\Q}'(y) &=&\frac{1}{\sqrt{\delta}}[-\frac{1}{2\sqrt{{1-(\frac{y}{\delta})}}}+\OO(\sqrt{\delta})]=[-\frac{1}{2\sqrt{\delta-y}}+\OO(1)]= \OO(\frac{1}{\sqrt{\delta}}) , %\quad\forall y\in[-y_0,y_0]
\\
\bar{\Q}''(y) &=& \frac{1}{(\sqrt{\delta})^3}[-\frac{1}{4 \sqrt{(1-\frac{y}{\delta})^3}}+\OO(\sqrt{\delta})]=
-\frac{1}{4 \sqrt{(\delta-y)^3}}+\OO(\frac{1}{\delta})
= \OO(\frac{1}{(\sqrt{\delta})^3}) ,
\end{array}
\end{equation}
and its inverse:\\
\begin{eqnarray*}
{\bar{\Q}}^{-1}:  [\sqrt{\de-y_0},\sqrt{\de C}]\times \{\delta\}\subset \SSS^+_\de
 &\to &    \{0\}\times[\de(1-C), y_0] \\
(x,\de) &\mapsto & (0,\bar\Q ^{-1}(x))
\end{eqnarray*}
with:
\begin{equation}
\begin{array}{rcl}
({\bar{\Q}}^{-1})(x) &=& \delta[1-(\frac{x}{\sqrt{\delta}})^2+\OO(\sqrt{\delta})]=
\delta-x^2+\OO(\sqrt{\delta^{3}}) =\OO(\delta),
\\
({\bar{\Q}}^{-1})'(x) &=& \sqrt{\delta}[-2\frac{x}{\sqrt{\delta}}+\OO(\sqrt{\delta})]=
-2x+\OO(\delta)=\OO(\sqrt{\delta}),
\\
({\bar{\Q}}^{-1})''(x) &=& -2+\OO(\sqrt{\delta})=\OO(1),
\end{array}
\end {equation}
\end{itemize}

\end{proposition}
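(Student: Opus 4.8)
The plan is to prove Proposition \ref{prop:exterior} by blowing up the fold of $\X_0$ at the origin. Since the normal form \eqref{def:Xg} holds on a fixed neighbourhood of $(0,0)$ and the first component of $\X_0$ is $1+f_1$ with $f_1(0,0)=0$, there is $r>0$ with $\dot x>\tfrac12$ on $R:=\{|x|\le r,\ |y|\le r\}$. A continuation argument (using $\dot x\ge\tfrac12$ and that $|\dot y|$ is bounded on $R$) shows that, for $\de$ small, every orbit arc entering the definition of $\Q$ or $\bar\Q$ stays in $R$, reaches the relevant section ($\{x=0\}$ or $\SSS_\de^\pm$) in time $\OO(\rdelta)$, and crosses it transversally; hence the crossing time is $C^2$ in the initial point by the implicit function theorem, and so are the four transition maps. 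Heuristically, using $x$ as independent variable an orbit solves $\tfrac{dy}{dx}=\tfrac{2x+by+f_2(x,y)}{1+f_1(x,y)}$; on these arcs $x=\OO(\rdelta)$ and $y=\OO(\de)$ (again by the bound above), so the right-hand side is $2x+\OO(\de)$ and every orbit is an $\OO(\de^{3/2})$-perturbation of a parabola $y=x^2+c$. Reading off $x=0$ and $y=\de$ already gives the leading terms of $\Q$ and $\bar\Q$; the $C^2$ statements I would obtain from the rescaling below.

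Set $x=\rdelta\,\xi$, $y=\de\,\zeta$ and rescale time by the factor $\rdelta$. In the variables $(\xi,\zeta)$ the field $\X_0$ becomes
\begin{equation}
\xi'=1+f_1(\rdelta\,\xi,\de\,\zeta)=1+\OO(\rdelta),\qquad \zeta'=2\xi+b\,\rdelta\,\zeta+\de^{-1/2}f_2(\rdelta\,\xi,\de\,\zeta)=2\xi+\OO(\rdelta),
\end{equation}
uniformly on compacta of $(\xi,\zeta)$, the $\OO(\rdelta)$ terms being $C^2$ in $(\xi,\zeta)$ and continuous in $\rdelta$. Thus the rescaled field is a regular $\rdelta$-perturbation of the integrable field $(\xi',\zeta')=(1,2\xi)$, whose orbits are the parabolas $\zeta=\xi^2+c$; here $\SSS_\de^\pm$ is $\{\zeta=1,\ \pm\xi>0\}$ and $\{x=0\}$ is $\{\xi=0\}$, the former transverse to the limit flow away from the fold $\xi=0$, the latter everywhere. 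For the limit field the transition map from $\{\zeta=1,\xi<0\}$ to $\{\xi=0\}$ is $\xi_0\mapsto 1-\xi_0^2$ and from $\{\xi=0\}$ to $\{\zeta=1,\xi>0\}$ is $\zeta_0\mapsto\sqrt{1-\zeta_0}$. By $C^2$ dependence of solutions and of crossing times on the initial data and on the parameter $\rdelta$, the transition maps of the rescaled field are $\OO(\rdelta)$-close to these in the $C^2$ topology, uniformly on compact subsets of $\{\xi<0\}$, resp. $\{\zeta<1\}$ — the rescaled images of the domains in the statement, which is why the constants in the $\OO$'s depend on $C$ and on how far $y_0$ stays from $\de$. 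Undoing the scaling, $\Q(x)=\de\,\widehat{\Q}(x/\rdelta)$ with $\widehat{\Q}(\xi)=1-\xi^2+\OO(\rdelta)$, whence $\Q'(x)=\rdelta\,\widehat{\Q}'(x/\rdelta)$ and $\Q''(x)=\widehat{\Q}''(x/\rdelta)$ give the first block of formulas; likewise $\bar\Q(y)=\rdelta\,\widehat{\bar\Q}(y/\de)$ with $\widehat{\bar\Q}(\zeta)=\sqrt{1-\zeta}+\OO(\rdelta)$; and running the flow of $\X_0$ from $\SSS_\de^+$ backwards to $\{x=0\}$ (limit map $\xi_0\mapsto 1-\xi_0^2$ again, now for $\xi_0>0$) gives $\bar\Q^{-1}$.

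For $\Q^{-1}$ and $\bar\Q^{-1}$ I would apply the inverse function theorem. On the stated domains the derivatives are bounded away from zero: $\Q'(x)=-2x+\OO(\de)\ge 2\sqrt{\de-y_0}+\OO(\de)>0$ since $x\le-\sqrt{\de-y_0}$, and $\bar\Q'(y)=-\tfrac1{2\sqrt{\de-y}}+\OO(1)$ is of order $\de^{-1/2}$ in modulus because $\de-y$ stays of order $\de$ — this is exactly the role of the hypothesis $0<y_0<\de$, which keeps the whole picture away from the tangency $x=0$. So $\Q^{-1},\bar\Q^{-1}$ are $C^2$ diffeomorphisms onto the stated intervals; inverting the leading relations $y=\de-x^2+\OO(\de^{3/2})$, resp. $x=\sqrt{\de-y}+\OO(\de)$ (legitimate since $\de-y=\OO(\de)$), and using $(\Q^{-1})'(y)=1/\Q'(\Q^{-1}(y))$ and $(\Q^{-1})''(y)=-\Q''(\Q^{-1}(y))/\Q'(\Q^{-1}(y))^3$, gives the stated expansions for the inverses.

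The step I expect to be the real work is the uniform $C^2$ control, as $\de\to0$, of the four transition maps and their crossing times: one has to check that the $\OO(\rdelta)$ discrepancy between the rescaled field and $(1,2\xi)$ produces an $\OO(\rdelta)$ discrepancy in the solutions, in their first and second variational equations, and in the section-crossing times, with constants that deteriorate only as the domain approaches the fold $\xi=0$ — so that keeping $y_0$ away from $\de$ keeps everything uniform. This is a routine but somewhat delicate Gronwall / regular-perturbation argument on a time interval of length $\OO(1)$.
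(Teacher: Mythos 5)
Your proof is correct and follows essentially the same route as the paper: both pass to the rescaled variables $(\xi,\zeta)=(x/\sqrt\delta,\,y/\delta)$ (after dividing out a factor $\sqrt\delta$ in time), recognize the rescaled field as a regular $\mathcal{O}(\sqrt\delta)$-perturbation of the integrable fold $(\xi',\zeta')=(1,2\xi)$ on a $\delta$-independent compact set, invoke smooth dependence of solutions and section-crossing times to get $C^2$-closeness of the transition maps, and then undo the scaling. The only cosmetic differences are that you make the role of the hypothesis $0<y_0<\delta$ and of the inverse function theorem explicit, and you candidly isolate the Gronwall/regular-perturbation estimate as the remaining technical step — which is exactly what the paper's appeal to ``regularity with respect to initial conditions and parameters'' encapsulates.
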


\begin{proof}
We will do the computations for $\bar\Q$.
The ones for $\Q$ are analogous.
First recall the normal form of $\X_0$:

\begin{equation}\label{normalformX0}
\X_0(x,y)=\left(\begin{array}{l}
        1 + f_1(x,y)\\
        2x + by +f_2(x,y)
       \end{array}\right)
\end{equation}
where $f_i(x,y)=O_i(x,y)$ and $f_2(x,0)=0$.
Recall that the periodic orbit $\Gamma_0$ is tangent to $\Sigma$ at  $(0,0)$

Near $y=0$ we perform the change
\begin{equation}\label{canvidelta}
\bar{x}= \frac{x}{\sqrt\delta};\quad\bar{y}=\frac{y}{\delta}
\end{equation}
Then vector field \eqref{normalformX0} transforms to the system:

\begin{equation}\label{deltaregularized}
\begin{array}{rcl}
\sqrt{\delta}\dot{\bar{x}} &=&
1+\OO(\sqrt{\delta}\bar{x},\delta\bar{y})\\
\sqrt{\delta}\dot{\bar{y}} &=&
2\bar{x}+\OO(\sqrt{\delta}{\bar{x}}^2,\delta\bar{x}\bar{y},\sqrt{\delta}\bar{y})
\end{array}
\end{equation}
system that for $\delta\ne0$ has the same orbits than

\begin{equation}\label{deltadifferentiable}
\begin{array}{rcl}
\dot{\bar{x}} &=&
1+\OO(\sqrt{\delta}\bar{x},\delta\bar{y})\\
\dot{\bar{y}} &=&
2\bar{x}+\OO(\sqrt{\delta}{\bar{x}}^2,\delta\bar{x}\bar{y},\sqrt{\delta}\bar{y})
\end{array}
\end{equation}
and we will study the scaled map $\bar \Q_r$ associated to the vector field \eqref{deltadifferentiable} and its inverse:
\begin{eqnarray*}
\bar \Q_r: \{0\} \times [1- C, \bar y_0] & \to & \SSS_1^+ \\
(0, \bar y) & \mapsto & (\bar \Q_r( \bar y), 1)
\end{eqnarray*}
where $\bar y_0=\frac{y_0}{\delta}$ and therefore $0<\bar y_0<1$.
and
\begin{eqnarray*}
\bar \Q_r^{-1}: [\sqrt{1-\bar{y}_0},\sqrt{C}]\times \{1\} \subset \SSS_1^+  &\to &  \{0\} \times [1-C, \bar y_0]\\
(\bar x,1) & \mapsto & (0, \bar \Q_r^{-1}(\bar x))
\end{eqnarray*}

Clearly we have
$$
\bar \Q(y)=\sqrt{\delta}\bar \Q_r(\frac{y}{\delta}), \quad \bar\Q^{-1}(x)=\delta \bar\Q_r^{-1}(\frac{x}{\sqrt{\delta}})
$$
Observe that  system \eqref{deltadifferentiable} is a regular $\OO(\sqrt{\delta})$- perturbation of the system:

\begin{equation}\label{deltadifferentiable0}
\begin{array}{rcl}
\dot{\bar{x}} &=&
1\\
\dot{\bar{y}} &=&
2\bar{x}
\end{array}
\end{equation}
therefore we can use the theorem of regularity to initial conditions and parameters to study the scaled maps as a regular perturbation of the ones in this simpler system.
Is clear that in this system, near $(0,0)$, and for any fixed $ \bar{y}_0<1$ can be defined the scaled map
\begin{equation}
\bar{\Q}_{r,0}(\bar{y})=+\sqrt{1-\bar{y}},\quad \forall\bar{y}\in[1-C,\bar{y}_0],
\end {equation}
and its inverse

\begin{equation}
{\bar{\Q}_{r,0}}^{-1}(\bar{x})=1-{\bar{x}}^2,\quad \forall\bar{x}\in[\sqrt{1-\bar{y}_0},\sqrt{C}]\\
\end {equation}
The derivatives are:
\begin{equation}
\begin{array}{lcr}
\bar{\Q}_{r,0}'(\bar{y})=-\frac{1}{2\sqrt{1-\bar{y}}},\quad\bar{\Q}_{r,0}''(\bar{y})= -\frac{1}{4(\sqrt{{1-\bar{y}}})^3} ,\quad\forall\bar{y}\in[1-C,\bar{y}_0]\\
\quad({\bar{\Q}_{r,0}}^{-1})'(\bar{x})=-2\bar{x},\quad ({\bar{\Q}_{r,0}}^{-1})''(\bar{x})=-2,\quad\forall\bar{x}\in[\sqrt{1-\bar{y}_0},\sqrt{C}]
\end{array}
\end {equation}

Then for $\delta$ small enough, there exists perturbed scaled maps $\bar{\Q}_r(\bar{y})$ and  ${\bar{\Q}_r}^{-1}(\bar{x})$, defined in the same sections, with:
\begin{equation}\label{pperturbat}
\begin{array}{lcr}
\bar{\Q}_r(\bar{y})=\bar{\Q}_{r,0}(\bar{y})+\OO(\sqrt{\delta}); \quad
\bar{\Q}_r '(\bar{y})=\bar{\Q}_{r,0}'(\bar{y})+\OO(\sqrt{\delta});\quad
\bar{\Q}_r''(\bar{y})=\bar{\Q}_{r,0}''(\bar{y}) +\OO(\sqrt{\delta}) ,\quad
\forall \bar{y}\in[1-C,\bar{y}_0]\\
\\
({\bar{\Q}_r}^{-1})(\bar{x})=({\bar{\Q}_{r,0}}^{-1})(\bar{x})+\OO(\sqrt{\delta});\quad
({\bar{\Q}_r}^{-1})'(\bar{x})=({\bar{\Q}_{r,0}}^{-1})'(\bar{x})+\OO(\sqrt{\delta});\\
\quad ({\bar{\Q}_r}^{-1})''(\bar{x})=({\bar{\Q}_{r,0}}^{-1})''(\bar{x})+\OO(\sqrt{\delta}), \quad
\forall \bar{x}\in[\sqrt{1-\bar{y}_0},\sqrt{C}]
\end{array}
\end {equation}
Returning to the $x,y$ variables we have
\begin{equation}\label{eq:returnxy}
\begin{array}{lcr}
\bar{\Q}(y)=\sqrt{\delta}{\bar{\Q}_\delta}(\frac{y}{\delta}),\quad\forall y\in[\de(1-C),y_0]\\
\bar{\Q}^{-1}(x)=\delta{\bar{\Q}_\delta}^{-1}(\frac{x}{\sqrt{\delta}}),\quad   \forall x   \in
[\rdelta\sqrt{1-\bar{y}_0},\rdelta\sqrt{C}]=  [\sqrt{\de-y_0},\sqrt{\de C}]
\end{array}
\end {equation}
where $ y_0:=\delta\bar{y}_0 $.
Differentiating, we will have
\begin{equation}
\begin{array}{lcr}
\bar{\Q}'(y)=\frac{1}{\sqrt\delta}\bar{\Q}'_\delta(\frac{y}{\delta}),\quad\bar{\Q}''(y)=\frac{1}{({\sqrt\delta})^3}\bar{\Q}''_\delta(\frac{y}{\delta})\quad\forall y\in[\de(1-C) ,y_0]\\
(\bar{\Q}^{-1})'(x)=\sqrt{\delta}({\bar{\Q}_\delta}^{-1})'(\frac{x}{\sqrt{\delta}}),\quad(\bar{\Q}^{-1})''(x)=({\bar{\Q}_\delta}^{-1})''(\frac{x}{\sqrt{\delta}}) \quad\forall x  \in
 [\sqrt{\de-y_0},\sqrt{\de C}]
\end{array},
\end {equation}
and Proposition \ref{prop:exterior} easily follows.

\end{proof}

Next lemma gives the asymptotic expression of $x_\de$ and $\tilde x_\de$ and therefore it is useful to
understand the domain of the map $\f$.
\begin{lemma}\label{prop:exteriortangent}
Let $\X_0$ with the hypothesis \eqref{hypofamily} and $x_\delta$ such that the solution with initial condition $(x_\delta,\delta)$ is tangent to $\mathcal{S}_\delta$.
Recall that $ \tilde{x}_\delta$ is the last cut of this solution (in backward time) with $\SSS_\delta^+$ before the tangency (see Figure \ref{fig:tangenciapexterior}).
Then,
$ x_\delta$ and  $ \tilde{x}_\delta$ satisfy:
\begin{equation}
x_\delta=-\frac{b}{2}\delta +\OO(\delta^2), \quad \tilde{x}_\delta=\rdelta\sqrt{1-\frac{1}{\pi'(0)}}+\OO(\delta)
\end{equation}
 \end{lemma}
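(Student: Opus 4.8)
The plan is to compute $x_\delta$ and $\tilde x_\delta$ directly from the geometry of the flow of $\X_0$ near the fold at the origin, using the normal form \eqref{def:Xg} and the decomposition $\f = \Q^{-1}\circ\pi\circ\bar\Q^{-1}$ together with the asymptotics of Proposition \ref{prop:exterior}. First I would locate $x_\delta$: by definition it is the $x$-coordinate on $\SSS_\delta = \{y=\delta\}$ where the orbit of $\X_0$ is tangent to the horizontal line, i.e.\ where the second component of $\X_0$ vanishes, $\X_{0,2}(x_\delta,\delta)=0$. Using the normal form $\X_{0,2}(x,y) = 2x + by + f_2(x,y)$ with $f_2(x,0)=0$ (hence $f_2(x,y)=O(y)\cdot(\text{stuff}) = O(xy,y^2)$), the equation $2x_\delta + b\delta + f_2(x_\delta,\delta)=0$ gives, by the implicit function theorem around $(x,\delta)=(0,0)$, the expansion $x_\delta = -\tfrac{b}{2}\delta + \OO(\delta^2)$. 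This is the routine half.

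The substantive part is $\tilde x_\delta$. The point $(\tilde x_\delta,\delta)$ is the last crossing in backward time of the tangent orbit with $\SSS_\delta^+$, so I would identify it as the image under $\bar\Q$ of the point where the tangent orbit, followed backward, meets the section $\{x=0\}$, and then close the loop through the return map $\pi$. Concretely: the orbit through $(x_\delta,\delta) = (\OO(\delta),\delta)$ is tangent to $y=\delta$, so in the scaled variables \eqref{canvidelta} it is (to leading order) the parabola-like orbit of the limit system \eqref{deltadifferentiable0} that is tangent to $\bar y = 1$; tracing it backward it crosses $\{\bar x = 0\}$ at $\bar y = \bar y_{\mathrm{in}}$ and, still earlier, crosses $\SSS_1^+$. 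But the tangent orbit is the boundary case: forward from $(x_\delta,\delta)$ it is tangent and does not re-enter $\{y<\delta\}$; backward it spirals. The key relation is that $(\tilde x_\delta,\delta)$ must be the $\bar\Q$-image of the point on $\{x=0\}$ which $\pi$ maps to the tangency configuration. Using $\Q(x_\delta) = \delta - x_\delta^2 + \OO(\sqrt{\delta^3}) = \delta + \OO(\delta^2)$ from Proposition \ref{prop:exterior}, i.e.\ the tangent orbit meets $\{x=0\}$ (on the way out) at height $y \approx \delta$; its preimage under $\pi$ is at height $y_\star$ with $\delta \approx \pi(y_\star) = \pi'(0)y_\star + \OO(y_\star^2)$, hence $y_\star = \delta/\pi'(0) + \OO(\delta^2)$; and then $\tilde x_\delta = \bar\Q(y_\star) = \sqrt{\delta - y_\star} + \OO(\delta) = \sqrt{\delta}\sqrt{1 - \tfrac{1}{\pi'(0)}} + \OO(\delta)$, which is the claimed formula.

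I expect the main obstacle to be the bookkeeping of which crossing is ``the last one in backward time'': one must argue that the relevant intersection of the tangent orbit with $\{x=0\}$ is exactly one half-turn back, so that $\pi$ (not $\pi^2$ or a higher iterate) is the right map to invert, and that the scaled limit picture \eqref{deltadifferentiable0} genuinely captures the $\OO(\sqrt\delta)$-leading behavior on the relevant compact range $\bar y \in [1-C,\bar y_0]$ — this is where Proposition \ref{prop:exterior}, and the regularity of the flow in $\delta$ and initial conditions, does the work. The error terms are $\OO(\delta)$ because $\pi$ contributes an $\OO(y_\star^2) = \OO(\delta^2)$ correction inside a square root of size $\sqrt\delta$, while the perturbation of $\bar\Q$ away from $\bar\Q_{r,0}$ is $\OO(\sqrt\delta)$ in the scaled variable, i.e.\ $\OO(\delta)$ after multiplying by $\sqrt\delta$; tracking that these combine to $\OO(\delta)$ rather than something larger is the only delicate estimate, and it follows immediately from the expansions already recorded in Proposition \ref{prop:exterior}.
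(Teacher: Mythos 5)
Your proposal is correct and follows essentially the same route as the paper: you solve for $\tilde x_\delta$ from the relation $\pi(\bar\Q^{-1}(\tilde x_\delta)) = y_\delta$ using the asymptotics of Proposition \ref{prop:exterior}, which is exactly what the paper does. One small caveat: you obtain $y_\delta = \delta + \OO(\delta^2)$ by evaluating the formula for $\Q$ at $x_\delta = \OO(\delta)$, a point outside the stated domain of Proposition \ref{prop:exterior} (whose error term would in any case only give $\OO(\delta^{3/2})$ there); the paper instead reads $y_\delta = \delta + \OO(\delta^2)$ directly off the fold structure of the orbit through $(x_\delta,\delta)$, though either bound yields the same $\OO(\delta)$ remainder in $\tilde x_\delta$.
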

\begin{proof}
As $\X_0$ has the form  \eqref{def:Xg} it is clear that $x_\delta=-\frac{b}{2}\delta +\OO(\delta^2)$.
Also if $\delta$ is small enough, the flow of $X_0$ is also a fold on $(x_\delta,\delta)$, therefore, the intersection of the solution issuing from it will cut $x=0$ at
\begin{equation}\label{eq:ydelta}
y_\delta=\delta+\OO(\delta^2).
\end{equation}
To compute $\tilde{x}_\delta$, we use that it satisfies of the equation
$$
\pi(\bar{\Q}^{-1}(\tilde{x}_\delta))=y_\delta=\delta (1+\OO(\delta)).
$$
Writing $\tilde x_\de=\sqrt{\delta}\bar x_\de$, using  the expression of $\bar \Q$ given in Proposition \ref{prop:exterior}  and Taylor expanding the return map $\pi$ around $x=0$ one obtains:
$$
\pi(\bar{\Q}^{-1}(\sqrt{\delta}\bar x_\de ))=
\pi (\delta -\delta \bar x_\de ^2+\OO(\delta^{3/2}))=
\pi'(0) (\delta -\delta \bar x_\de ^2+\OO(\delta^{3/2}))+ \OO(\delta^2)=
\delta[(\pi'(0)[1 - \bar x_\de^2]+\OO(\delta^{1/2})]
$$
solving
$$
\delta[(\pi'(0)[1 - \bar x_\de^2]+\OO(\delta^{1/2})]=\delta(1 +\OO(\delta))
$$
one obtains the result.
\end{proof}

Next step is to give an asymptotic formula for  $\f$
% is defined for $x\in \MM=[\tilde{x}_\delta,M]$, we want to obtain an asymptotics for it
that allows us to prove that $(\f)''(x)>0$.
Observe that, by definition $\f(\tilde x_\de)=x_\de$, and one can easily extend $\f$ to $[x_\de,\tilde x_\de]$ by the constant function $\f (x)=x_\de$, for any $x\in[x_\de,\tilde x_\de]$.

The main difficulty will be to obtain an asymptotics of $\f$ for $x\ge \tilde x_\delta$, very close to $\tilde x_\delta$.

As a first step, in next proposition, we obtain the asymptotics of $\f$ for points on the right of $\tilde x_\de$ but strictly separated from it.
This will allow us to prove that, for this range of points, $(\f)'' >0$.
\begin{proposition}\label{prop:exteriorderivada}
Let  $C>1$ be any constant.
Take  $\X_0$ with the hypothesis \eqref{hypofamily}, and fix $0<\bar{y}_0<\frac{1}{\pi'(0)}<1$.
Then, if $\delta>0$ small enough we have  %$ \f $ is defined in the interval
$[\sqrt{\delta}\sqrt{1-\bar{y}_0},\sqrt{\delta C}
%\sqrt{1+\bar{ y}_0}
]
\subset [\tilde x_\delta, \sqrt{\delta C}
%\sqrt{1+\frac{1}{ \pi'(0)}}
]\subset \MM$, where $\MM$ is  the domain of $\f$ defined in \eqref{eq:dominipie}, and
$\forall x\in[\sqrt{\delta}\sqrt{1-\bar{y}_0},\sqrt{\delta C}]$
\begin{equation}\label{eq:formulapiebona}
\begin{array}{rcl}
\f(x)  &=&
\sqrt{\delta- \pi'(0)(\delta-x^2)}+\OO(\delta), \\
(\f)'(x)  &=&
-\frac{\pi'(0)x}{\sqrt{\delta-\pi'(0)(\delta-x^2)}}+\OO(\sqrt{\delta}) \\
(\f)''(x) &=&
\frac{-\de \pi'(0)(1-\pi'(0))}{\sqrt{(\delta- \pi'(0)(\de-x^2))^3}  }+\OO(1) >0
\end{array}
\end{equation}
Consequently:

\begin{equation}
(\f)''(x)>0, \forall x\in[\sqrt{\delta}\sqrt{1-\bar{y}_0},\sqrt{\delta C}]
\end{equation}
\end{proposition}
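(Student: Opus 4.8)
The plan is to exploit the decomposition \eqref{descomposicioexterior}, $\f=(\Q)^{-1}\circ\pi\circ(\bar{\Q})^{-1}$, feeding into it the asymptotics of $\Q$, $\bar{\Q}$ and their inverses from Proposition \ref{prop:exterior} together with the Taylor expansion of the return map at $y=0$. Since $\X_0$ is of class $\mathcal{C}^{2}$, the Poincar\'{e} map $\pi$ is $\mathcal{C}^{2}$ with $\pi(0)=0$, so $\pi(y)=\pi'(0)y+\OO(y^2)$ and $\pi'(y)=\pi'(0)+\OO(y)$ near $y=0$.

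First I would check that the three maps actually compose on the interval in question. By Lemma \ref{prop:exteriortangent}, $\tilde x_\delta=\rdelta\sqrt{1-1/\pi'(0)}+\OO(\de)$, so the hypothesis $\bar y_0<1/\pi'(0)$ gives $\tilde x_\delta<\rdelta\sqrt{1-\bar y_0}$ for $\de$ small, and hence $[\rdelta\sqrt{1-\bar y_0},\sqrt{\de C}]\subset[\tilde x_\delta,\sqrt{\de C}]\subset\MM$, with $\MM$ as in \eqref{eq:dominipie}. For $x$ in this interval Proposition \ref{prop:exterior} gives $(\bar{\Q})^{-1}(x)=\de-x^2+\OO(\de^{3/2})$, which lies in $[\de(1-C),\de\bar y_0]$, whence $\pi\big((\bar{\Q})^{-1}(x)\big)=\pi'(0)(\de-x^2)+\OO(\de^{3/2})$. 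Because $\de-x^2\le\de\bar y_0$ and $\pi'(0)\bar y_0<1$, this value stays below the tangency level $y=\de$ by a margin of order $\de$, hence it belongs to the domain of $(\Q)^{-1}$ (applying Proposition \ref{prop:exterior} with a larger constant $C$ for the map $\Q$ if needed). This is precisely the purpose of the strict inequality $\bar y_0<1/\pi'(0)$: it keeps $x$ away from $\tilde x_\delta$ and $(\Q)^{-1}$ non-degenerate.

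Next I would substitute. Composing ${(\Q)}^{-1}(y)=-\sqrt{\de-y}+\OO(\de)$ with $y=\pi'(0)(\de-x^2)+\OO(\de^{3/2})$ gives $\f(x)=-\sqrt{\de-\pi'(0)(\de-x^2)}+\OO(\de)$, which is \eqref{eq:formulapiebona}; the expansion is legitimate because on the whole interval $\de-\pi'(0)(\de-x^2)\ge\de\,(1-\pi'(0)\bar y_0)$, which is comparable to $\de$, so the square root is bounded below by a constant times $\rdelta$. The formulas for $\f'$ and $\f''$ then follow from the chain rule
\[
\f'(x)=\big({(\Q)}^{-1}\big)'\big(\pi((\bar{\Q})^{-1}(x))\big)\,\pi'\big((\bar{\Q})^{-1}(x)\big)\,\big((\bar{\Q})^{-1}\big)'(x),
\]
upon inserting $\big((\bar{\Q})^{-1}\big)'(x)=-2x+\OO(\de)$, $\pi'((\bar{\Q})^{-1}(x))=\pi'(0)+\OO(\de)$ (since $(\bar{\Q})^{-1}(x)=\OO(\de)$) and $\big({(\Q)}^{-1}\big)'(y)=\tfrac1{2\sqrt{\de-y}}+\OO(1)$; collecting terms, using $x=\OO(\rdelta)$ and the lower bound $\sqrt{\de-\pi'(0)(\de-x^2)}$ comparable to $\rdelta$, the remainder is of relative order $\OO(\rdelta)$, which yields the stated formula for $\f'$. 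Differentiating once more --- legitimate because Proposition \ref{prop:exterior} supplies the $\mathcal{C}^{2}$ asymptotics --- gives the formula for $\f''$ with an $\OO(1)$ error.

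Finally, for the convexity claim it suffices to differentiate the leading term $h(x):=-\sqrt{\de(1-\pi'(0))+\pi'(0)x^2}$, which equals $-\sqrt{\de-\pi'(0)(\de-x^2)}$: writing $u(x):=\de(1-\pi'(0))+\pi'(0)x^2$ one gets $h'(x)=-\pi'(0)x\,u^{-1/2}$ and then $h''(x)=-\pi'(0)\,u^{-3/2}\big(u-\pi'(0)x^2\big)=-\de\,\pi'(0)(1-\pi'(0))\,u^{-3/2}$, the $\pi'(0)x^2$ terms cancelling. Since $\pi'(0)>1$ this is strictly positive, and on $[\rdelta\sqrt{1-\bar y_0},\sqrt{\de C}]$ one has $u$ comparable to $\de$, so $h''$ is comparable to $\de^{-1/2}$ and dominates the $\OO(1)$ error for $\de$ small; hence $\f''(x)>0$ there. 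The main obstacle is purely a matter of uniform bookkeeping --- checking that after two differentiations the error stays of relative order $\OO(\rdelta)$ uniformly in $x$ --- and it is exactly the strict bound $\bar y_0<1/\pi'(0)$ that makes this work, by keeping $\de-\pi'(0)(\de-x^2)$ bounded below by a constant multiple of $\de$ so that no denominator degenerates. The genuinely delicate regime $x\to\tilde x_\delta^{+}$, where that quantity collapses, is excluded here and handled separately in Proposition \ref{prop:extensiopie}.
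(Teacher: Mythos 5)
Your proof is correct and takes essentially the same route as the paper: decompose $\f=(\Q)^{-1}\circ\pi\circ(\bar{\Q})^{-1}$, feed in the $\mathcal{C}^2$ asymptotics of Proposition \ref{prop:exterior} together with the Taylor expansion of $\pi$ at $0$, and apply the chain rule, keeping track of the fact that all denominators are comparable to $\rdelta$ on the chosen interval because $\bar y_0<1/\pi'(0)$. The paper carries out the second-derivative computation by expanding each of the three chain-rule terms separately; you streamline this by differentiating the leading-order closed form $h(x)=-\sqrt{\de(1-\pi'(0))+\pi'(0)x^2}$ directly and verifying the cancellation of the $\pi'(0)x^2$ terms, which is a pleasant shortcut but lands in the same place. (Incidentally, the minus sign you carry in the formula for $\f(x)$ is the correct one, consistent with $\Q^{-1}(y)=-\sqrt{\de-y}+\OO(\de)$ and with the statement of Theorem \ref{thm:extensiopie}; the displayed formula in Proposition \ref{prop:exteriorderivada} has a sign typo.)
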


\begin{proof}

As we have decomposed $ \f(x)=(\Q)^{-1}\circ\pi\circ(\bar{\Q})^{-1}(x) $, (see \ref{descomposicioexterior}),
we will have:

\begin{equation}\label{eq:Pedd}
\begin{split}
(\f)'   & =(\Q^{-1})'\pi'(\bar{\Q}^{-1})'  \\
(\f)''  &=(\Q^{-1})'' (\pi')^2 ((\bar{\Q}^{-1})')^2
+(\Q^{-1})' \pi'' ((\bar{\Q}^{-1})')^2
+(\Q^{-1})'  \pi'  (\bar{\Q}^{-1})''
\end{split}
\end{equation}
For $x\in[\sqrt{\delta}\sqrt{1-\bar{y}_0},\sqrt{\delta C}]$
where all the functions are evaluated in the respective argument according the chain rule.
For instance
$(\Q^{-1})''\equiv(\Q^{-1})''(\pi(\bar{\Q}^{-1}(x)))$, etc

To obtain the asymptotic expression of these formulas as $\delta\rightarrow0 $  for
$ x\in[\sqrt{\delta}\sqrt{1-\bar{y}_0},\sqrt{\delta C}]$,
we will apply Proposition \ref{prop:exterior} which allows us to reduce the calculation to the dominant terms of these expressions:
\begin{equation}
\begin{array}{rcl}
\f(x)  &=& \Q^{-1}\circ\pi\circ{\bar{\Q}}^{-1}(x)
=\sqrt {\delta- \pi (\bar\Q^{-1}(x))}+\OO(\delta)=
\sqrt{\delta- \pi(\delta-x^2+\OO(\delta^{3/2}))}+\OO(\delta)\\
&=&
\sqrt{\delta- \pi'(0)(\delta-x^2)+\OO(\delta^{3/2})}+\OO(\delta)
=
\sqrt{\delta- \pi'(0)(\delta-x^2)}+\OO(\delta)
\end{array}
\end{equation}
as $ x\in[\sqrt{\delta}\sqrt{1-\bar{y}_0},\sqrt{\delta C}]$
then
$ \de(1-C) <\delta-x^2<\de \bar{y}_0<\frac{\de}{\pi'(0)}$, and
$ \de-\pi'(0)(\delta-x^2)>0$, and $\f$ is defined.

For $(\f)'$, first we calculate separately the dominant terms of the three factors using Proposition \ref{prop:exterior}:
\begin{equation*}
\begin{array}{rcl}
(\Q^{-1})'(\pi(\bar \Q^{-1}(x)))&=&
\frac{1}{2\sqrt{\delta-\pi(\delta-x^2+\OO(\delta^{3/2}))}}+\OO(1)=
\frac{1}{2\sqrt{\delta-\pi'(0)(\delta-x^2)+\OO(\delta^{3/2})}}+\OO(1)\\
&=&
\frac{1}{2\sqrt{\delta}\sqrt{1-\pi'(0)(1-\frac{x^2}{\delta})+\OO(\delta)}}+\OO(1)
=\frac{1}{2\sqrt{\delta}\sqrt{1-\pi'(0)(1-\frac{x^2}{\delta})}}+\OO(1)\\
&=& \frac{1}{2\sqrt{\delta-\pi'(0)(\delta-x^2)}}+\OO(1)\\
\pi'(\bar{\Q}^{-1}(x)) &=& 
\pi'(\delta-x^2+\OO(\delta^{3/2}\sqrt{\delta}))=\pi'(0)+\OO(\delta).\\
(\bar{\Q}^{-1})'(x) &=&
-2x+\OO(\delta)
\end{array}
\end{equation*}

Finally we obtain:

\begin{equation}\label{eq:exteriorderivada}
\begin{array}{rcl}
(\f)'(x)  &=&
\left(\frac{1}{2\sqrt{\delta-\pi'(0)(\delta-x^2)}}+\OO(1)\right) \left(\pi'(0)+\OO(\delta)\right)\left(-2x+\OO(\delta)\right)\\
&=&
-\frac{\pi'(0)x}{\sqrt{\delta-\pi'(0)(\delta-x^2)}}+\OO(\sqrt{\delta})
\end{array}
\end{equation}

Analogously, we proceed with $(\f)''$ using formula \eqref{eq:Pedd}.
We compute the asymptotics of the three terms in \eqref{eq:Pedd} using Proposition \ref{prop:exterior}.

\begin{equation*}
\begin{split}
&(\Q^{-1})'' (\pi(\bar \Q^{-1}(x)) (\pi'(\bar \Q^{-1}(x)))^2  ((\bar{\Q}^{-1})'(x))^2  \\
&=
\left(\frac{1}{4\sqrt{(\delta- \pi)^3}  } +\OO(\frac{1}{\delta})\right)
\left( \pi'(0)^2+\OO(\delta^{3/2})\right)
\left(4x^2+\OO (\delta^{3/2})\right)\\
&=  \frac{\pi'(0)^2 x^2}{\sqrt{(\delta- \pi)^3}  }+\OO(1)
\end{split}
\end{equation*}
where
\begin{equation}\label{piabreviat}
\pi=\pi(\bar \Q^{-1}(x))=\pi(\de-x^2+\OO(\de^{3/2}))=\pi'(0)(\de-x^2)+\OO(\de^{3/2})
\end{equation}
Similarly
\begin{equation*}
\begin{split}
&(\Q^{-1})' (\pi(\bar \Q^{-1}(x)) (\pi'(\bar \Q^{-1}(x))) (\bar{\Q}^{-1})''(x)\\
&=
\left(\frac{1}{2\sqrt{\de-\pi}  }+\OO(\de)\right)
\left(\pi'(0)+\OO(\de^{3/2})\right)
\left(-2+\OO(\rdelta)\right)\\
&=\frac{-\pi'(0)}{\sqrt{\de-\pi}  }+\OO(1),
\end{split}
\end{equation*}
and
\begin{equation*}
\begin{split}
&(\Q^{-1})' (\pi(\bar \Q^{-1}(x))   (\pi'' (\bar \Q^{-1}(x))) ((\bar{\Q}^{-1})')^2\\
&=
\left(\frac{1}{2\sqrt{\de-\pi}  }+\OO(\de)\right)
\left(\pi''(0)+\OO(\de^{3/2})\right)
\left(4x^2+\OO (\delta^{3/2})\right)\\
&=\frac{2x^2}{\sqrt{\de-\pi}  }+\OO(\de)
\end{split}
\end{equation*}

Using the previous formulas we obtain, by \eqref{eq:Pedd}, recalling \eqref{piabreviat} and that $x=\OO(\rdelta)$:
\begin{equation}\label{eq:exteriorderivada2}
\begin{array}{rcl}
(\f)'' &=& (\Q^{-1})'' (\pi')^2 ((\bar{\Q}^{-1})')^2
+(\Q^{-1})' \pi'' ((\bar{\Q}^{-1})')^2
+(\Q^{-1})'  \pi'  (\bar{\Q}^{-1})''\\
&=&
\frac{\pi'(0)^2 x^2}{\sqrt{(\delta- \pi)^3}  }+\OO(1)+
\frac{2x^2}{\sqrt{\de-\pi}  }+\OO(\de)-
\frac{\pi'(0)}{\sqrt{\de-\pi}  }+\OO(1)\\
&=&
\frac{\pi'(0)^2 x^2}{\sqrt{(\delta- \pi)^3}  }-
\frac{\pi'(0)}{\sqrt{\de-\pi}  }+\OO(1)=
\frac{\pi'(0)^2 x^2-\pi'(0)(\de-\pi'(0)(\de-x^2)}{\sqrt{(\delta- \pi)^3}  }+\OO(1)\\
&=&
\frac{-\de \pi'(0)(1-\pi'(0))}{\sqrt{(\delta- \pi)^3}  }+\OO(1) >0
\end{array}
\end{equation}
where
$ \de-\pi=\delta-\pi'(0)(\delta-x^2)+\OO(\delta^{3/2})>0$.

As $\pi'(0)>1$, then:
\[
(\f)''(x)>0, \quad         \forall x    \in   [\sqrt\delta\sqrt{1-\bar{y}_0},\sqrt{\delta C}].
\]
\end{proof}

\begin{remark}
By Remark \ref{rem:xdelta+-}
 we know that
$$
x_\delta^+ = \rdelta+\OO(\delta)>\tilde x_\de
$$
Therefore $x_\de^+ \in \MM $ and by \eqref{eq:exteriorderivada} and \eqref{eq:exteriorderivada2}:
\begin{equation}
\begin{split}
(\f)'(x_\delta^+) &=-\pi'(0)+\OO(\rdelta)<0, \\
(\f)''(x_\delta^+) &=-\frac{\pi'(0)(1-\pi'(0))}{\sqrt{\de}}=
\OO(\frac{1}{\rdelta}),\quad\delta\rightarrow0.
\end{split}
\end{equation}
\end{remark}

In Proposition \ref{prop:exteriorderivada}, we have seen that $(\f)''>0$ in the interval $[\sqrt\delta\sqrt{1-\bar{y}_0},\sqrt{\delta C}]$, the value of $0<\bar y_0<\frac{1}{\pi'(0)}$ can be fixed from now on.

In  proposition \ref{prop:extensiopie}  we will give formulas for $\f$ also to the interval $[x_\delta,\sigma\rdelta]$ for any $\sigma$ such that $1>\sigma> \sqrt{1-\bar y_0}>\sqrt{1-\frac{1}{\pi'(0)}}$.

We state previously a technical lemma that will be needed during the proof of proposition \ref{prop:extensiopie}.

\begin{lemma}\label{lem:partfinalPe}
Let the visible fold determined by the system
\begin{equation}\label{eq:partfinalPe}
\begin{array}{rcl}
\dot x &=&
1\\
\dot y &=&
 g(x,y,\delta)
\end{array}
\end{equation}
with $g(0,0,\delta)=0,\forall |\delta|<\delta_0$, and $\frac{\partial g}{\partial x}(0,0,0)=a>0$.
Consider the map
\[
\begin{split}
 D^{-1} : \{ (x,y), \ x=0, -b_0 \le  y \le 0 \} & \to \{ (x,y), \ -c_0\le  x\le 0, \ y=0 \}\\
(0,y) & \mapsto ( D^{-1}(y), 0)
\end{split}
\]
induced by
the orbits of \eqref{eq:partfinalPe} (in negative time).
This map  is given by:
\begin{equation}\label{eq:D-1}
 D^{-1} (y)=-\sqrt{-\frac{2}{a} y}+\OO(\delta\sqrt{-y},y)
\end{equation}
Where the term $\OO$ is valid in the $\mathcal{C}^2$ topology.

In particular the map is convex near $(0,0)$ and the singularity at $(0,0)$ is
$\OO(\sqrt{-y})$.
\end{lemma}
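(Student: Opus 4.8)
The plan is to desingularize the fold by a parabolic blow-up, after which $D^{-1}$ is realized as the (regular) exit-time map of a smooth one-parameter family, and then to read off the asymptotics directly.

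First I would exploit that $\dot x=1$, so that time coincides with the $x$-coordinate: the orbit of \eqref{eq:partfinalPe} through a point $(0,y)$ with $-b_0\le y<0$ is $(t,Y(t))$, where $\dot Y=g(t,Y,\delta)$ and $Y(0)=y$, and $D^{-1}(y)=t^*$ is the first negative time at which $Y(t^*)=0$. Then I would rescale by $y=-u^2$, $t=u\tau$, $Y=u^2Z$, so that $u=\sqrt{-y}>0$ and $Z(0)=-1$. Since $g(0,0,\delta)=0$, Hadamard's lemma writes $g(x,y,\delta)=x\,h_1(x,y,\delta)+y\,h_2(x,y,\delta)$ with $h_1(0,0,\delta)=\partial_x g(0,0,\delta)=:a(\delta)$, $a(0)=a$, and the rescaled equation becomes
\[
\frac{dZ}{d\tau}=\tau\,h_1(u\tau,u^2Z,\delta)+uZ\,h_2(u\tau,u^2Z,\delta)=:\mathcal{G}(\tau,Z,u,\delta),\qquad Z(0)=-1 .
\]
The point of the scaling is that the singular $y$-dependence has been pushed entirely into $u$, while $\mathcal{G}$ depends regularly on $(\tau,Z,u,\delta)$ down to $u=0$, where $\mathcal{G}(\tau,Z,0,\delta)=a(\delta)\tau$.

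Next I would analyse the limiting problem $u=0$: its solution is the parabola $Z_0(\tau)=-1+\tfrac{1}{2}a(\delta)\tau^2$, which meets $\{Z=0\}$ at $\tau_0^*(\delta)=-\sqrt{2/a(\delta)}$, and the crossing is transverse because $\frac{dZ_0}{d\tau}(\tau_0^*)=a(\delta)\tau_0^*<0$. By smooth dependence of the flow on initial data and parameters together with the implicit function theorem, for $u$ and $\delta$ small the exit time $\tau^*(u,\delta)$ is well defined, of the regularity of $\mathcal{G}$, and satisfies $\tau^*(u,\delta)=-\sqrt{2/a(\delta)}+\OO(u)$. Undoing the scaling,
\[
D^{-1}(y)=t^*=u\,\tau^*(u,\delta)=-u\sqrt{2/a(\delta)}+\OO(u^2)=-\sqrt{-\tfrac{2}{a}\,y}+\OO\!\big(\delta\sqrt{-y}\,,\,y\big),
\]
and, $u=\sqrt{-y}$ being a smooth function of $y$ on $\{y<0\}$, the remainder is controlled in the $\mathcal{C}^2$ sense asserted. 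Convexity near the origin then follows since the leading term has second derivative $\tfrac{1}{4}\sqrt{2/a}\,(-y)^{-3/2}>0$, which dominates that of the remainder once $\delta$ and $|y|$ are small, and the $\OO(\sqrt{-y})$ size of the singularity is immediate.

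The delicate point is precisely the $\mathcal{C}^2$ control of the remainder across the square-root singularity: the blow-up is what makes $D^{-1}$ smooth in $u=\sqrt{-y}$, but the Hadamard factorization costs one derivative of $g$, so one wants $g$ a shade smoother than $\mathcal{C}^2$ for the full $\mathcal{C}^2$ statement; this is harmless within the paper's standing regularity, and only the leading asymptotics together with the one-sided sign of the second derivative are used downstream. As a cross-check I would also compute the forward fold map $D\colon(x_0,0)\mapsto(0,D(x_0))$ directly, finding $D(x_0,\delta)=-\tfrac{1}{2}a(\delta)x_0^2+\OO(|x_0|^3)=x_0^2\psi(x_0,\delta)$ with $\psi(0,0)=-a/2\ne0$, and invert $y=x_0^2\psi(x_0,\delta)$ via the implicit function theorem in the variable $\sqrt{-y}$, which reproduces the same formula.
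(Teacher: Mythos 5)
Your proof is correct, but it takes a genuinely different route from the paper. The paper works with the \emph{forward} fold map $D:(x_0,0)\mapsto (0,D(x_0))$, writes its value as a time-$(-x_0)$ integral $D(x_0)=\int_0^{-x_0}g(x_0+s,y(s),\delta)\,ds$, differentiates under the integral to get $D(0)=D'(0)=0$, $D''(0)=-\partial_x g(0,0,\delta)$, Taylor-expands $D(x_0)=-\tfrac a2 x_0^2+\mathcal G_1(\delta)\delta x_0^2+\mathcal G(x_0,\delta)x_0^3$, and finally inverts $y=D(x_0)$. Your main argument instead applies the parabolic blow-up $y=-u^2$, $t=u\tau$, $Y=u^2Z$ together with Hadamard's lemma to desingularize the fold: in the $(\tau,Z)$ variables the crossing of $\{Z=0\}$ is transverse and the exit time $\tau^*(u,\delta)$ becomes a regular implicit-function problem with explicit limiting value $-\sqrt{2/a(\delta)}$ at $u=0$. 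Both proofs reach the same asymptotics, but they allocate regularity differently: the paper gains a derivative from the integral representation of $D$ and loses one on dividing by $x_0^2$ before inverting, while you lose one up front through Hadamard and then gain everything from smooth dependence on parameters; you correctly flag that for a literal $\mathcal C^2$ remainder one wants $g$ marginally smoother than $\mathcal C^2$, and that this is immaterial for the sign of the second derivative, which is all the downstream argument uses. The blow-up formulation is the more structural one (it makes the $\sqrt{-y}$ singularity disappear once and for all, and the uniformity in $\delta$ is automatic), and your closing cross-check via the forward map and the IFT in the variable $\sqrt{-y}$ is in fact essentially the paper's own computation, so the two derivations mutually confirm each other.
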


\begin{proof}
First we find the map $D(x)$ defined by the cut in the $y$ negative semi-axis of the orbits of \ref{eq:partfinalPe}.
Its solution issuing from a point $(x,0)$ in the negative $x$-axis has the form
\[
y(t;x,0,\delta)=\int_0^t g(x+s,y(s;x,0,\delta),\delta)ds
\]
Then we have
\[
D(x)=y(-x;x,0,\delta)=\int_0^{-x} g(x+s,y(s;x,0,\delta),\delta)ds
\]
and
\begin{equation}
\begin{split}
D' (x)& =-g(0,y(-x;x,0,\delta),\delta)\\
&+\int_0^{-x}
[\frac{\partial{g}}{\partial{x}}(x+s,y(s;x,0,\delta),\delta)+
\frac{\partial{g}}{\partial{y}}(x+s,y(s;x,0,\delta),\delta)\frac{dy}{dx}(s;x,0,\delta)]ds
\end{split}
\end{equation}
where $ \frac{dy}{dx}(s;x,0,\delta)$ is the first variational of the solution.
In particular $D'(0)=-g(0,0,\delta)=0$.

For the second derivative we have
\begin{equation}
\begin{split}
D''(0)  &=-\frac{\partial{g}}{\partial{y}}(0,0,\delta)[-\dot{y}(0,0,\delta)+\frac{dy}{dx}(0;x,0,\delta)]- \frac{\partial{g}}{\partial{x}}(0,0,\delta)
 -\frac{\partial{g}}{\partial{y}}(0,0,\delta)\frac{dy}{dx}(0;x,0,\delta)\\
 & =- \frac{\partial{g}}{\partial{x}}(0,0,\delta)
\end{split}
\end{equation}
as $\dot{y}(0,0,\delta)=0$ and the first variational $ \frac{dy}{dx}(0;x,0,\delta)=0$

Hence we have, using Taylor formula
\begin{equation}
\begin{split}
D (x)&=D(0)+D'(0)x+\frac{D''(0)}{2}x^2+\mathcal{G}(x,\delta)x^3\\
&=-\frac{ \frac{\partial{g}}{\partial{x}}(0,0,\delta)}{2}x^2+\mathcal{G}(x,\delta)x^3=
  -\frac{a}{2}x^2+\mathcal{G}_1(\delta)\delta x^2+\mathcal{G}(x,\delta)x^3
\end{split}
\end{equation}
where $\mathcal{G}$, $\mathcal{G}_1$ denote smooth and uniformly bounded functions of their arguments and with bounded derivatives for $0\le \de \le \de_0$ and $-c_0\le x \le 0$.
Finally for $c_0$ and $\de _0$ small enough,  we invert the formula
$y=-\frac{a}{2}x^2+\mathcal{G}_1(\delta)\delta x^2+\mathcal{G}(x,\delta)x^3$
and we obtain the result \eqref{eq:D-1}.
\end{proof}

\begin{proposition}\label{prop:extensiopie}
Take $\sigma >\sqrt{1-\bar y_0}$, where $\bar y_0$ is the constant given in Proposition
\ref{prop:exteriorderivada}.
Then, the map $\f$ satisfies for $x\in [\tilde{x}_\de ,\sigma \sqrt{\de}]$:
\begin{equation}%\eqref{eq:extensiopie}
\begin{split}
\f(x)&=x_\de-\OO(\sqrt{x-\tilde{x}_\delta})\\
(\f)''(x)&=+\OO((x-\tilde{x}_\delta))^{-\frac32}
\end{split}
\end{equation}
and therefore
$(\f ) ''(x)\ge 0$, for  $x\in  [\tilde{x}_\de ,\sigma \sqrt{\de}]$:
\end{proposition}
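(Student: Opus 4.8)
The only obstruction to the smoothness of $\f$ on $[\tilde x_\de,\sigma\rdelta]$ is the fold that the return map develops when the forward orbit issued from $\SSS_\de^+$ runs once around $\Gamma_0$ and comes back to graze $\SSS_\de^-$ at the visible fold point $(x_\de,\de)$ of $\X_0$. The plan is to factor $\f$ near $\tilde x_\de$ as a \emph{smooth} ($\mathcal C^2$) diffeomorphism followed by the model fold map isolated in Lemma \ref{lem:partfinalPe}, and then read off the two estimates of the statement, hence the sign of $(\f)''$, by the chain rule. Concretely I would keep the decomposition $\f=\Q^{-1}\circ\pi\circ\bar\Q^{-1}$ of \eqref{descomposicioexterior} and set $h:=\pi\circ\bar\Q^{-1}$, so that $\f=\Q^{-1}\circ h$.

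\textbf{The smooth factor $h$.} First I would check that $h$ is a $\mathcal C^2$ diffeomorphism of $[\tilde x_\de,\sigma\rdelta]$ onto a one-sided neighbourhood of $y_\de$ in $\{x=0\}$. By Proposition \ref{prop:exterior} (applied with $y_0$ close to $\de$, so that its formulas hold up to $x=\tilde x_\de$) the map $\bar\Q^{-1}$ is $\mathcal C^2$ with $(\bar\Q^{-1})'(x)=-2x+\OO(\de)$ bounded away from zero on this interval, since $x\ge\tilde x_\de=\rdelta\sqrt{1-1/\pi'(0)}$; and $\pi$ is smooth, so $h$ is a decreasing $\mathcal C^2$ diffeomorphism. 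By the defining equation of $\tilde x_\de$ (see the proof of Lemma \ref{prop:exteriortangent}) one has $h(\tilde x_\de)=\pi(\bar\Q^{-1}(\tilde x_\de))=y_\de$, whence $h(x)<y_\de$ for $x>\tilde x_\de$ and $y_\de-h(x)=|h'(\tilde x_\de)|(x-\tilde x_\de)+\OO((x-\tilde x_\de)^2)$. Thus every singular feature of $\f$ is carried by $\Q^{-1}$ on a left neighbourhood of $y_\de$.

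\textbf{The fold factor $\Q^{-1}$.} Since $\X_{0,2}(x_\de,\de)=0$ (definition of $x_\de$ and normal form \eqref{def:Xg}) and $\Q(x_\de)=y_\de$, the backward $\X_0$-orbit of $(0,y_\de)$ reaches $\SSS_\de^-$ tangentially at the visible fold $(x_\de,\de)$. Translating that point to the origin ($\xi=x-x_\de$, $\eta=y-\de$) and rescaling time so that $\dot\xi=1$, the field becomes $\dot\xi=1$, $\dot\eta=g(\xi,\eta,\de)$ with $g(0,0,\de)=0$ and $\partial_\xi g(0,0,0)=2>0$; the line $\{x=0\}$ is then $\{\xi=-x_\de\}$ with $-x_\de=\OO(\de)$, and the transversal short-time flow carrying $\{x=0\}$ to $\{\xi=0\}$ is a smooth, $\de$-uniformly bounded change of the transversal coordinate. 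Lemma \ref{lem:partfinalPe} then applies and gives, on a left neighbourhood of $y_\de$ and in the $\mathcal C^2$ topology, $\Q^{-1}(Y)=x_\de-\OO(\sqrt{y_\de-Y})$ with $(\Q^{-1})'(Y)=\OO((y_\de-Y)^{-1/2})$ and $(\Q^{-1})''(Y)=+\OO((y_\de-Y)^{-3/2})>0$. Substituting $Y=h(x)$ and using the expansion of $y_\de-h(x)$ from the previous paragraph yields $\f(x)=x_\de-\OO(\sqrt{x-\tilde x_\de})$; and since $(\f)''=(\Q^{-1})''(h)(h')^2+(\Q^{-1})'(h)h''$, the first term $+\OO((x-\tilde x_\de)^{-3/2})$ dominates the second $\OO((x-\tilde x_\de)^{-1/2})$, so that $(\f)''(x)=+\OO((x-\tilde x_\de)^{-3/2})\ge 0$ on $[\tilde x_\de,\sigma\rdelta]$. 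Together with Proposition \ref{prop:exteriorderivada} this gives $(\f)''\ge 0$ on $[\tilde x_\de,\sqrt{\de C}]$, completing the proof of Theorem \ref{thm:extensiopie}.

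\textbf{Main difficulty.} I expect the delicate point to be the $\mathcal C^2$ bookkeeping across the translation of the fold point, the auxiliary transversal shift from $\{x=0\}$ to $\{\xi=0\}$, and the composition with $h$: one must make sure that the correction terms provided by Lemma \ref{lem:partfinalPe} (of size $\OO(\de\sqrt{-\eta},\eta)$, valid in the $\mathcal C^2$ topology) and the $h''$-contribution remain strictly subdominant relative to the leading $-\sqrt{\,\cdot\,}$ term, so that the sign of $(\f)''$ is genuinely fixed by that term. Tracking the (possibly $\de$-dependent) constants is the only real work; the underlying geometry is just that of a visible fold.
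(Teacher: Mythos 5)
Your proof follows precisely the paper's own argument: the same factorization $\f = \Q^{-1}\circ h$ with $h=\pi\circ\bar\Q^{-1}$ smooth and transversal near $\tilde x_\de$, the same splitting of the singular factor $\Q^{-1}$ into a $\mathcal{C}^2$ transversal shift (the paper's map $C$) composed with the fold model $D^{-1}$ of Lemma \ref{lem:partfinalPe}, and the same conclusion by the chain rule, combined at the end with Proposition \ref{prop:exteriorderivada} to cover the part of the interval away from $\tilde x_\de$. The only cosmetic difference is that you write out the two-term expression $(\f)''=(\Q^{-1})''(h)(h')^2+(\Q^{-1})'(h)h''$ explicitly, whereas the paper states the final asymptotics directly.
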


\begin{proof}

Proposition \ref{prop:exteriorderivada} gives that $\f$ is convex  for $x\in [\sqrt{\delta}\sqrt{1-\bar{y}_0},\sqrt{\delta C}]$.
Now we will see that it is also convex in $[\tilde{x}_\delta,\sigma\rdelta ]$.

The definition of $\f$ in the  interval  $[\tilde{x}_\delta,\sigma\rdelta ]$ through the orbits of $\X_0$ is clear.
Nevertheless,  we cannot use  the approximation formulas seen in proposition \ref{prop:exteriorderivada}.
For these points, even if $\f$ exists, the formulas obtained through the identity
$\f= \Q^{-1}\circ \pi\circ \bar\Q^{-1}$ are not valid anymore.
The problem to apply these formulas is in $\Q^{-1}$, the last step of the definition of $\f$, but not in the map
$\pi\circ\bar{\Q}^{-1}$, which is well defined in a neighborhood of $\tilde{x}_\delta$ and formulas of the proposition \ref{prop:exterior} are valid: 
$$
\pi(\bar{\Q}^{-1}(x))
=\pi'(0)[\delta-x^2]+\OO(\delta^{3/2}), \ x \in \sqrt{\de}\sqrt{1-\bar y_0},\sqrt{\de C}],
\quad \pi(\bar{\Q}^{-1}(\tilde x_\de))=y_\de
$$
where $y_\de$ is given in \eqref{eq:ydelta}.

To study $\f$ near $\tilde{x}_\delta$, consider an interval around it,  $[\sigma_1\sqrt{\delta}, \sigma_2\sqrt{\delta}]$ with $ \sigma_1<\sqrt{1-\frac{1}{\pi'(0)}}<\sigma_2$.
Letting $\sigma_{1,2}$ be closer to $\sqrt{1-\frac{1}{\pi'(0)}}$ if needed, we can achieve that calling
$$
Y_\de:= \pi\circ\bar{\Q}^{-1}([\sigma_1\sqrt{\delta}, \sigma_2\sqrt{\delta}])=
[\pi'(0)\delta(1-\sigma_2^2)+\OO(\delta^{3/2}), \pi'(0)\delta(1-\sigma_1^2)+\OO(\delta^{3/2})]
$$
we have that
$$
\pi(\bar{\Q}^{-1}(\tilde{x}_\delta))=y_\delta =\delta+\OO(\delta^2)\in Y_\de
$$
if $\delta$ is small enough.

Next step is to "extend" the definition of the map $\Q^{-1}$ into $Y_\de$.
Observe that, modifying if needed, $\delta$ and $\sigma_{1,2}$ again, we can achieve that the flow is transversal to the sections  $x=0$ and $x=x_\delta$. See Figure \ref{fig:finalPe}.

Therefore, points $(0,y)$ with $y \in Y_\de$ have to be "classified" in different sets to extend $\Q^{-1}$:
\begin{enumerate}
\item
Points with $y\ge y_\de$. For these points the geometric definition of  $\Q^{-1}$ is not possible because the flow $\phi(t,0,y)$ does not cut $y=\de$, $x<0$. \\
We define  $\Q^{-1}(y)=x_\de$, for any $y\ge y_\de$.
\item
$\Q^{-1}(y_\de)=x_\de$
\item
%\textcolor{blue}{
Points with $\de<y<y_\de$:
\begin{itemize}
\item
If the tangency point $x_\de >0$, we can define $\Q^{-1}(y)$ as the first  cut of the flow
 $\phi(t,0,y)$  (in backwards or forward time) with $y=\de$, $x<x_\de$.
\item
If the tangency point $x_\de <0$, we can define $\Q^{-1}(y)$ as the second  cut of the flow
 $\phi(t,0,y)$  (in backwards or forward time) with $y=\de$, $x<x_\de <0$.
\end{itemize}
%}
\item
Points with $0<y<\de$, where we can define $\Q^{-1}(y)$ as the first   cut of the flow  $\phi(t,0,y)$  (in backwards or forward time) with $y=\de$, $x<0$.
\end{enumerate}
To obtain an asymptotics for $\Q^{-1}$  in $Y_\de$, in fact in  $Y_\de\cap \{y\le y_\de\}$, we observe that:
\[
\begin{split}
\Q^{-1} &= D^{-1} \circ C\\
C &: \{ (x,y), \ x=0, \ y \in Y_\de, \ y\le y_\de \}  \to  \{ (x,y), \ x=x_\de, \ y \in \tilde Y_\de , \ y\le \de \} , \quad C(y_\de)=\de \\
D^{-1} &: \{ (x,y), \ x=x_\de, \ y \in \tilde Y_\de, \quad y \le \de \}  \to \{ (x,y), \ x\le x_\de, \ y=\de \}
\end{split}
\]

The first map $C$ is  a diffeomorphism do to the transversality of the flow to both lines $x=0$ and $x=x_\de$.
Moreover, we know that $C(y_\de)=\de $ and
we have that

\begin{equation}\label{eq:C}
C(y)=\de+(1+\OO(\de))( y-y_\de)+\OO((y-y_\de)^2)
\end{equation}
To study the map $ D^{-1}$, we perform the change
$$
\bar{x}=x-x_\delta, \quad \bar y=y-\delta,
$$
to system \eqref{def:Xg} and we can apply the  lemma \ref{lem:partfinalPe} to the resulting system, which has a fold point at $(0,0)$.

\begin{figure}
\begin{center}
\includegraphics[width=9cm,height=5cm]{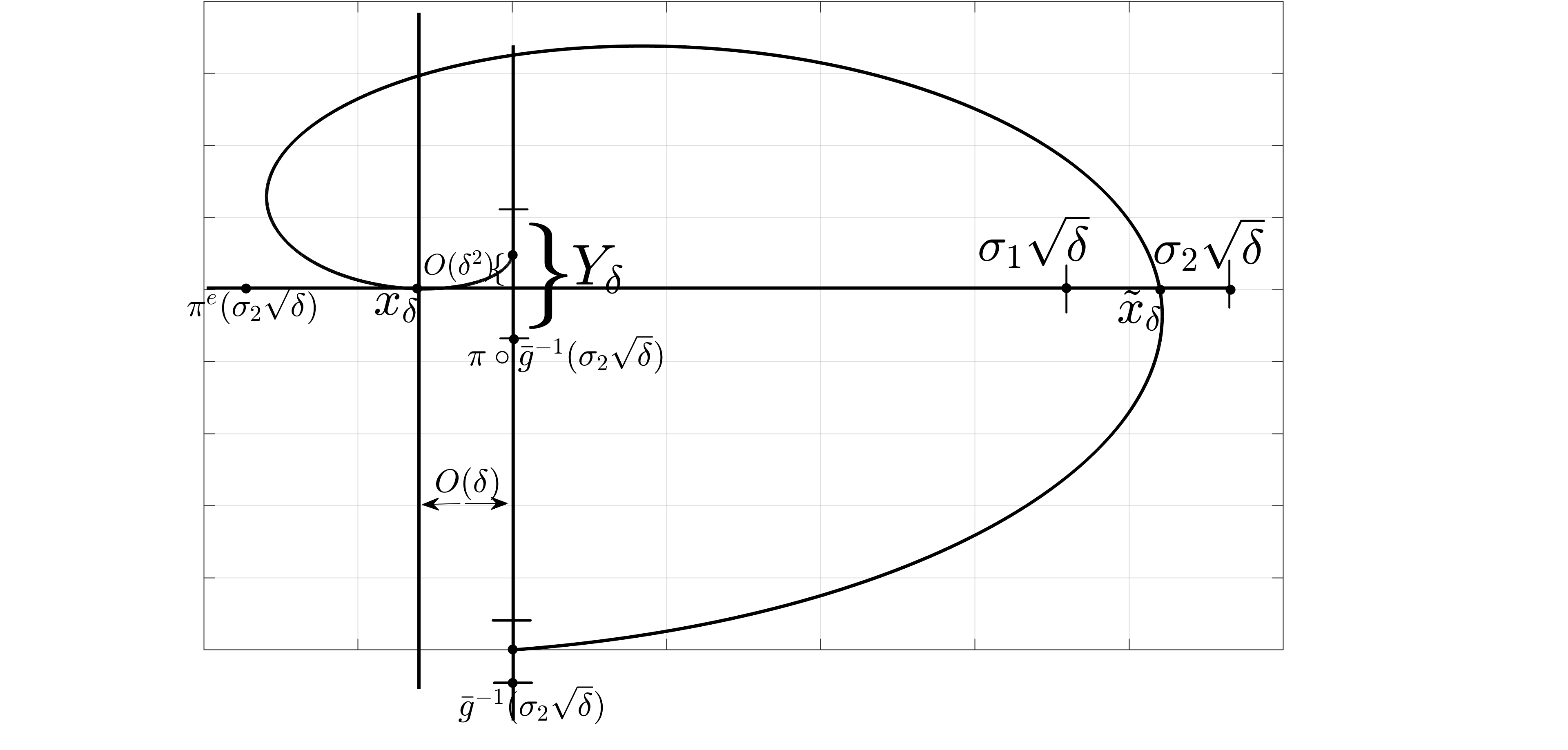}
\caption{The map $\f$ around $\tilde{x}_\delta$.}\label{fig:finalPe}
\end{center}

\end{figure}

This lemma  provides formulas for $D^{-1}$:
\[
D^{-1}(y)=x_\de -\sqrt{\frac{2}{a}(\de -y)}+\OO(\de \sqrt{\de -y}, \de-y)
\]
This formula combined with \eqref{eq:C} allows us to obtain asymptotic formulas for $\Q^{-1}$ for $y\in Y_\de$, $y\le y_\de$:
\begin{equation}\label{eq:g-1propxtilde}
\begin{split}
\Q^{-1}(y) &= D^{-1}(C(y)-\de)=x_\de -\sqrt{\frac{2}{a}(\de-C(y))}+\OO(\de \sqrt{\de-C(y)}, \de-C(y))\\
&=
x_\de-\sqrt{\frac 2a (y_\de-y)}
+\OO((\de \sqrt{y_\de-y}, (y-y_\de))
\end{split}
\end{equation}
Recalling that
$
\f(x)=\Q^{-1}(\pi(\bar\Q^{-1}(x)))
$
we have:
\[
\f(x)=x_\de-\sqrt{\frac 2a (y_\de-\pi(\bar\Q^{-1}(x)))}
+\OO(\de \sqrt{y_\de-\pi(\bar\Q^{-1}(x)}, \pi(\bar\Q^{-1}(x))-y_\de)
\]
Now, using that
\[
\pi(\bar\Q^{-1}(\tilde x))= \pi(\bar\Q^{-1}(\tilde x_\de)) +\OO(x-\tilde x_\de)=
y_\de + \OO(x-\tilde x_\de)
\]
we obtain:
\[
\f(x)=x_\de-\OO(\sqrt{x-\tilde{x}_\delta})
\]
and therefore
\[
(\f)''(x)=\OO((x-\tilde{x}_\delta))^{-\frac32}
\]
and consequently is convex.

\end{proof}

The result of this proposition,  combined with proposition \ref{prop:exteriorderivada} assures that the full extension of $\f$ is convex on the interval $[x_\delta,\sqrt{\de C}]$ and near $\tilde{x}_\delta$ the singularity has the form $\OO(\sqrt{x-\tilde{x}_\delta)}$.
This concludes the proof of Theorem \ref{thm:extensiopie}.

\subsection{The inner  map $\mathcal{Q}_{\mu,\eps} $: proof of Theorem \ref{prop:asymptoticsQ} }

In this section we prove Theorem \ref{prop:asymptoticsQ}.
We recall that the map  $\QQQ_{\mu,\eps}$ is defined by the orbits of the system \eqref{eq:fastsystem}  between $x<x_\mu=\OO(\eps^\frac43),v=1$ and $x>x_\mu, v=1$.
Even  the map $\QQQ_{\mu,\eps}$  depends on $\mu$, during this section we will simplify the notation and call it $\QQQ_{\eps}$.

To study the map $\QQQ_{\eps}$, we perform  the blow-up variables $ x=\eps^{\frac{2}{3}}\eta,v=1+\eps^\frac{1}{3}u $, and system \eqref{eq:fastsystem} is transformed into
\begin{equation}
\label{eq:sistemablowup}
\begin{array}{rcl}
\dot \eta &=&1+O(\eps^\frac{2}{3})\\
\dot u &=&2\eta-\frac{\varphi''(1)}{4}u^2+O(\eps^\frac{1}{3}).
\end{array}
\end{equation}
In these new variables, an  interval of the form  $x\in [-M\eps^\frac23, -\overline M\eps^\frac23]$ transforms to
$\eta \in [-M, -\overline M]$
and the relation between the map $\QQQ_{\eps}$ associated to system \eqref{eq:fastsystem} and the map $\tilde{\mathcal{Q}_\eps}$ associated to system \eqref{eq:sistemablowup} in these new variables will be
\begin{equation}\label{Qescalada}
\mathcal{Q}_{\eps}( x )=\eps^{\frac{2}{3}}\tilde{\mathcal{Q}_\eps}(\frac{x}{\eps^{\frac{2}{3}}}).
\end{equation}
and $\tilde \QQQ_\eps$ is defined in the section $u=0$.
Then we proceed as Proposition \ref{prop:exterior} and will approximate the map $\tilde \QQQ_\eps$ by the corresponding
map $\tilde{\mathcal{Q}_0}$ related to the system \eqref{Ricatti}, that we recall here:
\begin{equation}
%\left.
\begin{array}{rcl}
\dot \eta &=&1\\
\dot u &=&2\eta-\frac{\varphi''(1)}{4}u^2.
\end{array}
\end{equation}
Observe that this system has a fold point at $(\eta,u)=(0,0)$, therefore, $\tilde\QQQ_0(0)=\tilde\QQQ_0'(0)=0$.
Nevertheless, for points $\eta\in [-M,-\overline  M]$, we have that $\tilde \QQQ_0'(\eta)\ne 0$ therefore, like in Proposition \ref{prop:exterior}
\begin{equation}\label{derivadesQ}
\quad \tilde{\mathcal{Q}_\eps}(\eta)=\tilde{\mathcal{Q}_0}(\eta)+\OO(\eps^{\frac{1}{3}}), \ \eta\in [-M,-\overline M]
\quad\tilde{\mathcal{Q}_\eps}'(\eta)=
\tilde{\mathcal{Q}_0}'(\eta)+\OO(\eps^{\frac{1}{3}}),\quad\tilde{\mathcal{Q}_\eps}''(\eta)=\tilde{\mathcal{Q}_0}''(\eta)+\OO(\eps^{\frac{1}{3}})
\end{equation}
and therefore
\begin{equation}\label{derivadesQQ}
\quad \mathcal{Q}_\eps(x)=\eps^{\frac{2}{3}}\tilde{\mathcal{Q}_0}(\frac{x}{\eps^{\frac{2}{3}}})+\OO(\eps),\quad \mathcal{Q}'_\eps(x)=\tilde{\mathcal{Q}'_0}(\frac{x}{\eps^{\frac{2}{3}}})+\OO(\eps),
\quad \mathcal{Q}''_\eps(x)=\frac{1}{\eps^{\frac{2}{3}}}\tilde{\mathcal{Q}''_0}(\frac{x}{\eps^{\frac{2}{3}}})+\OO(\eps).
\end{equation}
Next step is to  prove that  $\tilde{\mathcal{Q}}''_0(\eta)<0$, for
$ \eta\in[-M,-\overline{M}]$.
To this end, the  scaling
\[
x=-(\frac{\varphi''(1)}{2})^\frac13 \eta, \quad
y=(\frac{(\varphi''(1))^2}{32})^\frac13 u
\]
transforms system \eqref{Ricatti} into the Ricatti equation:
\begin{equation}\label{RRicatti}
\begin{array}{rcl}
\dot x &=1\\
\dot y &=x+y^2.
\end{array}
\end{equation}\\

\begin{figure}
\begin{center}
\includegraphics[width=10cm,height=4cm]{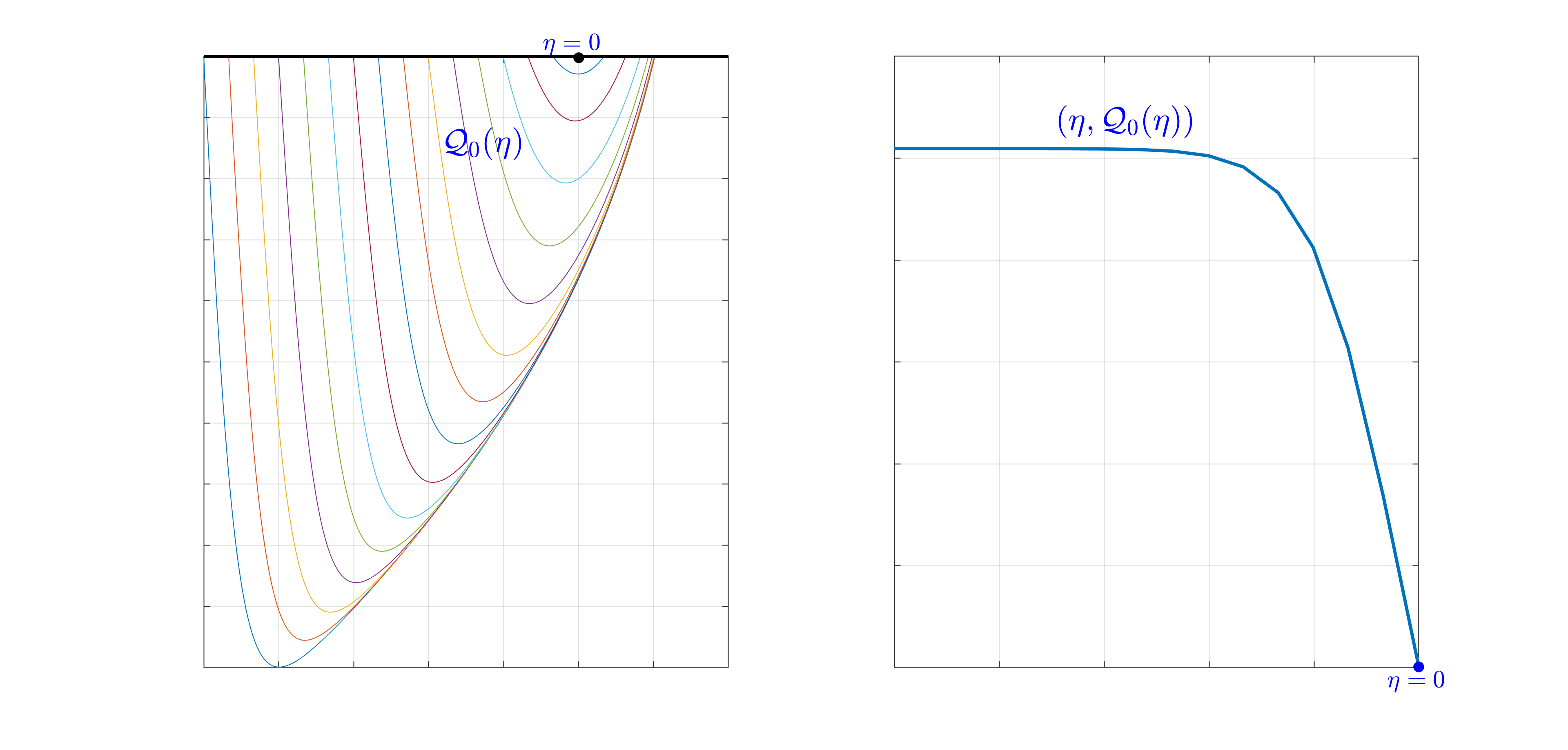}
\caption{First return map $\mathcal{Q}_0 $.}\label{fig:firstreturn}
\end{center}

\end{figure}

Therefore, we will study the map $\QQQ_0$ associated to this system (see Figure \ref{fig:firstreturn}).
In particular, as $\varphi''(1)<0$ the sign of $\QQQ_0''(x)$ will be the same as $\tilde \QQQ''_0(\eta)$.

If we call  $y(t,x_0)$ be the solution of \eqref{RRicatti} which begins at $(x_0,0), x_0<0$, and $t(x_0)>0$ the time of the first cut to ${x>0}$, that is, $y(t(x_0),x_0)=0$, the first return map  is given by $\QQQ_0(x_0)=x_0+t(x_0)$.
Then, on the one hand we have, using that $\QQQ_0$ is decreasing:
\begin{equation}\label{eq:derivadaprimera}
{\mathcal{Q}}'_0(x_0)=1+t'(x_0)<0
\end{equation}
and, one the other hand:
${\mathcal{Q}}''_0(x_0)=t''(x_0)$.

To prove that $t''(x_0)<0$  we proceed in several steps.
\begin{itemize}
\item
First, in Lemma \ref{lem:retornlocal}, we will use Taylor expansions to compute  $t(x_0) $ and its derivatives for points near $x_0=0$. This will allow us to check that $t''(x_0)<0$ for small values of $x_0$.
\item
To see that $t''$ is negative in a finite interval  of the form $[-M,0)$ we need to use the second order variational equations of system \eqref{RRicatti} and relate the sign of $t''$ with several quantities obtained through the study of the solutions of these variational equations. This is done in lemmas \ref{lem:variacionals},\ref{vigual1}, \ref{lem:intu},\ref{lem:derivadav} and, finally, in Proposition \ref{prop:signef}.
\end{itemize}
\begin{lemma}\label{lem:retornlocal}
Near $x_0=0$ the function $t(x_0)$ has the expansion
\begin{equation}\label{prop:retornlocal}
\begin{array}{lcr}
t(x_0)=-2x_0-\frac{4}{15}x_0^4+O(x_0^5), \\
t''(x_0)=-\frac{16}{5}x_0^2  +\OO(x_0^3)<0.
\end{array}
\ x_0\simeq 0
\end{equation}
\end{lemma}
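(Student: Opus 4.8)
The plan is to turn the computation into a phase--plane problem and then read off the expansion from one Picard iteration together with the implicit function theorem. Since $\dot x\equiv1$ along \eqref{RRicatti}, after time $\tau$ the first coordinate of the orbit through $(x_0,0)$ equals $x_0+\tau$, so it is convenient to reparametrise the orbit by $x$ and write $y=y(x;x_0)$ for the solution of $dy/dx=x+y^2$ with $y(x_0;x_0)=0$; then $t(x_0)=X(x_0)-x_0$, where $X(x_0)>x_0$ is the next zero of $x\mapsto y(x;x_0)$. From the integral equation
\[
y(x;x_0)=\frac{x^2-x_0^2}{2}+\int_{x_0}^{x}y(s;x_0)^2\,ds
\]
a contraction/bootstrap argument gives $|y(s;x_0)|\le x_0^2$, hence $y(s;x_0)=\tfrac12(s^2-x_0^2)+\OO(x_0^5)$ for $s$ between $x_0$ and $-x_0$; feeding this back once yields
\[
y(x;x_0)=\frac{x^2-x_0^2}{2}+\frac14\int_{x_0}^{x}(s^2-x_0^2)^2\,ds+\OO(x_0^8).
\]

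Before extracting coefficients I would check that $t$ is $C^\infty$ up to the fold value $x_0=0$, so that the stated expansion and its term--by--term differentiation make sense. Since $y(x_0;x_0)=0$, Hadamard's lemma gives $y(x;x_0)=(x-x_0)\psi(x;x_0)$ with $\psi$ smooth and $\psi(x_0;x_0)=\partial_xy(x_0;x_0)=x_0$, and the return $X(x_0)$ is exactly the zero of $\psi(\cdot;x_0)$ lying near $-x_0$. Differentiating the identity $(x-x_0)\psi=\partial_x y=x+y^2$ once and twice at $x=x_0=0$ gives $\psi(0;0)=0$ and $2\partial_x\psi(0;0)=\partial_x^2y(0;0)=1$, so $\partial_x\psi(0;0)=\tfrac12\neq0$. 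The implicit function theorem applied to $\psi(X,x_0)=0$ at $(X,x_0)=(0,0)$ then produces a smooth branch $X(x_0)$ with $X(0)=0$, which for $x_0<0$ small is the return point; hence $t(x_0)=X(x_0)-x_0$ is smooth near $0$. (Alternatively, one may simply invoke the standard smoothness of the transition map between the two branches of $\{y=0\}$ across a visible fold, a fact already used elsewhere in the paper.)

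Finally I would compute the coefficients. Writing $X(x_0)=-x_0+v(x_0)$ and using that $(s^2-x_0^2)^2$ is even in $s$, one has $\int_{x_0}^{-x_0}(s^2-x_0^2)^2\,ds=\tfrac{16}{15}(-x_0)^5$, hence $y(-x_0;x_0)=\tfrac4{15}(-x_0)^5+\OO(x_0^8)$, while $\partial_xy(-x_0;x_0)=-x_0+\OO(x_0^{10})$. Taylor expanding $x\mapsto y(x;x_0)$ at $x=-x_0$ and imposing $y(X;x_0)=0$,
\[
0=y(-x_0;x_0)+\partial_xy(-x_0;x_0)\,v+\OO(v^2)=\tfrac4{15}(-x_0)^5-x_0\,v+\OO(x_0^8),
\]
so that $v(x_0)=-\tfrac4{15}x_0^4+\OO(x_0^5)$ and therefore $t(x_0)=-2x_0-\tfrac4{15}x_0^4+\OO(x_0^5)$; differentiating twice gives $t''(x_0)=-\tfrac{16}{5}x_0^2+\OO(x_0^3)<0$ for small $x_0\neq0$, which is the claim. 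I expect the only genuinely delicate point to be the smoothness of $t$ at $x_0=0$ (and, relatedly, obtaining the error terms uniformly in $x_0$ rather than only in a fixed box); once that is settled, everything else is a controlled Picard iteration and one application of the implicit function theorem.
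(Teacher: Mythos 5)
Your proof is correct and reaches the paper's expansion $t(x_0)=-2x_0-\tfrac{4}{15}x_0^4+\OO(x_0^5)$ by a genuinely different route. The paper works in the time variable: it computes $y^{(k)}(0,x_0)$ for $k\le 5$ recursively from the ODE, factors $t$ out of the Taylor series of $y(t,x_0)$ to get an implicit equation $x_0+\tfrac12 t+\tfrac13 x_0^2t^2+\tfrac14 x_0 t^3+\tfrac{6+16x_0^3}{120}t^4+\OO(t^5)=0$, and then substitutes a power-series ansatz $t(x_0)=\sum t_kx_0^k$ and matches coefficients. You instead use $\dot x\equiv1$ to reparametrise by $x$, treat the scalar Riccati equation $dy/dx=x+y^2$ with $y(x_0;x_0)=0$, and extract the $x_0^4$ coefficient from one Picard step together with the evenness of $(s^2-x_0^2)^2$ on $[x_0,-x_0]$, reading off the return correction from a single Taylor step at $x=-x_0$. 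Both are elementary and produce the same number $\tfrac14\cdot\tfrac{16}{15}$, but your route buys two things: the computation localises to one explicit integral rather than a multi-order coefficient match, and your Hadamard-plus-implicit-function-theorem argument for the smoothness of $t$ at the fold (writing $y=(x-x_0)\psi$ with $\partial_x\psi(0;0)=\tfrac12\ne0$) explicitly justifies the existence of the power-series expansion, which the paper simply posits. One small slip in the write-up: the displayed ``identity $(x-x_0)\psi=\partial_x y=x+y^2$'' should read $\partial_x\bigl[(x-x_0)\psi\bigr]=\partial_x y=x+y^2$ (the Hadamard factorization itself is $y=(x-x_0)\psi$); the evaluations you draw from it, $\psi(0;0)=0$ and $\partial_x\psi(0;0)=\tfrac12$, are nevertheless correct.
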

\begin{proof}
To see \eqref{prop:retornlocal} we take
 $y(t,x_0)$ such that $y(0,x_0)=0$
 and compute:
\begin{equation}
\begin{array}{lcr}
y'=x+y^2\quad\Rightarrow y'(0)=x_0\\
                   y''=1+2yy'\quad \Rightarrow y''(0)=1\\
                   y'''=2(y')^2+yy''\quad\Rightarrow y'''(0)=2x_0^2\\
                   y^{(iv)}=6y'y''+2yy'''\quad\Rightarrow y^{(iv)}(0)=6x_0\\
                    y^{(v)}=6(y'')^2+8y'y'''+2yy^{(iv)}\quad\Rightarrow y^{(v)}(0)=6+16x_0^3\quad...
 \end{array}
\end{equation}
On the other hand, we can expand $y(t,x_0)$ near $t=0$ and we obtain
\begin{equation}\label{tximplicita}
\begin{array}{lcr}
 y(t,x_0)
 &=& 0+x_0t+\frac{1}{2}t^2+\frac{1}{3}x_0^2t^3+\frac{1}{4}x_0t^4+\frac{6+16x_0^3}{120}t^5+ O(t^6)\\
        & = &  t(x_0+\frac{1}{2}t+\frac{1}{3}x_0^2t^2+\frac{1}{4}x_0t^3+\frac{6+16x_0^3}{120}t^4+ \OO(t^5));
                     \end{array}
\end{equation}
 By definition of $t(x_0)>0$, and we have $y(t(x_0);x_0,0)=0$, therefore:
 $t(x_0)$ is the implicit solution of:
\begin{equation}\label{eq:implicita}
x_0+\frac{1}{2}t(x_0)+\frac{1}{3}x_0^2t(x_0)^2+\frac{1}{4}x_0t(x_0)^3+\frac{6+16x_0^3}{120}t(x_0)^4+ \OO(t(x_0)^5)=0.
\end{equation}
But we seek the behavior of $t(x_0)$ near $x_0=0$.
Clearly $t(0)=0$ and if we expand it in powers of $x_0$ we will have
\[
t(x_0)=
%t_0+t_1x_0+t_2x_0^2+t_3x_0^3+t_4x_0^4+...=
t_1x_0+t_2x_0^2+t_3x_0^3+t_4x_0^4+...
%\end{array}\end{equation}
\]
has to solve \eqref{eq:implicita}.
That is:
\[
%\begin{equation}
\begin{array}{lcr}
  0=x_0+\frac{1}{2}(t_1x_0+t_2x_0^2+t_3x_0^3+t_4x_0^4+)+
\frac{1}{3}x_0^2(t_1x_0+t_2x_0^2+t_3x_0^3+t_4x_0^4)^2+\\
 \frac{1}{4}x_0(t_1x_0+t_2x_0^2+t_3x_0^3+t_4x_0^4)^3+
 \frac{6+16x_0^3}{120}(t_1x_0+t_2x_0^2+t_3x_0^3+t_4x_0^4)^4+\OO(x_0^5)\\
\end{array}
%\end{equation}
\]
and equating the coefficients of the successive powers of $x_0$ we arrive at the result.
%, and $\tilde{\mathcal{Q}}_0(\eta)''<0$ near $0$.
\end{proof}
As a consequence of the previous lemma, for $x_0$ small enough:
\[
\begin{split}
\QQQ_0(x_0)&=x_0+t(x_0) =-x_0-\frac{4}{15}x_0^4+O(x_0^5), %\ \mbox{for} \ x_0 \approx 0
\\
\QQQ_0''(x_0)&=t''(x_0)=-\frac{16}{5}x_0^2+O(x_0^3)<0. %\mbox{for} \ x_0 \approx 0 .
\end{split}
\]
Observe that going back to the original variables we obtain the last item of the theorem \eqref{derivadesQQvell}.
\begin{equation}
\quad \mathcal{Q}_\eps(x)=\eps^{\frac{2}{3}}\tilde{\mathcal{Q}_0}(\frac{x}{\eps^{\frac{2}{3}}})+\OO(\eps)=-x(1+\OO(\frac{x}{\eps^{\frac23}}))+\OO(\eps)
\end{equation}

Next step is to extend the previous result about the sign of $t(x_0)''$ for any $x_0\in [-M,0]$.
\\
To this end,  we will see that  the sign of the second derivative of the first return map, $\QQQ_0''(x_0)=t''(x_0)$, will be determined by the sign of the function $u^2+x^2v-2xu$, the functions $u$, $v$ are solutions of  the second order variational equations associated to system  \eqref{RRicatti}:
\begin{equation}\label{variacionalsRRicatti}
%\left.
\begin{array}{rcl}
\dot x &=1\\
\dot y &=x+y^2\\
\dot u &=1+2yu\\
\dot v &=2u^2+2yv,
\end{array}
%\right\} \quad r=\sqrt {x^2+(y-\mu-1)^2}
\end{equation}\\
with initial condition $ (x_0,0,0,0) $ and evaluated at $t=t(x_0)$.
Actually, we have
\begin{equation}\label{theo:variacionals}
\begin{array}{lcr}
x(t,x_0)=t+x_0; \quad u(t,x_0)\equiv\frac{\partial y}{\partial x_0}(t,x_0);\quad v(t,x_0)\equiv\frac{\partial^2 y}{\partial x_0^2}(t,x_0)\\
%t'(x_0)=-\frac{\frac{\partial y}{\partial x}(t(x_0);x_0,0)}{t(x_0)+x_0};\quad t''(x_0)=-\frac{(t'(x_0))^2+2t'(x_0)+\frac{\partial^2 y}{\partial x^2}(t(x_0);x_0,0)}{t(x_0)+x_0}\equiv \mathcal{Q}''(x_0)
 \end{array}
 \end{equation}
Next lemma gives $t'(x_0)$ and $t''(x_0)$ in terms of these functions.
and we will see that:
\begin{lemma}\label{lem:variacionals}
\begin{equation}\label{theo:variacionals}
\begin{array}{rcl}
t'(x_0)&=&-\frac{\frac{\partial y}{\partial x_0}(t(x_0),x_0)}{t(x_0)+x_0}=
-\frac{u(t(x_0),x_0)}{x(t(x_0),x_0)}
;\\
 t''(x_0)&=&-\frac{(t'(x_0))^2+2t'(x_0)+\frac{\partial^2 y}{\partial x_0^2}(t(x_0),x_0)}{t(x_0)+x_0}
 =-\frac{(t'(x_0))^2+2t'(x_0)+v(t(x_0),x_0)}{x(t(x_0),x_0)}\equiv \QQQ_0''(x_0)
  \end{array}
  \end{equation}
\end{lemma}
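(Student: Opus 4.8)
The plan is to obtain both identities by implicit differentiation of the defining relation $y(t(x_0),x_0)\equiv 0$, using the relations $x(t,x_0)=t+x_0$, $u=\partial y/\partial x_0$, $v=\partial^2 y/\partial x_0^2$ already recorded in \eqref{theo:variacionals} for the solutions of \eqref{variacionalsRRicatti}. As a preliminary step I would record the partial derivatives of the Riccati right-hand side along a solution: since $\partial_t y=x+y^2$ with $x=t+x_0$ we have $\partial_t x=\partial_{x_0}x=1$, hence $\partial_{tt}y=1+2y\,\partial_t y$ and $\partial_{tx_0}y=1+2y\,u$ (this last one is just $\dot u$ from the variational system). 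Evaluating at the point $(t(x_0),x_0)$, where $y=0$ by definition of $t(x_0)$, gives the three values I will use repeatedly: $\partial_t y=x(t(x_0),x_0)=t(x_0)+x_0$, $\partial_{tt}y=1$, $\partial_{tx_0}y=1$. I would also remark that $t(x_0)$ is well-defined and $C^2$ on $[-M,0]$ by the implicit function theorem, because the transversality condition $\partial_t y(t(x_0),x_0)=t(x_0)+x_0=\QQQ_0(x_0)>0$ holds there (the first return is to $\{x>0\}$).

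For the first identity, differentiating $y(t(x_0),x_0)\equiv 0$ once in $x_0$ gives
\[
\partial_t y\cdot t'(x_0)+\partial_{x_0}y=0,
\]
and substituting $\partial_t y=x(t(x_0),x_0)$ and $\partial_{x_0}y=u(t(x_0),x_0)$ yields $t'(x_0)=-u(t(x_0),x_0)/x(t(x_0),x_0)$, which is the first line of \eqref{theo:variacionals}.

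For the second identity, differentiating once more in $x_0$ and applying the chain rule to each factor gives
\[
\big(\partial_{tt}y\cdot t'+\partial_{tx_0}y\big)t'+\partial_t y\cdot t''+\big(\partial_{tx_0}y\cdot t'+\partial_{x_0x_0}y\big)=0,
\]
that is $\partial_{tt}y\,(t')^2+2\,\partial_{tx_0}y\,t'+\partial_t y\,t''+v=0$. Plugging in $\partial_{tt}y=1$, $\partial_{tx_0}y=1$, $\partial_t y=x(t(x_0),x_0)$ and solving for $t''$ produces
\[
t''(x_0)=-\frac{(t'(x_0))^2+2t'(x_0)+v(t(x_0),x_0)}{x(t(x_0),x_0)}.
\]
Finally, since $\QQQ_0(x_0)=x_0+t(x_0)$ we have $\QQQ_0''(x_0)=t''(x_0)$, giving the asserted identification.

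The computation is essentially routine; the only point that requires care is the bookkeeping of the mixed partials of $x+y^2$ — in particular keeping track of the fact that its argument $x=t+x_0$ depends on both $t$ and $x_0$, so that differentiating in either variable produces the constant $1$ coming from $x$ — and making sure every partial derivative is evaluated at the point $(t(x_0),x_0)$ where $y=0$ before it is inserted. I do not expect any genuine obstacle here beyond this careful chain-rule bookkeeping.
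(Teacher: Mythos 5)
Your proposal is correct and takes essentially the same route as the paper: implicit differentiation of $y(t(x_0),x_0)\equiv 0$ twice, using $\partial_t y = x+y^2$, $\partial_{tt}y=1+2y\dot y$ and $\partial_{tx_0}y=\dot u=1+2yu$, and evaluating all partials at the return point where $y=0$. Your bookkeeping is in fact slightly cleaner than the paper's, which reduces $\dot u$ at the return point to $1$ in the same way but writes that intermediate simplification with a small notational slip.
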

\begin{proof}
To see  formulas \eqref{theo:variacionals}, we apply the Implicit Function theorem to the equation
\begin{equation}\label{eq:y0}
\begin{array}{rcl}
y(t(x_0),x_0)=0,
\end{array}
\end{equation}
%
%from $ y(t(x_0);x_0,0)=0$,
where $(x_0,0)$ with $x_0<0$ is the initial point.
Then, differentiating respect to $x_0$ equation \eqref{eq:y0} we get, denoting $'=\frac{\partial}{\partial x_0}$ and $\dot{}=\frac{\partial}{\partial t}$:
\begin{equation*}
\begin{array}{rcl}
0 &=&
%\frac{\partial}{\partial x}(y(t(x);x,0))=
\dot{y}(t(x_0),x_0)t'(x_0)+\frac{\partial y}{\partial x_0}(t(x_0),x_0)\\
&=&\left(t(x_0)+x_0+y^2(t(x_0),x_0)\right)t'(x_0)+
\frac{\partial y}{\partial x_0}(t(x_0),x_0)=(t(x_0)+x_0)t'(x_0)
+\frac{\partial y}{\partial x_0}(t(x_0),x_0)
\end{array}
\end{equation*}
therefore
\begin{equation}\label{primeraderivadaretorn}
%\Longrightarrow
t'(x_0)=-\frac{\frac{\partial y}{\partial x_0}(t(x_0),x_0)}{t(x_0)+x_0}.
\end{equation}
Differentiating another time, and using that $y(t(x_0),x_0)=0$, we have
\begin{equation}
\begin{array}{rcl}
0&=&
%\frac{\partial}{\partial x}((t(x)+x)t'(x)+\frac{\partial y}{\partial x}(t(x);x,0))=
(t'(x_0)+1)t'(x_0)+(t(x_0)+x_0)t''(x_0)
+(\dot{ \frac{\partial y}{\partial x_0}})(t(x_0),x_0)t'(x_0)
+\frac{\partial^2 y}{\partial x_0^2}(t(x_0),x_0)\\
&=&
(t'(x_0)+1)t'(x_0)+(t(x_0)+x_0)t''(x_0)+\dot u (t(x_0),x_0) t'(x_0)
+\frac{\partial^2y}{\partial x_0^2}(t(x_0),x_0)\\
&=&
(t'(x_0)+1)t'(x_0)+(t(x_0)+x_0)t''(x_0)+u(t(x_0),x_0))t'(x_0)
+\frac{\partial^2y}{\partial x_0^2}(t(x_0),x_0)
\end{array}
\end{equation}
and therefore:
%, using System \eqref{variacionalsRRicatti} and $y(t(x_0);x,0)=0$:
%
\begin{equation}\label{segonaderivadaretorn}
%\Longrightarrow
t''(x_0)=-\frac{(t'(x_0))^2+2t'(x_0)+\frac{\partial^2y}{\partial x^2}(t(x_0),x_0)}{t(x_0)+x_0}
%t''(x)=-\frac{(t'(x))^2+2t'(x)+\frac{\partial^2y}{\partial x^2}(t(x);x,0)}{t(x)+x}
\end{equation}
%In particular $t''(x_0)=-\frac{(t'(x_0))^2+2t'(x_0)+\frac{\partial^2y}{\partial x^2}(t(x_0);x_0,0)}{t(x_0)+x_0}$.\\
%
%
\end{proof}
In view of Lemmma \ref{lem:variacionals},  as $t(x_0)+x_0=\QQQ_0(x_0)>0$, $\QQQ_0''(x_0)<0$ if
\[
(t'(x_0))^2+2t'(x_0)+\frac{\partial^2 y}{\partial x_0^2}(t(x_0),x_0)>0.
\]
And by (\ref{primeraderivadaretorn}), this condition will be equivalent to
\begin{equation}
\begin{split}
&(\frac{\partial{y}}{\partial{x_0}}(t(x_0),x_0))^2-2(t(x_0)+x_0)
\frac{\partial{y}}{\partial{x_0}}(t(x_0),x_0)+
(t(x_0)+x_0)^2\frac{\partial^2{y}}{\partial x_0^2}(t(x_0),x_0)=\\
%(u(t(x_0),x_0))^2-2(t(x_0)+x_0)u(t(x_0),x_0)+(t(x_0)+x_0)^2v(t(x_0),x_0)=\\
&(u(t(x_0),x_0))^2-2x(t(x_0),x_0)u(t(x_0),x_0)+(x(t(x_0),x_0))^2v(t(x_0),x_0)>0,
\end{split}
\end{equation}
where $(x(t,x_0),y(t,x_0),u(t,x_0),v(t,x_0))$ are the solutions of system \eqref{variacionalsRRicatti} with initial condition $(x_0,0,0,0)$, and $t(x_0)>0$ is such that
$y(t(x_0),x_0)=0$.

We need to prove that
\[
f(x_0)= (u(t(x_0),x_0))^2-2x(t(x_0),x_0)u(t(x_0),x_0)+(x(t(x_0),x_0))^2v(t(x_0),x_0)>0,\  \forall x_0<0
\]
To this end, we need some technical lemmas:
\begin{lemma}\label{vigual1}
Consider the solutions of system \eqref{variacionalsRRicatti} and $t(x_0)$ the time such that $y(t(x_0),x_0)=0$. Then one has that:
\begin{equation}
u(t(x_0),x_0)-x(t(x_0),x_0) >0, \quad \forall x_0<0
\end{equation}
\end{lemma}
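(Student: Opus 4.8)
The plan is to avoid any direct estimation of the variational solutions $u$ and $v$, and instead to read off the sign of $u-x$ at the return time directly from the first return map $\QQQ_0$, using the identities already established in Lemma~\ref{lem:variacionals}.

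Recall that $x(t,x_0)=t+x_0$, so $x(t(x_0),x_0)=x_0+t(x_0)=\QQQ_0(x_0)$, and that $\QQQ_0(x_0)>0$ for every $x_0<0$ in the domain (this is already used right above: the trajectory issuing from $(x_0,0)$ with $x_0\neq0$ is, by uniqueness, not the fold trajectory through the origin, hence cannot return to $y=0$ at $x=0$; together with Lemma~\ref{lem:retornlocal} this forces $\QQQ_0(x_0)>0$). By Lemma~\ref{lem:variacionals},
\[
t'(x_0)=-\frac{u(t(x_0),x_0)}{x(t(x_0),x_0)}=-\frac{u(t(x_0),x_0)}{\QQQ_0(x_0)},
\]
so $u(t(x_0),x_0)=-\QQQ_0(x_0)\,t'(x_0)$. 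Subtracting $x(t(x_0),x_0)=\QQQ_0(x_0)$ and using $\QQQ_0'(x_0)=1+t'(x_0)$ yields the clean identity
\[
u(t(x_0),x_0)-x(t(x_0),x_0)=-\QQQ_0(x_0)\bigl(t'(x_0)+1\bigr)=-\QQQ_0(x_0)\,\QQQ_0'(x_0).
\]

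It then remains only to invoke the two sign facts already on record: $\QQQ_0(x_0)>0$, as above, and $\QQQ_0'(x_0)<0$, the latter being precisely \eqref{eq:derivadaprimera} (the first return map of a visible fold is orientation reversing). Hence $-\QQQ_0(x_0)\,\QQQ_0'(x_0)>0$, which is exactly the claim of Lemma~\ref{vigual1}. I expect essentially no obstacle here: the computation is two lines, and the only points needing a remark are that $\QQQ_0(x_0)\neq0$ (the return point never lands on the fold, by uniqueness of solutions) and that the monotonicity $\QQQ_0'<0$ has indeed been recorded earlier. As an alternative to citing Lemma~\ref{lem:variacionals}, one may derive $u(t(x_0),x_0)=-\QQQ_0(x_0)\,t'(x_0)$ from scratch by differentiating the defining relation $y(t(x_0),x_0)=0$ with respect to $x_0$ and using $\dot y=x+y^2$ together with $y(t(x_0),x_0)=0$.
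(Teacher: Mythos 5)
Your algebraic identity is correct, and the two-line reduction is clean: from $t'(x_0)=-u/x$ and $x(t(x_0),x_0)=\QQQ_0(x_0)$ one indeed gets
\[
u(t(x_0),x_0)-x(t(x_0),x_0)=-\QQQ_0(x_0)\bigl(1+t'(x_0)\bigr)=-\QQQ_0(x_0)\,\QQQ_0'(x_0),
\]
and both sign facts $\QQQ_0(x_0)>0$ and $\QQQ_0'(x_0)<0$ appear in the text before the lemma, so the argument closes. This is, however, a genuinely different route from the paper's, and the difference is worth understanding. Your identity shows that the lemma is \emph{logically equivalent} to the strict monotonicity $\QQQ_0'(x_0)<0$; since that inequality is merely asserted in the paper ("using that $\QQQ_0$ is decreasing," a soft topological fact about visible-fold return maps, never proved there), your proof is in effect a restatement of that assertion rather than an independent verification. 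The paper's proof goes the other way: it introduces $w(t)=u(t,x_0)-\dot y(t,x_0)$, observes $\dot w=2yw$ with $w(0)=-x_0>0$, and integrates to get the explicit formula $u(t(x_0),x_0)-x(t(x_0),x_0)=-x_0\,e^{\int_0^{t(x_0)}2y(s,x_0)ds}>0$. This is a self-contained computational proof which, combined with your identity, would actually \emph{prove} $\QQQ_0'<0$ rather than assume it. More importantly, the intermediate expression \eqref{eq:wu} that the paper extracts along the way, namely $u(t,x_0)=x(t,x_0)+y^2(t,x_0)-x_0\,e^{\int_0^t 2y(s,x_0)ds}$, is explicitly reused in the proof of Lemma~\ref{lem:intu}, so the integrating-factor computation is not a detour but a necessary ingredient of the subsequent argument. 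In short: your proof is correct and pleasantly short, but it leans on an asserted geometric fact equivalent to the conclusion and does not supply the formula that the later lemmas require; the paper's computation delivers both.
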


\begin{proof}
Calling $w(t)=u(t,x_0)-\dot y(t,x_0)=u(t,x_0)-x(t,x_0)-y^2(t,x_0)$ we have that $w(0)= u(0,x_0)-\dot y(0,x_0)=-x_0>0$
 and differentiating
 \[
 \begin{split}
 \dot w&= \dot u(t,x_0)-\ddot y(t,x_0)= 1+2y(t,x_0)u(t,x_0)-\frac{\partial }{\partial t}(x(t,x_0)+y^2(t,x_0)) \\
& =1+2y(t,x_0)u(t,x_0) -1-2y(t,x_0)\dot y(t,x_0)=2y(t,x_0)w
 \end{split}
 \]
 therefore
 \begin{eqnarray}
 w(t) &=&-x_0 e^{\int_0^t 2y(s,x_0)ds}\ \Rightarrow \nonumber \\
u(t, x_0)&=& \dot y(t,x_0)-x_0 e^{\int_0^t 2y(s,x_0)ds}
 =x(t,x_0)+ y^2(t,x_0)-x_0 e^{\int_0^t 2y(s,x_0)ds} \label{eq:wu}
 \end{eqnarray}
 evaluating at $t=t(x_0)$ we obtain, using that $x_0<0$:
 \[
 u(t(x_0), x_0)=x_0+ t(x_0)-x_0 e^{\int_0^{t(x_0)} 2y(s,x_0)ds}
 \Rightarrow \ u(t(x_0), x_0)-x(t(x_0),x_0)=-x_0 e^{\int_0^{t(x_0)} 2y(s,x_0)ds} >0
 \]
\end{proof}

\begin{lemma}\label{lem:intu}
Assume that for some $x_0 <0$ the function $v(t(x_0),x_0) <1$. Then we have:
\[
 1+2 x_0 \int _0^t u(s,x_0 )ds >0, \quad \forall 0\le t\le t(x_0)
\]
\end{lemma}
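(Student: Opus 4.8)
The plan is to reduce the claim to a single inequality at the return time $t(x_0)$ and then propagate it along $[0,t(x_0)]$ by a monotonicity argument. First I would record the sign of $u$: since $u$ solves the linear equation $\dot u=1+2yu$ with $u(0)=0$, variation of constants gives $u(t,x_0)=e^{2\int_0^t y(s,x_0)\,ds}\int_0^t e^{-2\int_0^s y(r,x_0)\,dr}\,ds>0$ for $t\in(0,t(x_0)]$. Consequently the function $g(t):=1+2x_0\int_0^t u(s,x_0)\,ds$ has $g'(t)=2x_0u(t,x_0)\le 0$ (recall $x_0<0$), so $g$ is non-increasing on $[0,t(x_0)]$ and it suffices to prove $g(t(x_0))>0$.

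The key step is an exact identity expressing $v$ in terms of $\int_0^t u$. Starting from the formula obtained in the proof of Lemma \ref{vigual1},
\[
u(t,x_0)=x(t,x_0)+y^2(t,x_0)-x_0\,e^{2\int_0^t y(s,x_0)\,ds},\qquad x(t,x_0)=t+x_0,
\]
I would differentiate with respect to $x_0$, using $v=\partial u/\partial x_0$, $\partial x/\partial x_0=1$, and $\partial_{x_0}\!\int_0^t y(s,x_0)\,ds=\int_0^t u(s,x_0)\,ds$, to obtain
\[
v(t,x_0)=1+2y(t,x_0)u(t,x_0)-e^{2\int_0^t y(s,x_0)\,ds}\Bigl(1+2x_0\int_0^t u(s,x_0)\,ds\Bigr).
\]
(If one prefers to avoid differentiating under the integral sign, the same identity follows by checking that its right-hand side vanishes at $t=0$ and satisfies $\dot v=2u^2+2yv$.)

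Finally I would evaluate this identity at $t=t(x_0)$. Since $y(t(x_0),x_0)=0$, the term $2y(t)u(t)$ drops out, leaving
\[
1+2x_0\int_0^{t(x_0)}u(s,x_0)\,ds=\bigl(1-v(t(x_0),x_0)\bigr)\,e^{-2\int_0^{t(x_0)} y(s,x_0)\,ds}.
\]
The exponential factor is strictly positive and, by hypothesis, $v(t(x_0),x_0)<1$, so the right-hand side is positive; hence $g(t(x_0))>0$, and by the monotonicity established above $g(t)\ge g(t(x_0))>0$ for every $t\in[0,t(x_0)]$, which is the assertion. I expect the only delicate point to be the bookkeeping in the differentiation leading to the identity for $v$ — in particular tracking the $x_0$-dependence inside the exponent $\int_0^t y(s,x_0)\,ds$; everything else is either the elementary positivity of $u$ or a substitution using $y(t(x_0),x_0)=0$.
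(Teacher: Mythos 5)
Your proof is correct and follows essentially the same route as the paper: you differentiate the identity for $u$ from Lemma \ref{vigual1} with respect to $x_0$ to get $v(t,x_0)=1+2y u-e^{2\int_0^t y\,ds}\bigl(1+2x_0\int_0^t u\,ds\bigr)$, evaluate at $t(x_0)$ where $y$ vanishes, and conclude from the hypothesis $v(t(x_0),x_0)<1$. Your explicit monotonicity argument via $g(t)=1+2x_0\int_0^t u\,ds$ and the variation-of-constants check that $u>0$ merely make precise what the paper's final inequality chain uses implicitly.
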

\begin{proof}
Let's compute $v(t,x_0)$ using the expression for $u(t,x_0)$ obtained in \eqref{eq:wu}:
\begin{eqnarray}
v(t,x_0) &=& \frac{\partial u}{\partial x_0}(t,x_0)=
\frac{\partial \dot y}{\partial x_0}(t,x_0) -  e^{\int_0^t 2y(s,x_0)ds}[ 1+2 x_0 \int _0^t u(s,x_0 )ds]\nonumber
\\
&=& \dot u(t,x_0) -  e^{\int_0^t 2y(s,x_0)ds}[ 1+2 x_0 \int _0^t u(s,x_0 )ds]\label{eq:bonav}\\
&=&
1+2y(t,x_0)u(t,x_0) -  e^{\int_0^t 2y(s,x_0)ds}[ 1+2 x_0 \int _0^t u(s,x_0 )ds]\nonumber
\end{eqnarray}

Evaluating at $t=t(x_0)$ we have:
\begin{equation}\label{eq:goodv}
v(t(x_0),x_0) =
1 -  e^{\int_0^{t(x_0)} 2y(s,x_0)ds}[ 1+2 x_0 \int _0^{t(x_0}u(s,x_0 )ds]
\end{equation}
and therefore
\[
v(t(x_0),x_0) -1 = -  e^{\int_0^{t(x_0)} 2y(s,x_0)ds}[ 1+2 x_0 \int _0^{t(x_0)} u(s,x_0 )ds]
\]

The last equality gives:
\[
v(t(x_0),x_0)< 1 \iff   1+2 x_0 \int _0^{t(x_0)} u(s,x_0 )ds >0
\]
Moreover,  as $x_0<0$, for any $0\le t\le t(x_0)$ we have:
\[
 1+2 x_0 \int _0^t u(s,x_0 )ds >  1+2 x_0 \int _0^{t(x_0)} u(s,x_0 )ds >0
\]
\end{proof}
\begin{lemma}\label{lem:derivadav}
Assume that for some $x_0 <0$ the function $v(t(x_0),x_0) <1$. Then we have:
\[
\frac{d v}{d x_0} (t(x_0),x_0)<0
\]
\end{lemma}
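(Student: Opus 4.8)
The plan is to differentiate the closed-form expression for $v(t(x_0),x_0)$ obtained in the previous lemma, namely the identity
\[
v(t(x_0),x_0) = 1 - e^{\int_0^{t(x_0)} 2y(s,x_0)\,ds}\Big[\,1 + 2x_0 \int_0^{t(x_0)} u(s,x_0)\,ds\,\Big],
\]
with respect to $x_0$. Writing $E(x_0) = e^{\int_0^{t(x_0)} 2y(s,x_0)\,ds}$ and $J(x_0) = 1 + 2x_0 \int_0^{t(x_0)} u(s,x_0)\,ds$, we have $v(t(x_0),x_0) = 1 - E(x_0) J(x_0)$, hence
\[
\frac{dv}{dx_0}(t(x_0),x_0) = -E'(x_0) J(x_0) - E(x_0) J'(x_0).
\]
Since $E(x_0) > 0$ always, and since the hypothesis $v(t(x_0),x_0) < 1$ together with Lemma \ref{lem:intu} gives $J(x_0) > 0$, it suffices to show that $E'(x_0) \ge 0$ and $J'(x_0) > 0$ (with at least one of the two strict), so that $\frac{dv}{dx_0} < 0$.

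First I would handle $J'(x_0)$. Differentiating,
\[
J'(x_0) = 2 \int_0^{t(x_0)} u(s,x_0)\,ds + 2x_0\Big[ u(t(x_0),x_0)\,t'(x_0) + \int_0^{t(x_0)} v(s,x_0)\,ds \Big],
\]
using that $v = \partial u/\partial x_0$. This still involves $\int_0^{t(x_0)} v\,ds$, which is awkward; the cleaner route is to use that $\frac{J(x_0) - 1}{2 x_0} = \int_0^{t(x_0)} u\,ds$ and instead recall from the proof of Lemma \ref{lem:intu} that the quantity $J(x_0)$ itself has the interpretation $J(x_0) = e^{-\int_0^{t(x_0)} 2y\,ds}\,(1 - v(t(x_0),x_0))$, i.e. $E(x_0) J(x_0) = 1 - v(t(x_0),x_0)$. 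That is just the original identity, so differentiating it directly gives $\frac{dv}{dx_0} = -(E J)'$, which is what we already have. The productive approach is therefore to find a genuinely simpler ODE satisfied by $v(t(x_0),x_0)$ as a function of $x_0$, or to monotonicity-compare: since $u(s,x_0) = x(s,x_0) + y^2(s,x_0) - x_0 e^{\int_0^s 2y\,d\tau}$ from \eqref{eq:wu}, and $x_0 < 0$ forces the last term positive and the integrals of $y$ controllable, one can show $u(s,x_0) > 0$ for $0 < s \le t(x_0)$ and moreover that $u$ is increasing in $x_0$ on this range (which is exactly $v > 0$ pointwise, a claim one can extract on $(0, t(x_0)]$ from the variational equation $\dot v = 2u^2 + 2yv$, $v(0)=0$, since $2u^2 \ge 0$). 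With $v(s,x_0) \ge 0$ on $[0, t(x_0)]$ and $\int_0^{t(x_0)} u\,ds > 0$, and handling the sign of $t'(x_0) = -u/x$ (which is negative since $u>0$, $x = t+x_0 = \QQQ_0(x_0) > 0$, so $2x_0 u t' = -2 u^2 x_0/x > 0$ as $x_0<0$), every term of $J'(x_0)$ above is strictly positive. So $J'(x_0) > 0$.

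For $E'(x_0)$, I would compute $\frac{d}{dx_0}\int_0^{t(x_0)} 2y(s,x_0)\,ds = 2 y(t(x_0),x_0) t'(x_0) + 2\int_0^{t(x_0)} u(s,x_0)\,ds = 2\int_0^{t(x_0)} u(s,x_0)\,ds$, using $y(t(x_0),x_0)=0$. Since $u > 0$ on $(0, t(x_0)]$, this is positive, so $E'(x_0) = E(x_0)\cdot 2\int_0^{t(x_0)} u\,ds > 0$. Combining: $\frac{dv}{dx_0}(t(x_0),x_0) = -E'(x_0)J(x_0) - E(x_0)J'(x_0) < 0$, since $E, E', J, J' $ are all positive. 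The main obstacle is establishing cleanly that $v(s,x_0) \ge 0$ (equivalently $u$ increasing in $x_0$) for all $0 \le s \le t(x_0)$: this is not immediate from $\dot v = 2u^2 + 2yv$ alone because $y$ changes sign along the orbit, so one must argue via the integrating factor $v(t,x_0) = 2 e^{\int_0^t 2y\,d\tau}\int_0^t e^{-\int_0^s 2y\,d\tau} u(s,x_0)^2\,ds \ge 0$, which is manifestly nonnegative once $u$ is known to be real; combined with $u(s,x_0) > 0$ for $s>0$ (from \eqref{eq:wu} and $x_0<0$) this closes the argument, and in fact shows $v(s,x_0) > 0$ for $s > 0$, which we then only need integrated.
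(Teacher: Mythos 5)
Your decomposition $\frac{dv}{dx_0}(t(x_0),x_0) = -E'(x_0)J(x_0) - E(x_0)J'(x_0)$ is algebraically identical to the paper's computation: the paper keeps the brace $\{2\int u\cdot J + J'\}$ together instead of splitting it, but it is the same quantity. Your claims $E>0$, $E'(x_0) = 2E(x_0)\int_0^{t(x_0)}u\,ds > 0$ (using $y(t(x_0),x_0)=0$), and $J>0$ by Lemma \ref{lem:intu} are all correct, as are the observations $u(s,x_0)>0$ for $s\in(0,t(x_0)]$ and $v(s,x_0)\ge 0$ via the integrating factor. However, the argument for $J'(x_0)>0$ contains a sign error. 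Writing
\[
J'(x_0) = 2\int_0^{t(x_0)} u\,ds \;+\; 2x_0\, u(t(x_0),x_0)\,t'(x_0) \;+\; 2x_0\int_0^{t(x_0)} v(s,x_0)\,ds,
\]
the first two summands are indeed positive ($u>0$; $t'<0$, $x_0<0$), but the third satisfies $2x_0\int_0^{t(x_0)}v\,ds \le 0$ — precisely \emph{because} of your own conclusion that $v\ge 0$, together with $x_0<0$. So it is false that ``every term of $J'$ is strictly positive''; the term you worked hardest to control has the wrong sign for the argument you are making.

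The inequality $J'(x_0)>0$ is nevertheless true, but it needs a different argument: one has to show that the \emph{combined} quantity $u(t(x_0),x_0)\,t'(x_0) + \int_0^{t(x_0)} v(s,x_0)\,ds$ is negative, so that its product with $2x_0<0$ is a single positive contribution. This is exactly the content of the paper's inequality \eqref{eq:finalsigne}. The paper obtains it by integrating the pointwise identity \eqref{eq:bonav},
\[
\int_0^{t(x_0)}v\,ds \;=\; u(t(x_0),x_0) \;-\; \int_0^{t(x_0)} e^{\int_0^s 2y\,dr}\Bigl[1 + 2x_0\int_0^s u\,dr\Bigr]\,ds,
\]
so that $u\,t' + \int v = u(t(x_0),x_0)\bigl(t'(x_0)+1\bigr) - \int_0^{t(x_0)}e^{\int_0^s 2y}\bigl[1+2x_0\int_0^s u\bigr]\,ds$; the first piece is $\le 0$ since $u\ge 0$ and $t'(x_0)+1 = \mathcal{Q}_0'(x_0)<0$ by \eqref{eq:derivadaprimera}, and the second piece is strictly negative by Lemma \ref{lem:intu}. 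Note that you only established $t'(x_0)<0$, which is weaker than the $t'(x_0)<-1$ that is actually used here. Without this combination step the proposal does not close, since the pointwise positivity of $v$ works against you rather than for you.
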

\begin{proof}
We differenciate the expression \eqref{eq:goodv}:
\[
 \begin{split}
&\frac{d v}{d x_0} (t(x_0),x_0)=  -  e^{\int_0^{t(x_0)} 2y(s,x_0)ds}  \left( 2y(t(x_0),x_0) t'(x_0)+\int_0^{t(x_0)} 2u(s,x_0)ds \right)\left[ 1+2 x_0 \int _0^{t(x_0}u(s,x_0 )ds\right]\\
&- e^{\int_0^{t(x_0)}2y(s,x_0)ds}  \left( 2 \int _0^{t(x_0)}u(s,x_0 )ds +2 x_0 u(t(x_0),x_0) t'(x_0)+
 2 x_0 \int _0^{t(x_0)}v(s,x_0 )ds \right)\\
&=  -  e^{\int_0^{t(x_0)} 2y(s,x_0)ds}
\left\{
\int_0^{t(x_0)} 2u(s,x_0)ds \left[ 1+2 x_0 \int _0^{t(x_0}u(s,x_0 )ds\right]+
2 \int _0^{t(x_0)}u(s,x_0 )ds \right. \\
& \left.+2 x_0 u(t(x_0),x_0) t'(x_0)+
 2 x_0 \int _0^{t(x_0)}v(s,x_0 )ds
 \right\}
% &=  -  e^{\int_0^{t(x_0)} 2y(s,x_0)ds}
%\left\{\int_0^{t(x_0)} 2u(s,x_0)ds \left[ 1+2 x_0 \int _0^{t(x_0}u(s,x_0 )ds\right]+ 2 \int _0^{t(x_0)}u(s,x_0 )ds \right. \\
%& \left.-2 x_0 x(t(x_0),x_0) (t'(x_0))^2+ 2 x_0 \int _0^{t(x_0)}v(s,x_0 )ds  \right\}
\end{split}
\]
Observe that the terms involving the integral of $u$ in  this expression are positive because $u$ is positive.
By lemma \ref{lem:intu} we know that also the term
$1+2 x_0 \int _0^{t(x_0}u(s,x_0 )ds$
is positive.
%The fourth term is positive because $x_0<0$ and $x(t(x_0),x_0)>0$.
Therefore, again using that $x_0<0$  we just need to check that:
\begin{equation}\label{eq:finalsigne}
u(t(x_0),x_0) t'(x_0)+\int _0^{t(x_0)}v(s,x_0 )ds <0
\end{equation}
to finish the proof.

To see \eqref{eq:finalsigne} we use  the expression \eqref{eq:bonav}:
\[
\begin{split}
&u(t(x_0),x_0) t'(x_0)+\int _0^{t(x_0)}v(s,x_0 )ds =\\
&u(t(x_0),x_0) t'(x_0)+
\int _0^{t(x_0)}\left(
\dot u(s,x_0) -  e^{\int_0^s 2y(r,x_0)dr}\left[ 1+2 x_0 \int _0^s u(r,x_0 )dr\right]  \right) ds \\
&=u(t(x_0),x_0) t'(x_0)+ u(t(x_0),x_0) -  \int _0^{t(x_0)}e^{\int_0^s 2y(r,x_0)dr}\left[ 1+2 x_0 \int _0^s u(r,x_0 )dr\right]\  ds \\
\\
&=u(t(x_0),x_0)( t'(x_0)+ 1) -  \int _0^{t(x_0)}e^{\int_0^s 2y(r,x_0)dr}\left[ 1+2 x_0 \int _0^s u(r,x_0 )dr\right]   ds <0
\end{split}
\]
Where the last inequality is a consequence of lemma \ref{lem:intu}, equation \eqref{eq:derivadaprimera}
%fact that $\QQQ_0 '(x_0)=t'(x_0)+1 <0$
and the fact that $u(t(x_0),x_0)\ge 0$.
\end{proof}

Now we are ready to prove that
\begin{proposition}\label{prop:signef}
We have:
\[
f(x_0)= (u(t(x_0),x_0))^2-2x(t(x_0),x_0)u(t(x_0),x_0)+(x(t(x_0),x_0))^2v(t(x_0),x_0)>0,\  \forall x_0<0
\]
\end{proposition}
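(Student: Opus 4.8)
The plan is to set $f(x_0) = u^2 - 2xu + x^2 v$ evaluated along the return, which by Lemma \ref{lem:variacionals} (after multiplying through by $x(t(x_0),x_0)^2 = \QQQ_0(x_0)^2 > 0$) is exactly the quantity whose positivity is equivalent to $\QQQ_0''(x_0) < 0$. I would first observe that Lemma \ref{lem:retornlocal} already gives $f(x_0) > 0$ for $|x_0|$ small (there $t''(x_0) < 0$), so the issue is to propagate this to all $x_0 < 0$. The natural device is a continuity/connectedness argument: suppose, for contradiction, that $f$ vanishes somewhere on $(-\infty,0)$, and let $x_0^\#$ be the supremum of such points (equivalently, work from the small-$|x_0|$ end toward $-\infty$ and take the first zero). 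At $x_0^\#$ one has $f(x_0^\#) = 0$, hence $\QQQ_0''(x_0^\#) = 0$, and for all $x_0 \in (x_0^\#, 0)$ one has $f > 0$, i.e. $\QQQ_0'' < 0$.

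The key is to derive a contradiction at such a first zero $x_0^\#$ using the three technical lemmas. Rewrite $f$ using $x(t(x_0),x_0) = \QQQ_0(x_0) > 0$: dividing by $x^2$, positivity of $f$ is equivalent to
\[
v(t(x_0),x_0) > \frac{2u(t(x_0),x_0)}{x(t(x_0),x_0)} - \frac{u(t(x_0),x_0)^2}{x(t(x_0),x_0)^2} = 1 - \left(\frac{u(t(x_0),x_0)}{x(t(x_0),x_0)} - 1\right)^2 .
\]
In particular $f(x_0) = 0$ forces $v(t(x_0),x_0) < 1$ (since the right side is $\le 1$, and equals $1$ only if $u = x$, which by Lemma \ref{vigual1} never happens for $x_0 < 0$; so at a zero of $f$ we get strictly $v(t(x_0),x_0) < 1$). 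Now Lemma \ref{lem:derivadav} applies at $x_0^\#$: it gives $\tfrac{d}{dx_0} v(t(x_0),x_0)\big|_{x_0^\#} < 0$. Combining this with the chain-rule expression for $\tfrac{d}{dx_0} f(t(x_0),x_0)$ — in which the $v$-term contributes $x^2 \tfrac{dv}{dx_0}$, and the remaining terms $2u\,\tfrac{du}{dx_0} - 2\tfrac{dx}{dx_0}u - 2x\tfrac{du}{dx_0} + 2x\tfrac{dx}{dx_0}v$ can be controlled using $t'(x_0)$ (equation \eqref{primeraderivadaretorn}), $\QQQ_0'(x_0) < 0$, and Lemmas \ref{vigual1}, \ref{lem:intu} — I expect to show $f'(x_0^\#) > 0$ (or $<0$, depending on orientation), contradicting that $x_0^\#$ is a first zero approached from the region where $f > 0$. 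Essentially: wherever $f$ touches zero, it is strictly monotone there in a direction that is incompatible with $f$ having just been positive, so no zero can exist.

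The main obstacle is the bookkeeping in the last step: the derivative $\tfrac{d}{dx_0}$ of $f(t(x_0),x_0)$ involves $t'(x_0)$ and the $x_0$-derivatives of $u$ and $v$ along $t = t(x_0)$ (not the partials $\dot u$, $v$ themselves), and assembling these into a manifestly signed expression requires carefully using the integral representations \eqref{eq:wu}, \eqref{eq:bonav}, \eqref{eq:goodv} for $u$ and $v$, together with the sign facts $u(t(x_0),x_0) \ge 0$, $u(s,x_0) > 0$ for $s > 0$, $x_0 < 0$, $t'(x_0) + 1 = \QQQ_0'(x_0) < 0$, and $1 + 2x_0\int_0^t u\,ds > 0$ from Lemma \ref{lem:intu}. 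A cleaner alternative, which I would try first, is to avoid differentiating $f$ altogether: instead prove directly that $v(t(x_0),x_0) < 1$ for all $x_0 < 0$ (monotonicity from Lemma \ref{lem:derivadav} plus the small-$x_0$ asymptotics, since a first upcrossing of the level $1$ would need $\tfrac{dv}{dx_0} \ge 0$ there while $v < 1$ just before — contradicting Lemma \ref{lem:derivadav} at that point), and then, once $v(t(x_0),x_0) < 1$ is known globally, use Lemma \ref{vigual1} to write $u = x + (\text{positive})$ and plug into $f = (u-x)^2 + x^2(v-1) + \bigl[\text{cross terms}\bigr]$; but since $v - 1 < 0$ this decomposition is the wrong sign, so one really does need the sharper estimate $1 + 2x_0\int_0^{t(x_0)} u\,ds = (1 - v)e^{-\int 2y}$ from \eqref{eq:goodv} to control how negative $x^2(v-1)$ can be against $(u-x)^2 = x_0^2 e^{2\int y}$, i.e. $f = x_0^2 e^{2\int 2y\,ds}\bigl(\dots\bigr) + x^2(v-1)$ and the two exponentials must be matched. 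This matching — showing the positive $(u-x)^2$ term dominates the negative $x^2(v-1)$ term — is where the real work lies, and I expect it to reduce, after substituting the integral formulas, to an inequality among $\int_0^{t(x_0)} y\,ds$, $\int_0^{t(x_0)} u\,ds$, and $t(x_0)$ that can be closed using $\dot y = x + y^2$ and $x_0 < 0$.
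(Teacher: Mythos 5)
Your primary strategy --- contradiction at the first zero $x_1$ of $f$, using Lemma \ref{vigual1} to infer $v(t(x_1),x_1)<1$ there, then Lemma \ref{lem:derivadav} for the sign of $\tfrac{dv}{dx_0}$, then a sign on $f'(x_1)$ --- is precisely the paper's argument. The bookkeeping you leave open (and your detour through the alternative ``prove $v<1$ globally first'' route) collapses entirely once you notice that, because $\dot u(t(x_0),x_0)=1$ (as $y(t(x_0),x_0)=0$) and $u(t(x_0),x_0)+x(t(x_0),x_0)\,t'(x_0)=0$ by \eqref{primeraderivadaretorn}, the non-$v$ terms in $f'(x_0)$ cancel identically to give $f'(x_0)=x^2(t(x_0),x_0)\,\tfrac{dv}{dx_0}(t(x_0),x_0)$, so $f'(x_1)<0$, contradicting the $f'(x_1)\ge 0$ forced by $f\ge 0$ on $[x_1,0)$ with $f(x_1)=0$.
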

\begin{proof}
By formula \eqref{prop:retornlocal} we know that for  small enough $x_0 \le 0$ one has that $f(x_0)<0$.
Suppose that somewhere in $\{x<0\}$ the function $f(x_0)$
%expression $u^2+x^2v-2xu$
were positive.
Let be $x_1<0$, the first time  where $f(x_1)=0$.
%this expression is zero.
We would have
\[
%\begin{equation}\begin{split}
f(x_1)= (u(t(x_1),x_1))^2-2x(t(x_1),x_1)u(t(x_1),x_1)+(x(t(x_1),x_1))^2v(t(x_1),x_1)=0
\]
%\end{split}\end{equation}
%Solving this equation in $u$ and in $x$ we obtain:
%\[u(t(x_1))=x(t(x_1))(1\pm\sqrt{1-v(t(x_1)})\ \
%x(t(x_1))=\frac{u(t(x_1))(1\pm\sqrt{1-v(t(x_1)})}{v(t(x_1))}\]
%which gives $ v(t(x_1))=(1\pm\sqrt{1-v(t(x_1))})^2$.
%But this equation has the unique solution $ v(t(x_1))=1$.
Observe that we can write $f(x_1)=0$ as:
\[
%begin{equation}\begin{split}
f(x_1)=(u(t(x_1),x_1))-x(t(x_1),x_1))^2+(x(t(x_1),x_1))^2(v(t(x_1),x_1)-1)=0
\]
which can have a solution if:
\begin{enumerate}
\item
 $v(t(x_1),x_1)=1$ and $u(t(x_1),x_1)=x(t(x_1),x_1)$  or
 \item
  $v(t(x_1),x_1)<1$.
\end{enumerate}
Lemma \ref{vigual1} proves that the first possibility can not hold.
Therefore,  if $f(x_1)=0$ then $v(t(x_1),x_1)<1$.
Let us now compute the derivative of $f$:
\begin{equation}
\begin{split}
f'(x_0)&=\frac{d}{dx_0}\left(u(t(x_0),x_0))^2-2x((t(x_0),x_0))u((t(x_0),x_0))+(x(t(x_0),x_0))^2v(t(x_0),x_0)\right)(x_0)\\
&=2u(t(x_0),x_0)\left[\dot u(t(x_0),x_0))t(x_0)'+v(t(x_0),x_0)\right]-2\left[1+t'(x_0)\right]u(t(x_0),x_0)\\
&-2x (t(x_0),x_0) \left[\dot u(t(x_0),x_0))t(x_0)'+v(t(x_0),x_0))\right]+
2x(t(x_0),x_0) \left[1+t'(x_0)\right]v(t(x_0),x_0))\\ &+x^2(t(x_0),x_0)\frac{d v}{dx_0}(t(x_0),x_0))\\
%&=2ut'-2ut'-2xt'+2xt'+x^22u^2t')(x=x_0)=(x^22u^2t')(x=x_1)<0,
&=
2u(t(x_0),x_0)\left[t(x_0)'+v(t(x_0),x_0\right]-
2\left[1+t'(x_0\right]u(t(x_0),x_0)\\
&-2x (t(x_0),x_0) \left[t(x_0)'+v(t(x_0),x_0))\right]+
2x(t(x_0),x_0) \left[1+t'(x_0)\right]v(t(x_0),x_0))\\ &+x^2(t(x_0),x_0)\frac{d v}{dx_0}(t(x_0),x_0))\\
&=
2u(t(x_0),x_0) v(t(x_0),x_0 )-2u(t(x_0),x_0)-2x (t(x_0),x_0) t(x_0)'+
2x(t(x_0),x_0) t'(x_0)v(t(x_0),x_0))\\ &+x^2(t(x_0),x_0)\frac{d v}{dx_0}(t(x_0),x_0))\\
&=
2[v(t(x_0),x_0)-1] [u(t(x_0),x_0)+x (t(x_0),x_0) t(x_0)']+x^2(t(x_0),x_0)\frac{d v}{dx_0}(t(x_0),x_0))\\
&=
x^2(t(x_0),x_0)\frac{d v}{dx_0}(t(x_0),x_0))
\end{split}
\end{equation}
%
%
%where we have applied $v(t(x_1))=1;t'(x_1)<0; y(t(x_1))=0$ and %system \ref{variacionalsRRicatti}. But that is impossible as %the derivative should be positive.
%
%\end{proof}
We know that $f(x_0)>0$ for small values of $x_0<0$. If at some point $x_1$ we have that $f(x_1)=0$ and $f(x_0)>0$ for  values of $x_1<x_0<0$ then we should have that $f'(x_1)>0$.
But  we have seen that $f(x_1)=0$ only can happen if $v(t(x_1),x_1)<1$ and in this case the previous computation and Lemma \ref{lem:derivadav} gives us that
\[
 f'(x_1)=
x^2(t(x_1),x_1)\frac{d v}{dx_0}(t(x_1),x_1)) <0
\]
which is a contradiction. Therefore, we have seen that $f(x_0)<0$, for any $x_0<0$.
\end{proof}
The last proposition and formula \eqref{theo:variacionals} prove that $\QQQ''(x_0)=t''(x_0) <0$ and therefore  $\tilde \QQQ''(x_0)<0$ for any $-L\le x_0<0$.
This implies, going back to variables $(\eta,u)$ that $\tilde \QQQ_0''(\eta_0)<0$  for any $\eta \in [-M,0)$, and, by \eqref{derivadesQQ} $\QQQ_\eps''(x)<0$, for $x\in [-M\eps^\frac23, -\overline M\eps^\frac23]$.

\subsection{The values of the bifurcation: proof of the last two items of Theorem \ref{thm:main}}\label{sec:last2}
\begin{proof}
In the scope of Theorem \ref{thm:main} and range of $\mu's$ and $\delta's$ in (\ref{rangmu}), we will seek the fixed points of the map $\f_{\mu,\eps}\circ\mathcal{Q}_{\mu,\eps}$ as solutions of the system:

\begin{equation}\label{eq:sysbif}
\begin{array}{rcl}
\mathcal{Q}_{\mu,\eps}(x)&=&(\f_{\mu,\eps})^{-1}(x)
\\
\mathcal{Q}'_{\mu,\eps}(x) &=&((\f_{\mu,\eps})^{-1})'(x)
\\
x\in[\f_{\mu,\eps}(\sqrt{2}\eta_0(0)\eps^{\frac{2}{3}}),\f_{\mu,\eps}(\sigma\rdelta)]&:=&\mathcal{I}^{-1}
  \end{array}
  \end{equation}

  In order to better understanding, through all these section we will denote $\pi_{\delta}$ for $\f_{\mu,\eps}$ and $\mathcal{Q}_{\eps}$ for  $\mathcal{Q}_{\mu,\eps}$.\\

From \ref{eq:formulapiebonathm} it is straightforward to see that
\begin{equation}
\pi^{-1}_{\delta}(x)=\frac{1}{\sqrt{\pi'(0)}}\sqrt{x^2+\delta(\pi'(0)-1)}+\OO(\delta)
\end{equation}
To treat system \ref{eq:sysbif} is better to scale its equations by $\eta=\frac{x}{\eps^{\frac{2}{3}}}$, as we did in \ref{Qescalada}:
  \[
\mathcal{Q}_{\eps}( x )=\eps^{\frac{2}{3}}\tilde{\mathcal{Q}_\eps}(\frac{x}{\eps^{\frac{2}{3}}})=\eps^{\frac{2}{3}}\tilde{\mathcal{Q}_\eps}(\eta)
\]

and we have formulas \ref{derivadesQ}
\[ \quad \tilde{\mathcal{Q}_\eps}(\eta)=\tilde{\mathcal{Q}_0}(\eta)+\OO(\eps^{\frac{1}{3}})
\quad\tilde{\mathcal{Q}_\eps}'(\eta)=
\tilde{\mathcal{Q}_0}'(\eta)+\OO(\eps^{\frac{1}{3}}) \quad\tilde{\mathcal{Q}_\eps}''(\eta)=\tilde{\mathcal{Q}_0}''(\eta)+\OO(\eps^{\frac{1}{3}}), \ \eta\in [-M,-\overline M]
\]

\begin{figure}
\begin{center}
\includegraphics[width=13cm,height=6.75cm]{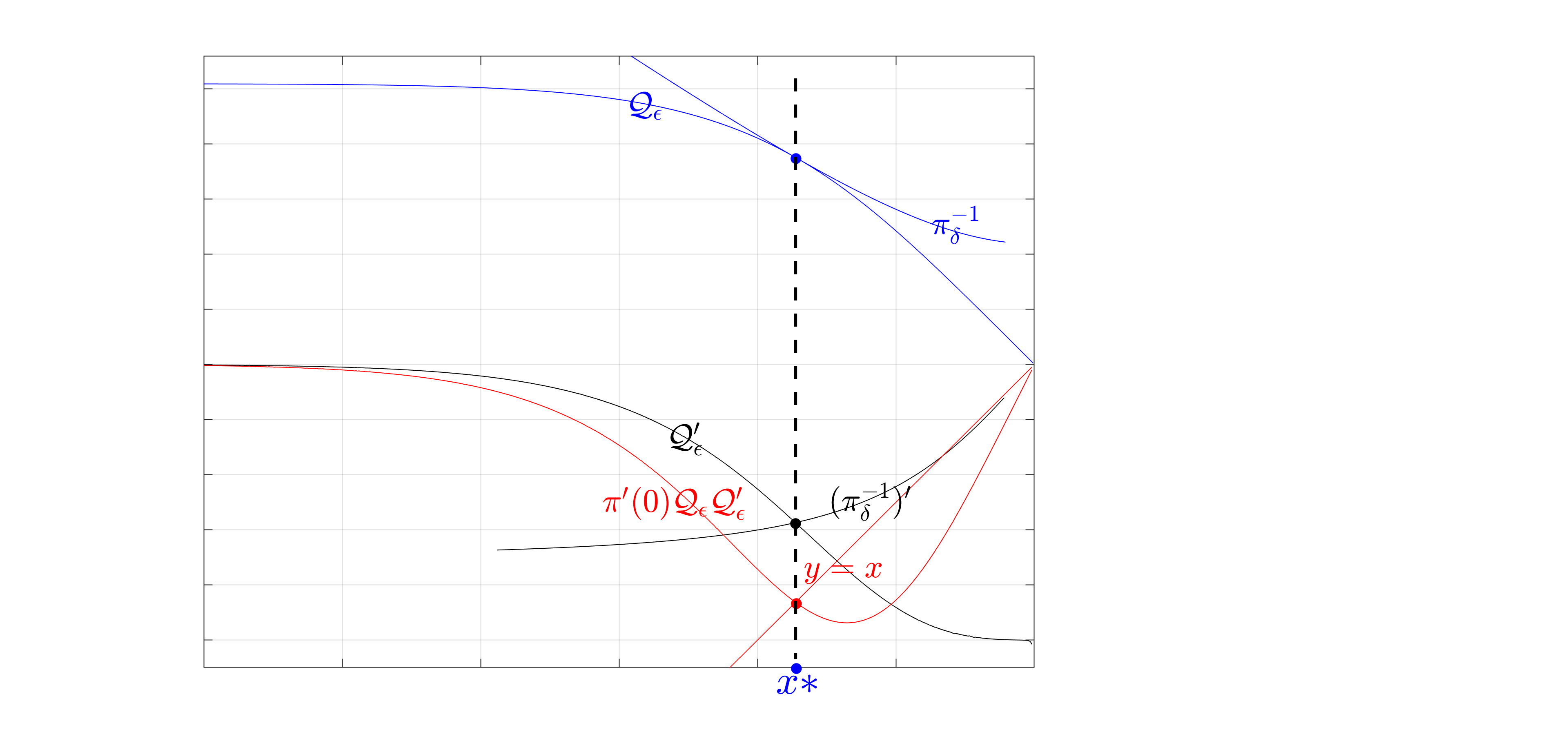}
\caption{Bifurcation value $x_0^{*}$ as solution of system \ref{eq:sysbifsca_0} as well of the equation $\pi'(0)\tilde{\mathcal{Q}_0}(\eta)\tilde{\mathcal{Q}'_0}(\eta)=\eta$}\label{fig:bifvalue}
\end{center}

\end{figure}

To derive similar expressions for $(\pi_{\delta})^{-1}(x)$, we must also scale the $\delta$ parameter by
\begin{equation}\label{deltaescalada}
\tilde{\delta}=\frac{\delta}{\eps^{\frac{4}{3}}}
\end{equation}
and we will have
\begin{equation}
\pi^{-1}_{\delta}(x)=\eps^{\frac{2}{3}}\tilde{\pi}^{-1}_{\frac{{\delta}}{\eps^{\frac{4}{3}}}}(\frac{x}{\eps^{\frac{2}{3}}})=
\eps^{\frac{2}{3}}\tilde{\pi}^{-1}_{\tilde{\delta}}(\eta)
\end{equation}

where $\mathcal{I}^{-1}$ has transformed to
\begin{equation}
\begin{split}
\tilde{\mathcal{I}}^{-1}:=&[\tilde{\pi}_{\tilde{\delta}}(\sqrt{2}\eta_0(0)),\tilde{\pi}_{\tilde{\delta}}(\sigma\sqrt{\tilde{\delta}})]\\
=&[-\sqrt{\tilde{\delta}(1-\pi'(0))+\pi'(0)2\eta^2_0(0)}+\OO(\eps^{\frac{2}{3}}),-\sqrt{1-\pi'(0)(1-\sigma^2)}\sqrt{\tilde{\delta}}+\OO(\eps^{\frac{2}{3}})]
\end{split}
\end{equation}
And now we have
\begin{equation}
\tilde{\pi}^{-1}_{\tilde{\delta}}(\eta)=\frac{1}{\sqrt{\pi'(0)}}\sqrt{\eta^2-(1-\pi'(0))\tilde{\delta}}+\OO(\eps^{\frac{2}{3}})\quad,
K^2_2 \le \tilde{\delta} \le \eta^2_0(0) + K_1\eps^{\frac{1}{3}},\quad \eta\in\tilde{\mathcal{I}}^{-1}
\end{equation}

In despite of its dependence on $\tilde{\delta}$ we also denote by $ \tilde{\pi}^{-1}_{0}(\eta)=\frac{1}{\sqrt{\pi'(0)}}\sqrt{\eta^2-(1-\pi'(0))\tilde{\delta}}$ and we have the formulas

\begin{equation}
 \tilde{\pi}^{-1}_{\tilde{\delta}}(\eta)=\tilde{\pi}^{-1}_{0}(\eta)+\OO(\eps^{\frac{2}{3}})
 \quad(\tilde{\pi}^{-1}_{\tilde{\delta}})'(\eta)=(\tilde{\pi}^{-1}_{0})'(\eta)+\OO(\eps^{\frac{2}{3}})
 \quad(\tilde{\pi}^{-1}_{\tilde{\delta}})''(\eta)=(\tilde{\pi}^{-1}_{0})''(\eta)+\OO(\eps^{\frac{2}{3}}), \eta\in \tilde{\mathcal{I}}^{-1},
\end{equation}

%\begin{equation}
% \mathcal{Q}_{\mu,\eps},(\f_{\mu,\eps})^{-1}:\mathcal{I}^{-1}\subset  \SSS^-_\eps\to \mathcal{I}\subset \SSS_\eps^+
%\end{equation}
then system \ref{eq:sysbif} reads:

\begin{equation}\label{eq:sysbifsca}
\begin{array}{rcl}
\tilde{\mathcal{Q}_0}(\eta)+\OO(\eps^{\frac{1}{3}})=\tilde{\pi}^{-1}_{0}(\eta)+\OO(\eps^{\frac{2}{3}})=
\frac{1}{\sqrt{\pi'(0)}}\sqrt{\eta^2+\tilde{\delta}(\pi'(0)-1)}+\OO(\eps^{\frac{2}{3}})
\\
\tilde{\mathcal{Q}_0}'(\eta)+\OO(\eps^{\frac{1}{3}})=(\tilde{\pi}^{-1}_{0})'(\eta)+\OO(\eps^{\frac{2}{3}})=
\frac{1}{\sqrt{\pi'(0)}}\frac{\eta}{\sqrt{\eta^2+\tilde{\delta}(\pi'(0)-1)}}+\OO(\eps^{\frac{2}{3}})
\end{array}
\end{equation}
Then we can treat this system with implicit function theorem. So we depart from the system

\begin{equation}\label{eq:sysbifsca_0}
\begin{split}
\tilde{\mathcal{Q}_0}(\eta)=\frac{1}{\sqrt{\pi'(0)}}\sqrt{\eta^2+\tilde{\delta}(\pi'(0)-1)}\\
\tilde{\mathcal{Q}'_0}(\eta)=\frac{1}{\sqrt{\pi'(0)}}\frac{\eta}{\sqrt{\eta^2+\tilde{\delta}(\pi'(0)-1)}}
\end{split}
\end{equation}

As $\tilde{\mathcal{Q}_0}$ is concave and $ \frac{1}{\sqrt{\pi'(0)}}\sqrt{\eta^2+\tilde{\delta}(\pi'(0)-1)}$ is convex, in all $\eta<0$, and $\sqrt{\pi'(0)}>1$, system
\ref{eq:sysbifsca_0} has a unique solution $(x^*_0,\delta^*_0)$. Also, from \ref{eq:sysbifsca_0}, $x^*_0$ is the solution of $\pi'(0)\tilde{\mathcal{Q}_0}(\eta)\tilde{\mathcal{Q}'_0}(\eta)=\eta$ (see Figure \ref{fig:bifvalue}). And as $\tilde{\mathcal{Q}''_0}<0$ and $(\frac{1}{\sqrt{\pi'(0)}}\sqrt{\eta^2+\tilde{\delta}(\pi'(0)-1)})''>0$, and $ \frac{\partial}{\partial \tilde{\delta}}(\frac{1}{\sqrt{\pi'(0)}}\sqrt{\eta^2+\tilde{\delta}(\pi'(0)-1)})\neq 0 $, we can apply the implicit function theorem and obtain solutions of system \ref{eq:sysbifsca}for $\eps$ small. Then going back to the original variables we obtain the last two items of Theorem \ref{thm:main}
\end{proof}

\subsection{Proof of Propositions \ref{prop:tildepi} and  \ref{prop:histereticmap}}\label{sec:provapropos}
First we prove Proposition \ref{prop:tildepi}.

Observe that $\Gamma_\al$ is tangent to $\SSS_\al$ at $(0,\al)$.
Therefore, using the map $D$ studied in Lemma \ref{lem:partfinalPe} around this point, we have that, as:
\[
\pi_\al (\al+D(x))= \al -\pi_\al'(\al)\frac{a}{2}x^2+...
= \al -\pi'(0)\frac{a}{2}x^2+...
\]
where the dots indicate terms of higher order in $x$, and therefore:
\[
\tilde \pi (x)= D^{-1}(\pi_\al (\al+D(x))-\al)=
x\sqrt{\pi'(0)}+ ...
\]
which gives
\[
(\tilde \pi)'(0)=\sqrt {\pi'(0)}
\]
\qed

The rest of this section is devoted to prove the Proposition \ref{prop:histereticmap}.
Therefore, from now on in this section, we consider $\mu$ and $\al$ related by \eqref{eq:mu1} and \eqref{eq:sigma1}.

In this case the Poincar\'{e} map $P_\mu$:
\[
P_\mu: [A'_\mu, 0]\times \{y=\al\}\subset \SSS_\al^- \to
\SSS_\al^-
\]
will be constructed as a combination of two maps: an exterior  return map $\f$, and the "interior" hysteretic map $P^h$.
More concretely, consider the maps:
\begin{equation}\label{eq:pitilde}
\begin{split}
{\f} :[E_\mu,0]\times \{y=\al\}  & \to [A'_\mu,0]\times\{y=\alpha\}\\
(x,\al) & \mapsto ({\f} (x),\al)
\end{split}
\end{equation}
defined following the flow of $\X=X_\mu$ until its first cut with $\SSS_\al^-$, and
\begin{equation}\label{eq:phisteretic}
\begin{split}
P^h:[A_\mu',E_\mu]\times\{y=\alpha\}  & \to [A_\mu,0]\times\{y=\alpha\}\\
(x,\al) & \mapsto (P^h(x),\al)
\end{split}
\end{equation}
defined by the hysteretic process determined by the fields $X^{+}=X_\mu$ and $X^{-}=(0,1)$ and computed in the next Lemma.
\begin{lemma}\label{lem:histereticmap}
Take $\mu=\mu_1(\al)$ as  \eqref{eq:mu1} with $\sigma_1$ satisfying \eqref{eq:sigma1}.
Then, the map
\[
\begin{split}
P^h:[A_\mu',E_\mu]\times\{y=\alpha\}  & \to [A_\mu,0]\times\{y=\alpha\}\\
(x,\al) & \mapsto (P^h(x),\al)
\end{split}
\]
is given by
\[
P^h(x)=-\sqrt{-2\al +x^2}+\OO(\al), \ \]
Moreover:
\begin{eqnarray}
A'_\mu &=& -\sqrt{\alpha[\sigma_1((\pi_0)'(0)-1)+(\pi_0)'(0)+1]}+\OO(\alpha)\label{eq:Amuprima}\\
E_\mu &=& -\sqrt{2\al }+\OO(\al), \ P^h(E_\mu) =0, \ \lim_{x\to E_\mu^-}P_\mu (x)=0 \label{eq:Emu}\\
A_\mu & = & P^h (A'_\mu )= -\sqrt{-2\al +(A'_\mu)^2}+\OO(\al)
= -\sqrt{\al(\sigma_1+1)((\pi_0)'(0)+1)}+\OO(\al) \label{eq:Amu}
\end{eqnarray}
\end{lemma}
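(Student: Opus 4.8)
The plan is to compute the hysteretic map $P^h$ explicitly using the simple form of the two vector fields near the tangency, and then evaluate it at the relevant endpoints. Recall that by \eqref{eq:X0nou} the field $X_\mu$ satisfies $X_\mu(0,y)=\X_0(0,y-\mu)=(1+O(y-\mu),0)$, so near $x=0$ its orbits are nearly horizontal folds; more to the point, since $X_\mu(x,y)=\X_0(x,y-\mu)$ and $\X_0$ has the normal form \eqref{def:Xg}, the second component of $X_\mu$ vanishes to first order along $x=0$ and its $x$-derivative there is $2$. The lower field is exactly $\Y_\mu=(0,1)$, so a point $(\bar x,-\al)$ flows vertically to $(\bar x,\al)$; this is why $P^h(x)=\bar x$ is just the $x$-coordinate of the first intersection with $y=-\al$ of the forward $X_\mu$-orbit issuing from $(x,\al)$. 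Thus the whole computation of $P^h$ reduces to a one-map passage problem for $X_\mu$ between the lines $y=\al$ and $y=-\al$.

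First I would set up this passage map. Writing $X_\mu$ in the normal form and rescaling $y$ around $0$ (the tangency $(0,-\al)$ of the forward orbit), one finds, exactly as in Lemma \ref{lem:partfinalPe} applied with the small parameter $\al$ in place of $\de$, that the map sending $(x,\al)$ to the first cut with $y=-\al$ is
\[
P^h(x)=-\sqrt{-2\al+x^2}+\OO(\al),
\]
valid for $x$ negative and bounded away from $0$ on the scale $\sqrt\al$; indeed the coefficient $a$ in Lemma \ref{lem:partfinalPe} equals $2$ here because $\partial_x \X_{0,2}(0,0)=2$, and the term $2\al$ comes from the total drop $2\al$ in the $y$-variable. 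The singularity at $x=-\sqrt{2\al}$ is exactly the tangency: $(E_\mu,\al)$ with $E_\mu=-\sqrt{2\al}+\OO(\al)$ is the point whose forward orbit grazes $y=-\al$ at $(0,-\al)$, which gives $P^h(E_\mu)=0$ and the one-sided limit $\lim_{x\to E_\mu^-}P_\mu(x)=0$; this establishes \eqref{eq:Emu}.

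Next I would identify $A'_\mu$. By definition $(A'_\mu,\al)$ is the first cut with $\SSS_\al^-$ of the positive orbit of $(0,-\al)$ under $X_\mu$; equivalently, $A'_\mu={\tilde \pi}(0)$ in the notation of \eqref{eq:pitilde}, where ${\tilde\pi}$ is the near-grazing exterior return map of $\Gamma_\mu$. Here I would use that under the normalization $X_\mu(x,y)=\X_0(x,y-\mu)$ the exterior map for $X_\mu$ between lines $y=\pm\al$ is the same as the one studied in Section \ref{sec:proofs} with the role of $\de$ played by the offsets $\mu\mp\al$; running the orbit from $y=-\al$ up to $x=0$ picks up a factor governed by $(\pi^{-1})'(0)=1/\pi'(0)$ on the drop $\al+\mu$, and then down to $y=\al$ another quadratic passage, so that
\[
(A'_\mu)^2=\al\big(\sigma_1((\pi)'(0)-1)+(\pi)'(0)+1\big)+\OO(\al^{3/2}),
\]
after substituting $\mu=\sigma_1\al$ from \eqref{eq:mu1}; this is \eqref{eq:Amuprima}. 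Finally, composing, $A_\mu=P^h(A'_\mu)=-\sqrt{-2\al+(A'_\mu)^2}+\OO(\al)$, and plugging in the value of $(A'_\mu)^2$ and simplifying (the $-2\al$ combines with the $(\pi'(0)+1)\al$ and $\sigma_1((\pi'(0)-1))\al$ terms) gives $A_\mu=-\sqrt{\al(\sigma_1+1)((\pi)'(0)+1)}+\OO(\al)$, which is \eqref{eq:Amu}.

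The main obstacle I anticipate is bookkeeping the exterior passage that defines $A'_\mu$: one must carefully track which segment of the orbit contributes the factor $\pi'(0)$ (or its reciprocal) and make sure the linearization of the return map $\pi$ of $\Gamma_\mu$ is applied about the correct base point, keeping the error genuinely $\OO(\al)$ after the outer square root. The passage map computation $P^h$ itself is routine once Lemma \ref{lem:partfinalPe} is invoked, and the endpoint evaluations $P^h(E_\mu)=0$, $\lim_{x\to E_\mu^-}P_\mu(x)=0$ are immediate from the definitions; everything else is algebraic simplification.
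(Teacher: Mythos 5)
Your overall strategy coincides with the paper's: compute the hysteretic passage map $P^h$ from the local fold normal form, locate $E_\mu$ as the zero of $P^h$, and obtain $A'_\mu$, $A_\mu$ by composing a passage through the fold with the Poincar\'e map of $\Gamma_\mu$ on $\{x=0\}$ and then with $P^h$. The final formulas you write are the correct ones, but the derivation of $A'_\mu$ has a genuine misidentification. You claim that $A'_\mu = \tilde\pi(0)$ in the notation of \eqref{eq:pitilde}; this is wrong. In the paper's notation $\tilde\pi(0)=B_\mu=-\sqrt{\al(\sigma_1-1)(\pi'(0)-1)}+\OO(\al)$ (Lemma~\ref{lem:exterior}, eq.~\eqref{eq:Bmu}), a different point; the correct identification is $A'_\mu=\tilde\pi(E_\mu)$. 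The orbit through $(0,-\al)$, whose first return to $\SSS_\al^-$ is $A'_\mu$, is a different orbit from the one through $(0,+\al)$, whose first return is $B_\mu$: with the normalization $\X_{\mu,2}(0,y)\equiv 0$, each point $(0,y)$ is the $y$-minimum of its own orbit, so the two never coincide. If you compute the quantity you actually named, $\tilde\pi(0)$, you obtain $B_\mu$, not $A'_\mu$; the formula you wrote is correct for $A'_\mu$ but inconsistent with the identification you used.

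Relatedly, the phrase ``running the orbit from $y=-\al$ up to $x=0$ picks up a factor governed by $(\pi^{-1})'(0)=1/\pi'(0)$ on the drop $\al+\mu$'' does not actually yield the stated formula. The paper's computation applies the forward Poincar\'e map $\pi_\mu$ on $\{x=0\}$ (with factor $\pi'(0)>1$, not its reciprocal) to the point $-\al$, getting $\pi_\mu(-\al)=\mu+\pi'(0)(-\al-\mu)+\OO(\al^2)=-\al[\sigma_1(\pi'(0)-1)+\pi'(0)]+\OO(\al^2)$, and then applies $g^{-1}$ (Prop.~\ref{prop:exterior}, with $\de=2\al$ after the shift $\bar y=y+\al$) to land on $\SSS_\al^-$; the extra $+1$ under the square root comes from the $\al$ in $g^{-1}(y)=-\sqrt{\al-y}+\OO(\al)$. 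A minor further point: for $P^h$ itself, Lemma~\ref{lem:partfinalPe} alone does not supply a map between the two horizontal lines $y=\pm\al$; the paper composes the half-passage $\SSS_\al^-\to\{x=0\}$ (Prop.~\ref{prop:exterior}, $\de=2\al$) with the half-passage $\{x=0\}\to\{y=-\al\}$ (Lemma~\ref{lem:partfinalPe}), which is the right way to justify the coefficient $2\al$ you identified heuristically.
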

\begin{proof}
Observe that the map $P^h (x_1)=x_2$ if the flow by $\X_\mu$ through $(x_1,\al)$ intersects $y=-\al$ at $(x_2,-\al)$ (recall that the vector field $\Y=(0,1)^T$).
We will compute the point $x_2$ in two steps.
\begin{itemize}
 \item
First we compute the point $(0,y_1)$ where the orbit through $(x_1,\al)$ intersects $x=0$.
\item
Second we will compute the point $(x_2,-\al)$ where the backward orbit through $(0,y_1)$ intersects $y=-\al$.
\end{itemize}
To compute these points we will focus on the tangency point $(0,-\al)$.
Through the change $\bar y=y+\al$ this tangency becomes $(0,0)$.

The point $(0,\bar y_1)$, with $\bar y_1=y_1+\al$, will be given by $\bar y_1=g(x_1)$ where the map $g$ is given in Proposition \ref{prop:exterior} with $\de=2\al$, and therefore:
\[
\bar y_1= g(x_1)=2\al-x_1^2 +\OO(\al^\frac32)
\]
Analogously, the point $(x_2,0)$, will be given by $D(x_2)=\bar y_1$, where $D$ is the map given in  Lemma \ref{lem:partfinalPe}, and therefore:
\[
\bar y_1= D(x_2)=-(\frac{a}{2}+\OO(\al))x_2^2+\OO(x_2^3)
\]
note that in our case (see \eqref{def:Xg}) $a=2$ and therefore
equalizing the two formulas
\[
2\al-x_1^2 +\OO(\al^\frac32)=(1+\OO(\al)) x_2^2+\OO(x_2^3)
\]
which gives
\[
 x_2=P^h(x_1)=-\sqrt{-2\al +x_1^2}+\OO(\al)
\]
Finally, as the point $E_\mu$ satisfies that $P^h(E_\mu)=0$, we have that
\[
E_\mu =-\sqrt{2\al}+\OO(\al).
\]
Observe that using the Poincar\'{e} map $\pi_\mu$ and the map $g^{-1}$ we can compute the point $A_\mu'$:
\[
\begin{split}
\pi_\mu (-\al) &=\pi_\mu (\mu)+(\pi_\mu)'(\mu)(-\al-\mu)+
\OO((-\al-\mu)^2)\\
&=\mu +(\pi_0)'(0)(-\al-\mu)+\OO((-\al-\mu)^2)=
-\al[\sigma_1((\pi_0)'(0)-1)+(\pi_0)'(0)]+\OO(\al^2)<-3\al\\
A'_\mu &= g^{-1}(\pi_\mu(-\al)+\al)=-\sqrt{\al -\pi_\mu(-\al)}+\OO(\al)\\
&=-\sqrt{\alpha[\sigma_1((\pi_0)'(0)-1)+(\pi_0)'(0)+1]}+\OO(\alpha)
\end{split}
\]
Finally, the point $(A_\mu,\al)$ is given by:
\[
\begin{split}
A_\mu& = P^h (A'_\mu )= -\sqrt{-2\al +(A'_\mu)^2}+\OO(\al)\\
&=-\sqrt{-2\al +\alpha\left[\sigma_1((\pi_0)'(0)-1)+(\pi_0)'(0)+1\right]}+\OO(\al)\\
&=
 -\sqrt{\al(\sigma_1+1)((\pi_0)'(0)+1)}+\OO(\al)
 \end{split}
\]
\end{proof}

To compute the image of the points in $[E_\mu,0]$, fist we need to compute the map $\tilde \pi$ in the following lemma:
\begin{lemma}\label{lem:exterior}
Take $\mu=\mu_1(\al)$ as  \eqref{eq:mu1} with $\sigma_1$ satisfying \eqref{eq:sigma1}.
Then,
the map
\[
\tilde \pi: [E_\mu,0]\times \{y=\al\}
\to [A'_\mu, B_\mu]\times \{y=\al\}\subset \SSS^\al_-
\]
where $E_\mu $ and $A'_\mu$ are given in \eqref{eq:Emu} and \eqref{eq:Amuprima} respectively and $B_\mu=\tilde \pi(0)$, is given by
\[
\begin{split}
\tilde \pi(x)&=-\sqrt{(\al-\mu)(1-(\pi')(0))+(\pi')(0)x^2(1+\OO(\sqrt{\al})}+\OO(\al)\\
&=-\sqrt{\al(\sigma_1-1)( (\pi')(0)-1)+(\pi')(0)x^2(1+\OO(\sqrt{\al})}+\OO(\al)
\end{split}
 \]
and $B_\mu$, satisfies $A_\mu < B_\mu<E_\mu$.
\end{lemma}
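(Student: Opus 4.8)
The map $\tilde\pi$ sends a point $(x,\al)\in\SSS_\al^-$ to the first intersection with $\SSS_\al^-$ of the forward orbit of $\X_\mu$, and the relevant tangency it must be organized around is the tangency of $\Gamma_\mu$ with the line $y=\mu$ at the point $(0,\mu)$. The plan is to decompose $\tilde\pi$ exactly as the exterior map $\f$ was decomposed in \eqref{descomposicioexterior}, namely
\[
\tilde\pi(x)=\Q^{-1}\circ\pi_\mu\circ\bar\Q^{-1}(x),
\]
where $\bar\Q^{-1}$ carries $(x,\al)$ along the flow of $\X_\mu$ to the section $x=0$ near $y=\mu$, $\pi_\mu$ is the return map \eqref{eq:poincarex0} of $\Gamma_\mu$ on $x=0$ (with $\pi_\mu(\mu)=\mu$ and $\pi_\mu'(\mu)=\pi'(0)$ by our normalizations), and $\Q^{-1}$ carries the image point back to $\SSS_\al^-$. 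Since under the translation $\X_\mu(x,y)=\X_0(x,y-\mu)$ this is exactly the situation of Proposition \ref{prop:exterior} and Lemma \ref{prop:exteriortangent} with the role of $\de$ played by $\de=\al-\mu$ (which here is negative, $\al-\mu=\al(1-\sigma_1)<0$, so one must track signs carefully), I would first restate those formulas in the present setting. Concretely, $\bar\Q^{-1}(x)$ lands at height $\mu+\big((\al-\mu)-x^2+\OO(|\al-\mu|^{3/2})\big)$ relative to the tangency, i.e.\ the deviation from $\mu$ is $(\al-\mu)-x^2+\OO(\al^{3/2})$; applying $\pi_\mu$ multiplies this deviation by $\pi'(0)$ up to higher order; and then $\Q^{-1}$ (the analogue of the map $D^{-1}$ of Lemma \ref{lem:partfinalPe} / the map $\Q^{-1}$ of Proposition \ref{prop:exterior}, now mapping back to $y=\al$, i.e.\ to deviation $\al-\mu$ from the tangency height $\mu$) inverts a quadratic with leading coefficient $1$ (since $a=2$ in \eqref{def:Xg}), producing
\[
\tilde\pi(x)=-\sqrt{(\al-\mu)-\pi'(0)\big((\al-\mu)-x^2\big)}+\OO(\al)
=-\sqrt{(\al-\mu)(1-\pi'(0))+\pi'(0)x^2}+\OO(\al),
\]
which is the claimed formula after substituting $\al-\mu=-\al(\sigma_1-1)$.

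Next I would pin down the endpoints of the domain and image. The point $E_\mu$ is, by Lemma \ref{lem:histereticmap} \eqref{eq:Emu}, $E_\mu=-\sqrt{2\al}+\OO(\al)$, and one checks by plugging into the formula above that $\tilde\pi(E_\mu)=A'_\mu$ as given in \eqref{eq:Amuprima}: indeed $\tilde\pi(E_\mu)=-\sqrt{(\al-\mu)(1-\pi'(0))+2\al\pi'(0)}+\OO(\al)=-\sqrt{\al[\sigma_1(\pi'(0)-1)+\pi'(0)+1]}+\OO(\al)$, matching \eqref{eq:Amuprima}. For $x=0$ one gets $B_\mu:=\tilde\pi(0)=-\sqrt{(\al-\mu)(1-\pi'(0))}+\OO(\al)=-\sqrt{\al(\sigma_1-1)(\pi'(0)-1)}+\OO(\al)$, which is well-defined since $\sigma_1>1$ and $\pi'(0)>1$. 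Finally the inequalities $A_\mu<B_\mu<E_\mu$ follow from comparing the three expressions at leading order in $\sqrt\al$: $A_\mu=-\sqrt{\al(\sigma_1+1)(\pi'(0)+1)}+\OO(\al)$ from \eqref{eq:Amu}, $B_\mu=-\sqrt{\al(\sigma_1-1)(\pi'(0)-1)}+\OO(\al)$, and $E_\mu=-\sqrt{2\al}+\OO(\al)$; so I must check $(\sigma_1+1)(\pi'(0)+1)>(\sigma_1-1)(\pi'(0)-1)$ (clear, since the difference is $2(\sigma_1+\pi'(0))>0$) and $(\sigma_1-1)(\pi'(0)-1)<2$, which is where the hypothesis \eqref{eq:sigma1}, $\sigma_1>\frac{\pi'(0)+1}{\pi'(0)-1}$, enters — this bound is equivalent to $(\sigma_1-1)(\pi'(0)-1)>2$ actually going the other way, so I would need to be careful and it may be that the correct inequality $B_\mu<E_\mu$ requires instead the reverse, or that one uses the strengthened condition \eqref{eq:sigma2}; I would re-examine which of \eqref{eq:sigma1}, \eqref{eq:sigma2} makes $B_\mu$ fall to the left of $E_\mu$ versus only needing $B_\mu$ between $A_\mu$ and $E_\mu$, consistently with the relative-position chain \eqref{eq:relativeposition} of Proposition \ref{prop:histereticmap}.

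The main obstacle I anticipate is not the algebra but the bookkeeping of signs and of which "$\de$" is in play: here $\al-\mu<0$, so the quantity $(\al-\mu)(1-\pi'(0))$ is a \emph{positive} multiple of $\al$ (product of two negatives), and one must verify that the argument of every square root that appears — in $\bar\Q^{-1}$, in $\Q^{-1}$, and in the final formula — is nonnegative on the relevant interval $x\in[E_\mu,0]$, which amounts to $\pi'(0)x^2\ge(\al-\mu)(\pi'(0)-1)$, i.e.\ $x^2\ge\al\frac{(\sigma_1-1)(\pi'(0)-1)}{\pi'(0)}$; since $|E_\mu|^2=2\al$ this holds precisely when $2\ge\frac{(\sigma_1-1)(\pi'(0)-1)}{\pi'(0)}$. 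So the hypotheses on $\sigma_1$ must be used exactly to make $\tilde\pi$ well-defined on all of $[E_\mu,0]$ (not just to order the endpoints), and at $x=E_\mu$ the formula should degenerate to $A'_\mu$ with no negative radicand. Beyond this, once the decomposition and the two auxiliary maps ($\Q$, $D$) from Proposition \ref{prop:exterior} and Lemma \ref{lem:partfinalPe} are invoked with $\de=\al-\mu$, the proof is a direct substitution, and the $\OO(\al)$ error terms are inherited from those results (together with the $\OO(\sqrt\al)$ correction inside the radical coming from the $\OO(\de^{3/2})$ terms in Proposition \ref{prop:exterior}, which is why the statement carries the factor $(1+\OO(\sqrt\al))$ multiplying $\pi'(0)x^2$).
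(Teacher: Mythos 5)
Your decomposition of $\tilde\pi$ into a fold map, the return map $\pi_\mu$ of $\Gamma_\mu$, and the inverse fold map is exactly the paper's strategy, and the resulting leading-order formula, the check $\tilde\pi(E_\mu)=A'_\mu$, the expression for $B_\mu$, and the inequality $A_\mu<B_\mu$ are all correct. However, you cannot literally invoke Proposition \ref{prop:exterior} with $\de=\al-\mu<0$: that proposition is stated and proved for $\de>0$ (its scaling $\bar x=x/\sqrt{\de}$, $\bar y=y/\de$ is meaningless otherwise). The paper avoids this by translating $\bar y=y-\al$ and applying Lemma \ref{lem:partfinalPe} to the fold of $\X_\mu$ at $(0,\al)$, obtaining $D(x)=\al-(1+\OO(\al))x^2+\OO(x^3)$ and $D^{-1}(y)=-\sqrt{\al-y}+\OO(\al\sqrt{\al-y},\al-y)$, and writing $\tilde\pi=D^{-1}\circ\pi_\mu\circ D$; no signed parameter $\de$ appears at all. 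You should either do the same or explicitly rederive the fold expansion for a section lying below the fold, which is what your ``$\de<0$'' is actually describing.

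You also have the final inequality backwards. Since $B_\mu=-\sqrt{\al(\sigma_1-1)(\pi'(0)-1)}+\OO(\al)$ and $E_\mu=-\sqrt{2\al}+\OO(\al)$ are both negative, $B_\mu<E_\mu$ means $|B_\mu|>|E_\mu|$, i.e.\ $(\sigma_1-1)(\pi'(0)-1)>2$, not $<2$. This is precisely what \eqref{eq:sigma1} gives: $\sigma_1>\tfrac{\pi'(0)+1}{\pi'(0)-1}$ rearranges to $\sigma_1-1>\tfrac{2}{\pi'(0)-1}$, hence $(\sigma_1-1)(\pi'(0)-1)>2$. So \eqref{eq:sigma1} suffices and \eqref{eq:sigma2} is not needed here. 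Finally, your worry about the radicand is moot: $(\al-\mu)(1-\pi'(0))$ is the product of two negative numbers, hence positive, so the radicand $(\al-\mu)(1-\pi'(0))+\pi'(0)x^2$ is a sum of two nonnegative terms and is $\ge 0$ for all $x$. The condition $x^2\ge\al(\sigma_1-1)(\pi'(0)-1)/\pi'(0)$ you wrote has a dropped minus sign and is not actually a constraint on the domain.
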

\begin{proof}
Observe that $\tilde \pi=D^{-1}\circ \pi_\mu \circ D $ where $D(x)$ is the map associated to the fold at $(\al,0)$ and is given through the formulas given in  Lemma \ref{lem:partfinalPe} after the change $\bar y=y-\al$:
\[
\begin{split}
D(x) &= \al-(1+\OO(\al)) x^2+\OO(x^3)\\
D^{-1}(y) &=-\sqrt{\al-y}+\OO(\al\sqrt{\al-y}, \al-y)
\end{split}
 \]
 Therefore, as $x\in [E_\mu,0]$ satisfy $x=\OO(\sqrt{\al})$, we have:
 \[
  D(x)=\al-x^2(1+\OO(\sqrt{\al}))=\OO(\al)
 \]
therefore, using the Taylor expansion of $\pi_\mu$ around $y=\mu$:
\[
\begin{split}
 \pi_\mu(D(x)) &=\mu +(\pi')(0)(D(x)-\mu)+\OO((D(x)-\mu)^2) \\
&=
\mu +(\pi')(0)\left(\al-x^2(1+\OO(\sqrt{\al}))-\mu\right)+\OO(\al^2)=\OO(\al)
\end{split}
\]
Finally:
\[
\begin{split}
\tilde \pi(x) &=-\sqrt{\al-\left[\mu +(\pi')(0)\left(\al-x^2(1+\OO(\sqrt{\al}))-\mu\right)\right]}+\OO(\al)\\
&=-\sqrt{(\al-\mu)(1- (\pi')(0))+(\pi')(0)x^2(1+\OO(\sqrt{\al})}+\OO(\al)\\
&=-\sqrt{\al(\sigma_1-1)( (\pi')(0)-1)+(\pi')(0)x^2(1+\OO(\sqrt{\al})}+\OO(\al)
\end{split}
\]
which gives, using that $ B_\mu= \tilde \pi(0)$:
\begin{equation}\label{eq:Bmu}
B_\mu
= -\sqrt{\al(\sigma_1-1)( (\pi'_0)(0)-1)}+\OO(\al)
\end{equation}
using the expression of $A_\mu$ given in \eqref{eq:Amu} one easyly  gets that $A_\mu <B_\mu$ and
using the definition of $\sigma_1$ in \eqref{eq:mu1} one gets $B_\mu<E_\mu$ were $E_\mu$ is given in \eqref{eq:Emu}.
\end{proof}
With Lemmas \ref{lem:histereticmap} and \ref{lem:exterior} we can prove Proposition \ref{prop:histereticmap}.
\begin{proof}
 The formulas of the Poincar\'{e} map in the different intervals are a direct consequence of the formulas for $P^h$ and $\tilde \pi$ given in lemmas \ref{lem:histereticmap} and \ref{lem:exterior}.

As $P_\mu(A_\mu)=P^h(A_\mu)$ clearly $A_\mu <P_\mu(A_\mu)$.
The second inequality is just a calculation using the formulas for $P_\mu(A_\mu)$ and $P_\mu(0)$ and using that $\sigma_1>0$ and $(\pi)'(0)-1>0$.

The inequality $P_\mu(0)<E_\mu$ is satisfied provided:
\[
\sigma_1((\pi)'(0)-1)>3+(\pi)'(0)
\]
but the condition for $\sigma_1$ in \eqref{eq:sigma2} and the fact that $(\pi)'(0)>1$ implies this inequality because:
\[
\sigma_1((\pi)'(0)-1)>2(\pi)'(0)+2>(\pi)'(0)+3
\]
Clearly the derivative in positive everywhere except at $x=0$.
\end{proof}

\bigskip

\textbf{Acknowledgments}

The authors have been partly supported by the Spanish MINECO-FEDER Grant
PGC2018-098676-B-100 (AEI/FEDER/UE).  T. M. S. is supported by the Catalan Institution for 
Research and Advanced Studies via an ICREA
Academia Prize 2019.

%
%\bibliography{nonsmooth}
\bibliographystyle{plain}

\end{document}